\numberwithin{equation}{section}
\title{Metric completions of discrete cluster categories}
\author{Charley Cummings and Sira Gratz}
\DeclareFontFamily{U}{min}{}
\DeclareFontShape{U}{min}{m}{n}{<-> udmj30}{}
\newtheorem{theorem}{Theorem}[section]
\newtheorem{proposition}[theorem]{Proposition}
\newtheorem{corollary}[theorem]{Corollary}
\newtheorem{lemma}[theorem]{Lemma}
\theoremstyle{definition}
\newtheorem{definition}[theorem]{Definition}
\newtheorem{remark}[theorem]{Remark}
\DeclareMathOperator{\Ab}{Ab}
\DeclareMathOperator{\Mod}{Mod}
\DeclareMathOperator{\cone}{cone}
\DeclareMathOperator{\Hom}{Hom}
\DeclareMathOperator{\Ext}{Ext}
\DeclareMathOperator{\id}{id}
\DeclareMathOperator{\colim}{colim}
\DeclareMathOperator{\add}{add}
\DeclareMathOperator{\cx}{Conv}
\DeclareMathOperator{\mocolim}{mocolim}
\newcommand{\aisle}[0]{\mathcal{X}}
\newcommand{\raisle}[0]{{\aisle_R}}
\newcommand{\coaisle}[0]{\mathcal{Y}}
\newcommand{\rcoaisle}[0]{{\coaisle_R}}
\newcommand{\yo}{\!\text{\usefont{U}{min}{m}{n}\symbol{'207}}\!}
\newcommand{\arc}[3]{\{x_{#1}^-,x_{#2}^{({#3})}\}}
\newcommand{\fromarc}[1]{\cE^+({#1})}
\newcommand{\toarc}[1]{\cE^-({#1})}
\newcommand{\cA}{\mathcal{A}}
\newcommand{\cB}{\mathcal{B}}
\newcommand{\cC}{\mathcal{C}}
\newcommand{\cD}{\mathcal{D}}
\newcommand{\cE}{\mathcal{E}}
\newcommand{\cM}{\mathcal{M}}
\newcommand{\cP}{\mathcal{P}}
\newcommand{\cQ}{\mathcal{Q}}
\newcommand{\cR}{\mathcal{R}}
\newcommand{\cS}{\mathcal{S}}
\newcommand{\cT}{\mathcal{T}}
\newcommand{\cY}{\mathcal{Y}}
\newcommand{\cZ}{\mathcal{Z}}
\newcommand{\fR}{\mathfrak{R}}
\newcommand{\fS}{\mathfrak{S}}
\newcommand{\bZ}{\mathbb{Z}}
\newcommand{\bN}{\mathbb{N}}
\newcommand{\bK}{\mathbb{K}}
\newcommand{\rx}{\mathrm{x}}
\newcommand{\ry}{\mathrm{y}}
\begin{document}

\begin{abstract}
    Neeman shows that the completion of a triangulated category with respect to a good metric yields a triangulated category. We compute completions of discrete cluster categories with respect to metrics induced by internal t-structures. In particular, for a coaisle metric this yields a new triangulated category which can be interpreted as a topological completion of the associated combinatorial model. Moreover, we show that the completion of any triangulated category with respect to an internal aisle metric is a thick subcategory of the triangulated category itself.
\end{abstract}

\setlist[enumerate,1]{label={(\roman*)}}

\maketitle
\tableofcontents

\section{Introduction}
Despite the ubiquity and popularity of triangulated categories across mathematical fields, processes for generating new triangulated categories from old are notoriously few and far between.  We may, of course, take a triangulated subcategory or a Verdier quotient, but beyond that, what tools do we have available? There is a small, but steadily growing arsenal of astonishingly modern, sophisticated and elaborate techniques including orbit categories \cite{Keller_orbit}, modules over a separable monad \cite{Balmer_monad}, relative stable categories \cite{Beligiannis_relative}, \cite{Balmer-Stevenson_relative} and, as of very recently, completions \cite{Krause_completing}, \cite{Neeman--2018--DetermineEachOther}, \cite{neeman2020metricssurvey}. 

We follow Neeman’s approach from \cite{neeman2020metricssurvey} which utilises metrics on a triangulated category to form a completion which is again a triangulated category. These metrics can be realised as metrics in the spirit of \cite{lawvere1973metrics_on_categories}: Morphisms get assigned a length and a triangle inequality is satisfied---the length of a composition of two morphisms cannot exceed the sum of their individual lengths. Given a metric $\cM$ on a triangulated category, following our intuition from metric spaces, we now do what we set out to do: We add in objects which arise as ``limit points'' of Cauchy sequences, that is, of $\bN$-shaped diagrams whose morphisms get increasingly smaller. These ``limit points'' are in fact colimits in the ind-completion of our triangulated category, and restricting to well-behaved ones, that is those which treat sufficiently small morphisms as isomorphisms, gives rise to a subcategory $\mathfrak{S}_\cM$ of the ind-completion. Neeman \cite{Neeman--2018--DetermineEachOther} shows that $\mathfrak{S}_\cM$ is a triangulated category with triangulated structure induced by the triangulated structure on our initial category. Interesting examples abound; for example one can realise the bounded derived category of finitely generated modules over a noetherian ring as a completion of its category of perfect complexes, and vice versa.

Completions provide an entirely novel way to, in principle, cook up a plethora of new examples of triangulated categories. However, so far in practice, this has not happened. The completions constructed in Neeman’s work are all computed within a suitable, already known, ambient triangulated category, a so-called good extension of our initial category. This has also been exploited in a representation theoretic context in \cite{cyril}, which includes the computation of all possible completions of the bounded derived category of a hereditary finite dimensional algebra with finite representation type. In this paper we present a, to our knowledge first, non-trivial computation that does completely without the aid of a known ambient triangulated category: completions of discrete cluster categories of type $A$ with respect to metrics coming from t-structures. A discrete cluster category is an algebraic triangulated category, so a good extension to an ambient triangulated category exists in theory. However in practice, beyond the simplest case (treated holistically in \cite{ACFGSII}) we do not have an explicit description thereof. In this simplest case, \cite{Fisher} explicitly computes an enlargement adding Pr\"ufer objects. Amazingly, even though the paper predates the metric completion techniques, these turn out to be precisely the homotopy colimits of compactly supported Cauchy sequences, in a good extension, with respect to the (up to equivalence unique) non-trivial coaisle. 

Without the help of an explicit good extension in the general case, we instead utilise the combinatorial model employed in \cite{ITcyclic} to construct these discrete cluster categories---a disc with an infinite, discrete set $\cZ$ of marked points on its boundary, with finitely many two-sided accumulation points. The discrete cluster category $\cC(\cZ)$ is a $\bK$-linear (over some field $\bK$) Krull-Schmidt  triangulated category, whose indecomposable objects correspond to diagonals with endpoints in $\cZ$ (called arcs of $\cZ$), and morphisms between arcs are described by angles between the relevant diagonals.

We are specifically interested in metrics coming from t-structures on $\cC(\cZ)$. More generally, given a t-structure $(\aisle,\coaisle)$ on a triangulated category $\cT$ one can define an associated aisle metric $\cM_\aisle$. In this metric, a morphism is short, say at most length $\frac{1}{n+1}$, if its cone lies in the $n$-th suspension of $\aisle$. Dually, one defines an associated coaisle metric $\cM_\coaisle$, which considers a morphism to be of length at most $\frac{1}{n+1}$, if its cone lies in the $n$-th desuspension of $\coaisle$. The case of aisle metric completions is quickly settled:

\begin{theorem} [Corollary~{\ref{C:aisle_in_metric_completion}}]
    Let $(\aisle,\coaisle)$ be a t-structure on a triangulated category $\cT$ and consider the aisle metric $\cM_\aisle$. Then we have
    \begin{equation*}
        \fS_{\cM_\aisle} \simeq \add \bigcup_{p\in\bZ} \Sigma^p\coaisle.
    \end{equation*}
In particular, the completion $\fS_{\cM_\aisle}$ is triangulated equivalent to a thick subcategory of $\cT$.
\end{theorem}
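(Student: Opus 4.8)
The plan is to show that the Yoneda functor restricts to a triangulated equivalence $\yo \colon \cB \xrightarrow{\ \sim\ } \fS_{\cM_\aisle}$, where $\cB := \add \bigcup_{p \in \bZ} \Sigma^p\coaisle$. Two preliminary remarks: first, $\cB$ is a thick subcategory of $\cT$, since $\coaisle$ being closed under desuspension makes the $\Sigma^p\coaisle$ an increasing chain of extension-closed subcategories, so their union is triangulated (and, $\coaisle$ being a suspension of a right-orthogonal class, already closed under direct summands and finite sums, so in fact $\cB = \bigcup_{p} \Sigma^p\coaisle$); second, $\cM_\aisle$ is a good metric, because its terms $\Sigma^n\aisle$ satisfy $\Sigma^n\aisle \ast \Sigma^n\aisle = \Sigma^n\aisle$ (an aisle is extension-closed and closed under suspension), so the theorem of Neeman recalled above applies and $\fS_{\cM_\aisle}$ is triangulated with the structure induced from $\cT$. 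It therefore suffices to prove that $\yo|_\cB$ takes values in $\fS_{\cM_\aisle}$ and is essentially surjective onto it; full faithfulness is automatic from Yoneda's lemma.

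For the inclusion $\yo(\cB) \subseteq \fS_{\cM_\aisle}$, let $Y \in \Sigma^p\coaisle$. Because $\coaisle$ is closed under desuspension, $Y \in \Sigma^n\coaisle$ for every $n \geq p$, so the truncation of $Y$ into $\Sigma^n\aisle$ vanishes for all such $n$. Hence the constant sequence $Y \xrightarrow{\id} Y \to \cdots$ is Cauchy and its approximation errors at all sufficiently fine levels are zero, which upon unwinding the definition places $\yo(Y)$ in $\fS_{\cM_\aisle}$. As every object of $\cB$ is of this form, this gives $\yo(\cB) \subseteq \fS_{\cM_\aisle}$.

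Essential surjectivity is the heart of the matter. Let $F = \colim_i \yo(E_i) \in \fS_{\cM_\aisle}$, for a Cauchy sequence with $\cone(E_i \to E_{i+1}) \in \Sigma^{n_i}\aisle$ and $n_i \to \infty$. A stabilisation lemma comes first: applying the octahedral axiom to the two truncation triangles of $E_i$ and $E_{i+1}$, and using the standard vanishing $\aisle \cap \coaisle = 0$, one checks that a morphism whose cone lies in $\Sigma^{m+1}\aisle$ is inverted by the truncation functor $\tau_{\Sigma^m\coaisle}$ onto $\Sigma^m\coaisle$. Consequently, for each fixed $m$ the sequence $(\tau_{\Sigma^m\coaisle} E_i)_i$ is eventually constant, say with value $Y^{(m)} \in \Sigma^m\coaisle \subseteq \cB$. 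Completing the morphism $F \to \yo(Y^{(m)})$ assembled from the truncation maps to a triangle in $\fS_{\cM_\aisle}$, and identifying its third vertex via the colimit of the truncation triangles $\tau_{\Sigma^m\aisle} E_i \to E_i \to \tau_{\Sigma^m\coaisle} E_i$, one obtains a triangle $G^{(m)} \to F \to \yo(Y^{(m)}) \to \Sigma G^{(m)}$ in $\fS_{\cM_\aisle}$, in which $G^{(m)} = \colim_i \yo(\tau_{\Sigma^m\aisle} E_i)$ is the colimit of a Cauchy sequence all of whose terms lie in the ever-deeper aisles $\Sigma^m\aisle$. It remains to prove that $G^{(m)} = 0$ in $\fS_{\cM_\aisle}$ for $m$ large enough, as then $F \cong \yo(Y^{(m)}) \in \yo(\cB)$.

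I expect the vanishing of $G^{(m)}$ to be the main obstacle, and it is exactly where membership in $\fS_{\cM_\aisle}$, rather than merely in the larger category $\fL_{\cM_\aisle}$ of all Cauchy colimits, does the work: $\fL_{\cM_\aisle}$ contains nonzero colimits of sequences sinking arbitrarily deep into the aisle --- the ``points at infinity'' the aisle metric cannot separate --- but the separatedness condition distinguishing Neeman's $\fS_{\cM_\aisle}$ from $\fL_{\cM_\aisle}$ forces any such object to be zero, and I would deduce $G^{(m)} = 0$ for $m \gg 0$ from that condition (equivalently, from the structural description of $\fS_{\cM}$ underlying this corollary), using that the terms of the defining sequence of $G^{(m)}$ all lie in $\Sigma^m\aisle$. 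Everything else is formal bookkeeping with truncation triangles and, if one does not observe that $\bigcup_p \Sigma^p\coaisle$ is already summand-closed, with the passage to the thick closure $\cB$.
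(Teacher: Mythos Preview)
Your approach is essentially the paper's: truncate the Cauchy sequence by the shifted t-structure $(\Sigma^m\aisle,\Sigma^m\coaisle)$, observe that the coaisle part stabilises, and argue that the aisle part has vanishing module colimit. The stabilisation lemma you state is correct and is precisely the paper's Lemma~\ref{L:cauchy_prop_of_X_and_Y}(ii)(b). The difference is organisational: the paper works entirely in $\Mod\cT$ via the long exact sequence of module colimits induced by the sequence of approximation triangles, and never needs to build a triangle inside $\fS_{\cM_\aisle}$ or identify its third vertex.

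The genuine gap is the vanishing of $G^{(m)}$, which you flag as ``the main obstacle'' but do not carry out. Your parenthetical appeal to ``the structural description of $\fS_\cM$ underlying this corollary'' is circular, and the informal invocation of a ``separatedness condition'' is not yet an argument. What is missing is the choice $m = t$, where $t$ is the level at which $F$ is compactly supported. With that choice, compact support says $F(A)=0$ for all $A\in\Sigma^t\aisle$; combining this with $\Hom(\Sigma^t\aisle,\Sigma^{t-1}\coaisle)=0$ in the long exact sequence coming from the truncation triangles yields $G^{(t)}(A)=0$ for all $A\in\Sigma^t\aisle$. Since every entry $X_i = \tau_{\Sigma^t\aisle}E_i$ of the defining sequence of $G^{(t)}$ lies in $\Sigma^t\aisle$, in particular $G^{(t)}(X_i)=0$, so the identity on $X_i$ dies in the colimit and the sequence is null. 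This is exactly the paper's Lemma~\ref{L:cptly_supp_prop_of_X_and_Y}(i) specialised to $\cS = \Sigma^t\aisle$, and it makes the detour through triangles in $\fS_{\cM_\aisle}$ unnecessary: once $\mocolim\mathbf{X}=0$, the isomorphism $F \cong \mocolim\mathbf{Y} \cong \yo(Y^{(t)})$ follows directly from the long exact sequence in $\Mod\cT$.
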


This observation also appears in the context of extendable t-structures, cf.\ \cite{BCMPZ-completion}. Note that, while we admittedly still land inside a known triangulated category (the initial category itself), we do not make use of a good extension to obtain the result. New and fun triangulated categories are thus not to be come by using aisle metrics from internal t-structures. However, coaisle metrics prove to be considerably more fruitful. We explicitly compute the completion of $\cC(\cZ)$ with respect to any coaisle metric and, in general, obtain an a priori completely new triangulated category. To do so we exploit the combinatorial model. The new objects we add are certain arcs that end at one or two accumulation points. Depending on our choice of coaisle metric we may choose which accumulation points are added as potential endpoints and, to a lesser extent, where the arcs incident with this point are allowed to go.

More concretely, we use the classification from \cite{Gratz--Zvonareva--2023--tstructures_in_clus_cat} of t-structures on $\cC(\cZ)$ in terms of $\overline{\cZ}$-decorated non-crossing partitions: Assume that $\cZ$ has $N$ proper accumulation points, that is, limit points with respect to the standard topology which do not lie in $\cZ$. Then every t-structure $(\aisle,\coaisle)$ corresponds to a a $\overline{\cZ}$-decorated non-crossing partition, i.e.\ a pair $(\cP,\rx)$, where $\cP$ is a non-crossing partition of the set $[N] = \{1, \ldots, N\}$ under the natural linear order, and $\rx$ is a $\overline{\cZ}$-decoration of $\cP$, i.e.\ an $N$-tuple from $\overline{\cZ}$ satisfying mild additional conditions (cf.\ Definition~\ref{D:decoratednc}). This description of t-structures allows for a concrete way to describe the aisle, and coaisle, in terms of the combinatorial model (cf.\ Theorem~\ref{T:GZ_classification_aisles}). Given a block $B$ of $\cP$, we define a subset $\overline{\cZ}_B$ of $\overline{\cZ}$, which is determined by $B$ and $\rx$, and which in general contains accumulation points in $\overline{\cZ} \setminus \cZ$. An arc of $\overline{\cZ}_B$ is a diagonal in the disc with endpoints in $\overline{\cZ}_B$.

\begin{theorem}[{Theorem~\ref{T:combinatorial description of completion}}]
    Let $(\aisle,\coaisle)$ be a t-structure on $\cC(\cZ)$ corresponding to a $\overline{\cZ}$-decorated non-crossing partition $(\cP,\rx)$. The metric completion $\mathfrak{S}_{\cM_\coaisle}$ of $\cC(\cZ)$ with respect to the coaisle metric $\cM_\coaisle$ is equivalent to
    \[
        \mathfrak{S}_{\cM_\coaisle}(\cC(\cZ)) \simeq \bigoplus_{B \in \cP} \mathfrak{S}_{\cM_\coaisle}(B),
    \]
    where each $\mathfrak{S}_{\cM_\coaisle}(B)$ can be described combinatorially as follows:
    \begin{itemize}
        \item The indecomposable objects of $\mathfrak{S}_{\cM_\coaisle}(B)$ are in one-to-one correspondence with arcs of $\overline{\cZ}_B$.
        \item Let $F$, $G$ and $H$ be arcs of $\overline{\cZ}_B$. Then we have $\dim_{\bK} (\Hom_{\mathfrak{S}_{\cM_\coaisle}(B)}(F, G)) \leq 1$. Moreover, whether or not a non-trivial morphism from $F$ to $G$ exists, and factors through $H$, can be read from the relative positioning of the arcs $F$, $G$ and $H$.
    \end{itemize}
Furthermore, $\mathfrak{S}_{\cM_\coaisle}$ is a triangulated category with suspension functor $\Sigma$ acting on arcs by a one-step clockwise rotation. Its distinguished triangles are diagrams which are isomorphic to colimits in $\Mod \cC(\cZ)$ of compactly supported Cauchy sequences of triangles in $\cC(\cZ)$. In particular, for each of the configurations for $U$, $W$, and $V=V_1 \oplus V_2$ as in Figure~\ref{F:cone} we obtain a triangle
\[
    U \to V \to W \to \Sigma U.
\]
\end{theorem}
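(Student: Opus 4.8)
\emph{Overall approach.} The plan is to work inside Neeman's framework throughout: realise $\mathfrak{S}_{\cM_\coaisle}$ as the full subcategory of $\Mod\cC(\cZ)$ consisting of those colimits of compactly supported Cauchy sequences that treat all sufficiently short morphisms as isomorphisms, and then translate each ingredient into the combinatorial model of $\cC(\cZ)$. The first step is to read off from Theorem~\ref{T:GZ_classification_aisles} which morphisms of $\cC(\cZ)$ are short for $\cM_\coaisle$: a morphism $f$ has length at most $\tfrac{1}{n+1}$ exactly when $\cone(f)\in\Sigma^{-n}\coaisle$, and the combinatorial form of $\Sigma^{-n}\coaisle$ lets us recognise a compactly supported Cauchy sequence of arcs as a sequence $G_1\to G_2\to\cdots$ in which one or both endpoints move monotonically towards an accumulation point of $\overline{\cZ}_B$ for some block $B$, while the remaining endpoint stabilises. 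Computing the colimit of such a sequence in $\Mod\cC(\cZ)$ should produce the functor attached to the resulting ``limit arc'' of $\overline{\cZ}_B$, and every arc of $\overline{\cZ}_B$ with an endpoint outside $\cZ$ should arise this way; this yields the candidate list of new indecomposable objects.

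\emph{Block decomposition.} Next I would prove $\mathfrak{S}_{\cM_\coaisle}(\cC(\cZ))\simeq\bigoplus_{B\in\cP}\mathfrak{S}_{\cM_\coaisle}(B)$. The key point is that the arcs which can occur with nonzero multiplicity infinitely often along a Cauchy sequence are confined, block by block, to the region of the disc cut out by $B$ and the decoration $\rx$, and that every $\Hom$-group between limit objects attached to distinct blocks vanishes, because such a morphism would be forced to be infinitely short from both sides. Granting this, $\mathfrak{S}_{\cM_\coaisle}$ splits as the coproduct of the subcategories generated by the Cauchy sequences supported in each block.

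\emph{Describing a single block.} For the combinatorial description of $\mathfrak{S}_{\cM_\coaisle}(B)$ I would proceed in two steps: (i) show that every object is a finite direct sum of the colimit objects found above, indexed by arcs of $\overline{\cZ}_B$, using the Krull--Schmidt structure of $\cC(\cZ)$, the explicit shape of compactly supported Cauchy sequences, and Neeman's stabilisation condition to exclude any further objects; and (ii) compute $\Hom_{\mathfrak{S}_{\cM_\coaisle}(B)}(F,G)$ via the standard formula for morphisms between colimits of ind-objects, reducing it to a (co)limit of the spaces $\Hom_{\cC(\cZ)}(F_i,G_j)$, which are governed by the angles between arcs. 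Tracking these angles as the endpoints run to their limits should give $\dim_{\bK}\Hom_{\mathfrak{S}_{\cM_\coaisle}(B)}(F,G)\le 1$ and pin down, purely from the relative positions of the arcs $F$, $G$ and a third arc $H$ in the disc, exactly when a nonzero morphism exists and when it factors through $H$.

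\emph{Triangulated structure and the main obstacle.} Finally I would invoke Neeman's theorem that $\mathfrak{S}_{\cM_\coaisle}$ is triangulated with suspension induced by $\Sigma$ and distinguished triangles exactly the diagrams isomorphic in $\Mod\cC(\cZ)$ to colimits of compactly supported Cauchy sequences of triangles in $\cC(\cZ)$. Since $\Sigma$ rotates arcs of $\cZ$ one step clockwise and clockwise rotation is a continuous self-map of the disc preserving $\overline{\cZ}$, it carries limit arcs to limit arcs by the same rule, giving the stated action. To obtain the triangle for each configuration of $U$, $W$ and $V=V_1\oplus V_2$ in Figure~\ref{F:cone}, I would exhibit an explicit compactly supported Cauchy sequence of genuine triangles $U_i\to V_i'\to W_i\to\Sigma U_i$ in $\cC(\cZ)$, built from the combinatorial description of mapping cones in $\cC(\cZ)$ with the varying endpoints approaching the relevant accumulation points, and check that its colimit is the desired triangle. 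The main obstacle I anticipate is step (i): proving that the arcs of $\overline{\cZ}_B$ genuinely exhaust the indecomposable objects, i.e.\ that no exotic colimit of a Cauchy sequence survives Neeman's stabilisation condition; the explicit verification of the Figure~\ref{F:cone} triangles, which requires enough control over cones in $\cC(\cZ)$ to pass to the limit, is a close second.
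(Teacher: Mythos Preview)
Your outline is essentially the paper's strategy, and you have correctly identified the main obstacle: showing that arcs of $\overline{\cZ}_B$ exhaust the indecomposables of $\mathfrak{S}_{\cM_\coaisle}(B)$, i.e.\ that an \emph{arbitrary} compactly supported Cauchy sequence has the same module colimit as a double fan. What is missing from your proposal is any indication of how this obstacle is overcome, and the paper's machinery for this is not something one would guess from the outline you wrote.

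The paper does \emph{not} argue directly that a compactly supported Cauchy sequence of arcs has endpoints moving monotonically towards accumulation points; a general Cauchy sequence has decomposable entries with many summands and no a priori monotonicity. Instead the paper proceeds by a sequence of reductions. First, given $E\in\mathfrak{S}_{\cM_\coaisle}$ compactly supported at $t$, it passes via t-structure truncation to a sequence $\widehat{\mathbf{Y}}$ in $\Sigma^t\widehat{\coaisle}$, where $(\widehat{\aisle},\widehat{\coaisle})$ comes from a \emph{largest aisle contained in} $\coaisle$; this step (Lemma~\ref{L:right-non-degenerate}, Proposition~\ref{P:reduce_to_RNDG_coaisle_sequences}) replaces $\coaisle$ by a \emph{right non-degenerate} coaisle $\rcoaisle=\coaisle\cap\Sigma^{2(t+1)}\widehat{\coaisle}$, which is essential because only then do the combinatorial arguments about summands go through. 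Second, the $\rcoaisle$-approximation of $\widehat{\mathbf{Y}}$ is shown to be a \emph{minimal coaisle sequence} (Definition~\ref{D:coaisle_sequence}), and Propositions~\ref{P:summands_in_coaisle_are_neighbouring} and \ref{P:coaisle_seq_small_cones} classify such sequences: up to subsequence, every indecomposable summand of an entry has the form $\{x_i^-,x_j^{(s)}\}$ with $j$ the $\cP^c$-predecessor of $i$ and $s$ growing. Third, Lemma~\ref{lem:bound_tilde_seq_by_double_fan} constructs an explicit double fan $\mathbf{F}$ together with maps $\Phi_n\colon F_n\to\widehat{Y}_n$ whose cones live in shrinking neighbourhoods of the accumulation points, so that $\mocolim\mathbf{F}\cong\mocolim\widehat{\mathbf{Y}}$ by Lemma~\ref{L:mocolim_of_small_bumps}. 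None of these steps --- the largest-aisle trick, the passage to minimal coaisle sequences, or the explicit bounding by double fans via an inductive $3\times 3$ argument --- appears in your plan, and your phrase ``recognise a compactly supported Cauchy sequence of arcs as a sequence in which one or both endpoints move monotonically'' elides precisely the work that these results do.

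Once that obstacle is cleared, the rest of your plan matches the paper: the block decomposition and the Hom computation are Lemma~\ref{L:morphisms_between_double_fans}, the triangulated structure is Neeman's theorem, and the explicit triangles come from taking colimits of the standard cone triangles in $\cC(\cZ)$ along double fans, exactly as you suggest.
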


\begin{figure}[H]
    \centering
    \includegraphics[scale=0.8]{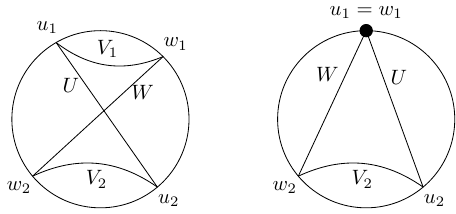}
    \caption{Representation of distinguished triangles between indecomposable objects in the completion $\fS_{\cM_\coaisle}$ of $\cC(\cZ)$. The endpoints of the arcs lie in $\overline{\cZ}_B \subseteq \overline{\cZ}$ for some block $B$. On the right-hand side, we have $u_1 = w_1 \in \overline{\cZ} \setminus \cZ$ and $V_1 = 0$.}
    \label{F:cone}
\end{figure}

Discrete cluster categories of type $A$ have received a lot of well-deserved attention in recent years (\cite{GHJ}, \cite{Murphy_Orlov}, \cite{Murphy_K0}, \cite{CKP_discrete}, \cite{Franchini_torsion_pairs}). In particular, Paquette and Y{\i}ld{\i}r{\i}m \cite{Paquette--Yildirim--2021:combinatorial_completion} were interested in the combinatorial process of adding arcs ending in accumulation points, and explicitly constructed, as a certain Verdier quotient, a triangulated category which contains these. Astonishingly, in the special case where we pick our t-structure to be non-degenerate and bounded above (such a t-structure always exists in $\cC(\cZ)$), our completion is, as a category, equivalent to the category constructed in \cite{Paquette--Yildirim--2021:combinatorial_completion}. While this equivalence commutes with the autoequivalence of the respective triangulated structure and preserves triangles with two indecomposable terms, we do not know if it is in fact a triangulated equivalence. This remains to be investigated, and we encourage the interested reader to do so. 

The discrete cluster category $\cC(\cZ)$ is self-dual, that is, equivalent to its opposite category. In particular, aisles in $\cC(\cZ)$ correspond to coaisles in $\cC(\cZ)^\mathrm{op} \simeq \cC(\cZ)$, and an aisle metric on $\cC(\cZ)$ has a dual coaisle metric on $\cC(\cZ)^\mathrm{op} \simeq \cC(\cZ)$. Nevertheless, as we show in this paper, the completions with respect to aisle and coaisle metrics look drastically different. This should not surprise; already the ind-completions in which all computations take place fail to behave well with taking opposite categories, as in general $\Mod(\cT)^\mathrm{op}$ is not equivalent to $\Mod(\cT^\mathrm{op})$.

\subsection*{Acknowledgements}
This work was supported by VILLUM FONDEN  (Grant Number VIL42076) and EPSRC (Grant Number EP/V038672/1).

\section{Conventions}
All our subcategories are full and replete, even where not explicitly stated. If $\cA$ is a subcategory of a category $\cC$, then we write $\cA \subseteq \cC$, and if $A$ is an object in $\cA$, then we write $A \in \cA$. For a set of objects $S$ in an additive category $\cT$, we denote by $\add(S)$ the full subcategory of $\cT$ whose objects are summands of finite direct sums of objects in $S$. For subcategories $\cA$ and $\cB$ of $\cT$, we define subcategories:
\begin{enumerate}
    \item $\cA * \cB = \add\{T \in \cT \mid \exists \text{ triangle } A \to T \to B \to \Sigma A \text{ such that $A \in \cA$ and $B \in \cB$}\}$;
    \item $\cA \cap \cB$ to be the full subcategory of $\cT$ with objects $T \in \cT$ such that $T \in \cA$ and $T \in \cB$;
    \item $\cA \cup \cB$ to be the full subcategory of $\cT$ with objects $T \in \cT$ such that $T \in \cA$ or $T \in \cB$.
\end{enumerate}

Throughout, we work over a fixed field $\bK$ and we denote by $D$ the duality $\Hom_\bK(-,\bK)$.

\section{Preliminaries}
In this section, we fix a triangulated category $\cT$ with suspension functor $\Sigma \colon \cT \rightarrow \cT$.

\subsection{Preliminaries}

We recall some fundamental definitions from the theory of metrics and completions of triangulated categories. We mostly follow conventions from \cite{neeman2020metricssurvey}. Where convenient, we introduce additional terminology that will be useful throughout the rest of the paper.

\begin{definition}\label{D:metric}
    A {\em metric} on $\cT$ is a collection of full subcategories $\cM = \{B_t\}_{t \geq 0}$ of $\cT$ with $B_0 = \cT$ such that, for all $t \geq 0$, the following hold:
    \begin{itemize}
        \item[\textbf{M0}]{$0 \in B_t$;}
        \item[\textbf{M1}]{$B_{t+1} \subseteq B_t$.}
    \end{itemize}
    A {\em good metric} on $\cT$ is a metric $\cM = \{B_t\}_{t \geq 0}$ on $\cT$ such that, for all $t \geq 0$, the following holds:
    \begin{itemize}
        \item[\textbf{M2}]{$B_t * B_t \subseteq B_t$, i.e.\ $B_t$ is extension closed;}
        \item[\textbf{M3}]{$\Sigma B_{t+1} \subseteq B_{t}$ and $\Sigma^{-1} B_{t+1} \subseteq B_{t}$}.
    \end{itemize}
\end{definition}

Axiom \textbf{M2} ensures that the metric is {\em non-Archimedian} and in \cite{Neeman--2018--DetermineEachOther} is taken to be part of the definition of a metric (cf.\ the discussion before \cite[Definition~10]{Neeman--2018--DetermineEachOther}). Definition~\ref{D:metric} yields a metric on a category in the spirit of Lawvere \cite{lawvere1973metrics_on_categories}. Explicitly, given a metric $\cM = \{B_t\}_{t \geq 0}$ on $\cT$, we assign to a morphism $f \colon A \to B$ the length:
    \[
        \mathrm{length}(f) = \inf \left\{\frac{1}{t+1} \mid \cone(f) \in B_t\right\}.
    \]
For more details, we refer the reader to the exposition in \cite{neeman2020metricssurvey}.

\begin{definition}
    A {\em sequence $\mathbf{E} = (E_n,f_{n})_{n>0}$ in $\cT$} is an $\bN$-shaped diagram
    \begin{center}
		\begin{tikzcd}[column sep= {7em,between origins},row sep= {5em,between origins}]
			E_1 \arrow[r, "f_1"] & E_2 \arrow[r, "f_2"] & \ldots \arrow[r] \arrow[r, "f_{n-1}"] & E_n \arrow[r,"f_n"] & \ldots
		\end{tikzcd}
	\end{center}
    in $\cT$. For all integers $0<m < m'$, we denote by $f_{m,m'}$ the composition
    \begin{equation*}
        f_{m'} \circ \cdots \circ f_{m+1} \circ f_{m} \colon E_m \rightarrow E_{m'}.
    \end{equation*}    
    A sequence $\mathbf{E} = (E_n,f_n)_{n>0}$ in $\cT$ is a {\em null sequence}, if for all integers $m > 0$ there exists an integer $m' > m$ with $f_{m,m'} = 0$.
\end{definition}

\begin{definition}
    Fix a good metric $\cM = \{B_t\}_{t \geq 0}$ on $\cT$. A {\em Cauchy sequence with respect to $\cM$} is a sequence $\mathbf{E} = (E_n,f_{n})_{n >0}$ in $\cT$ such that for all $t \geq 0$ there exists an $n_t > 0$ such that for all $m' > m \geq n_t$ the cone of $f_{m,m'}$ lies in $B_t$.
\end{definition}

Remembering that a good metric measures the lengths of morphisms, we see that a Cauchy sequence is aptly named. It is a sequence $(E_n,f_n)_{n>0}$ in $\cT$ such that for all $\varepsilon >0$ there exists an $n_\varepsilon$ such that for all $m' > m \geq n_\varepsilon$ we have $\mathrm{length}(f_{m,m'}) < \varepsilon$.

Consider the category $\Mod\cT$ of contravariant additive functors from $\cT$ to the category $\Ab$ of abelian groups. Recall that $\cT$ may be realised as a full subcategory of $\Mod\cT$ under the Yoneda-embedding $\yo \colon \cT \to \Mod\cT$ that maps an object $A \in \cT$ to the contravariant functor $\Hom_{\cT}(-,A)$.

\begin{definition}
    Let $\mathbf{E} = (E_n,f_n)_{n>0}$ be a sequence in $\cT$. Its image under the Yoneda embedding $\yo$ is a sequence $\yo(\mathbf{E}) = (\yo(E_n), \yo(f_n))_{n>0}$ in $\Mod \cT$. The {\em module colimit} of $\mathbf{E}$, denoted  by $\mocolim\mathbf{E}$, is the colimit of $\yo(\mathbf{E})$ in $\Mod \cT$:
    \[
        \mocolim \mathbf{E} = \colim \yo(\mathbf{E}).
    \]
\end{definition}

\begin{definition}
    Let $\cM$ be a good metric on $\cT$. The {\em completion $\mathfrak{L}_{\cM}(\cT)$ of $\cT$ along Cauchy sequences with respect to $\cM$}  is the full subcategory of $\Mod\cT$ whose objects are the module colimits of Cauchy sequences in $\cT$ with respect to $\cM$.
\end{definition}

The completion of a triangulated category with respect to a fixed good metric in the sense of Neeman \cite{Neeman--2018--DetermineEachOther} is the restriction of the completion along Cauchy sequences to compactly supported objects.

\begin{definition}
    Let $\cM = \{B_t\}_{t \geq 0}$ be a good metric on $\cT$ and fix an integer $t \geq 0$.  A functor $F \in \Mod \cT$ is {\em compactly supported at $t$ with respect to $\cM$} if, for all morphisms $f$ in $\cT$ with $\cone(f) \in B_t$, the morphism $F(f)$ is an isomorphism. A functor is {\em compactly supported with respect to $\cM$} if it is compactly supported at some $t \geq 0$. We denote by $\mathfrak{C}_\cM$ the full subcategory of $\Mod\cT$ whose objects are compactly supported with respect to $\cM$.

    A sequence in $\cT$ is {\em compactly supported (at $t$) with respect to $\cM$} if its module colimit is compactly supported (at $t$) with respect to $\cM$.
\end{definition}

If the good metric $\cM$ is clear from context, we usually omit the qualifier ``with respect to $\cM$'' when discussing compactly supported functors or sequences.
Rephrasing loosely in terms of lengths of morphisms, a functor is compactly supported if and only if it takes sufficiently small morphisms to isomorphisms.

\begin{remark}
     Since module colimits of sequences are cohomological functors, a sequence $\mathbf{E}$ in $\cT$ is compactly supported at $t$, with respect to a good metric $\cM = \{B_t\}_{t \geq0}$, if and only if for all objects $A \in B_t$ we have $\mocolim\mathbf{E} (A) = 0$. 
\end{remark}

\begin{definition}
    Let $\cM$ be a good metric on $\cT$. The {\em completion of $\cT$ with respect to $\cM$} is the full subcategory
    \[
        \mathfrak{S}_\cM = \mathfrak{L}_\cM \cap \mathfrak{C}_\cM
    \]
    of $\Mod \cT$ that consists of the module colimits of compactly supported Cauchy sequences in $\cT$ with respect to the metric $\cM$.
\end{definition}

\begin{theorem}\cite[Theorem~15]{neeman2020metricssurvey}
    For any good metric $\cM$ on $\cT$, the category $\fS_\cM$ is triangulated with triangulated structure induced by the triangulated structure on $\cT$.
\end{theorem}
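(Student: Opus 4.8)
The plan is to carry out everything inside the abelian category $\Mod\cT$, leaning on two soft facts: filtered colimits in $\Mod\cT$ are exact and computed pointwise (it is a Grothendieck category), and every representable functor $\yo(A)$ is a compact object, since $\Hom_{\Mod\cT}(\yo(A),-)$ is evaluation at $A$. As $\Sigma\colon\cT\to\cT$ is an equivalence it extends to an equivalence $\Sigma\colon\Mod\cT\to\Mod\cT$ commuting with $\mocolim$, and axiom \textbf{M3} shows that $\Sigma$ and $\Sigma^{-1}$ send Cauchy sequences to Cauchy sequences and preserve compact support (after raising the index $t$ by one), so $\Sigma$ restricts to an autoequivalence of $\fS_\cM$. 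One then decrees a triangle in $\fS_\cM$ to be \emph{distinguished} when it is isomorphic to the module colimit of a commutative ladder of distinguished triangles $A_n\to B_n\to C_n\to\Sigma A_n$ in $\cT$ whose columns $\mathbf A,\mathbf B,\mathbf C$ are compactly supported Cauchy sequences. Additivity of $\fS_\cM$ is routine, since $\mocolim$ commutes with finite biproducts and a finite biproduct of compactly supported Cauchy sequences is one.

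The technical backbone is the identification
\[
    \Hom_{\Mod\cT}(\mocolim\mathbf E,\mocolim\mathbf F)\;\cong\;\lim_m\colim_n\Hom_\cT(E_m,F_n),
\]
obtained by writing $\mocolim\mathbf E=\colim_m\yo(E_m)$ and using compactness of each $\yo(E_m)$. From this I would deduce that every morphism $\mocolim\mathbf E\to\mocolim\mathbf F$ of $\fS_\cM$ is, after passing to a cofinal subsequence of $\mathbf F$, induced by an honest morphism of sequences $\mathbf E\to\mathbf F$: a compatible family of germs $E_m\to F_n$ can be straightened into a ladder one rung at a time by composing with enough structure maps of $\mathbf F$. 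Passing to subsequences changes neither the module colimit nor the properties of being Cauchy or compactly supported, so this costs nothing.

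With morphisms realised by ladders, each triangulated axiom for $\fS_\cM$ follows by applying the corresponding axiom of $\cT$ rung by rung and then taking $\mocolim$. For \textbf{TR1}, represent $f\colon\mocolim\mathbf E\to\mocolim\mathbf F$ by a ladder $E_n\to F_n$ and complete each rung to a triangle $E_n\to F_n\to C_n\to\Sigma E_n$; the octahedral axiom in $\cT$ puts $\cone(C_m\to C_{m'})$ into a triangle built from $\cone(E_m\to E_{m'})$ and $\cone(F_m\to F_{m'})$, so \textbf{M1}, \textbf{M2} and \textbf{M3} force it eventually into $B_t$ and $\mathbf C$ is Cauchy; applying the exact functor $\mocolim$ to the sequence of triangles yields a long exact sequence showing $\mocolim\mathbf C$ is compactly supported. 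The resulting candidate triangle is distinguished by construction and realises $f$, and rotation for \textbf{TR2} is immediate since rotating a ladder of triangles is again one and is compatible with the extended $\Sigma$. For \textbf{TR3} and \textbf{TR4} one lifts all objects and the prescribed maps to ladders, reindexes so that the given relations hold strictly at each finite stage, invokes \textbf{TR3}, resp.\ the octahedral axiom, in $\cT$ rung by rung, and passes to $\mocolim$; exactness of filtered colimits turns the level-wise data into a genuine morphism of triangles, resp.\ an octahedron, in $\fS_\cM$, manifestly induced by the triangulated structure of $\cT$.

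The main obstacle lies in \textbf{TR3}: the fill-in in $\cT$ is not functorial, so the rung-wise maps $\gamma_n\colon C_n\to C_n'$ produced by \textbf{TR3} in $\cT$ need not commute with the structure maps of $\mathbf C$ and $\mathbf C'$. One has to show that the discrepancies $c'_n\gamma_n-\gamma_{n+1}c_n$ become negligible along the sequence: each such discrepancy annihilates the map $B_n\to C_n$, hence factors through $C_n\to\Sigma A_n$, and the $\lim\colim$ description of $\Hom$ together with the Cauchy condition on $\mathbf A$ must be used to kill it after passing further along $\mathbf C'$, so that the $\gamma_n$ assemble to a well-defined morphism $\mocolim\mathbf C\to\mocolim\mathbf C'$ rendering the diagram commutative. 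Propagating this bookkeeping through the octahedral axiom is the most delicate step; everything else reduces to chasing exact sequences in $\Mod\cT$.
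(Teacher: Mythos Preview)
The paper does not give its own proof of this statement; it is simply quoted from \cite[Theorem~15]{neeman2020metricssurvey}, so there is no argument in the paper to compare yours against. Your outline is broadly the right shape and matches the strategy one expects: realise morphisms in $\fS_\cM$ as ladders via the $\lim\colim$ formula, build candidate triangles levelwise, and push the axioms of a triangulated category through $\mocolim$ using exactness of filtered colimits in $\Mod\cT$.

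That said, your handling of \textbf{TR3} has a genuine gap. You correctly observe that the discrepancy $d_n = c'_n\gamma_n - \gamma_{n+1}c_n$ factors through $C_n \to \Sigma A_n$, yielding some $\delta_n \colon \Sigma A_n \to C'_{n+1}$. But your claim that ``the Cauchy condition on $\mathbf{A}$'' then kills $\delta_n$ after passing further along $\mathbf{C}'$ does not follow: the Cauchy condition controls the cones of the structure maps $A_n \to A_m$, not maps out of $\Sigma A_n$ into an unrelated target. What you would actually need is that the image of $\delta_n$ in $(\mocolim\mathbf{C}')(\Sigma A_n) = \colim_m \Hom_\cT(\Sigma A_n, C'_m)$ vanishes, and there is no reason for this in general, since $\Sigma A_n$ need not lie in any $B_t$ and compact support of $\mocolim\mathbf{C}'$ gives you nothing here. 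One has to organise the construction more carefully, for instance by building the $\gamma_n$ inductively together with the reindexing so that each new fill-in is chosen compatibly with the previous one, rather than producing arbitrary fill-ins and then trying to repair them afterwards. The same difficulty compounds in \textbf{TR4}, as you anticipate. The rest of your sketch is sound.
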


\begin{definition}[{\cite[Definition~1.2]{Neeman--2018--DetermineEachOther}}]
        Two metrics $\cM = \{B_t\}_{t \geq 0}$ and $\cM' = \{B'_t\}_{t \geq 0}$ on $\cT$ are {\em equivalent} if for every integer $t \geq 0$ there exists an integer $t' \geq 0$ such that $B'_{t'} \subseteq B_{t}$ and for every integer $s' \geq 0$ there exists an integer $s \geq 0$ such that $B_{s} \subseteq B'_{s'}$.
\end{definition}

The definitions of a Cauchy sequence and a compactly supported sequence with respect to a good metric $\cM$ only rely on the equivalence class of $\cM$. In particular, the completion of $\cT$ with respect to $\cM$ only depends on the equivalence class of $\cM$.

We introduce two new properties which mimic the properties of being a Cauchy sequence and a compactly supported sequence respectively, and which will be useful in concrete computations.

\begin{definition}
    Let $\mathbf{E} = (E_n,f_{n})_{n>0}$ be a sequence in $\cT$, and let $\cS$ be a subcategory of $\cT$.
    \begin{enumerate}
        \item The sequence $\mathbf{E}$ {\em stabilises at $\cS$} if, for all integers $m'>m \gg 0$, the cone of $f_{m,m'}$ lies in $\cS$.
        \item The sequence $\mathbf{E}$ is {\em compactly supported at $\cS$} if $\mocolim\mathbf{E}(\cS)=0$.
    \end{enumerate}
\end{definition}

\begin{remark}
    Let $\cM = \{B_t\}_{t\geq0}$ be a good metric on $\cT$. Then a sequence $\mathbf{E}$ is a Cauchy sequence if and only if for all $t\geq0$ it stabilises at $B_{t}$. Furthermore, a sequence $\mathbf{E}$ is compactly supported at $t\geq0$ if and only if $\mathbf{E}$ is compactly supported at $B_t$.
\end{remark}

\subsection{Subsequences}

For a sequence $\mathbf{E} = (E_n,f_n)_{n>0}$ in $\cT$ and a strictly increasing sequence $\mathbf{I}$ in $\bN$ we denote by $\mathbf{E}_\mathbf{I}$ the subsequence 
$(E_{m},f_{m,m'})_{m<m'\in \mathbf{I}}$ of $\mathbf{E}$ indexed by $\mathbf{I}$. 

\begin{remark} \label{R:subseq_are_cchy_and_cptly_supp}
	Let $\cS$ be a subcategory of $\cT$ and let $\mathbf{E}$ be a sequence in $\cT$. Then for each strictly increasing sequence $\mathbf{I}$ in $\bN$ the following hold:
    \begin{enumerate}
        \item $\mocolim \mathbf{E} \cong \mocolim \mathbf{E_I}$;
        \item if $\mathbf{E}$ stabilises at $\cS$, then so does $\mathbf{E_I}$;
        \item if $\mathbf{E}$ is compactly supported at $\cS$, then so is $\mathbf{E_I}$.
    \end{enumerate}
\end{remark}

\begin{remark} \label{R:subseq_induced_by_morphisms_being_zero_in_colimit}
    Let $\mathbf{E}=(E_n,f_n)_{n>0}$ be a sequence in $\cT$ and let $Z$ be an object in $\cT$ such that $\mocolim\mathbf{E}(Z) = 0$. Fix an integer $m>0$. Then for each morphism $\varphi \colon Z \rightarrow E_m$, there exists an integer $m_\varphi > m$ such that the composition
    \begin{center}
        \begin{tikzcd}
            Z \arrow[r, "\varphi"] & E_m \arrow[r, "f_{m,m_\varphi}"] & E_{m_\varphi}
        \end{tikzcd}
    \end{center}
    is zero. In particular, we have $\mocolim \mathbf{E} = 0$ if and only if $\mathbf{E}$ is a null sequence: Clearly, if $\mathbf{E}$ is a null sequence then its module colimit vanishes. Conversely, if $\mocolim \mathbf{E} = 0$ then in particular for all $m>0$ we have $\mocolim \mathbf{E}(E_m) = 0$, and hence the identity map in $\Hom_\cT(E_m,E_m)$ does not survive in the colimit $\colim \Hom_\cT(E_m,\mathbf{E}) = \mocolim \mathbf{E}(E_m) = 0$. Therefore, $\mathbf{E}$ must be a null-sequence.
    
    Moreover, if the group $\Hom_\cT(Z,E_m)$ is finitely generated as an abelian group, then there exists an integer $m_Z > m$ such that for all morphisms $\varphi' \colon Z \rightarrow E_m$ the composition
    \begin{center}
        \begin{tikzcd}
            Z \arrow[r, "\varphi'"] & E_m \arrow[r, "f_{m,m_Z}"] & E_{m_Z}
        \end{tikzcd}
    \end{center}
    is zero. If $\Hom_\cT(Z,E_m)$ is not finitely generated, then such an $m_Z$ may not exist. For example consider in $\cD(\Mod\bZ)$ the sequence
    \begin{center}
        \begin{tikzcd}
            \mathbf{E} = \bZ[\frac{1}{p}]/\bZ \arrow[r, "- \cdot p"] & \bZ[\frac{1}{p}]/\bZ \arrow[r, "- \cdot p"] & \bZ[\frac{1}{p}]/\bZ \arrow[r, "- \cdot p"] & \bZ[\frac{1}{p}]/\bZ \arrow[r, "- \cdot p"] & \cdots.
        \end{tikzcd}
    \end{center}
     We have
    \begin{equation*}
        \mocolim\mathbf{E}(\bZ) = \colim \Hom_{\cD(\Mod\bZ)}(\bZ,E) \cong \colim\mathbf{E} = 0,
    \end{equation*}
    but for all $a \in \bN$ the morphism $- \cdot \frac{1}{p^{a+1}} \colon \bZ \rightarrow \bZ[\frac{1}{p}]/\bZ$ is non-trivial and precomposes non-trivially with $f_{m,m+a} = - \cdot p^a$, and thus, no such $m_\bZ$ exists.
\end{remark}

\begin{definition}
    Let $\mathbf{E}=(E_n,f_n)_{n>0}$ be a sequence in $\cT$. For each integer $m>0$ let $\tilde{E}_m$ be a summand of $E_m$ with canonical inclusion and projection morphisms $\iota_m \colon \tilde{E}_m \rightarrow E_m$ and $\pi_m \colon E_m \rightarrow \tilde{E}_m$ respectively. We call the sequence $\mathbf{\tilde{E}} = (\tilde{E}_n,\pi_{n+1} \circ f_n \circ \iota_{n})_{n>0}$ a {\em component of $\mathbf{E}$}. 
\end{definition}

\begin{definition}
    Let $\cS$ be a subcategory of $\cT$. A sequence $\mathbf{E} = (E_n,f_n)_{n>0}$ is {\em $\cS$-trivial} if for all integers $m>0$ the entry $E_m$ has no non-trivial direct summands that lie in $\cS$. 
\end{definition}

\begin{lemma} \label{L:cptly_supp_means_no_summands}
   Assume $\cT$ is Krull-Schmidt and let $\cS$ and $\cR$ be subcategories of $\cT$ such that $\cR$ is closed under extensions and direct summands. Let $\mathbf{E}$ be a sequence in $\cT$ that is compactly supported at $\cS$. Then there exists a subsequence $\mathbf{E_I}$ of $\mathbf{E}$ and an $\cS$-trivial component $\mathbf{\tilde{E}_I}$ of $\mathbf{E_I}$ such that
    \[
        \mocolim \mathbf{E} \cong \mocolim \mathbf{\tilde{E}_I},
    \]
    and $\mathbf{\tilde{E}_I}$ is compactly supported at $\cS$. Moreover, if $\mathbf{E}$ stabilises at $\cR$, then so does $\mathbf{\tilde{E}_I}$.
\end{lemma}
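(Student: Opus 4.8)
The plan is to strip off, term by term, the part lying in $\cS$ from a suitable subsequence of $\mathbf{E}$, the subsequence being chosen so that compact support makes these parts get annihilated by the transition maps; the surviving summands then constitute the component $\mathbf{\tilde{E}_I}$.

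First I would make a harmless reduction. Since objects of $\Mod\cT$ are additive functors, $\mathbf{E}$ is compactly supported at $\cS$ if and only if it is compactly supported at $\add\cS$, and enlarging $\cS$ to $\add\cS$ only strengthens the conclusion that a sequence is $\cS$-trivial while leaving the hypotheses on $\cR$ untouched; so I may assume $\cS$ is closed under direct summands. Using that $\cT$ is Krull--Schmidt, for each $m$ write $E_m = \tilde{E}_m \oplus E'_m$, with inclusions $\iota_m, \iota'_m$ and projections $\pi_m, \pi'_m$, where $E'_m$ is the sum of the indecomposable direct summands of $E_m$ lying in $\cS$ and $\tilde{E}_m$ the sum of the remaining ones; closure of $\cS$ under summands forces $\tilde{E}_m$ to have no non-trivial direct summand in $\cS$. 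Each indecomposable summand $X$ of $E'_m$ lies in $\cS$, so $\mocolim\mathbf{E}(X) = 0$; by Remark~\ref{R:subseq_induced_by_morphisms_being_zero_in_colimit} the inclusion $X \hookrightarrow E_m$ is killed by some $f_{m,m_X}$, and as there are only finitely many such $X$, there is $m^+ > m$ with $f_{m,m^+} \circ \iota'_m = 0$. Defining $i_1 = 1$ and $i_{k+1} = (i_k)^+$ gives a subsequence $\mathbf{E_I}$ whose transition maps $g_k = f_{i_k,i_{k+1}}$ satisfy $g_k \circ \iota'_{i_k} = 0$, and I let $\mathbf{\tilde{E}_I}$ be the component of $\mathbf{E_I}$ determined by the summands $\tilde{E}_{i_k}$; by the above it is $\cS$-trivial.

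Next I would check that passing to this component leaves the module colimit unchanged. The complementary component $\mathbf{E'_I}$, built from the $E'_{i_k}$, is a null sequence, because its transition maps $\pi'_{i_{k+1}} \circ g_k \circ \iota'_{i_k}$ vanish once $g_k \circ \iota'_{i_k} = 0$; hence $\mocolim\mathbf{E'_I} = 0$. The same relations $g_k \circ \iota'_{i_k} = 0$ make the Yoneda images of the inclusions $\iota'_{i_k}$ and the projections $\pi_{i_k}$ into morphisms of $\bN$-shaped diagrams, giving a short exact sequence $\yo(\mathbf{E'_I}) \to \yo(\mathbf{E_I}) \to \yo(\mathbf{\tilde{E}_I})$ of such diagrams which is split exact in each degree. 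Applying $\colim$, which is exact here since filtered colimits are exact in $\Ab$ and colimits in $\Mod\cT$ are computed objectwise, yields $\mocolim\mathbf{E_I} \cong \mocolim\mathbf{\tilde{E}_I}$; together with $\mocolim\mathbf{E} \cong \mocolim\mathbf{E_I}$ from Remark~\ref{R:subseq_are_cchy_and_cptly_supp} this gives $\mocolim\mathbf{E} \cong \mocolim\mathbf{\tilde{E}_I}$, and in particular $\mathbf{\tilde{E}_I}$ remains compactly supported at $\cS$.

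The step I expect to be the real obstacle is transporting stabilisation at $\cR$. Suppose $\mathbf{E}$ stabilises at $\cR$; then so does $\mathbf{E_I}$ by Remark~\ref{R:subseq_are_cchy_and_cptly_supp}, i.e.\ $\cone(g_{k,k'}) \in \cR$ for all $k' > k \gg 0$, where $g_{k,k'} = f_{i_k,i_{k'}}$. Since $g_{k,k'} \circ \iota'_{i_k} = 0$, the morphism $g_{k,k'}$ is the direct sum of $h_{k,k'} := g_{k,k'} \circ \iota_{i_k} \colon \tilde{E}_{i_k} \to E_{i_{k'}}$ and the zero map $E'_{i_k} \to 0$, so $\cone(g_{k,k'}) \cong \cone(h_{k,k'}) \oplus \Sigma E'_{i_k}$; by closure of $\cR$ under summands both lie in $\cR$, in particular $\Sigma E'_{i_{k'}} \in \cR$. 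Because every $g_j$ kills $E'_{i_j}$, the transition map $\tilde{g}_{k,k'}$ of $\mathbf{\tilde{E}_I}$ equals $\pi_{i_{k'}} \circ h_{k,k'}$, and the octahedral axiom applied to this factorisation produces a triangle $\cone(h_{k,k'}) \to \cone(\tilde{g}_{k,k'}) \to \cone(\pi_{i_{k'}}) \to \Sigma\cone(h_{k,k'})$ in which $\cone(\pi_{i_{k'}}) \cong \Sigma E'_{i_{k'}}$, since $\pi_{i_{k'}}$ is a split epimorphism. Thus $\cone(\tilde{g}_{k,k'})$ is an extension of two objects of $\cR$, hence lies in $\cR$ by extension-closure, so $\mathbf{\tilde{E}_I}$ stabilises at $\cR$, as required. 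The delicate points here are the bookkeeping that ``components of compositions are compositions of components'' --- which holds precisely because the chosen transition maps annihilate the $E'$-summands --- and the observation that $\Sigma E'_{i_k}$ is forced into $\cR$ for free, so that only the stated closure properties of $\cR$ are used.
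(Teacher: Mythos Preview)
Your proof is correct and follows essentially the same strategy as the paper's: split off the maximal $\cS$-summand from each term, pass to a subsequence along which this summand is annihilated by the transition maps (via Remark~\ref{R:subseq_induced_by_morphisms_being_zero_in_colimit}), and then control the cone of the component maps using an octahedral-type argument. Your treatment differs only in packaging: you obtain the colimit isomorphism from a termwise split short exact sequence of diagrams and exactness of filtered colimits, whereas the paper invokes that $\yo$ is homological on the split triangles; and for stabilisation you factor $\tilde{g}_{k,k'} = \pi_{i_{k'}} \circ h_{k,k'}$ and apply the octahedral axiom directly, whereas the paper extracts the same information from the $3\times 3$~Lemma applied to the morphism of split triangles. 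Your explicit reduction to $\add\cS$ is a nice touch that makes the phrase ``largest summand in $\cS$'' unambiguous.
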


\begin{proof}
    Let $\mathbf{E} = (E_n,f_n)_{n>0}$. For each $m > 0$, let $\tilde{E}_m^c$ be the largest summand of $E_m$ that lies in $\cS$, and let $E_m \cong \tilde{E}_m^c \oplus \tilde{E}_m$ be the corresponding decomposition of $E_m$. For each $m>0$, let $\pi_m \colon E_m \rightarrow \tilde{E}_m$ be the canonical projection morphism and $\kappa_m \colon \tilde{E}^c_m \rightarrow E_m$ be the canonical inclusion morphism. Then, as $\mathbf{E}$ is compactly supported at $\cS$, by Remark~\ref{R:subseq_induced_by_morphisms_being_zero_in_colimit}, there exists a subsequence $\mathbf{E}_\mathbf{I}$ of $\mathbf{E}$ such that for all $m,m' \in \mathbf{I}$ with $m<m'$ we have $f_{m,m'}\circ \kappa_m (\tilde{E}_m^c)=0$. Consider the components $\mathbf{\tilde{E}_I} = (\tilde{E}_n,\tilde{f}_n)_{n \in \mathbf{I}}$ and $\mathbf{\tilde{E}^c_I} = (\tilde{E}_n^c,\tilde{f}^c_n)_{n \in \mathbf{I}}$ of $\mathbf{E}_\mathbf{I}$. Then for all $m,m' \in \mathbf{I}$ with $m<m'$ since $f_{m,m'} \circ \kappa_m(\tilde{E}_m^c) = 0$ we have $\tilde{f}^c_{m,m'} = 0$.
    Consequently, for all $m \in \mathbf{I}$ with successor $m' \in \mathbf{I}$ we obtain a morphism of split triangles
	\begin{center}
		\begin{tikzcd}
			\tilde{E}_m^c \arrow[r,"\kappa_{m}"] \arrow[d,"0"] & E_{m} \arrow[d, "f_{m,m'}"] \arrow[r,"\pi_{m}"] & \tilde{E}_{m} \arrow[r, "0"] \arrow[d,"\tilde{f}_{m,m'}"] & \Sigma \tilde{E}_m^c \arrow[d,"0"]
			\\
			\tilde{E}_{m'}^c \arrow[r,"\kappa_{m'}"] & E_{m'} \arrow[r,"\pi_{m'}"] & \tilde{E}_{m'} \arrow[r, "0"] & \Sigma \tilde{E}_{m'}^c.
		\end{tikzcd}
	\end{center}
    As $\mathbf{\tilde{E}^c}_\mathbf{I}$ is a null sequence and $\yo$ is homological we obtain \[
        \mocolim \mathbf{E}\cong \mocolim \mathbf{E}_\mathbf{I} \cong \mocolim \mathbf{\tilde{E}}_\mathbf{I}.
    \]
    Thus, as $\mathbf{E}$ is compactly supported at $\cS$, so is $\mathbf{\tilde{E}_I}$.
    Assume now that $\mathbf{E}$ stabilises at $\cR$. Then for all integers $m'>m \gg 0$ we have $\cone f_{m,m'} \in \cR$. The $3 \times 3$ Lemma (cf.\ for example \cite[Lemma~2.6]{May-triangulated}) yields a diagram
    \begin{center}
		\begin{tikzcd}
			\tilde{E}_m^c \arrow[r,"\kappa_{m}"] \arrow[d,"0"] & E_{m} \arrow[d, "f_{m,m'}"] \arrow[r,"\pi_{m}"] & \tilde{E}_{m} \arrow[r, "0"] \arrow[d,"\tilde{f}_{m,m'}"] & \Sigma \tilde{E}_m^c \arrow[d,"0"]
			\\
			\tilde{E}_{m'}^c \arrow[r,"\kappa_{m'}"] \arrow[d] & E_{m'} \arrow[r,"\pi_{m'}"] \arrow[d] & \tilde{E}_{m'} \arrow[r, "0"] \arrow[d] & \Sigma \tilde{E}_{m'}^c \arrow[d]
            \\
            \tilde{E}_{m'}^c \oplus \Sigma \tilde{E}_m^c \arrow[r] \arrow[d] & \cone{f_{m,m'}} \arrow[r] \arrow[d] & \cone \tilde{f}_{m,m'} \arrow[r] \arrow[d] & \Sigma \tilde{E}_{m'}^c \oplus \Sigma^2 \tilde{E}_m^c \arrow[d]
            \\
            \Sigma \tilde{E}_m^c \arrow[r,"\Sigma \kappa_{m}"] & \Sigma E_{m} \arrow[r,"\Sigma \pi_{m}"] & \Sigma \tilde{E}_{m} \arrow[r, "0"] & \Sigma^2 \tilde{E}_m^c,
		\end{tikzcd}
	\end{center}
    which is commutative everywhere except for the bottom right square which is anticommutative, and whose rows and columns are distinguished triangles in $\cT$. By the bottom right square and the third row there exists an object $D_{m,m'} \in \cT$ such that $\cone f_{m,m'} \cong \Sigma \tilde{E}_m^c \oplus D_{m,m'}$ and there exists a triangle
    \begin{center}
		\begin{tikzcd}
            \tilde{E}_{m'}^c \arrow[r] & D_{m,m'} \arrow[r] & \cone \tilde{f}_{m,m'} \arrow[r] & \Sigma \tilde{E}_{m'}^c.
		\end{tikzcd}
	\end{center}
    Therefore, as $\cone{f_{m,m'}} \in \cR$ and $\cR$ is closed under extensions and direct summands, we also have $\cone{\tilde{f}_{m,m'}} \in \cR$. So, $\mathbf{\tilde{E}_I}$ stabilises at $\cR$.
\end{proof}

\section{Metric completions and t-structures} \label{S:metric completions} 
In this section we again fix a triangulated category $\cT$ with shift functor $\Sigma \colon \cT \rightarrow \cT$.

\subsection{Metrics from t-structures}

T-structures provide an important source for metrics of $\cT$.

\begin{definition}[\cite{BBD}]
    A t-structure on $\cT$ is a pair of full subcategories $(\aisle,\coaisle)$ of $\cT$, both closed under direct summands, such that
    \begin{enumerate}
        \item{for all $X \in \aisle$ and $Y \in \coaisle$ we have $\Hom_\cT(X,Y) = 0$;}
        \item{for all $T \in \cT$ there exists a distinguished triangle
        \begin{center}
		\begin{tikzcd}
            X \arrow[r,"\alpha"] & T \arrow[r,"\beta"] & Y \arrow[r,"\gamma"] & \Sigma X,
		\end{tikzcd}
	\end{center}
        with $X \in \aisle$ and $Y \in \coaisle$, called an {\em approximation triangle of $T$ with respect to $(\aisle,\coaisle)$};}
        \item there is an inclusion $\Sigma \aisle \subseteq \aisle$.
    \end{enumerate}
    Given a t-structure $(\aisle, \coaisle)$ the subcategory $\aisle$ is called its {\em aisle} and $\cY$ its {\em coaisle}.
\end{definition}

The approximation triangle of an object $T \in \cT$ with respect to a fixed t-structure $(\aisle, \coaisle)$ is functorial and unique up to isomorphism of triangles. More specifically, we have essentially unique truncation functors $\tau_\aisle \colon \cT \to \aisle$ and $\tau^\coaisle \colon \cT \to \coaisle$ so that for any morphism $f \colon T \to T'$ there is a unique morphism $\Delta f$ of approximation triangles
\begin{center}
\begin{equation}\label{E:map_of_approximation_triangles}
    \begin{tikzcd}
 	\Delta T \ar[d,"\Delta f"]& = &	\tau_\aisle T \arrow[r,"\alpha"] \ar[d,"\tau_\aisle f"] & T \arrow[r,"\beta"] \ar[d,"f"]  & \tau^\coaisle T	\arrow[r,"\gamma"] \ar[d,"\tau^\coaisle f"]  & \tau_\aisle \Sigma T \ar[d]
           \\
            \Delta T' & = &         \tau_\aisle T' \arrow[r,"\alpha'"] & T'\arrow[r,"\beta'"] & \tau^\coaisle T' \arrow[r,"\gamma'"] & \tau_\aisle \Sigma T',
		\end{tikzcd}
  \end{equation}
 \end{center}
where the top row $\Delta T$ is the approximation triangle of $T$, and the bottom row $\Delta T'$ is the approximation triangle of $T'$.

\begin{definition}
    Let $(\aisle, \coaisle)$ be a t-structure on $\cT$. The {\em aisle metric $\cM_\aisle$} is defined to be the good metric $\cM_\aisle = \{(\cM_\aisle)_t\}_{t \geq 0}$ such that $(\cM_\aisle)_0 = \cT$ and for all integers $t >0$ we have
    \[
        (\cM_\aisle)_t = \Sigma^t \aisle.
    \]
    Symmetrically, the {\em coaisle metric $\cM_\coaisle$} is defined to be the good metric $\cM_\coaisle = \{(\cM_\coaisle)_t\}_{t \geq 0}$ such that $(\cM_\coaisle)_0 = \cT$ and for all integers $t >0$ we have
    \[
        (\cM_\coaisle)_t = \Sigma^{-t} \coaisle.
    \]
\end{definition}

\begin{definition}[{\cite[Definition 0.10]{neeman2018Brown}}]
    Two t-structures $(\aisle,\coaisle)$ and $(\aisle',\coaisle')$ on $\cT$ are called {\em equivalent}, if there exists an integer $t \geq 0$ such that $\Sigma^t \aisle \subseteq \aisle' \subseteq \Sigma^{-t} \aisle$.
\end{definition}

The notion of equivalent t-structures defines an equivalence relation on the t-structures in $\cT$. It follows from the definition that equivalent t-structures give rise to equivalent metrics.

\subsection{Approximations of sequences}

Under the presence of a t-structure $(\aisle, \coaisle)$, any sequence $\mathbf{E}$ in $\cT$ gives rise to two approximating sequences, one in $\aisle$ and one in $\coaisle$ which inherit certain properties of $\mathbf{E}$. Throughout the rest of Section~\ref{S:metric completions}, we fix a triangulated category $\cT$ with a t-structure $(\aisle,\coaisle)$ and truncation functors $\tau_\aisle$ and $\tau^\coaisle$.

\begin{definition}
    Let $\mathbf{E} = (E_n,f_n)_{n>0}$ be a sequence in $\cT$. We call the sequence 
    \[
        \tau_\aisle \mathbf{E} = (\tau_\aisle E_n,\tau_\aisle f_n)_{n>0}
    \]
    the {\em $\aisle$-approximation of $\mathbf{E}$} and the sequence  
    \[
        \tau^\coaisle \mathbf{E} = (\tau^\coaisle E_n,\tau^\coaisle f_n)_{n>0}
    \]
    the {\em $\coaisle$-approximation of $\mathbf{E}$.} We call the sequence $\Delta \mathbf{E} = (\Delta E_n, \Delta f_n)_{n>0}$, where $\Delta E_n$ is the approximation triangle of $E_n$ and $\Delta f_n$ is the map of triangles from (\ref{E:map_of_approximation_triangles}), the {\em sequence of approximation triangles of $\mathbf{E}$.}
\end{definition}

A triangulated category $\cT$ is {\em $d$-Calabi-Yau} for an integer $d > 0$ if it is $\bK$-linear and $\Sigma^d$ is a Serre functor of $\cT$.

\begin{lemma} \label{L:approximations_S_trivial}
    Let $\mathbf{E} = (E_n,f_n)_{n>0}$ be a sequence in $\cT$ with $\aisle$-approximation $\mathbf{X} = (X_n,h_n)_{n>0}$ and $\coaisle$-approximation $\mathbf{Y} = (Y_n,g_n)_{n>0}$. 
    If $\cT$ is $d$-Calabi-Yau for some integer $d > 0$ then the following hold:
    \begin{enumerate}
        \item 
        If $\mathbf{E}$ is $(\Sigma^{d-1}\aisle)$-trivial, then so is $\mathbf{X}$.
        \item 
        If $\mathbf{E}$ is $(\Sigma^{-(d-1)}\coaisle)$-trivial, then so is $\mathbf{Y}$.
    \end{enumerate}
\end{lemma}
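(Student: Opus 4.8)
The two statements are dual, so I would prove (i) and remark that (ii) follows by applying (i) to $\cT^{\op}$ with the dual t-structure (noting that $\Sigma^d$ being a Serre functor is a self-dual condition, and $\aisle$-approximations in $\cT$ become $\coaisle$-approximations in $\cT^{\op}$). So focus on (i): assume $\mathbf{E}$ is $(\Sigma^{d-1}\aisle)$-trivial and show $\mathbf{X} = \tau_\aisle\mathbf{E}$ is $(\Sigma^{d-1}\aisle)$-trivial, i.e.\ that no $X_n = \tau_\aisle E_n$ has a nonzero direct summand in $\Sigma^{d-1}\aisle$.

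\textbf{Key steps.} Fix $n$ and suppose, for contradiction, that $X_n$ has an indecomposable summand $S \in \Sigma^{d-1}\aisle$ (using that $\cT$ is Krull--Schmidt, which holds for $d$-Calabi--Yau $\cT$ under the standing assumptions — or at least that we may extract such an $S$). The idea is to use the Calabi--Yau duality to convert the summand condition into a $\Hom$ computation and then transport it along the approximation triangle. Concretely: $S$ being a summand of $X_n$ means the composite $S \hookrightarrow X_n \twoheadrightarrow S$ is the identity, equivalently $\id_S$ factors through $X_n$, so $\Hom_\cT(X_n, S) \to \Hom_\cT(S,S)$ (or the covariant version) hits $\id_S$. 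Now apply the Serre duality isomorphism $\Hom_\cT(X_n, S) \cong D\Hom_\cT(S, \Sigma^d X_n)$. Since $S \in \Sigma^{d-1}\aisle$ we have $\Sigma^{-d}S \in \Sigma^{-1}\aisle \subseteq \aisle$ is still in the aisle — wait, more usefully: $S \in \Sigma^{d-1}\aisle$ means $\Sigma^{-(d-1)}S \in \aisle$. I want to show a nonzero map $S \to E_n$ (or $E_n \to S$) exists, contradicting $(\Sigma^{d-1}\aisle)$-triviality of $\mathbf{E}$. The approximation triangle $X_n \xrightarrow{\alpha} E_n \xrightarrow{\beta} Y_n \to \Sigma X_n$ with $Y_n \in \coaisle$ gives, upon applying $\Hom_\cT(S, -)$, the exact sequence $\Hom_\cT(S, X_n) \xrightarrow{\alpha_*} \Hom_\cT(S, E_n) \to \Hom_\cT(S, Y_n)$. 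Since $S \in \Sigma^{d-1}\aisle \subseteq \aisle$ (as $d-1 \geq 0$ and $\Sigma\aisle \subseteq \aisle$) and $Y_n \in \coaisle$, the orthogonality axiom gives $\Hom_\cT(S, Y_n) = 0$, so $\alpha_*$ is surjective — but I need it to detect the summand. Dually, applying $\Hom_\cT(-, S)$ to the triangle: $\Hom_\cT(\Sigma^{-1}Y_n, S) \to \Hom_\cT(X_n, S) \to \Hom_\cT(E_n, S) \xrightarrow{} \Hom_\cT(Y_n, S)$; here $\Sigma^{-1}Y_n$ need not be in $\coaisle$, so this direction is less clean, which tells me the right move is to produce a nonzero map $S \to E_n$: the split surjection $X_n \to S$ precomposed appropriately, combined with surjectivity of $\alpha_* \colon \Hom_\cT(S,X_n) \to \Hom_\cT(S,E_n)$ going the wrong way. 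Let me instead argue: the section $\sigma \colon S \to X_n$ composed with $\alpha$ gives $\alpha\sigma \colon S \to E_n$; if $\alpha\sigma = 0$ then $\sigma$ factors through $\Sigma^{-1}Y_n \to X_n$, giving $\sigma \in \operatorname{im}(\Hom_\cT(S,\Sigma^{-1}Y_n) \to \Hom_\cT(S,X_n))$. Now $\Sigma^{-1}Y_n \in \Sigma^{-1}\coaisle$, and $\Hom_\cT(S, \Sigma^{-1}Y_n) \cong \Hom_\cT(\Sigma S, Y_n)$; since $\Sigma S \in \Sigma^d\aisle$... hmm, $\Sigma S \in \aisle$ still, so $\Hom_\cT(\Sigma S, Y_n) = 0$. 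Hence $\sigma = 0$, contradicting that $\sigma$ is a split monomorphism onto a nonzero summand. Therefore $\alpha\sigma \neq 0$, giving a nonzero morphism $S \to E_n$ with $S \in \Sigma^{d-1}\aisle$; then $\operatorname{add}(S) \ni S' := $ (the indecomposable $S$, relabeled to sit inside $\Sigma^{d-1}\aisle$ which is closed under summands) is a nonzero summand of... no — a nonzero \emph{map}, not a summand. I need a summand of $E_n$.

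\textbf{Resolving this — the real argument.} The above only produces a nonzero map, so the genuinely correct route uses Serre duality to get a summand back. Since $\cT$ is $d$-CY and Krull--Schmidt with $S$ indecomposable, $S$ is a summand of $E_n$ iff $\Hom_\cT(S, E_n)$ and $\Hom_\cT(E_n, S)$ pair to hit $\id_S$ under composition; and by Serre duality $\Hom_\cT(E_n, S) \cong D\Hom_\cT(S, \Sigma^d E_n)$. The clean statement is: for an indecomposable $S$ in a Hom-finite Krull--Schmidt $d$-CY category, $S \mid X_n$ (summand of $\tau_\aisle E_n$) forces, via the triangle and the vanishing $\Hom_\cT(S,Y_n) = \Hom_\cT(\Sigma^{d-1}\aisle\text{-object}, \coaisle\text{-object}) = 0$ together with the dual vanishing $\Hom_\cT(\Sigma^{-1}Y_n, S)$ via $D\Hom_\cT(S, Y_n) $ shifted appropriately, that $\alpha_* \colon \Hom_\cT(S, X_n) \xrightarrow{\sim} \Hom_\cT(S, E_n)$ \emph{and} $\alpha^* \colon \Hom_\cT(X_n, S) \xleftarrow{}\ \Hom_\cT(E_n,S)$ behave so that the idempotent on $X_n$ cutting out $S$ lifts to an idempotent on $E_n$ cutting out a copy of $S$. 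This is exactly where the Calabi--Yau dimension $d$ and the shift by $d-1$ are calibrated: one needs \emph{both} $\Hom_\cT(S, Y_n) = 0$ (immediate from orthogonality since $S \in \aisle$) \emph{and} $\Hom_\cT(\Sigma^{-1}Y_n, S) = 0$, and the latter is $D\Hom_\cT(S, \Sigma^{d-1}Y_n)$, which vanishes because $S \in \Sigma^{d-1}\aisle$ — here the hypothesis that $\mathbf{E}$ (hence $E_n$, hence $S$) lies in $\Sigma^{d-1}\aisle$ rather than just $\aisle$ is essential — giving $\Hom_\cT(S, \Sigma^{d-1}Y_n) = \Hom_\cT(\Sigma^{-(d-1)}S, Y_n) = 0$ by orthogonality, since $\Sigma^{-(d-1)}S \in \aisle$. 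With both vanishings, applying $\Hom_\cT(X_n \oplus S, -)$ and $\Hom_\cT(-, X_n \oplus S)$ to the approximation triangle shows $\alpha$ induces isomorphisms $\Hom_\cT(S, X_n) \cong \Hom_\cT(S, E_n)$ and $\Hom_\cT(E_n, S)\cong\Hom_\cT(X_n,S)$, hence the projector $e \in \operatorname{End}_\cT(X_n)$ onto $S$ pulls back to a projector on $E_n$ whose image is isomorphic to $S \in \Sigma^{d-1}\aisle$ — contradicting that $\mathbf{E}$ is $(\Sigma^{d-1}\aisle)$-trivial.

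\textbf{Main obstacle.} The crux is bookkeeping the two Hom-vanishing conditions and checking that together they force the splitting idempotent to descend along $\alpha$ — in particular verifying Hom-finiteness / Krull--Schmidt is in force (it is, for $d$-CY $\cT$ in the intended applications, or one argues with pseudocompactness) and getting the shifts in $\Hom_\cT(\Sigma^{-(d-1)}S, Y_n)$ exactly right so the hypothesis $(\Sigma^{d-1}\aisle)$-triviality is used, not merely $\aisle$-triviality. Everything else (applying long exact sequences to the approximation triangle, invoking orthogonality, invoking $\Sigma\aisle \subseteq \aisle$) is routine.
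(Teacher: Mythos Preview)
Your core argument is correct and matches the paper's: use the $d$-Calabi--Yau property to obtain $\Hom_\cT(\Sigma^{-1}Y_m, S) \cong D\Hom_\cT(S, \Sigma^{d-1}Y_m) = 0$ (since $\Sigma^{-(d-1)}S \in \aisle$), then transport the summand along the approximation triangle. The paper's execution is more direct than yours --- this single vanishing already lets the projection $\pi_S \colon X_m \to S$ factor through $\alpha$ as $\pi_S = \rho\alpha$, whence $\rho(\alpha\iota_S) = \id_S$ exhibits $S$ as a summand of $E_m$; the second vanishing, the idempotent-lifting language, and the appeal to Krull--Schmidt are all unnecessary.
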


\begin{proof}
    Fix an integer $m>0$, and let
    \begin{tikzcd}
 		X_m \arrow[r,"\alpha"]  & E_m \arrow[r,"\beta"]   & Y_m	\arrow[r,"\gamma"]   & \Sigma X_m
		\end{tikzcd}
    be the approximation triangle of $E_m$. Then $\alpha$ is a $\aisle$-cover and $\beta$ is a $\coaisle$-envelope. Let $T \in \cT$ be a summand of $X_m$ that lies in $\Sigma^{d-1}\aisle$ with canonical projection $\pi_T \colon X_m \rightarrow T$. Since $\cT$ is $d$-Calabi-Yau, we have 
    \begin{equation*}
        \Hom_\cT(\Sigma^{-1}Y_m,T) \cong D\Hom_\cT(T, \Sigma^{d-1}Y_m) =0.
    \end{equation*}
    Thus the morphism $\pi_T$ factors through $\alpha$, and so, $T$ is also a summand of $E_m$. This shows (i), and (ii) follows dually.
\end{proof}

\begin{lemma} \label{L:cptly_supp_prop_of_X_and_Y}
    Let $\cS$ be a subcategory of $\cT$. Let $\mathbf{E}$ be a sequence in $\cT$ that is compactly supported at $\cS$ with $\aisle$-approximation $\mathbf{X}$ and $\coaisle$-approximation $\mathbf{Y}$. Then the following hold:
    \begin{enumerate}
        \item The sequence $\mathbf{X}$ is compactly supported at $\Sigma^{-1}\aisle \cap \cS$. Moreover, if $\aisle \subseteq \cS$, then $\mathbf{X}$ is a null sequence and there is an isomorphism
        \begin{equation*}
            \mocolim\mathbf{E} \cong \mocolim\mathbf{Y}.
        \end{equation*}
        \item If $\cT$ is $d$-Calabi-Yau for some $d>0$, then the sequence $\mathbf{Y}$ is compactly supported at $\cS \cap \Sigma^{-(d-1)}\coaisle$.
    \end{enumerate}
\end{lemma}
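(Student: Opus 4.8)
The plan is to read everything off the sequence of approximation triangles $\Delta\mathbf{E}$, whose $n$-th term is the triangle $X_n \to E_n \to Y_n \to \Sigma X_n$ and whose connecting maps are the morphisms $\Delta f_n$ of \eqref{E:map_of_approximation_triangles}. Fix a test object $Z \in \cT$. Applying $\Hom_\cT(Z,-)$ to each of these triangles gives a long exact sequence of abelian groups, and as filtered colimits are exact in $\Ab$, taking the colimit over $n$ produces a long exact sequence
\[
    \cdots \to \mocolim(\Sigma^{-1}\mathbf{Y})(Z) \to \mocolim\mathbf{X}(Z) \to \mocolim\mathbf{E}(Z) \to \mocolim\mathbf{Y}(Z) \to \mocolim(\Sigma\mathbf{X})(Z) \to \cdots
\]
natural in $Z$. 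Every assertion then follows by evaluating this sequence at suitable objects, using that $\mocolim\mathbf{E}(\cS)=0$.

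For (i), take $S \in \Sigma^{-1}\aisle \cap \cS$ and consider the segment $\mocolim(\Sigma^{-1}\mathbf{Y})(S) \to \mocolim\mathbf{X}(S) \to \mocolim\mathbf{E}(S)$. The right-hand term vanishes because $S \in \cS$, and for every $n$ the group $\Hom_\cT(S,\Sigma^{-1}Y_n) \cong \Hom_\cT(\Sigma S, Y_n)$ vanishes because $\Sigma S \in \aisle$ and $Y_n = \tau^\coaisle E_n \in \coaisle$; so the left-hand term vanishes too and exactness gives $\mocolim\mathbf{X}(S)=0$. This proves the first claim of (i). If moreover $\aisle \subseteq \cS$, then every entry $X_m$ of $\mathbf{X}$ lies in $\aisle$, and since $\Sigma\aisle \subseteq \aisle$ we have $X_m \in (\Sigma^{-1}\aisle) \cap \cS$; hence $\mocolim\mathbf{X}(X_m)=0$ for all $m$ by what we just proved, so $\id_{X_m}$ does not survive in $\colim\Hom_\cT(X_m,X_n)$ and Remark~\ref{R:subseq_induced_by_morphisms_being_zero_in_colimit} shows $\mathbf{X}$ is a null sequence. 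Consequently $\mocolim\mathbf{X}=0$, and also $\mocolim(\Sigma\mathbf{X})=0$ since $\Sigma$ is a functor. Feeding these into the long exact sequence makes $\mocolim\mathbf{E}(Z) \to \mocolim\mathbf{Y}(Z)$ an isomorphism for every $Z$, so the natural transformation $\mocolim\mathbf{E} \to \mocolim\mathbf{Y}$ induced by the maps $E_n \to Y_n$ is an isomorphism.

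For (ii), assume $\cT$ is $d$-Calabi--Yau for some $d>0$ and fix $S \in \cS \cap \Sigma^{-(d-1)}\coaisle$. Consider the segment $\mocolim\mathbf{E}(S) \to \mocolim\mathbf{Y}(S) \to \mocolim(\Sigma\mathbf{X})(S)$. The left-hand term is $0$ since $S \in \cS$. For the right-hand term, Serre duality yields, for each $n$,
\[
    \Hom_\cT(S,\Sigma X_n) \cong D\Hom_\cT(\Sigma X_n,\Sigma^{d}S) \cong D\Hom_\cT(X_n,\Sigma^{d-1}S),
\]
which is $0$ because $X_n \in \aisle$ and $\Sigma^{d-1}S \in \coaisle$; passing to the colimit, $\mocolim(\Sigma\mathbf{X})(S)=0$. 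Exactness now forces $\mocolim\mathbf{Y}(S)=0$, so $\mathbf{Y}$ is compactly supported at $\cS \cap \Sigma^{-(d-1)}\coaisle$.

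The individual computations are short, so the real work is organisational. The step I expect to be most delicate is the ``moreover'' clause of (i): there one has to upgrade the vanishing of $\mocolim\mathbf{X}$ on the subcategory $(\Sigma^{-1}\aisle)\cap\cS$ to the vanishing of the whole functor $\mocolim\mathbf{X}$, and this genuinely uses that each entry $X_m$ of $\mathbf{X}$ itself lies in $(\Sigma^{-1}\aisle)\cap\cS$, so that Remark~\ref{R:subseq_induced_by_morphisms_being_zero_in_colimit} can be invoked to make $\mathbf{X}$ a null sequence. Apart from that, the only thing to watch is the shift bookkeeping in the Calabi--Yau duality isomorphism of (ii).
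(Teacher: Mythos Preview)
Your proof is correct and follows essentially the same route as the paper's: both arguments read everything off the long exact sequence obtained by applying $\Hom_\cT(Z,-)$ to the approximation triangles and passing to the filtered colimit, then specialise $Z$ using t-structure orthogonality for (i) and Serre duality for (ii). Your treatment of the ``moreover'' clause in (i) is slightly more explicit than the paper's (you spell out that $\aisle\subseteq\Sigma^{-1}\aisle\cap\cS$ and invoke Remark~\ref{R:subseq_induced_by_morphisms_being_zero_in_colimit} to force $\mathbf{X}$ to be null, then deduce $\mocolim(\Sigma\mathbf{X})=0$), but the underlying argument is identical.
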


\begin{proof}
Since $\yo$ is homological, for all objects $T \in \cT$, we have an exact sequence
    \begin{equation}\label{E:les}
    	\mocolim\Sigma^{-1}\mathbf{Y}(T) \to \mocolim\mathbf{X} (T) \to \mocolim\mathbf{E}(T) \to \mocolim \mathbf{Y}(T) \to \mocolim \mathbf{\Sigma X}(T).
    \end{equation}
    \begin{enumerate}
        \item Suppose that $T$ is an object in $\Sigma^{-1}\aisle \cap \cS$. Since $(\aisle,\coaisle)$ is a t-structure and $T \in \Sigma^{-1}\aisle$, we have $\mocolim \Sigma^{-1}\mathbf{Y}(T) = 0$. Also, since $\mathbf{E}$ is compactly supported at $\cS$ and $T \in \cS$, we have $\mocolim \mathbf{E}(T) = 0$. Hence the claim follows by the exactness of (\ref{E:les}).

        Now suppose that $\aisle$ is a subcategory of $\cS$. Then $\aisle$ is also a subcategory of $\Sigma^{-1}\aisle \cap \cS$ and for all integers $m>0$ we have $\mocolim\mathbf{X}(X_m)=0$. Thus $\mathbf{X}$ is a null sequence, and the isomorphism $\mocolim \mathbf{E} \cong \mocolim \mathbf{Y}$ is induced by the exact sequence (\ref{E:les}).

        \item Suppose that $\cT$ is $d$-Calabi-Yau. Then, for each integer $m>0$, we have
        \begin{equation*}
            \Hom_\cT(T,\Sigma X_m) \cong D\Hom_\cT(\Sigma X_m, \Sigma^{d} T) \cong D\Hom_\cT(\Sigma^{-(d-1)}X_m, T).
        \end{equation*}
        Thus the claim follows similarly to (i) by the exactness of (\ref{E:les}).
    \end{enumerate}
\end{proof}

\begin{lemma} \label{L:cauchy_prop_of_X_and_Y}
    Let $\cR$ be an extension closed subcategory of $\cT$. Let $\mathbf{E} = (E_n,f_n)_{n>0}$ be a sequence in $\cT$ that stabilises at $\cR$ with $\aisle$-approximation $\mathbf{X} = (X_n,h_n)_{n>0}$ and $\coaisle$-approximation $\mathbf{Y} = (Y_n,g_n)_{n>0}$. Then the following hold:
    \begin{enumerate}
        \item The sequence $\mathbf{X}$ stabilises at $(\cR \cap \aisle)$ if at least one of the following holds:
        \begin{enumerate}
            \item there is an inclusion $\coaisle \subseteq \cR$;
            \item there is an inclusion $\cR \subseteq \Sigma\aisle$. Moreover, in this case, the sequence $\mathbf{Y}$ stabilises at $0$, and, for all integers $m'>m\gg0$, there is an isomorphism $\cone{f_{m,m'}} \cong \cone{h_{m,m'}}$.
        \end{enumerate}
        \item The sequence $\mathbf{Y}$ stabilises at $(\cR \cap \Sigma\coaisle)$ if at least one of the following holds:
        \begin{enumerate}
            \item there is an inclusion $\Sigma\aisle \subseteq \cR$;
            \item there is an inclusion $\cR \subseteq \coaisle$. Moreover, in this case, the sequence $\mathbf{X}$ stabilises at $0$ and, for all integers $m'>m\gg0$, there is an isomorphism $\cone{f_{m,m'}} \cong \cone{g_{m,m'}}$.
        \end{enumerate}
    \end{enumerate}
\end{lemma}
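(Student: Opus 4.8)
The plan is to reduce all four cases to a single application of the $3 \times 3$ lemma to the morphism of approximation triangles $\Delta f_{m,m'}$ from \eqref{E:map_of_approximation_triangles}, followed by routine bookkeeping of which of $\aisle$, $\Sigma\aisle$, $\coaisle$, $\Sigma\coaisle$, $\cR$ the relevant cones belong to.

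First I would fix $m' > m \gg 0$ large enough that $\cone(f_{m,m'}) \in \cR$, and abbreviate $h = h_{m,m'}$, $f = f_{m,m'}$, $g = g_{m,m'}$. Functoriality of $\tau_\aisle$ and $\tau^\coaisle$ shows that $(h,f,g,\Sigma h)$ is precisely the morphism of approximation triangles $\Delta f_{m,m'}$, so the $3 \times 3$ lemma (cf.\ the proof of Lemma~\ref{L:cptly_supp_means_no_summands}) yields a distinguished triangle
\[
    \cone(h) \to \cone(f) \to \cone(g) \to \Sigma\cone(h).
\]
Next I would record two facts valid for every $m < m'$, independently of any Cauchy hypothesis: from the triangle $X_m \xrightarrow{h} X_{m'} \to \cone(h) \to \Sigma X_m$ with $X_m, X_{m'} \in \aisle$, using $\Sigma\aisle \subseteq \aisle$ and that $\aisle$ is extension-closed, one gets $\cone(h) \in \aisle$; dually, from $Y_m \xrightarrow{g} Y_{m'} \to \cone(g) \to \Sigma Y_m$, using $\Sigma^{-1}\coaisle \subseteq \coaisle$ and that $\coaisle$ is extension-closed, one gets $\cone(g) \in \Sigma\coaisle$, equivalently $\Sigma^{-1}\cone(g) \in \coaisle$.

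Each case is then immediate by rotating the triangle above. For (i)(a), rotate to $\Sigma^{-1}\cone(g) \to \cone(h) \to \cone(f) \to \cone(g)$: since $\Sigma^{-1}\cone(g) \in \coaisle \subseteq \cR$ and $\cone(f) \in \cR$ and $\cR$ is extension-closed, $\cone(h) \in \cR$, so $\cone(h) \in \cR \cap \aisle$; case (ii)(a) is symmetric, using the rotation $\cone(f) \to \cone(g) \to \Sigma\cone(h) \to \Sigma\cone(f)$ and $\Sigma\cone(h) \in \Sigma\aisle \subseteq \cR$. For (i)(b), the hypothesis $\cR \subseteq \Sigma\aisle$ gives $\cone(f) \in \Sigma\aisle$; the rotation $\cone(f) \to \cone(g) \to \Sigma\cone(h) \to \Sigma\cone(f)$ forces $\cone(g) \in \Sigma\aisle$, and since also $\cone(g) \in \Sigma\coaisle$ and $\aisle \cap \coaisle = 0$ (any object therein has vanishing identity), we conclude $\cone(g) = 0$; thus $\mathbf{Y}$ stabilises at $0$, the triangle collapses to $\cone(h) \cong \cone(f)$, and $\cone(h_{m,m'}) \cong \cone(f_{m,m'}) \in \cR \cap \aisle$. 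Case (ii)(b) is symmetric, with $\cR \subseteq \coaisle$ forcing $\cone(h) \in \coaisle \cap \aisle = 0$, whence $\mathbf{X}$ stabilises at $0$ and $\cone(g_{m,m'}) \cong \cone(f_{m,m'})$.

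I expect no deep obstacle here; the only thing to be careful about is getting the direction of the rotations right so that extension-closedness of $\cR$ (in the (a) cases) or the identity $\aisle \cap \coaisle = 0$ (in the (b) cases) can be applied, and keeping track that everything apart from the single input ``$\cone(f_{m,m'}) \in \cR$ for $m' > m \gg 0$'' is an unconditional statement about the $\aisle$- and $\coaisle$-truncations of the sequence.
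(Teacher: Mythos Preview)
Your proof is correct and follows essentially the same approach as the paper: both apply the $3\times 3$ lemma to the morphism of approximation triangles to obtain the triangle $\cone(h)\to\cone(f)\to\cone(g)\to\Sigma\cone(h)$, record that $\cone(h)\in\aisle$ and $\cone(g)\in\Sigma\coaisle$ from closure properties of the aisle and coaisle, and then treat each case by extension-closure of $\cR$ (for the (a) cases) or the vanishing $\aisle\cap\coaisle=0$ (for the (b) cases). Your version is slightly more explicit about which rotation of the cone triangle is used in each case, but the substance is identical.
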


\begin{proof}
    Fix integers $m'>m \gg 0$ so that $\cone(f_{m,m'}) \in \cR$. Note that as $h_{m,m'} \colon X_m \rightarrow X_{m'}$ is a morphism between two objects in $\aisle$, and aisles are closed under extensions and positive shifts, its cone also lies in $\aisle$. Similarly, as $g_{m,m'} \colon Y_m \rightarrow Y_{m'}$ is a morphism between two objects in $\coaisle$, and coaisles are closed under extensions and negative shifts, its cone lies in $\Sigma \coaisle$. Moreover, the sequence of approximation triangles $\Delta \mathbf{E}$ induces a diagram 
    \begin{center}
    \begin{equation}\label{E:3x3again}
		\begin{tikzcd}
			X_m \arrow[r] \arrow[d,"h_{m,m'}"] & E_{m} \arrow[d, "f_{m,m'}"] \arrow[r] & Y_m \arrow[r] \arrow[d,"g_{m,m'}"] & \Sigma X_m \arrow[d,"\Sigma h_{m,m'}"]
			\\
			X_{m'} \arrow[r] \arrow[d] & E_{m'} \arrow[d] \arrow[r] & Y_{m'} \arrow[r] \arrow[d] & \Sigma X_{m'} \arrow[d]
            \\
            \cone(h_{m,m'}) \arrow[r] \arrow[d] & \cone(f_{m,m'}) \arrow[r] \arrow[d] & \cone (g_{m,m'})\arrow[r] \arrow[d] & \Sigma \cone(h_{m,m'}) \arrow[d]
            \\
            \Sigma X_m \arrow[r] & \Sigma E_{m} \arrow[r] & \Sigma Y_m \arrow[r] & \Sigma^2 X_m
		\end{tikzcd}  
    \end{equation}
	\end{center}
    that is commutative everywhere except in the bottom right square, which is anticommutative, and whose rows and columns are distinguished triangles in $\cT$.
    \begin{enumerate}
        \item \begin{enumerate}
            \item If $\coaisle$ is a subcategory of $\cR$, then the statement follows by the triangle in the third row of (\ref{E:3x3again}) and the extension closure of $\cR$.
            \item  Suppose that $\cR$ is a subcategory of $\Sigma\aisle$. Then, by the third row of (\ref{E:3x3again}) and the extension closure of $\Sigma\aisle$, the cone of $g_{m,m'}$ lies in $\Sigma\coaisle \cap \Sigma\aisle =\{0\}$. So, the cones of $f_{m,m'}$ and $h_{m,m'}$ are isomorphic and lie in $\cR \cap \aisle$. In particular, $\mathbf{X}$ stabilises at $(\cR \cap \aisle)$ and $\mathbf{Y}$ stabilises at $0$.
        \end{enumerate}

        \item Follows similarly to (i).
    \end{enumerate}
\end{proof}

\begin{remark}
Note that Lemma~\ref{L:cauchy_prop_of_X_and_Y}(i)(b) and (ii)(b) have strong implications for the sequence $\mathbf{Y}$ and $\mathbf{X}$ respectively: A sequence stabilises at $0$ if and only if its morphisms are eventually all isomorphisms. Or, equivalently, all the sequence entries are eventually isomorphic to an object $T \in \cT$ and the module colimit of the sequence is isomorphic to $\yo(T)$.
\end{remark}

\subsection{Aisle metric completions}
 
Approximations of sequences provide a useful tool to compute completions with respect to good metrics which contain an aisle metric.

\begin{definition}
    Fix metrics $\cM = \{B_t\}_{t \geq 0}$ and $\cM' = \{B'_t\}_{t \geq 0}$ on $\cT$. We write
    \[
        \cM \subseteq \cM'
    \]
    if for all integers $t \geq 0$ we have $B_t \subseteq B'_t$.
\end{definition}

\begin{proposition} \label{P:aisle_inside_metric}
    Let $\cT$ be a triangulated category equipped with a good metric $\cM = \{B_t\}_{t\geq0}$ and a t-structure $(\aisle,\coaisle)$ such that $\cM_\aisle \subseteq \cM$. Let $\mathbf{E}$ be a sequence in $\cT$ that is compactly supported at $s\geq0$ and stabilises at $B_r$ for some $r \geq 0$. Let $\mathbf{Y}$ be the $\Sigma^s \coaisle$-approximation of $\mathbf{E}$. In $\Mod\cT$ there is an isomorphism
        \begin{equation*}
            \mocolim\mathbf{E} \cong \mocolim\mathbf{Y};
        \end{equation*}
    and the following hold:
    \begin{enumerate}
        \item the sequence $\mathbf{Y}$ is compactly supported at $s$;
        \item if $r \leq s+1$, then the sequence $\mathbf{Y}$ stabilises at $(B_{r} \cap \Sigma^{s+1} \coaisle)$.
    \end{enumerate}
    In particular, if we have equalities $\cM = \cM_\aisle$ and $r=s+1$, then $\mathbf{Y}$ stabilises at $0$.
\end{proposition}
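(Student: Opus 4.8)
The plan is to deduce the proposition by combining the two auxiliary results Lemma~\ref{L:cptly_supp_prop_of_X_and_Y} and Lemma~\ref{L:cauchy_prop_of_X_and_Y}, applied not to the ambient t-structure $(\aisle,\coaisle)$ but to its shift $(\Sigma^s\aisle,\Sigma^s\coaisle)$. First I would record the standing facts: $(\Sigma^s\aisle,\Sigma^s\coaisle)$ is again a t-structure (orthogonality and approximation triangles transport along the autoequivalence $\Sigma^s$, and $\Sigma(\Sigma^s\aisle)=\Sigma^s(\Sigma\aisle)\subseteq\Sigma^s\aisle$), the sequence $\mathbf{Y}$ is by definition its coaisle-approximation of $\mathbf{E}$, and the proofs of the two lemmas use nothing about the ambient t-structure beyond the t-structure axioms, so they apply verbatim with $(\Sigma^s\aisle,\Sigma^s\coaisle)$ in place of $(\aisle,\coaisle)$. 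The one inclusion everything rests on is extracted from the hypothesis $\cM_\aisle\subseteq\cM$: for every $t>0$ we have $\Sigma^t\aisle=(\cM_\aisle)_t\subseteq B_t$, and trivially $\aisle\subseteq\cT=B_0$; in particular $\Sigma^s\aisle\subseteq B_s$ and $\Sigma^{s+1}\aisle\subseteq B_{s+1}$.

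For the isomorphism and part~(i): being compactly supported at $s$ means $\mathbf{E}$ is compactly supported at the subcategory $\cS=B_s$, and we have just seen $\Sigma^s\aisle\subseteq\cS$. Hence Lemma~\ref{L:cptly_supp_prop_of_X_and_Y}(i), applied to $(\Sigma^s\aisle,\Sigma^s\coaisle)$ and $\cS=B_s$, shows that the $\Sigma^s\aisle$-approximation of $\mathbf{E}$ is a null sequence and yields the isomorphism $\mocolim\mathbf{E}\cong\mocolim\mathbf{Y}$. Part~(i) is then immediate: $\mocolim\mathbf{Y}(B_s)\cong\mocolim\mathbf{E}(B_s)=0$, so $\mathbf{Y}$ is compactly supported at $s$.

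For part~(ii): $B_r$ is extension closed by axiom \textbf{M2}, so Lemma~\ref{L:cauchy_prop_of_X_and_Y}(ii) is available with $\cR=B_r$ and the shifted t-structure, once one of its alternative hypotheses is checked. I would verify (ii)(a), namely $\Sigma(\Sigma^s\aisle)\subseteq\cR$: indeed $\Sigma^{s+1}\aisle=(\cM_\aisle)_{s+1}\subseteq B_{s+1}$, and when $r\leq s+1$ iterated use of \textbf{M1} gives $B_{s+1}\subseteq B_r=\cR$. The lemma then yields that $\mathbf{Y}$ stabilises at $\cR\cap\Sigma(\Sigma^s\coaisle)=B_r\cap\Sigma^{s+1}\coaisle$, which is exactly~(ii). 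Finally, for the closing assertion: if $\cM=\cM_\aisle$ and $r=s+1$, then $B_r=(\cM_\aisle)_{s+1}=\Sigma^{s+1}\aisle$, so $\mathbf{Y}$ stabilises at $\Sigma^{s+1}\aisle\cap\Sigma^{s+1}\coaisle=\Sigma^{s+1}(\aisle\cap\coaisle)$; since $\aisle\cap\coaisle=0$ (any $T$ in the intersection satisfies $\id_T\in\Hom_\cT(T,T)=0$, hence $T\cong0$), the sequence $\mathbf{Y}$ stabilises at $0$.

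Since the genuine work is carried out in the two auxiliary lemmas, the proposition is essentially an assembly, and I do not expect a real obstacle. The only points needing attention are the bookkeeping forced by the shift $\Sigma^s$---one must be careful to invoke the lemmas for $(\Sigma^s\aisle,\Sigma^s\coaisle)$ and to track that its coaisle is $\Sigma^s\coaisle$---and the boundary case $s=0$, where $(\cM_\aisle)_0=\cT$ rather than $\aisle$; this is harmless because $\aisle\subseteq\cT$ in any case.
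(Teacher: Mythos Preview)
Your proof is correct and follows essentially the same approach as the paper: apply Lemma~\ref{L:cptly_supp_prop_of_X_and_Y}(i) with $\cS=B_s$ for the isomorphism and part~(i), then Lemma~\ref{L:cauchy_prop_of_X_and_Y}(ii)(a) with $\cR=B_r$ for part~(ii), and conclude via $\Sigma^{s+1}\aisle\cap\Sigma^{s+1}\coaisle=0$. You are simply more explicit than the paper about the need to work with the shifted t-structure $(\Sigma^s\aisle,\Sigma^s\coaisle)$ and about verifying the inclusions $\Sigma^s\aisle\subseteq B_s$ and $\Sigma^{s+1}\aisle\subseteq B_r$ required by the lemmas.
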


\begin{proof}
    Apply Lemma~\ref{L:cptly_supp_prop_of_X_and_Y}(i) with  $\cS=B_s$ to obtain an isomorphism
    $\mocolim\mathbf{E} \cong \mocolim\mathbf{Y}$. Thus, $\mathbf{Y}$ is also compactly supported at $s$. Apply Lemma~\ref{L:cauchy_prop_of_X_and_Y}(ii)(a) with $\cR=B_{r}$ to show that $\mathbf{Y}$ stabilises at $B_r \cap \Sigma^{s+1} \coaisle$. Therefore, if $\cM=\cM_\aisle$ and $r=s+1$, then $\mathbf{Y}$ stabilises at $\Sigma^{r} \aisle \cap \Sigma^{s+1} \coaisle = \{0\}$.
\end{proof}

\begin{corollary} \label{C:aisle_in_metric_completion}
    Let $(\aisle,\coaisle)$ be a t-structure on $\cT$ and consider the aisle metric $\cM_\aisle$. Then we have
    \begin{equation*}
        \fS_{\cM_\aisle} \simeq \add \bigcup_{p\in\bZ} \Sigma^p\coaisle.
    \end{equation*}
In particular, the completion $\fS_{\cM_\aisle}$ is equivalent to a thick subcategory of $\cT$.
\end{corollary}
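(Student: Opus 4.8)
The plan is to prove the equivalence by identifying $\fS_{\cM_\aisle}$, which is a full subcategory of $\Mod\cT$ by definition, with the essential image under the Yoneda embedding $\yo$ of $\add\bigcup_{p\in\bZ}\Sigma^p\coaisle$; since $\yo$ is fully faithful this yields the stated equivalence of categories. As a preliminary reduction I would note that $\add\bigcup_{p\in\bZ}\Sigma^p\coaisle=\bigcup_{p\in\bZ}\Sigma^p\coaisle$: the inclusion $\Sigma^{-1}\coaisle\subseteq\coaisle$ gives an ascending chain $\cdots\subseteq\coaisle\subseteq\Sigma\coaisle\subseteq\Sigma^2\coaisle\subseteq\cdots$, so any finite collection of objects in $\bigcup_p\Sigma^p\coaisle$ lies in a common $\Sigma^q\coaisle$, and each $\Sigma^q\coaisle$ is extension closed, hence closed under finite direct sums and under direct summands.

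For the inclusion $\yo\big(\bigcup_p\Sigma^p\coaisle\big)\subseteq\fS_{\cM_\aisle}$, I would fix $T\in\Sigma^p\coaisle$ and consider the constant sequence $\mathbf{E}$ with all entries $T$ and all transition maps $\id_T$; it has $\mocolim\mathbf{E}\cong\yo(T)$ and is trivially Cauchy since every composite has zero cone. The point is to see that $\mathbf{E}$ is compactly supported with respect to $\cM_\aisle$, equivalently that $\yo(T)=\Hom_\cT(-,T)$ inverts every morphism whose cone lies in $(\cM_\aisle)_t=\Sigma^t\aisle$ for a suitable $t$. Applying $\Hom_\cT(-,T)$ to a triangle $A\to B\to C\to\Sigma A$ with $C\in\Sigma^t\aisle$, the induced map $\Hom_\cT(B,T)\to\Hom_\cT(A,T)$ is an isomorphism as soon as $\Hom_\cT(C,T)=0$ and $\Hom_\cT(\Sigma^{-1}C,T)=0$; since $\Sigma^{-p}T\in\coaisle$ while $\Sigma^{-p}C\in\Sigma^{t-p}\aisle\subseteq\aisle$ and $\Sigma^{-p-1}C\in\Sigma^{t-p-1}\aisle\subseteq\aisle$ whenever $t\geq p+1$, the orthogonality $\Hom_\cT(\aisle,\coaisle)=0$ gives the vanishing for $t=\max\{p+1,1\}$. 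Hence $\yo(T)\in\fS_{\cM_\aisle}$.

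For the reverse inclusion I would take an arbitrary object of $\fS_{\cM_\aisle}$, written as $\mocolim\mathbf{E}$ for a Cauchy sequence $\mathbf{E}$ that is compactly supported at some $s\geq0$. If $s=0$ then $\mocolim\mathbf{E}=0=\yo(0)$. If $s\geq1$ then $\mathbf{E}$, being Cauchy, stabilises in particular at $(\cM_\aisle)_{s+1}=\Sigma^{s+1}\aisle$, so Proposition~\ref{P:aisle_inside_metric} applied with $\cM=\cM_\aisle$ and $r=s+1$ furnishes the $\Sigma^s\coaisle$-approximation $\mathbf{Y}$ of $\mathbf{E}$ with $\mocolim\mathbf{E}\cong\mocolim\mathbf{Y}$ and $\mathbf{Y}$ stabilising at $0$. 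A sequence stabilising at $0$ has eventually invertible transition maps, so its module colimit is $\yo(T)$ for some entry $T$, which here lies in $\Sigma^s\coaisle$ (cf.\ the remark after Lemma~\ref{L:cauchy_prop_of_X_and_Y}). Thus $\mocolim\mathbf{E}\cong\yo(T)$ with $T\in\bigcup_p\Sigma^p\coaisle$. Combining the two inclusions identifies the two full subcategories of $\Mod\cT$, giving $\fS_{\cM_\aisle}\simeq\add\bigcup_{p\in\bZ}\Sigma^p\coaisle$; since the triangulated structure on $\fS_{\cM_\aisle}$ is induced from that of $\cT$, this equivalence should moreover be compatible with triangulated structures.

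Finally, for the ``in particular'' I would check that $\bigcup_{p\in\bZ}\Sigma^p\coaisle$ is a thick subcategory of $\cT$: it is closed under $\Sigma$ and $\Sigma^{-1}$ since $\Sigma^{\pm1}(\Sigma^p\coaisle)=\Sigma^{p\pm1}\coaisle$, closed under direct summands because coaisles are, and closed under cones because in a triangle $A\to B\to C\to\Sigma A$ with $A\in\Sigma^p\coaisle$ and $B\in\Sigma^q\coaisle$ one has $C\in\Sigma^a\coaisle*\Sigma^a\coaisle=\Sigma^a(\coaisle*\coaisle)\subseteq\Sigma^a\coaisle$ for $a=\max\{p+1,q\}$, using the chain above and extension closure of coaisles. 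I expect the substantive input to be Proposition~\ref{P:aisle_inside_metric} (which rests on Lemmas~\ref{L:cptly_supp_prop_of_X_and_Y} and~\ref{L:cauchy_prop_of_X_and_Y}), so the only real care needed is in pinning down the correct truncation index $t$ for the compact support of the constant sequences and in invoking the proposition with $r=s+1$, which is legitimate precisely because a Cauchy sequence stabilises at every term of the metric.
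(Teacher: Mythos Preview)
Your proof is correct and follows essentially the same approach as the paper: one inclusion uses constant sequences, and the other applies Proposition~\ref{P:aisle_inside_metric} with $r=s+1$ to see that the $\Sigma^s\coaisle$-approximation stabilises at $0$. You supply more detail than the paper (the redundancy of $\add$, the explicit verification of compact support for constant sequences, the separate treatment of $s=0$, and the thickness check), but the core argument is identical.
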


\begin{proof}
    Fix $E \in \fS_{\cM_\aisle}$. Then there exists a sequence $\mathbf{E}$ in $\cT$ that is Cauchy and compactly supported at $t\geq0$ with respect to $\cM_\aisle$ such that $E \cong \mocolim \mathbf{E}$. By Proposition~\ref{P:aisle_inside_metric} with $s=t$ and $r=t+1$, we have that the $\Sigma^t\coaisle$-approximation $\mathbf{Y}$ of $\mathbf{E}$ stabilises at $0$, and so, there exists an object $Y \in \Sigma^t\coaisle$ such that $\yo(Y) \cong \mocolim\mathbf{Y} \cong \mocolim \mathbf{E}$. Conversely, any object in $\add \bigcup_{p\in\bZ} \yo(\Sigma^p\coaisle) \simeq \add \bigcup_{p\in\bZ} \Sigma^p\coaisle$ can be obtained as a colimit of a constant sequence in $\yo(\Sigma^p \coaisle)$ for some $p \in \bZ$, and hence as a module colimit of a compactly supported Cauchy sequence in $\cT$.
\end{proof}

\begin{remark}
    The result from Corollary \ref{C:aisle_in_metric_completion} has since been obtained independently in \cite[Corollary~3.6]{BCMPZ-completion} using their previously developed ideas on extendable t-structures.
\end{remark}

\subsection{Coaisle metric completions}

We now focus on completions with respect to coaisle metrics. Like in the case of aisle metrics, approximations can be used to greatly reduce the sequences whose module colimits we have to compute. This reduction is particularly ruthless in $d$-Calabi-Yau triangulated categories.

\begin{proposition} \label{P:coaisle_inside_metric}
    Let $\cT$ be a triangulated category equipped with a good metric $\cM = \{B_t\}_{t\geq0}$. Let $\mathbf{E}$ be a sequence in $\cT$ that is compactly supported at $s\geq0$ and stabilises at $B_r$ for some $r \geq 0$. Let $(\aisle,\coaisle)$ be a t-structure on $\cT$ and let $\mathbf{X}$ and $\mathbf{Y}$ denote the $\aisle$-approximation and $\coaisle$-approximation of $\mathbf{E}$ respectively. Then the following hold:
    \begin{enumerate}
        \item $\mathbf{X}$ is compactly supported at $B_s \cap \Sigma^{-1}\aisle$;
        \item if $\cT$ is $d$-Calabi-Yau for some $d>0$, then $\mathbf{Y}$ is compactly supported at $B_s \cap \Sigma^{-(d-1)}\coaisle$.
    \end{enumerate}
    If $\cM \subseteq \cM_{\coaisle}$, 
    then 
    \begin{enumerate}[resume]
        \item $\mathbf{X}$ stabilises at $0$;
        \item $\mathbf{Y}$ stabilises at $(B_{r} \cap \Sigma \coaisle)$; 
        \item {$\mathbf{Y}$ is Cauchy if and only if $\mathbf{E}$ is Cauchy.}
    \end{enumerate}
\end{proposition}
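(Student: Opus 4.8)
The plan is to deduce the whole statement from the two approximation lemmas proved above, fed with the appropriate subcategories, together with the single structural observation that the hypothesis $\cM\subseteq\cM_\coaisle$ forces the terms of $\cM$ into the coaisle: for every $t\geq1$ we have $B_t\subseteq(\cM_\coaisle)_t=\Sigma^{-t}\coaisle\subseteq\coaisle$, since coaisles are closed under negative shifts. None of the steps is genuinely hard; the real content has been isolated in Lemmas~\ref{L:cptly_supp_prop_of_X_and_Y} and \ref{L:cauchy_prop_of_X_and_Y}, and what remains is bookkeeping.

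First I would dispose of (i) and (ii). Both are immediate applications of Lemma~\ref{L:cptly_supp_prop_of_X_and_Y} with $\cS=B_s$: part~(i) of that lemma says $\mathbf{X}$ is compactly supported at $\Sigma^{-1}\aisle\cap B_s$, which is exactly (i), and part~(ii) says that when $\cT$ is $d$-Calabi-Yau, $\mathbf{Y}$ is compactly supported at $B_s\cap\Sigma^{-(d-1)}\coaisle$, which is exactly (ii). No further argument is needed.

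For (iii) and (iv) I would assume $\cM\subseteq\cM_\coaisle$ (the value $r=0$ being degenerate since $B_0=\cT$, so we may take $r\geq1$; in the applications $\mathbf{E}$ is Cauchy and hence stabilises at every $B_t$, so this is harmless). By the observation above, $\cR:=B_r\subseteq\coaisle$, and $B_r$ is extension closed by axiom \textbf{M2}. Hence Lemma~\ref{L:cauchy_prop_of_X_and_Y}(ii), branch~(b), applies: it gives directly that $\mathbf{Y}$ stabilises at $B_r\cap\Sigma\coaisle$, which is (iv), and its ``moreover'' clause gives that $\mathbf{X}$ stabilises at $0$, which is (iii), together with the isomorphism $\cone f_{m,m'}\cong\cone g_{m,m'}$ for all $m'>m\gg0$.

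Finally (v). By (iii) there is an $M$ such that $h_{m,m'}$ is an isomorphism, hence $\cone h_{m,m'}=0$, for all $m'>m\geq M$; the distinguished triangle $\cone h_{m,m'}\to\cone f_{m,m'}\to\cone g_{m,m'}\to\Sigma\cone h_{m,m'}$ appearing as the third row of diagram~(\ref{E:3x3again}) then yields $\cone f_{m,m'}\cong\cone g_{m,m'}$ for all $m'>m\geq M$. The key point is that $M$ does not depend on any metric parameter. Consequently, for each fixed $t\geq0$, the sequence $\mathbf{E}$ stabilises at $B_t$ if and only if $\mathbf{Y}$ does: given a tail on which $\cone f_{m,m'}\in B_t$, intersect it with the tail $m\geq M$ to get one on which $\cone g_{m,m'}\in B_t$, and symmetrically. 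Since being Cauchy with respect to $\cM$ means precisely stabilising at $B_t$ for every $t\geq0$, this proves (v). The only point demanding a little care is this last interchange of quantifiers --- that the isomorphism between cones is available on a tail independent of $t$ --- but it is immediate from (iii), as ``$\mathbf{X}$ stabilises at $0$'' is a single, $t$-free assertion; I would regard this as the sole (mild) obstacle.
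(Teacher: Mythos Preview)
Your proposal is correct and follows essentially the same approach as the paper: apply Lemma~\ref{L:cptly_supp_prop_of_X_and_Y} with $\cS=B_s$ for (i)--(ii), then use $B_r\subseteq\Sigma^{-r}\coaisle\subseteq\coaisle$ together with Lemma~\ref{L:cauchy_prop_of_X_and_Y}(ii)(b) for (iii)--(v). You are in fact slightly more careful than the paper in two respects: you flag the degenerate case $r=0$ (which the paper's proof silently excludes by writing $B_r\subseteq\Sigma^{-r}\coaisle$), and you spell out the quantifier argument for (v) explicitly, noting that the tail $M$ on which $\cone f_{m,m'}\cong\cone g_{m,m'}$ is independent of $t$, whereas the paper simply cites the ``moreover'' clause of Lemma~\ref{L:cauchy_prop_of_X_and_Y}(ii)(b) and leaves this implicit.
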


\begin{proof}
    Apply Lemma~\ref{L:cptly_supp_prop_of_X_and_Y}(i) with $\cS=B_s$ to obtain that $\mathbf{X}$ is compactly supported at $B_s \cap \Sigma^{-1}\aisle$ and Lemma~\ref{L:cptly_supp_prop_of_X_and_Y}(ii) to obtain that if $\cT$ is $d$-Calabi-Yau then $\mathbf{Y}$ is compactly supported at $B_s \cap \Sigma^{-(d-1)}\coaisle$. If $\cM \subseteq \cM_\coaisle$, then $B_r \subseteq \Sigma^{-r}\coaisle \subseteq \coaisle$. Thus, by Lemma~\ref{L:cauchy_prop_of_X_and_Y}(ii)(b) with $\cR = B_r$ we have that $\mathbf{X}$ stabilises at $0$, and $\mathbf{Y}$ stabilises at $B_r \cap \Sigma \coaisle$ and is Cauchy if and only if $\mathbf{E}$ is Cauchy.
\end{proof}

\begin{definition} \label{D:coaisle_sequence}
    Let $(\aisle,\coaisle)$ be a t-structure on $\cT$. A {\em coaisle sequence with respect to the coaisle $\coaisle$} is a sequence $\mathbf{Y} = (Y_n,g_n)_{n>0}$ in $\coaisle$ that is compactly supported at $\Sigma^{-1}\coaisle$ and stabilises at $\Sigma^{-2}\coaisle$. A {\em minimal coaisle sequence with respect to the coaisle $\coaisle$} is a $(\Sigma^{-1}\coaisle)$-trivial coaisle sequence with respect to $\coaisle$.
\end{definition}

Coaisle sequences naturally occur as approximations of compactly supported Cauchy sequences with respect to coaisle metrics.

\begin{corollary} \label{C:coaisle_metric}
    Let $\cT$ be a 2-Calabi-Yau triangulated category equipped with a t-structure $(\aisle,\coaisle)$ and the coaisle metric $\cM_\coaisle$. Let $\mathbf{E}$ be a sequence in $\cT$ that is compactly supported at $t\geq0$ and stabilises at $B_{t+1}$. Let $\mathbf{X}$ and $\mathbf{Y}$ be the $(\Sigma^{-(t-1)}\aisle)$-approximation and $(\Sigma^{-(t-1)}\coaisle)$-approximation of $\mathbf{E}$ respectively. Then $\mathbf{X}$ stabilises at $0$ and $\mathbf{Y}$ is a coaisle sequence with respect to $\Sigma^{-(t-1)}\coaisle$.
\end{corollary}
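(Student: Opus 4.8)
\emph{Proof plan.} The plan is to deduce this from Proposition~\ref{P:coaisle_inside_metric}, applied not to the given t-structure $(\aisle,\coaisle)$ but to a suitably shifted one, after translating the hypotheses on $\mathbf{E}$ into the corresponding shifted coaisle metric.

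First I would set $\aisle' = \Sigma^{-(t-1)}\aisle$ and $\coaisle' = \Sigma^{-(t-1)}\coaisle$. A shift of a t-structure is again a t-structure, so $(\aisle',\coaisle')$ is a t-structure on $\cT$, with its own coaisle metric $\cM_{\coaisle'} = \{B'_u\}_{u\geq0}$, where $B'_0 = \cT$ and $B'_u = \Sigma^{-u}\coaisle' = \Sigma^{-(u+t-1)}\coaisle$ for $u>0$. The whole point of this choice is the two identities $B'_1 = \Sigma^{-t}\coaisle$ and $B'_2 = \Sigma^{-(t+1)}\coaisle = B_{t+1}$, where $B_\bullet$ denotes the pieces of $\cM_\coaisle$. (For $t\geq1$ one even has $B'_1 = B_t$; for $t=0$ one only has $B'_1 = \coaisle \subseteq \cT = B_0$, which is all that is needed.)

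Next I would translate the hypotheses. Since $\mathbf{E}$ is compactly supported at $t$ with respect to $\cM_\coaisle$, the functor $\mocolim\mathbf{E}$ vanishes on $B_t$, hence on $B'_1 \subseteq B_t$; thus $\mathbf{E}$ is compactly supported at $1$ with respect to $\cM_{\coaisle'}$. As stabilising at a subcategory depends only on that subcategory, and $B_{t+1} = B'_2$, the sequence $\mathbf{E}$ also stabilises at $B'_2$. So, with respect to $\cM_{\coaisle'}$, the sequence $\mathbf{E}$ is compactly supported at $s=1$ and stabilises at $B'_r$ with $r=2$. I would then apply Proposition~\ref{P:coaisle_inside_metric} with good metric $\cM = \cM_{\coaisle'}$, t-structure $(\aisle',\coaisle')$, $s=1$ and $r=2$; here $\cM_{\coaisle'}\subseteq\cM_{\coaisle'}$ holds trivially, and the sequences produced by the proposition are exactly the $\aisle'$- and $\coaisle'$-approximations of $\mathbf{E}$, i.e.\ the $\Sigma^{-(t-1)}\aisle$- and $\Sigma^{-(t-1)}\coaisle$-approximations $\mathbf{X}$ and $\mathbf{Y}$ of the statement. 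Since $\cT$ is $2$-Calabi-Yau, part~(ii) gives that $\mathbf{Y}$ is compactly supported at $B'_1 \cap \Sigma^{-1}\coaisle' = \Sigma^{-1}\coaisle'$; part~(iii) gives that $\mathbf{X}$ stabilises at $0$; and part~(iv) gives that $\mathbf{Y}$ stabilises at $B'_2 \cap \Sigma\coaisle' = \Sigma^{-2}\coaisle'$, the last equality because iterating $\Sigma^{-1}\coaisle'\subseteq\coaisle'$ yields $\Sigma^{-2}\coaisle'\subseteq\Sigma\coaisle'$. As $\mathbf{Y}$ is the $\coaisle'$-approximation of $\mathbf{E}$ it is a sequence in $\coaisle'$, so by Definition~\ref{D:coaisle_sequence} it is a coaisle sequence with respect to $\coaisle' = \Sigma^{-(t-1)}\coaisle$, which together with the statement that $\mathbf{X}$ stabilises at $0$ is the claim.

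I do not expect a conceptual obstacle here; the only thing to get right is the bookkeeping of suspensions in the index shift, together with the mild degeneracy at $t=0$ where $B_0 = \cT$ is strictly larger than $B'_1$. Since the argument only ever uses that $\mathbf{E}$ is compactly supported at $B'_1$ (rather than at $\cT$), the case $t=0$ causes no trouble. One could alternatively bypass Proposition~\ref{P:coaisle_inside_metric} and invoke Lemmas~\ref{L:cptly_supp_prop_of_X_and_Y}(ii) and~\ref{L:cauchy_prop_of_X_and_Y}(ii)(b) directly for the t-structure $(\aisle',\coaisle')$ with $\cS = B_t$ and $\cR = B_{t+1} = \Sigma^{-(t+1)}\coaisle \subseteq \coaisle'$, which produces the same conclusions with the same arithmetic.
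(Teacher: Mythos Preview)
Your proof is correct and follows essentially the same strategy as the paper: both apply Proposition~\ref{P:coaisle_inside_metric} to the shifted t-structure $(\Sigma^{-(t-1)}\aisle,\Sigma^{-(t-1)}\coaisle)$ with $d=2$. The only difference is bookkeeping: the paper keeps the original metric $\cM_\coaisle$ and takes $s=t$, $r=t+1$, whereas you pass to the shifted coaisle metric $\cM_{\coaisle'}$ and take $s=1$, $r=2$. Your variant has the mild advantage that the containment $\cM\subseteq\cM_{\coaisle'}$ required in Proposition~\ref{P:coaisle_inside_metric} is tautological, while in the paper's parametrisation one really only needs the consequence $B_{t+1}\subseteq\coaisle'$ of that hypothesis (the full containment $\cM_\coaisle\subseteq\cM_{\coaisle'}$ need not hold for $t\geq 2$); either way the underlying input is Lemma~\ref{L:cauchy_prop_of_X_and_Y}(ii)(b), exactly as in your alternative route.
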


\begin{proof}
    Apply Proposition~\ref{P:coaisle_inside_metric} with t-structure $(\Sigma^{-(t-1)}\aisle,\Sigma^{-(t-1)}\coaisle)$, $s=t$, $r=t+1$, and $d=2$.
\end{proof}

For future computations, it proves useful to combine Proposition~\ref{P:aisle_inside_metric} and Corollary~\ref{C:coaisle_metric}.

\begin{proposition} \label{P:combine_aisle_and_coaisle}
    Let $\cM =\{B_t\}_{t\geq0}$ be a good metric on a 2-Calabi-Yau category $\cT$. Let $(\widehat{\aisle},\widehat{\coaisle})$ be a t-structure on $\cT$ such that $\cM_{\widehat{\aisle}} \subseteq \cM$ and let $E \in \fS_\cM(\cT)$ be compactly supported at $t\geq0$. Then there exists a sequence $\mathbf{\widehat{Y}}$ in $\Sigma^t\widehat{\coaisle}$ such that 
        \[
            E \cong \mocolim \widehat{\mathbf{Y}}
        \]
    and such that, if $\coaisle = \Sigma^{t+1} B_{t+1} \cap \Sigma^{2(t+1)}\widehat{\coaisle}$ is a coaisle of $\cT$ with corresponding t-structure $(\aisle,\coaisle)$, the following hold:
        \begin{enumerate}
            \item the $(\Sigma^{-(t-1)}\aisle)$-approximation of $\mathbf{\widehat{Y}}$ stabilises at $0$;
            \item the $(\Sigma^{-(t-1)}\coaisle)$-approximation of  $\mathbf{\widehat{Y}}$ is a minimal coaisle sequence with respect to $\Sigma^{-(t-1)}\coaisle$.
        \end{enumerate}
\end{proposition}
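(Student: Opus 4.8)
The plan is to bootstrap the two previously established reductions—Proposition~\ref{P:aisle_inside_metric} (for metrics containing an aisle metric) and Corollary~\ref{C:coaisle_metric} (for coaisle metrics in the $2$-Calabi--Yau setting)—applied in sequence. Since $E \in \fS_\cM(\cT)$, write $E \cong \mocolim\mathbf{E}$ for a Cauchy sequence $\mathbf{E}$ that is compactly supported at $t$; being Cauchy, $\mathbf{E}$ stabilises at $B_{t+1}$. First I would apply Proposition~\ref{P:aisle_inside_metric} with the t-structure $(\widehat{\aisle},\widehat{\coaisle})$, $s=t$ and $r=t+1$: since $\cM_{\widehat{\aisle}} \subseteq \cM$ and $r = s+1$, this produces the $\Sigma^t\widehat{\coaisle}$-approximation $\widehat{\mathbf{Y}}$ of $\mathbf{E}$, which satisfies $E \cong \mocolim\mathbf{E} \cong \mocolim\widehat{\mathbf{Y}}$, is compactly supported at $t$, and stabilises at $B_{t+1} \cap \Sigma^{t+1}\widehat{\coaisle}$. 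So $\widehat{\mathbf{Y}}$ lives in $\Sigma^t\widehat{\coaisle}$ and still carries the compact-support and stabilisation data, now with a tightened stabilisation subcategory.

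Next I would feed $\widehat{\mathbf{Y}}$ into Corollary~\ref{C:coaisle_metric}, with the (assumed) t-structure $(\aisle,\coaisle)$ where $\coaisle = \Sigma^{t+1}B_{t+1} \cap \Sigma^{2(t+1)}\widehat{\coaisle}$. The key bookkeeping point is that the stabilisation subcategory of $\widehat{\mathbf{Y}}$, namely $B_{t+1}\cap\Sigma^{t+1}\widehat{\coaisle}$, is, after applying $\Sigma^t$ to match conventions, exactly $\Sigma^{-1}$ of the coaisle $\coaisle$ as defined; more precisely one checks $\Sigma^{-1}(B_{t+1}\cap\Sigma^{t+1}\widehat{\coaisle})$ sits inside the $B_{t+1}$-indexed piece of the coaisle metric $\cM_\coaisle$ in the right degree, so that Corollary~\ref{C:coaisle_metric} applies to $\widehat{\mathbf{Y}}$ with its metric $\cM_\coaisle$ and with the shifted t-structure $(\Sigma^{-(t-1)}\aisle, \Sigma^{-(t-1)}\coaisle)$. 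Corollary~\ref{C:coaisle_metric} then immediately gives that the $(\Sigma^{-(t-1)}\aisle)$-approximation of $\widehat{\mathbf{Y}}$ stabilises at $0$—this is conclusion~(i)—and that the $(\Sigma^{-(t-1)}\coaisle)$-approximation of $\widehat{\mathbf{Y}}$ is a coaisle sequence with respect to $\Sigma^{-(t-1)}\coaisle$.

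To upgrade ``coaisle sequence'' to ``minimal coaisle sequence'' in conclusion~(ii), I would invoke Lemma~\ref{L:cptly_supp_means_no_summands} together with Lemma~\ref{L:approximations_S_trivial}. The coaisle sequence is compactly supported at $\Sigma^{-1}$ of its coaisle, so Lemma~\ref{L:cptly_supp_means_no_summands}, applied with $\cS$ this desuspended coaisle and $\cR$ its second desuspension (both closed under extensions and summands), passes to a subsequence with an $\cS$-trivial component having the same module colimit and still stabilising at $\cR$; since $\cT$ is $2$-Calabi--Yau, Lemma~\ref{L:approximations_S_trivial}(ii) guarantees this triviality is inherited by the relevant $\coaisle$-approximation, yielding a $(\Sigma^{-(t-1)}\coaisle)$-trivial, i.e.\ minimal, coaisle sequence. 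The main obstacle I anticipate is the careful index/shift arithmetic: one must verify that the stabilisation subcategory coming out of the aisle-metric step is precisely compatible with the hypothesis ``$\coaisle = \Sigma^{t+1}B_{t+1}\cap\Sigma^{2(t+1)}\widehat{\coaisle}$ is a coaisle'' so that Corollary~\ref{C:coaisle_metric} applies verbatim, and that replacing $\mathbf{E}$ by the possibly-passed-to subsequences at each stage does not disturb the module colimit—this last point is handled by Remark~\ref{R:subseq_are_cchy_and_cptly_supp}, but it needs to be threaded through all three invocations.
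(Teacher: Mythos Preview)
Your overall strategy matches the paper's: apply Proposition~\ref{P:aisle_inside_metric} with $(\widehat{\aisle},\widehat{\coaisle})$, $s=t$, $r=t+1$ to produce $\widehat{\mathbf{Y}}$ in $\Sigma^t\widehat{\coaisle}$, then apply Corollary~\ref{C:coaisle_metric} with the t-structure $(\aisle,\coaisle)$. The bookkeeping you flag is exactly the verification that $B_{t+1}\cap\Sigma^{t+1}\widehat{\coaisle} = \Sigma^{-(t+1)}\coaisle$ (so $\widehat{\mathbf{Y}}$ stabilises at $(\cM_\coaisle)_{t+1}$) and that $\Sigma^{-t}\coaisle = \Sigma B_{t+1}\cap\Sigma^{t+2}\widehat{\coaisle} \subseteq \Sigma B_{t+1} \subseteq B_t$ (so $\widehat{\mathbf{Y}}$ is compactly supported at $(\cM_\coaisle)_t$).

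The minimality step, however, is in the wrong order. You propose to apply Lemma~\ref{L:cptly_supp_means_no_summands} to the coaisle-approximation $\mathbf{Y}$, and then invoke Lemma~\ref{L:approximations_S_trivial}(ii). But Lemma~\ref{L:approximations_S_trivial}(ii) transfers $(\Sigma^{-1}\coaisle)$-triviality \emph{from} a sequence \emph{to} its $\coaisle$-approximation, not the other way around; once you have already passed to $\mathbf{Y}$ there is nothing further to approximate. Moreover, the $\cS$-trivial component $\widetilde{\mathbf{Y}}_{\mathbf{I}}$ produced by Lemma~\ref{L:cptly_supp_means_no_summands} applied to $\mathbf{Y}$ is a minimal coaisle sequence, but it is \emph{not} the $(\Sigma^{-(t-1)}\coaisle)$-approximation of $\widehat{\mathbf{Y}}$ (or of any subsequence of it), so conclusion~(ii) as stated does not follow; nor can you simply take $\widetilde{\mathbf{Y}}_{\mathbf{I}}$ as your new $\widehat{\mathbf{Y}}$, since $\mocolim\mathbf{Y}$ need not agree with $E$ (the $\raisle$-part $\mathbf{X}$ stabilises at $0$ but contributes a nontrivial representable to the module colimit).

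The fix, and this is what the paper does, is to swap the order: apply Lemma~\ref{L:cptly_supp_means_no_summands} to $\widehat{\mathbf{Y}}$ itself with $\cS = B_t$, replacing $\widehat{\mathbf{Y}}$ by a $B_t$-trivial component of a subsequence with the same module colimit \emph{before} taking any approximation. Since $\Sigma^{-t}\coaisle \subseteq B_t$, this $\widehat{\mathbf{Y}}$ is then $(\Sigma^{-t}\coaisle)$-trivial, and now Lemma~\ref{L:approximations_S_trivial}(ii) (with $d=2$) applies directly to show that its $(\Sigma^{-(t-1)}\coaisle)$-approximation is also $(\Sigma^{-t}\coaisle)$-trivial, hence a minimal coaisle sequence.
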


\begin{proof}
    As $E \in \fS_\cM(\cT)$ there exists a sequence $\mathbf{E}$ in $\cT$ that is Cauchy with respect to $\cM$ such that $E \cong \mocolim\mathbf{E}$. Since $E$ is compactly supported at $t$, so is $\mathbf{E}$. Thus, as $\cM_{\widehat{\aisle}} \subseteq \cM$, by Proposition~\ref{P:aisle_inside_metric} with $s=t$ and $r=t+1$, there exists a sequence $\mathbf{\widehat{Y}}$ in $\Sigma^t\widehat{\coaisle}$ that is compactly supported at $t$ and stabilises at $B_{t+1} \cap \Sigma^{t+1}\widehat{\coaisle}$ such that $E \cong \mocolim\mathbf{\widehat{Y}}$. By Lemma~\ref{L:cptly_supp_means_no_summands} we may assume without loss of generality that $\mathbf{\widehat{Y}}$ is $B_t$-trivial.

    Now let $\coaisle = \Sigma^{t+1} B_{t+1} \cap \Sigma^{2(t+1)}\widehat{\coaisle}$. The sequence $\mathbf{\widehat{Y}}$ is compactly supported at $B_t$, and, because $\cM$ is a good metric, we have 
    \begin{equation*}
        \Sigma^{-t}\coaisle = \Sigma B_{t+1} \cap \Sigma^{t+2}\widehat{\coaisle} \subseteq \Sigma B_{t+1} \subseteq B_t.
    \end{equation*}
    Therefore, $\widehat{\mathbf{Y}}$ is compactly supported at $\Sigma^{-t}\coaisle$ and stabilises at $\Sigma^{-(t+1)}\coaisle$. Thus, if $\coaisle$ is a coaisle of $\cT$, we may apply Corollary~\ref{C:coaisle_metric} with coaisle metric $\cM_\coaisle$ to obtain that the $(\Sigma^{-(t-1)}\aisle)$-approximation of $\mathbf{\widehat{Y}}$ stabilises at $0$ and the $(\Sigma^{-(t-1)}\coaisle)$-approximation of $\mathbf{\widehat{Y}}$ is a coaisle sequence with respect to $\Sigma^{-(t-1)}\coaisle$.
    Finally, as $\Sigma^{-t}\coaisle \subseteq B_t$, the sequence $\mathbf{\widehat{Y}}$ is also $\Sigma^{-t}\coaisle$-trivial, and by Lemma~\ref{L:approximations_S_trivial}(iv) so is its $(\Sigma^{-(t-1)} \coaisle)$-approximation.
    \end{proof}

\section{Discrete cluster categories of type \texorpdfstring{$A$}{A}}
Igusa and Todorov \cite{ITcyclic} introduced discrete cluster categories of infinite rank over a fixed field $\bK$. These categories are $\bK$-linear, Hom-finite, triangulated categories which exhibit interesting cluster combinatorics of infinite Dynkin type $A$, and generalise the category studied by Holm and J{\o}rgensen in \cite{HJ-cat}. They are constructed combinatorially from a discrete subset $\cZ \subseteq S^1$.

Consider a discrete (with respect to the standard topology) subset $\cZ \subseteq S^1$ of the unit circle satisfying the following:
\begin{enumerate}[label=\textbf{Z\arabic*}]
    \item{The set $\cZ$ is infinite.}
    \item{The set $\cZ$ has $N \in \bN$ (proper) accumulation points, i.e.\ limits of eventually non-constant sequences from $\cZ$.}
    \item{Each accumulation point $a$ of $\cZ$ is {\em two-sided}. That is, there exist sequences $\mathbf{z} = \{z_i\}_{i > 0}$ and $\mathbf{z'} = \{z'_i\}_{i > 0}$ from $\cZ$ such that $\mathbf{z}$ converges to $a$ from a clockwise direction and $\mathbf{z'}$ converges to $a$ from an anti-clockwise direction.}
\end{enumerate}

There exists a natural cyclic order on the set $\cZ$ induced by the cyclic order on $S^1$. Given points $z_1, z_2 \ldots, z_k \in S^1$ we write
\[
    z_1 < z_2 < \ldots < z_k
\]
if the points are pairwise distinct and, when walking along the circle in an anti-clockwise direction, we encounter first $z_1$, then $z_2$, etc., and last $z_k$. Analogously, we use weak inequalities $\leq$ when two points might coincide. Given points $a,c \in S^1$ we write $(a,c)$ for the interval
\begin{equation*}
    (a,c) = \begin{cases}
        \{b \in S^1 \mid a < b < c\} &\text{ if } a \neq c,
        \\
        \varnothing &\text{ if } a=c.
    \end{cases}
\end{equation*}
Analogously we define the closed interval $[a,c]$ and the half-open intervals $[a,c)$ and $(a,c]$ by using weak inequalities where appropriate. Note that these intervals lie in the circle $S^1$, not $\cZ$, and as such, may contain accumulation points.

Axioms \textbf{Z1}-\textbf{Z3} ensure that each point $z \in \cZ$ has a unique successor $z^+$, i.e.\ a unique point $z^+ \in \cZ$ such that $(z,z^+) \cap \cZ = \varnothing$. Symmetrically, it has a unique predecessor $z^-$, i.e.\ a unique point $z^- \in \cZ$ such that $(z^-,z) \cap \cZ = \varnothing$. For an integer $n > 0$ we iteratively define $z^{(0)} = z$, $z^{(n)} = (z^{(n-1)})^+$ and $z^{(-n)} = (z^{(-n+1)})^-$.

We denote by $\overline{\cZ}$ the topological closure of $\cZ$ with respect to the standard topology. We call the points in $L(\cZ) = \overline{\cZ} \setminus \cZ$ the accumulation points of $\cZ$. We label the accumulation points in $L(\cZ)$ consecutively, in an anti-clockwise direction, by $a_1, \ldots, a_N$, where $N=|L(\cZ)|$. We consider the indices of the accumulation points modulo $N$, with $a_{i+jN} = a_{i}$ for all $i \in [N] = \{1,2,\dots,N\}$ and for all $j \in \bZ$.

An {\em arc of $\cZ$} is a two-element subset $Z = \{z_0,z_1\}$ of $\cZ$ such that $z_1$ is neither the predecessor of $z_0$, nor the successor of $z_0$, i.e.\ $z_1 \notin \{z_0^+,z_0,z_0^-\}$. We call the points $z_0,z_1 \in \cZ$ the {\em endpoints of the arc $Z= \{z_0,z_1\}$}. We say that two arcs $X$ and $Y$ cross, if $X = \{x_0,x_1\}$ and $Y=\{y_0,y_1\}$ such that
\begin{equation*}
    x_0 < y_0 < x_1 < y_1.
\end{equation*}
Given an arc $X = \{x_0,x_1\}$ we write $X^-$ for the arc $\{x_0^-,x_1^-\}$ and $X^+$ for the arc $\{x_0^+,x_1^+\}$.

\begin{definition}\label{D:category}
     Fix a subset $\cZ \subseteq S^1$ satisfying axioms \textbf{Z1}-\textbf{Z3} and a field $\bK$. The {\em discrete cluster category with $N$ accumulation points over $\bK$} is the $\bK$-linear Krull-Schmidt category $\cC(\cZ)$ described by the following data:
     \begin{enumerate}
        \item{The indecomposable objects of $\cC(\cZ)$ are the arcs of $\cZ$.}
        \item{For indecomposable objects $X$ and $Y$ in $\cC(\cZ)$ we have
        \[
            \Hom_{\cC(\cZ)}(X,Y) \cong
                \begin{cases}
                    \bK & \text{if $X^-$ and $Y$ cross},
                    \\
                    0 & \text{otherwise}.
                \end{cases}
        \] 
        Note that $X^-$ and $Y$ cross if and only if we can write $X = \{x_0,x_1\}$ and $Y = \{y_0,y_1\}$ such that $x_0 \leq y_0 < x_1^-$ and $x_1 \leq y_1 < x_0^-$.
        }
        \item{Suppose there is a non-zero map $f\colon X \to Y$. Then $X^- =\{x_0^-,x_1^-\}$ and $Y = \{y_0,y_1\}$ cross, say with
        \[
            x_0^- < y_0 < x_1^- < y_1.
        \]
        Then the morphism $f$ factors through an indecomposable object $S = \{s_0,s_1\}$ if and only if we have inequalities
        \[
            x_0 \leq s_0 \leq y_0 \; \; \text{ and } \; \; x_1 \leq s_1 \leq y_1.
        \]}
     \end{enumerate}
    Whenever a subset $\{z_0,z_1\} \subseteq \cZ$ is not an arc of $\cZ$, i.e.\ if $z_1 \in \{z_0,z_0^+, z_0^-\}$, we set $\{z_0,z_1\} = 0$.
\end{definition}

\begin{remark}
    Up to triangulated equivalence, the category $\cC(\cZ)$ does not depend on the precise choice of $\cZ$, only on its number of proper accumulation points.
\end{remark}

\begin{remark}
    If we fix two arcs $X = \{x_0,x_1\}$ and $Y=\{y_0,y_1\}$ in $\cZ$, then we implicitly fix the ordering of their endpoints as points in $S^1$. Now, if there exists a non-zero morphism $f \colon X \rightarrow Y$ in $\cC(\cZ)$, then the arcs $X^-$ and $Y$ must cross, and so, precisely one of the inequalities
    \begin{equation} \label{S5:E:order1}
        x_0^- < y_0 < x_1^- < y_1
    \end{equation}
    or
    \begin{equation} \label{S5:E:order2}
        x_0^- < y_1 < x_1^- < y_0
    \end{equation}
    holds. If (\ref{S5:E:order1}) holds, then we say that $f$ is {\em enabled by the ordering (\ref{S5:E:order1}).} Otherwise, (\ref{S5:E:order2}) holds, and we say that $f$ is {\em enabled by the ordering (\ref{S5:E:order2}).}
\end{remark}

\begin{remark} \label{R:canonical_morphisms_exist}
    For all indecomposable objects $X$ and $Y$ in $\cC(\cZ)$, we may choose a canonical morphism $e_{XY} \colon X \to Y$ such that every morphism in $\Hom_{\cC(\cZ)}(X,Y)$ is of the form $\lambda e_{XY}$ for some $\lambda \in \bK$ and such that, if $e_{XZ}$ factors through $Y$, then $e_{YZ} \circ e_{XY} = e_{XZ}$.
\end{remark}

Igusa and Todorov \cite{ITcyclic} construct $\cC(\cZ)$ as a stable category of a Frobenius category, and therefore $\cC(\cZ)$ is triangulated. The suspension functor $\Sigma$ on $\cC(\cZ)$ is defined on indecomposable objects $X$ by $\Sigma X = X^-$. Moreover, for all objects $X,Y \in \cC(\cZ)$, there exist bi-functorial isomorphisms
\[
\Hom_{\cC(\cZ)}(X,Y) \cong D\Hom_{\cC(\cZ)}(Y,\Sigma^2X),
\]
where $D = \Hom_\bK(-,\bK)$,
and so $\cC(\cZ)$ is $2$-Calabi-Yau. In particular, we have
\begin{equation} \label{E:Ext1}
    \Ext^1_{\cC(\cZ)}(X,Y) \cong
        \begin{cases}
            \bK & \text{if $X$ and $Y$ cross}\\
            0 & \text{else}.
        \end{cases}
\end{equation}

Let $X = \{x_0,x_1\}$ and $Y=\{y_0,y_1\}$ be indecomposable objects in $\cC(\cZ)$ with a non-trivial morphism $f \colon X \rightarrow Y$ enabled by the ordering
\begin{equation*}
    x_0^- < y_0 < x_1^- < y_1.
\end{equation*}
Then there exists a distinguished triangle
\begin{center}
    \begin{tikzcd}
        X \arrow[r, "f"] & Y \arrow[r] & \{x_0^-,y_0\} \oplus \{x_1^-,y_1\} \arrow[r] & \Sigma X.
    \end{tikzcd}
\end{center}

The endpoints of an indecomposable object can be identified by considering morphisms to and from the ``shortest'' arcs in $\cC(\cZ)$. The arc $\fromarc{z} = \{z,z^{(2)}\}$ is the ``shortest arc starting in $z$'', in the sense that there are no arcs with $z$ as an endpoint, which have their second endpoint in the interval $[z,z^{(2)})$. Similarly, the arc $\toarc{z} = \{z^{(-2)},z\}$ is the ``shortest arc ending in $z$''.

 \begin{lemma} \label{L:arc_that_picks_out_points}
    Let $z$ be a point in $\cZ$ and let $W$ be an indecomposable object in $\cC(\cZ)$. \begin{enumerate}
        \item There exists a non-zero morphism $\fromarc{z} \rightarrow W$ if and only if $z$ is an endpoint of $W$.
        \item There exists a non-zero morphism $W \rightarrow \toarc{z}$ if and only if $z$ is an endpoint of $W$.
    \end{enumerate}
 \end{lemma}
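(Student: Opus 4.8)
The plan is to prove (i) directly from the combinatorial description of $\Hom$-spaces in Definition~\ref{D:category}, and then to obtain (ii) either by a symmetric argument or by invoking the $2$-Calabi-Yau duality $\Hom_{\cC(\cZ)}(W, \toarc{z}) \cong D\Hom_{\cC(\cZ)}(\toarc{z}, \Sigma^2 W)$ together with part (i) applied suitably. Write $W = \{w_0, w_1\}$ with $w_0 < w_1$. For (i), recall that $\fromarc{z} = \{z, z^{(2)}\}$, so by Definition~\ref{D:category}(ii) a non-zero morphism $\fromarc{z} \to W$ exists if and only if $\fromarc{z}^- = \{z^-, z^{(1)}\} = \{z^-, z^+\}$ and $W$ cross. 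Here I would be careful: since the two endpoints of $\fromarc{z}$ are adjacent-but-one, the arc $\fromarc{z}^-$ has endpoints $z^-$ and $z^+$, which are separated only by $z$ on one side. So the crossing condition $x_0^- < y_0 < x_1^- < y_1$ for $X = \fromarc{z}$, $Y = W$ reads (choosing the ordering $z^- < z^+$): $z^- < w_i < z^+ < w_j$ for $\{i,j\} = \{0,1\}$. The only point of $\cZ$ strictly between $z^-$ and $z^+$ is $z$ itself, so $w_i = z$; and the condition $z^+ < w_j$ (with $w_j \neq z, z^+$ as it is the other endpoint of an arc) is then automatic once $z$ is an endpoint and $W$ is a genuine arc. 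Hence the crossing holds if and only if $z \in \{w_0, w_1\}$, which is exactly the claim.

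For (ii), the cleanest route is to use the Serre duality isomorphism $\Hom_{\cC(\cZ)}(W, \toarc{z}) \cong D\Hom_{\cC(\cZ)}(\toarc{z}, \Sigma^2 W)$. Since $\Sigma$ acts on arcs by $\{x_0, x_1\} \mapsto \{x_0^-, x_1^-\}$, we have $\Sigma^2 W = \{w_0^{(-2)}, w_1^{(-2)}\}$, and $\toarc{z} = \{z^{(-2)}, z\}$; one checks that $\toarc{z} = \Sigma^2 \fromarc{z^{(-2)}}$, or more directly that $\Hom(\toarc{z}, \Sigma^2 W) \neq 0$ iff (by Definition~\ref{D:category}(ii)) $\toarc{z}^-$ and $\Sigma^2 W$ cross, and then translate the endpoint condition back. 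Alternatively, I would just run the mirror-image of the argument in (i): $\Hom(W, \toarc{z}) \neq 0$ iff $W^-$ and $\toarc{z} = \{z^{(-2)}, z\}$ cross, i.e. writing $W^- = \{w_0^-, w_1^-\}$, iff $w_i^- \leq z^{(-2)} < $ (something) and the companion inequality forces $z$ to be reached — the same ``only one lattice point in the gap'' phenomenon pins down that $z$ must be an endpoint of $W$.

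The main obstacle I anticipate is purely bookkeeping: correctly handling the successor/predecessor shifts and the degenerate cases where $W$ fails to be an arc (e.g. when a putative endpoint equals $z^\pm$), and making sure the ``companion'' inequality in the crossing condition is automatically satisfied rather than an extra constraint. In particular I should verify that when $z$ is an endpoint of $W$, the other endpoint of $W$ genuinely lies in the required interval; this uses that $W$ is an arc, so its endpoints are not adjacent, combined with $\fromarc{z}$ (resp.\ $\toarc{z}$) being the shortest such arc, which is precisely the content of the informal remark preceding the lemma. Once that observation is pinned down, both directions of both equivalences are immediate from the crossing criterion, so no serious difficulty remains beyond this careful case analysis.
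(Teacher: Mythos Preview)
Your approach to (i) is correct and essentially identical to the paper's: reduce to the crossing condition for $\Sigma\fromarc{z}=\{z^-,z^+\}$ and $W$, then observe that the only $\cZ$-point strictly between $z^-$ and $z^+$ is $z$ itself, forcing one endpoint of $W$ to equal $z$; the converse is automatic since $W$ being an arc excludes the other endpoint from $\{z^-,z,z^+\}$. For (ii) the paper simply says ``symmetrically'', which is your second suggested route; your Serre-duality alternative also works, though note the minor slip that $\toarc{z}=\{z^{(-2)},z\}$ already equals $\fromarc{z^{(-2)}}$ (no $\Sigma^2$ needed), after which part (i) applied to $\fromarc{z^{(-2)}}\to\Sigma^2 W$ gives the result.
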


 \begin{proof}
    There exists a morphism $\fromarc{z} \rightarrow W$ if and only if the arcs $\Sigma\fromarc{z}$ and $W$ cross, which is the case if and only if $W = \{w_0,w_1\}$ such that
 	\begin{equation*}
 		z^- < w_0 < z^+ < w_1.
 	\end{equation*}
    Or equivalently, we have $w_0 = z$. The second statement follows symmetrically.
 \end{proof}

Additive subcategories of $\cC(\cZ)$ that are closed under indecomposable summands correspond to sets of arcs of $\cZ$. In particular, to each subset of $\cZ$ we may assign a unique additive extension closed subcategory of $\cC(\cZ)$ via the {\em convex hull}.

\begin{definition}
	Let $S$ be a subset of $\cZ$. The {\em convex hull of $S$ in $\cC(\cZ)$} is the subcategory
	\begin{equation*}
		\cx(S) = \add \{ \{z,z'\} \text{ arc in } \cC(\cZ) \mid z,z' \in S \}.
	\end{equation*}
\end{definition}

\begin{remark} \label{R:convex_hull_ext_closed}
    The convex hull of a subset $S \subseteq \cZ$ is an extension closed subcategory of $\cC(\cZ)$ by \cite[Lemma~3.4]{Gratz--Zvonareva--2023--tstructures_in_clus_cat}.
\end{remark}

To each subcategory of $\cC(\cZ)$ we may also uniquely assign a subset of $\cZ$ via the {\em support}.

\begin{definition}
    Let $\cS$ be a subcategory of $\cC(\cZ)$. The {\em support} of $\cS$ denoted by $S(\cS)$ is the set of endpoints of arcs in $\cS$, i.e.
    \begin{equation*}
        S(\cS) = \{ z \in \cZ \mid \text{there exists a point } z' \in \cZ \text{ and an arc } \{z, z'\} \in \cS \}.
    \end{equation*}
\end{definition}

\subsection{T-structures on $\cC(\cZ)$}
 The t-structures on $\cC(\cZ)$ in the one-accumulation point case (i.e.\ when $N=1$) have been classified in \cite{Ng}. In the general case, they have been classified in \cite[Theorem~4.6]{Gratz--Zvonareva--2023--tstructures_in_clus_cat} building on the description of torsion pairs in \cite[Theorem~0.9]{GHJ}. The classification of t-structures is given by $\overline{\cZ}$-decorated non-crossing partitions $(\cP,\rx)$ whose definition we recall here. As before, we assume that $\cZ$ has $N \in \bN$ accumulation points and we set $[N] = \{1, \ldots, N\}$.

\begin{definition}
	A {\em non-crossing partition of $[N]$} is a partition $\cP = \{B_m \subseteq [N] \mid m \in I\}$ of $[N]$, such that whenever there exist indices $i, j, k, \ell \in [N]$ and $m_1,m_2 \in I$ with
		\begin{equation*}
			1 \leq i < k < j < \ell \leq N,
		\end{equation*}
		$i, j \in B_{m_1}$ and $k, \ell \in B_{m_2}$, then we must have $m_1 = m_2$.
    If $\{i\} \in \cP$ is a block in $\cP$, then we call $i$ a {\em singleton of $\cP$}. If $\{i,i+1\} \subseteq B$ for some block $B \in \cP$, then we call $i$ an {\em adjacency of $\cP$}.
\end{definition}

\begin{definition}\label{D:decoratednc}
	Let $\cP$ be a non-crossing partition of $[N]$. A {\em $\overline{\cZ}$-decoration of $\cP$} is an $N$-tuple $\rx = (x_1,x_2, \dots, x_N)$ such that
	\begin{equation*}
		x_i \in \begin{cases}
			[a_i,a_{i+1}) \text{ if $i$ is a singleton of $\cP$,}
			\\
			(a_i,a_{i+1}] \text{ if $i$ is an adjacency of $\cP$,}
			\\
			(a_i,a_{i+1}) \text{ else.}
		\end{cases}
	\end{equation*}
	A pair $(\cP,\rx)$ where $\cP$ is a non-crossing partition of $[N]$ and $\rx$ is a $\overline{\cZ}$-decoration of $\cP$ is called a {\em $\overline{\cZ}$-decorated non-crossing partition of $[N]$}.
\end{definition}

\begin{theorem}[{\cite[Theorem~4.6]{Gratz--Zvonareva--2023--tstructures_in_clus_cat}}] \label{T:GZ_classification_aisles}
Let $(\aisle,\coaisle)$ be a t-structure on $\cC(\cZ)$. There exists a unique $\overline{\cZ}$-decorated non-crossing partition $(\cP, \rx)$ such that the aisle $\aisle$ is given by
\begin{equation*}
	\aisle = \add \left( \bigcup_{B \in \cP} \cx \left( \bigcup_{i \in B} (a_i,x_i] \right) \right).
\end{equation*}
\end{theorem}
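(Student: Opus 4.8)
The plan is to derive this classification from the description of torsion pairs in $\cC(\cZ)$ established in \cite[Theorem~0.9]{GHJ}, by singling out exactly those torsion pairs whose torsion class is closed under suspension. The starting observation is that a t-structure on $\cC(\cZ)$ is precisely a torsion pair $(\aisle,\coaisle)$ — meaning $\Hom_{\cC(\cZ)}(\aisle,\coaisle)=0$ together with an approximation triangle for every object — that additionally satisfies $\Sigma\aisle\subseteq\aisle$. Since $\Sigma$ acts on indecomposables by the one-step clockwise rotation $X\mapsto X^-$, the task reduces to feeding rotation-invariance of the set of arcs in $\aisle$ into the combinatorial parametrisation of \cite{GHJ} and reading off which data survive.

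First I would record that an aisle, being additive and closed under indecomposable summands, is determined by the set of arcs it contains, and that $\Sigma$-stability makes this set closed under $X\mapsto X^-$. The key step comes next: combining rotation-invariance with axioms \textbf{Z1}--\textbf{Z3}, I would argue that an ``extremal'' arc of $\aisle$ cannot have an endpoint at an ordinary point of $\cZ$, because iterating the clockwise rotation would otherwise produce an eventually non-constant sequence of endpoints in $\cZ$, which must accumulate, and the only accumulation points available are $a_1,\dots,a_N$. This is what forces the decorating tuple to take values in $\overline{\cZ}$ and pins every ``boundary ray'' of $\aisle$ to a pair of accumulation points. I expect this to be the main conceptual obstacle, together with the delicate boundary analysis in the next step.

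With the accumulation-point structure in place, I would then extract $(\cP,\rx)$: decompose $\aisle$ into additively indecomposable pieces, observe that each such piece is the convex hull of a family of arcs sweeping between two diagonals with endpoints at accumulation points, and let the set of accumulation points involved in a piece be a block $B\subseteq[N]$. The resulting blocks partition $[N]$, and they must be non-crossing, for otherwise the standard morphism constructions in $\cC(\cZ)$ (cf.\ Definition~\ref{D:category} and Lemma~\ref{L:arc_that_picks_out_points}) would produce a non-zero map from an arc of $\aisle$ into an arc of $\coaisle$. For each $i$ in a block, $\aisle$ reaches from $a_i$ in the anticlockwise direction up to some point $x_i\in\overline{\cZ}$ with $a_i\le x_i\le a_{i+1}$; extension-closure of $\aisle$ (Remark~\ref{R:convex_hull_ext_closed}) together with the existence of truncation triangles then dictates whether $x_i$ may equal $a_{i+1}$, may be another accumulation point, or must lie in $\cZ$, reproducing exactly the three cases of Definition~\ref{D:decoratednc}. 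A short check then gives $\aisle=\add\left(\bigcup_{B\in\cP}\cx\left(\bigcup_{i\in B}(a_i,x_i]\right)\right)$.

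It remains to handle the converse and uniqueness. For the converse, given any $\overline{\cZ}$-decorated non-crossing partition $(\cP,\rx)$ I would set $\aisle$ to the right-hand side above and verify the t-structure axioms: closure under $\Sigma$ holds because $(a_i,x_i]\cap\cZ$ is mapped into itself by $z\mapsto z^-$ (the point $a_i$ being a two-sided accumulation point, so no endpoint in this interval is minimal); closure under extensions follows from the non-crossing condition via Remark~\ref{R:convex_hull_ext_closed}; Hom-orthogonality against the combinatorial complement is a direct check on endpoints; and — the most laborious point — every arc $W$ admits an approximation triangle, built by explicitly cutting $W$ along the boundary diagonals of $\aisle$. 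Uniqueness is then immediate, since $(\cP,\rx)$ can be read back off $\aisle$: each $x_i$ is the furthest point reached near $a_i$, and the blocks record which accumulation points are linked by arcs of $\aisle$. The hardest parts overall will be the accumulation-point step in the second paragraph and, on the technical side, the open/half-open/closed case analysis for the $x_i$ and the explicit construction of truncation triangles.
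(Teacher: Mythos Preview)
The paper does not prove this statement at all: Theorem~\ref{T:GZ_classification_aisles} is quoted verbatim from \cite[Theorem~4.6]{Gratz--Zvonareva--2023--tstructures_in_clus_cat} as an external input and is used without any argument in the present paper. There is therefore no proof here to compare your proposal against.

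That said, your outline is broadly the right strategy and indeed mirrors how the cited paper proceeds --- starting from the torsion-pair classification of \cite{GHJ} and isolating the suspension-closed ones. A word of caution on one point: your ``extremal arc'' argument in the second paragraph is not quite right as stated. Rotation-invariance of the set of arcs in $\aisle$ does \emph{not} force individual endpoints of arcs to be accumulation points; every arc in $\aisle$ has both endpoints in $\cZ$, and the decorations $x_i$ arise instead as suprema of endpoint sets, not as endpoints of any single ``extremal'' arc. What $\Sigma$-closure actually buys is that the support of $\aisle$ in each segment $(a_i,a_{i+1})$ is a half-open interval $(a_i,x_i]\cap\cZ$ anchored at the accumulation point $a_i$. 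With that correction the rest of your plan --- reading off the blocks, checking non-crossing via Hom-vanishing, and the boundary case analysis for the $x_i$ --- is sound.
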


There is a similar description for the corresponding coaisle. By abuse of notation, for all indices $i \in [N]$, we set $a_i^- = a_i = a_i^+$.

\begin{corollary}[{\cite[Corollary~4.14]{Gratz--Zvonareva--2023--tstructures_in_clus_cat}}] \label{C:GZ_classification_coaisle}
Let $(\aisle,\coaisle)$ be a t-structure on $\cC(\cZ)$ with aisle corresponding to the $\overline{\cZ}$-decorated non-crossing partition $(\cP, \rx)$. Denote by $\cP^c$ the Kreweras complement of $\cP$. Then
\begin{equation*}
	\coaisle = \add \left( \bigcup_{B \in \cP^c} \cx \left( \bigcup_{i \in B} [x_i^-,a_{i+1}) \right) \right).
\end{equation*}
\end{corollary}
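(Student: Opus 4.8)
The plan is to identify the coaisle of the given t-structure with the right-hand side of the claimed formula, which we abbreviate
\[
    \coaisle' \;:=\; \add \left( \bigcup_{B \in \cP^c} \cx \left( \bigcup_{i \in B} [x_i^-,a_{i+1}) \right) \right).
\]
Recall the standard fact that for any t-structure $(\aisle,\coaisle)$ one has $\coaisle = \aisle^{\perp} := \{Y \in \cC(\cZ) \mid \Hom_{\cC(\cZ)}(X,Y) = 0 \text{ for all } X \in \aisle\}$. The inclusion ``$\subseteq$'' is immediate from the definition of a t-structure; conversely, if $\Hom_{\cC(\cZ)}(X,Y) = 0$ for all $X \in \aisle$, then the morphism $\tau_\aisle Y \to Y$ in the approximation triangle vanishes, so $Y$ is a direct summand of $\tau^\coaisle Y \in \coaisle$ and hence lies in $\coaisle$. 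Since $\coaisle$ and $\coaisle'$ are both additive and closed under direct summands, it is enough to prove, for each indecomposable $Y \in \cC(\cZ)$, that $Y \in \coaisle'$ if and only if $\Hom_{\cC(\cZ)}(X,Y) = 0$ for all $X \in \aisle$.

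First I would recast the condition on $Y$ combinatorially. By Theorem~\ref{T:GZ_classification_aisles}, the indecomposables of $\aisle$ are precisely the arcs with both endpoints in $\bigcup_{i \in B}(a_i,x_i]$ for a single block $B$ of $\cP$. Since the accumulation points are two-sided, the shift $\Sigma X = X^-$ of such an arc has both endpoints in $R_B := \bigcup_{i \in B}(a_i,x_i^-] \cap \cZ$, and conversely every arc with endpoints in $R_B$ arises in this way. By Definition~\ref{D:category}, $\Hom_{\cC(\cZ)}(X,Y) \neq 0$ exactly when $\Sigma X$ and $Y$ cross; so $\Hom_{\cC(\cZ)}(X,Y) = 0$ for all $X \in \aisle$ if and only if, for every block $B$ of $\cP$, the arc $Y$ crosses no arc with both endpoints in $R_B$. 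Because the two endpoints of an arc are never adjacent and $R_B$ accumulates at each $a_i$ with $i \in B$, this is equivalent to: for every block $B$ of $\cP$, the region $R_B$ is contained in one of the two closed arcs into which the endpoints of $Y$ divide $S^1$.

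It then remains to show this is equivalent to $Y \in \coaisle'$, which I would do via the two implications. For ``$\Leftarrow$'', if $Y = \{y_0,y_1\}$ has both endpoints in $\bigcup_{i \in B'}[x_i^-,a_{i+1})$ for a single block $B'$ of $\cP^c$, then no arc with endpoints in any $R_B$ crosses $Y$: on the standard model with $2N$ points carrying the $a_i$ and the gap intervals $[x_i^-,a_{i+1})$ in cyclic order, $\cP \cup \cP^c$ is non-crossing, and the only points lying both in an aisle region $R_B$ and in a coaisle region $\bigcup_{i\in B'}[x_i^-,a_{i+1})$ are the $x_i^-$, which can occur only as shared endpoints and thus never create a crossing. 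For ``$\Rightarrow$'' I argue by contraposition. If $Y \notin \coaisle'$, then either some endpoint $y_k$ lies in $(a_i,x_i^-)$ for some $i$ --- in which case, using two-sidedness of $a_i$, one directly produces an arc with endpoints in $R_B$ (with $B \ni i$ the block of $\cP$) that crosses $Y$, or else notes that $Y$ itself already lies in $\aisle$ --- or both endpoints lie in $\bigcup_i[x_i^-,a_{i+1})$ but in gap intervals belonging to distinct blocks of $\cP^c$, in which case the defining maximality of the Kreweras complement produces a block $B$ of $\cP$ whose region $R_B$ separates $y_0$ from $y_1$, hence again a crossing arc. In either case $\Hom_{\cC(\cZ)}(X,Y) \neq 0$ for some $X \in \aisle$.

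The main obstacle is the combinatorial core of the last paragraph: genuinely extracting the Kreweras complement from the condition ``$R_B$ lies on one side of $Y$ for every block $B$ of $\cP$''. This demands careful control of the cyclic interleaving of the intervals $(a_i,x_i^-]$ and $[x_i^-,a_{i+1})$ and of the boundary points $x_i^-$ (each of which belongs both to an aisle region and to the adjacent coaisle region), together with a case distinction according to whether an index is a singleton, an adjacency, or neither of $\cP$ --- precisely the data governing whether the decoration interval is open or half-open at $a_i$ and $a_{i+1}$, and in particular whether $x_i^- = a_i$ can occur. Routing the argument through the $2N$-point model on which $\cP$ and $\cP^c$ jointly form a non-crossing partition, and using the description of $\cP^c$ as the coarsest non-crossing partition of the gap positions compatible with $\cP$, keeps this bookkeeping under control.
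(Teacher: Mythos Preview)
The paper does not give its own proof of this statement: it is quoted verbatim as \cite[Corollary~4.14]{Gratz--Zvonareva--2023--tstructures_in_clus_cat} and used as a black box. So there is no ``paper's own proof'' to compare your attempt against. What you have written is a plausible sketch of a direct argument, essentially reconstructing the cited result from the characterisation $\coaisle = \aisle^\perp$ and the crossing description of Hom-spaces.

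As a self-contained proof, your outline is sound in spirit but remains a sketch at the critical step. The reduction to ``$R_B$ lies on one side of $Y$ for every block $B$'' is clean, and the ``$\Leftarrow$'' direction via non-crossingness of $\cP \cup \cP^c$ on the $2N$-point model is the right idea. The ``$\Rightarrow$'' direction, however, is where the work lies, and you have only gestured at it. In the case $y_k \in (a_i,x_i^-)$ you need to actually exhibit an arc of $R_B$ crossing $Y$; when both endpoints of $Y$ lie in the same segment $(a_i,a_{i+1})$ this is easy, but when they do not you must use the block structure, and the phrase ``or else notes that $Y$ itself already lies in $\aisle$'' does not cover all subcases. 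In the second case (endpoints in gap intervals from distinct $\cP^c$-blocks), invoking ``maximality of the Kreweras complement'' is correct but needs to be spelled out: you must produce indices $i,j$ in a common $\cP$-block with $y_0$ and $y_1$ separated by the corresponding intervals $(a_i,x_i^-]$ and $(a_j,x_j^-]$, and check these intervals are nonempty (which uses that $i,j$ are not singletons, via the decoration conventions). The boundary cases $x_i = a_i$ and $x_i = a_{i+1}$ you flag at the end are exactly where care is needed. None of this is wrong, but a referee would ask you to fill it in; in the present paper the authors sidestep all of this by citing the source.
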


The coaisle of a t-structure is uniquely determined by the aisle and vice versa. Given the combinatorial description of the aisle $\aisle$, the coaisle $\coaisle$ of the corresponding t-structure is easy to read from the disc model, and vice versa. There is an example of this phenomena in Subsection~\ref{SS:EX:tstructures}, particularly in Figure~\ref{F:tstructure}. Explicitly, we have
\begin{eqnarray*}
    \aisle &=& \add\{Z \text{ arc of } \cZ \mid Z \text{ does not cross any arc in } \Sigma^{-1} \coaisle\},
    \\
    \coaisle &=& \add\{Z \text{ arc of } \cZ \mid Z \text{ does not cross any arc in } \Sigma \aisle\}.
\end{eqnarray*}

The equivalence classes of t-structures on $\cC(\cZ)$ can be easily determined from their combinatorial description.

\begin{definition}[{\cite[Definition~4.25]{Gratz--Zvonareva--2023--tstructures_in_clus_cat}}]
	For a $\overline{\cZ}$-decoration $\ry = (y_1, \ldots, y_N)$ we denote by $Z(\ry)$ its {\em set of $\cZ$-indices}
	\[
	Z(\ry) = \{i \in [N] \mid y_i \in \cZ\}.
	\]
	Two $\overline{\cZ}$-decorated non-crossing partitions $(\cP,\rx)$ and $(\cP',\rx')$ are {\em equivalent} if $\cP = \cP'$ and $Z(\rx) = Z(\rx')$.
\end{definition}

\begin{proposition}[{\cite[Proposition~4.27]{Gratz--Zvonareva--2023--tstructures_in_clus_cat}}]
	Two t-structures on $\cC(\cZ)$ are equivalent if and only if their respective associated $\overline{\cZ}$-decorated non-crossing partitions are equivalent.
\end{proposition}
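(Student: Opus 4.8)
The plan is to first describe precisely how the suspension functor acts on the classification of Theorem~\ref{T:GZ_classification_aisles}, and then read off both implications from that description. Since $\Sigma$ sends an arc $\{z,z'\}$ to $\{z^-,z'^-\}$, fixes accumulation points (as $a_i^-=a_i=a_i^+$), and commutes with $\add$, with unions of subcategories, and with convex hulls, the aisle $\aisle$ of the t-structure attached to a $\overline\cZ$-decorated non-crossing partition $(\cP,\rx)$ satisfies
\[
\Sigma^t\aisle=\add\left(\bigcup_{B\in\cP}\cx\left(\bigcup_{i\in B}(a_i,x_i^{(-t)}]\right)\right),
\]
because taking $t$-fold predecessors maps $(a_i,x_i]\cap\cZ$ bijectively onto $(a_i,x_i^{(-t)}]\cap\cZ$ (no point of $\cZ$ ``falls off'' at $a_i$, since $\cZ$ accumulates there). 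A short check, using that predecessors stay inside the prescribed half-open intervals while accumulation points are fixed, shows $\rx^{(-t)}=(x_1^{(-t)},\dots,x_N^{(-t)})$ is again a valid $\overline\cZ$-decoration of $\cP$; hence $\Sigma^t\aisle$ is exactly the aisle associated to $(\cP,\rx^{(-t)})$, and dually $\Sigma^{-t}\aisle$ to $(\cP,\rx^{(t)})$. In particular $Z(\rx^{(\pm t)})=Z(\rx)$, so $(\cP,\rx)$ and $(\cP,\rx^{(\pm t)})$ are equivalent as decorated non-crossing partitions.

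For the \emph{if} direction, assume the associated decorated non-crossing partitions $(\cP,\rx)$ and $(\cP',\rx')$ are equivalent, so $\cP=\cP'$ and $Z(\rx)=Z(\rx')$. Argue coordinate by coordinate. If $i\notin Z(\rx)$, the decoration conditions force $x_i=x_i'$, since each is the only accumulation point available in the relevant interval ($a_i$ for a singleton of $\cP$, $a_{i+1}$ for an adjacency; an ``else'' index always lies in $Z$). If $i\in Z(\rx)$, then $x_i$ and $x_i'$ both lie in $\cZ$ inside the same open sector $(a_i,a_{i+1})$, whose closure contains no accumulation point except its endpoints; so a closed sub-arc bounded away from those endpoints meets $\cZ$ in a finite set, and $x_i'=x_i^{(k_i)}$ for a unique $k_i\in\bZ$. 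Put $t=\max_i|k_i|$ (with $k_i:=0$ when $i\notin Z(\rx)$). Then $(a_i,x_i^{(-t)}]\subseteq(a_i,x_i']\subseteq(a_i,x_i^{(t)}]$ for every $i$, and applying $\cx$ block by block and then $\add$ gives $\Sigma^t\aisle\subseteq\aisle'\subseteq\Sigma^{-t}\aisle$; the t-structures are equivalent.

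For the \emph{only if} direction, assume $\Sigma^t\aisle\subseteq\aisle'\subseteq\Sigma^{-t}\aisle$, which by the first paragraph says that the aisle of $(\cP,\rx^{(-t)})$ lies in that of $(\cP',\rx')$, which lies in that of $(\cP,\rx^{(t)})$. I would prove two facts straight from Theorem~\ref{T:GZ_classification_aisles}. (a) If the aisle of $(\cQ_1,\ry_1)$ lies in that of $(\cQ_2,\ry_2)$, then every sector that is \emph{visible} for $(\cQ_1,\ry_1)$ (i.e.\ whose interval is nonempty, so it carries endpoints of arcs in the aisle) is visible for $(\cQ_2,\ry_2)$, and on the visible sectors $\cQ_1$ refines $\cQ_2$: given $i\neq j$ in one block of $\cQ_1$, an arc with endpoints chosen close enough to $a_i$ and $a_j$ lies in the intervals for both, hence in both aisles, forcing $i,j$ into a single block of $\cQ_2$ (for $N=1$ there is a unique non-crossing partition, so this is vacuous). (b) For a fixed partition $\cQ$, the aisle of $(\cQ,\ry_1)$ lies in that of $(\cQ,\ry_2)$ if and only if in each sector $(a_i,a_{i+1})$ the $i$-th entry of $\ry_1$ precedes or equals that of $\ry_2$; same arc-chasing. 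Now an invisible sector ($x_i=a_i$) is forced to be a singleton block with $i\notin Z$, and $\Sigma^{\pm t}$ changes neither the set of invisible sectors nor the set $Z$. Applying (a) to both inclusions shows $(\cP,\rx^{(\pm t)})$ and $(\cP',\rx')$ have the same visible sectors, $\cP$ refines $\cP'$ and $\cP'$ refines $\cP$ on them, and both are singletons off them; hence $\cP=\cP'$. Applying (b) with $\cP=\cP'$ gives $x_i^{(-t)}\le x_i'\le x_i^{(t)}$ in each sector order; when $x_i\in\cZ$ this squeezes $x_i'$ between two points of $\cZ$ interior to $(a_i,a_{i+1})$ and hence $x_i'\in\cZ$, while when $x_i\notin\cZ$ it forces $x_i'=x_i\notin\cZ$. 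So $Z(\rx')=Z(\rx)$, and $(\cP,\rx)$, $(\cP',\rx')$ are equivalent.

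I expect fact (a) to be the main obstacle: the point is to pin down exactly which part of $(\cP,\rx)$ is an invariant of the aisle. The individual $\cZ$-coordinates are genuinely not recoverable, and ``invisible'' singleton blocks contribute no arcs, so they cannot be detected directly from the aisle; the content of (a) is that the remaining data---the partition on the visible sectors together with the set $Z(\rx)$---is recoverable and visibly invariant under $\Sigma$, after which the two-sided inclusion pins down the invisible sectors too, because $\Sigma^{\pm t}$ fixes the set of them.
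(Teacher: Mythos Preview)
The paper does not prove this proposition at all; it simply quotes it from \cite[Proposition~4.27]{Gratz--Zvonareva--2023--tstructures_in_clus_cat} without argument. So there is nothing to compare your approach against in this paper.

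That said, your proposal is a correct self-contained proof. The computation of $\Sigma^t\aisle$ via $(\cP,\rx^{(-t)})$ is right, and the ``if'' direction goes through cleanly. In the ``only if'' direction your two auxiliary facts (a) and (b) are both true and your sketch of (a) is accurate: the key observation you flag---that an invisible sector (one with $x_i=a_i$) is forced by the decoration rules to be a singleton block---ensures that blocks of size $\geq 2$ consist entirely of visible sectors, so mutual refinement on visible sectors together with ``invisible sectors are singletons in both'' really does yield $\cP=\cP'$ on all of $[N]$. For (b), the point you use (aisle inclusion with fixed partition forces the coordinatewise inequality on decorations) is established by producing, whenever $x_i^{(-t)}\not\le x_i'$, an arc with both endpoints in $(a_i,x_i^{(-t)}]$, one of them beyond $x_i'$ and one close to $a_i$; this uses only that $\cZ$ accumulates at $a_i$. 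The final squeeze $x_i^{(-t)}\le x_i'\le x_i^{(t)}$ then pins down $Z(\rx')$ exactly as you say, since $\overline\cZ\cap[x_i^{(-t)},x_i^{(t)}]\subseteq\cZ$ when $x_i\in\cZ$, and the interval collapses to $\{x_i\}$ when $x_i\notin\cZ$.
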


In the computation of completions of $\cC(\cZ)$ with respect to coaisle metrics, it will be sufficient to restrict our attention to {\em right non-degenerate t-structures}. 

\begin{definition} \label{D:RNDG}
    A t-structure $(\aisle,\coaisle)$ on $\cT$ is {\em right non-degenerate} if $\bigcap_{n \in \bZ} \Sigma^n \coaisle = 0$. In which case, we say that the coaisle $\coaisle$ is also right non-degenerate. 
\end{definition}

Right non-degenerate t-structures on $\cC(\cZ)$ are precisely those that have no decorations $x_i$ such that $x_i=a_i$ \cite[Corollary~4.19]{Gratz--Zvonareva--2023--tstructures_in_clus_cat}. See, for example, the coaisles depicted in Subsection~\ref{SS:EX:RNDGcoaisles}, particularly in Figure~\ref{F:RNDGcoaisles}. Therefore, for a right non-degenerate t-structure $(\aisle, \coaisle)$ with $\overline{\cZ}$-decorated non-crossing partition $(\cP,\rx)$, the support of $\coaisle$ is given by 
\begin{equation*}
		S(\coaisle) = \{ x_i^{(s)} \mid x_i \in \cZ \text{ and } s \geq -1\}.
	\end{equation*}

Given an extension closed subcategory $\cS$ of $\cC(\cZ)$ there exists, up to equivalence, a unique largest aisle $\widehat{\aisle}(S)$ with respect to inclusion that lies in $\cS$. More precisely, there exists a unique equivalence class of aisles $[\widehat{\aisle}(S)]$ such that:
\begin{enumerate}
    \item there is a representative $\widehat{r}$ of $[\widehat{\aisle}(S)]$ with $\widehat{r} \subseteq \cS$ and;
    \item for any aisle $\aisle \subseteq S$, there exists a representative $r$ of $[\widehat{\aisle}(S)]$ with $\aisle \subseteq r$.
\end{enumerate}
The equivalence class is constructed as follows: Let $\aisle$ be an aisle of $\cC(\cZ)$ and denote by $[\aisle]$ its equivalence class under the equivalence of t-structures. Consider the finite set of equivalence classes
\[
    [A(\cS)] = \{[\aisle] \mid \aisle \subseteq \cS \text{ and $\aisle$ is an aisle in $\cC(\cZ)$} \}
\]
of aisles contained in $\cS$. For each $[\aisle] \in [A(\cS)]$ choose a representative $r_{[\aisle]} \in [\aisle]$ such that $r_{[\aisle]} \subseteq \cS$. By \cite[Proposition~5.12]{Gratz--Zvonareva--2023--tstructures_in_clus_cat}, t-structures on $\cC(\cZ)$ form a lattice under inclusion of aisles. Therefore, we may take the join over the representatives $r_{[\aisle]}$ to obtain an aisle
\[
    \widehat{\aisle}(\cS) = \bigvee_{[\aisle] \in [A(\cS)]} r_{[\aisle]}.
\]
In particular, suppose that $r_{[\aisle]}$ corresponds to the $\overline{\cZ}$-decorated non-crossing partition $(\cP(r_{[\aisle]}), \rx(r_{[\aisle]}))$ and let $\widehat{\cP}$ be the partition given by the join in the lattice of non-crossing partitions over the partitions $\cP_{r_{[\aisle]}}$. Then the aisle $\widehat{\aisle}(\cS)$ corresponds to the $\overline{\cZ}$-decorated non-crossing partition 
\begin{equation*}
    \left(\widehat{\cP}, \max\{\rx(r_{[\aisle]}) \mid [\aisle] \in [A(\cS)]\}\right),
\end{equation*}
where the maximum on the decorations is taken entry-wise. As $\cS$ is extension closed and contains every $r_{[\aisle]}$, it also contains $\widehat{\aisle}(\cS)$. By construction, $\widehat{\aisle}(\cS)$ is unique up to equivalence and we say that $[\widehat{\aisle}(\cS)]$ is the {\em equivalence class of largest aisles contained in $\cS$}. Note that not every representative of this equivalence class is contained in $\cS$; the precise decoration of an aisle plays a role. A representative $\widehat{\aisle}$ of $[\widehat{\aisle}(S)]$ is {\em a largest aisle contained in $\cS$} if $\widehat{\aisle} \subseteq \cS$. Such an example is depicted in Figure~\ref{F:largest_aisles}

\begin{lemma} \label{L:right-non-degenerate}
    Let $\coaisle$ be a coaisle of $\cC(\cZ)$ and let $\widehat{\aisle}$ be a largest aisle contained in $\coaisle$ with corresponding t-structure $(\widehat{\aisle},\widehat{\coaisle})$. Then for all integers $s$ we have $\Sigma^s  \widehat{\aisle} \subseteq \coaisle$, and, for all integers $r$, the intersection $\coaisle \cap \Sigma^r \widehat{\coaisle}$ is a right non-degenerate coaisle of $\cC(\cZ)$. 
\end{lemma}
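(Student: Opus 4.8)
The statement has two parts: first, that $\Sigma^s\widehat\aisle \subseteq \coaisle$ for all integers $s$, and second, that $\coaisle \cap \Sigma^r\widehat\coaisle$ is a right non-degenerate coaisle for all $r$. I would handle them in that order, since the first is used to set up the second.

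For the first part, recall that $\widehat\aisle$ is an aisle, so $\Sigma\widehat\aisle \subseteq \widehat\aisle \subseteq \coaisle$ handles all $s \geq 0$. The content is in the negative shifts. The key observation is that for a coaisle in $\cC(\cZ)$, the subcategory $\coaisle$ is an \emph{aisle} of the opposite category, or more concretely: by the combinatorial description (Corollary~\ref{C:GZ_classification_coaisle}), $\coaisle$ is a convex hull type subcategory, and I would argue directly that $\Sigma^{-1}\widehat\aisle \subseteq \coaisle$. This should follow because $\widehat\aisle$ is a \emph{largest} aisle contained in $\coaisle$: if $\Sigma^{-1}\widehat\aisle$ were not contained in $\coaisle$, one could enlarge $\widehat\aisle$ within $\coaisle$ — but one must be careful, because $\Sigma^{-1}\widehat\aisle$ need not be an aisle. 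The cleaner route is to use the explicit descriptions: $\aisle$ in Theorem~\ref{T:GZ_classification_aisles} is built from intervals $(a_i, x_i]$, and applying $\Sigma^{-1}$ shifts endpoints one step counterclockwise, so $\Sigma^{-1}$ of such an interval is contained in $[x_i^-, a_{i+1})$-type intervals appearing in the coaisle description — and one checks the non-crossing/complement bookkeeping works out because $\widehat\aisle \subseteq \coaisle$ already forces the blocks and decorations to be compatible. I would phrase this as: since $\widehat\aisle \subseteq \coaisle$, every arc in $\widehat\aisle$ fails to cross any arc of $\Sigma\aisle$ where $(\aisle,\coaisle)$ is the ambient t-structure; desuspending preserves the non-crossing relation appropriately, giving $\Sigma^{-1}\widehat\aisle \subseteq \coaisle$. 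Iterating gives all negative shifts.

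For the second part, I first note $\coaisle \cap \Sigma^r\widehat\coaisle$ is a coaisle: coaisles are closed under extensions, direct summands, and negative shifts $\Sigma^{-1}\coaisle \subseteq \coaisle$, and all three properties pass to the intersection of two coaisles; that it is the coaisle of an actual t-structure on $\cC(\cZ)$ follows because $\cC(\cZ)$ has the property that such intersections of coaisles are again coaisles (this is implicit in the lattice structure from \cite[Proposition~5.12]{Gratz--Zvonareva--2023--tstructures_in_clus_cat}, applied on the coaisle side via self-duality of $\cC(\cZ)$ — alternatively one exhibits its $\overline\cZ$-decorated non-crossing partition directly). For right non-degeneracy I must show $\bigcap_{n\in\bZ}\Sigma^n\!\left(\coaisle \cap \Sigma^r\widehat\coaisle\right) = 0$. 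Since $\Sigma^n(\coaisle \cap \Sigma^r\widehat\coaisle) = \Sigma^n\coaisle \cap \Sigma^{n+r}\widehat\coaisle$, the intersection over all $n$ is contained in $\bigcap_{n}\Sigma^n\widehat\coaisle$. So it suffices to show $(\widehat\aisle,\widehat\coaisle)$ is right non-degenerate, i.e.\ $\bigcap_n \Sigma^n\widehat\coaisle = 0$. This is where the first part of the lemma does its work: because $\Sigma^s\widehat\aisle \subseteq \coaisle$ for all $s$, the aisle $\widehat\aisle$ cannot contain all shifts of any fixed object (otherwise $\coaisle$ would contain a nonzero object together with all its shifts, but one checks no coaisle of $\cC(\cZ)$ — being a convex hull over a bounded collection of intervals — has this property unless... hmm). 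More directly: by \cite[Corollary~4.19]{Gratz--Zvonareva--2023--tstructures_in_clus_cat}, a t-structure on $\cC(\cZ)$ is right non-degenerate exactly when no decoration satisfies $x_i = a_i$; and the decoration of $\widehat\aisle$ is $\max\{\rx(r_{[\aisle]})\}$ taken entrywise over representatives contained in $\coaisle$ — I would argue this maximum never equals $a_i$ in coordinate $i$, using that $\coaisle$ itself, being a genuine coaisle, forces each relevant $x_i$ strictly away from $a_i$ via the convexity conditions in Corollary~\ref{C:GZ_classification_coaisle}.

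\textbf{Main obstacle.} The delicate point is the combinatorial verification that $\Sigma^{-1}\widehat\aisle \subseteq \coaisle$ and the companion claim that $\widehat\coaisle$ is right non-degenerate — both reduce to tracking how the $\overline\cZ$-decorated non-crossing partition of $\widehat\aisle$ (with its $\max$-decoration built from aisles inside $\coaisle$) interacts with the interval descriptions of $\coaisle$ in Theorem~\ref{T:GZ_classification_aisles} and Corollary~\ref{C:GZ_classification_coaisle}. In particular one must confirm that taking the join of aisles contained in $\coaisle$ and then the entrywise maximum of decorations cannot push a decoration onto an accumulation point $a_i$, which would destroy right non-degeneracy; this is ultimately forced by the fact that $\coaisle$ is a coaisle (so its complementary partition $\cP^c$ and the interval endpoints $x_i^-$ leave no room for $x_i = a_i$ among sub-aisles), but making that precise is the real work.
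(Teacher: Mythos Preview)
Your approach to the first part is on the right track but remains vague; the paper does exactly what you gesture at with the interval descriptions. From $\widehat\aisle \subseteq \coaisle$ one reads off, segment by segment, that $(a_i,\widehat x_i] \subseteq [x_i^-,a_{i+1})$. Since $(a_i,\widehat x_i]$ contains points arbitrarily close to $a_i$ whenever $\widehat x_i \neq a_i$, this forces $x_i = a_i$ in that case, so the \emph{entire} segment $(a_i,a_{i+1})$ lies in the support of $\coaisle$. Then every shift $(a_i,\widehat x_i^{(-s)}]$ stays inside, and $\Sigma^s\widehat\aisle \subseteq \coaisle$ follows for all $s$. Your non-crossing reformulation could presumably be made to work, but the interval argument is immediate.

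For the second part there is a genuine gap: your reduction ``it suffices to show $(\widehat\aisle,\widehat\coaisle)$ is right non-degenerate'' is false in general. Take $N=1$ with $x_1 \in \cZ$. Then $\coaisle = \cx([x_1^-,a_1))$ does not contain the points near $a_1$, so the only aisle inside $\coaisle$ is the zero aisle, giving $\widehat x_1 = a_1$ and $\widehat\coaisle = \cC(\cZ)$, which is \emph{not} right non-degenerate. What saves the intersection $\coaisle \cap \Sigma^r\widehat\coaisle$ here is $\coaisle$ itself. The correct mechanism is a dichotomy: the first part gives $\widehat x_i \neq a_i \Rightarrow x_i = a_i$, while \emph{maximality} of $\widehat\aisle$ gives the converse $x_i = a_i \Rightarrow \widehat x_i \neq a_i$ (because when $x_i = a_i$ the whole segment $(a_i,a_{i+1})$ is available in $\coaisle$, so a largest sub-aisle must use it). Hence for every $i$ at least one of $x_i,\widehat x_i$ differs from $a_i$, so the support of $\coaisle \cap \Sigma^r\widehat\coaisle$ in segment $i$ is $[x_i^-,a_{i+1}) \cap [\widehat x_i^{-(r+1)},a_{i+1}) \neq (a_i,a_{i+1})$, and right non-degeneracy follows. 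Your final paragraph almost stumbles onto this --- you mention the $\max$-decoration --- but you are trying to rule out $\widehat x_i = a_i$ globally, which cannot be done; the point is that $\widehat x_i = a_i$ only occurs when $x_i \neq a_i$, and then $\coaisle$ does the cutting.
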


\begin{proof}
    Let $(\aisle,\coaisle)$ be a t-structure on $\cC(\cZ)$ and let $(\cP,\rx)$ and $(\widehat{\cP}, \widehat{\rx})$ be the $\overline{\cZ}$-decorated non-crossing partitions associated to $\aisle$ and $\widehat{\aisle}$ respectively. As $\widehat{\aisle} \subseteq \coaisle$, for each block $\widehat{B} \in \widehat{\cP}$ there exists a block $B \in \cP$ with an inclusion
    \begin{equation*}
	\cx \left( \bigcup_{i \in \widehat{B}} (a_i,\widehat{x}_i] \right) \subseteq \cx \left( \bigcup_{j \in B} [x_j^-,a_{j+1}) \right),
    \end{equation*}
    so we have $\widehat{B} \subseteq B$. Also for each index $i \in [N]$ we have
    \begin{equation*}
        (a_i,\widehat{x}_i] \subseteq [x_i^-,a_{i+1}),
    \end{equation*}
    so $\widehat{x}_i \neq a_i$ implies $x_i = a_i$. Thus, for all integers $s$, we have
    \begin{equation*}
        (a_i,\widehat{x}_i^{(-s)}] \subseteq [x_i^-,a_{i+1}),
    \end{equation*}
    and $\Sigma^s \widehat{\aisle} \subseteq \coaisle$. Now fix an integer $r$ and consider the intersection $\rcoaisle = \coaisle \cap \Sigma^r\widehat{\coaisle}$, which is a coaisle of $\cC(\cZ)$ by \cite[Remark~5.4]{Gratz--Zvonareva--2023--tstructures_in_clus_cat}. As $\widehat{\aisle}$ is a largest aisle contained in $\coaisle$, for each index $i \in [N]$, we have $x_i = a_i$ implies $\widehat{x}_i \neq a_i$. Thus, it follows that
    \begin{equation*}
        S(\rcoaisle) \cap (a_i,a_{i+1}) = [x_i^-,a_{i+1}) \cap [\widehat{x}_i^{-(r+1)},a_{i+1}) \neq (a_i,a_{i+1}).
    \end{equation*}
    Hence, $\rcoaisle$ is a right non-degenerate coaisle.
\end{proof}

An example of this lemma can be found in Figure~\ref{F:largest_aisles}.

\subsection{Double fans}\label{S:double fans}

In this subsection we focus on a special collection of sequences in $\cC(\cZ)$, called double fans, which control the coaisle metric completions of $\cC(\cZ)$.

\begin{definition} \label{D:double_fan}
    Let $Z = \{z_0,z_1\}$ be an arc of $\cZ$. For all integers $n>0$, define points $f_n$ and $f'_n$ of $\cZ$ as follows:
    \begin{itemize}
        \item Either for all $n >0$ we have $f_n = z_0$, or for all $n > 0$ we have $f_n=z_0^{(n)}$;
        \item Either for all $n >0$ we have $f'_n = z_1$, or for all $n > 0$ we have $f'_n=z_1^{(n)}$.
    \end{itemize}
    
    Assume that for all $n > 0$ the set $F_n = \{f_n,f'_n\}$ is an arc in $\cC(\cZ)$, and that the limits $f = \lim f_n$ and $f' = \lim f'_n$ in $\overline{\cZ}$ are distinct.
    Let $\zeta_n \colon F_n \rightarrow F_{n+1}$ be the canonical morphism enabled by the ordering
	\begin{equation*}
		f_n^- < f_{n+1} < {f'_n}^- < f'_{n+1}.
	\end{equation*}
	Then the sequence $\mathbf{F} =  (F_n,\zeta_n)_{n>0}$ is an {\em indecomposable double fan} and we denote its module colimit by
\[
    \mocolim \mathbf{F} = \{f, f'\}.
\]
A direct sum 
\[
    \mathbf{F} = (\bigoplus_{d=1}^m F_n(d), \bigoplus_{d=1}^m\zeta_n(d))_{n > 0}
\]
of indecomposable double fans $(F_n(d),\zeta_n(d))_{n>0}$ is called a {\em double fan}.
\end{definition}

We use the name double fan due to the appearance of these sequences in the combinatorial model of $\cC(\cZ)$, as can be seen in Figure~\ref{F:doublefanseq}. Since the topological closure of $\cZ$ is $\cZ \cup L(\cZ)$ we may realise the module colimits of double fans in the combinatorial model of $\cC(\cZ)$ by allowing arcs to end at accumulation points. In line with that viewpoint we call a two-element subset 
$\{z_0,z_1\}$ of $\overline{\cZ}$ an {\em arc of $\overline{\cZ}$} if $z_0 \notin \{z_1^-,z_1,z_1^+\}$; where for any accumulation point $a_i \in L(\cZ)$, by abuse of notation, we write $a_i^- = a_i = a_i^+$.

\begin{remark}\label{R:every arc is a mocolim}
Note that every arc $\{f,f'\}$ of $\overline{\cZ}$ can be realised as a module colimit of a double fan $\mathbf{F} = (\{f_n,f'_n\},\zeta_n)_{n>0}$: If $f \in \cZ$, we set $f_n = f$ for all $n>0$. Conversely, if $f = a_k \in L(\cZ)$ for some index $k \in [N]$, then we choose a point $z \in (a_{k-1}, a_k) \cap \cZ$ and set $f_n = z^{(n)}$ for all $n>0$. Analogously, we define $f'_n$.
\end{remark}

The morphisms between module colimits of double fans can be described via the combinatorial model.

\begin{lemma} \label{L:morphisms_between_double_fans}
Let $\mathbf{F}$, $\mathbf{G}$ and $\mathbf{H}$ be indecomposable double fans with module colimits $F$, $G$ and $H$ respectively. Then we have
    \begin{equation*}
		\Hom_{\Mod\cC(\cZ)}(F, G) \cong \begin{cases}
			\bK, &\text{if } F = \{f, f'\} \text{ and }  G = \{g,g'\} \text{ with } \\
    & f \leq g < {f'}^- \text{ and } f' \leq  g' < f^-
			\\
			0, &\text{ otherwise.}\\
		\end{cases}
	\end{equation*}
Moreover, let $\varphi \colon F = \{f,f'\} \to G = \{g,g'\}$ and $\psi \colon G \to H = \{h,h'\}$ be non-trivial maps with
\[
    f \leq g < {f'}^-, \;\; f' \leq g' < f^- \text{ and } g \leq h < {g'}^-, \;\; g' \leq h' < g^-. 
\]
Then $\psi \circ \varphi \neq 0$ if and only if 
\[
    f \leq g \leq h \text{ and } f' \leq g' \leq h'.
\]
\end{lemma}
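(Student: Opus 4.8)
The plan is to compute $\Hom_{\Mod\cC(\cZ)}(F,G)$ directly from the colimit presentation $F = \mocolim \mathbf{F}$, $G = \mocolim \mathbf{G}$. Since $F$ is the colimit of the $\bN$-shaped diagram $\yo(\mathbf{F})$ and the $\yo(F_n)$ are compact (representable) objects, we have
\[
    \Hom_{\Mod\cC(\cZ)}(F, G) = \Hom_{\Mod\cC(\cZ)}(\colim \yo(\mathbf{F}), \colim\yo(\mathbf{G})) = \lim_n \colim_m \Hom_{\cC(\cZ)}(F_n, G_m),
\]
so the whole computation reduces to understanding, for each $n$, the colimit over $m$ of the one-dimensional (or zero) spaces $\Hom_{\cC(\cZ)}(F_n, G_m)$, and then taking the inverse limit over $n$. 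First I would fix the four possible ``shapes'' of $\mathbf{F}$ (each endpoint either constant or marching off to an accumulation point), and similarly for $\mathbf{G}$, and in each case read off from Definition~\ref{D:category}(ii) precisely when $\Hom_{\cC(\cZ)}(F_n, G_m) \neq 0$ in terms of the crossing condition $x_0 \le y_0 < x_1^-$, $x_1 \le y_1 < x_0^-$ applied to $F_n = \{f_n, f'_n\}$ and $G_m = \{g_m, g'_m\}$.

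The key observation is that, because the endpoints $f_n \to f$, $f'_n \to f'$, $g_m \to g$, $g'_m \to g'$ converge monotonically in $\overline{\cZ}$ and $\cZ$ is discrete, the inequalities defining $\Hom_{\cC(\cZ)}(F_n, G_m) \neq 0$ stabilise: for fixed $n$, either $\Hom_{\cC(\cZ)}(F_n, G_m) \ne 0$ for all sufficiently large $m$, or it is eventually $0$; and in the stable regime the canonical maps $e_{F_n G_m} \to e_{F_n G_{m+1}}$ (obtained by postcomposing with $\zeta_m(\mathbf{G})$) are compatible by the factorisation property in Remark~\ref{R:canonical_morphisms_exist}, so that $\colim_m \Hom_{\cC(\cZ)}(F_n, G_m)$ is $\bK$ or $0$ accordingly. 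One then checks that ``eventually nonzero'' translates exactly, in the limit, into the closed conditions $f \le g < {f'}^-$ and $f' \le g' < f^-$ on the limit points — here the degeneracy clauses ($a_i^- = a_i = a_i^+$ for accumulation points) are exactly what is needed to make the boundary cases come out right, e.g.\ when $g = f$ but $f$ is an accumulation point. Taking the further inverse limit over $n$ is then immediate: the transition maps $\Hom_{\cC(\cZ)}(F_{n+1}, G_m) \to \Hom_{\cC(\cZ)}(F_n, G_m)$ (precomposition with $\zeta_n(\mathbf{F})$) are, once everything has stabilised, isomorphisms $\bK \to \bK$, so the inverse limit is again $\bK$ (or $0$), giving $\dim_\bK \Hom_{\Mod\cC(\cZ)}(F,G) \le 1$ with the stated criterion.

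For the composition statement, I would use the canonical morphisms throughout: choosing compatible representatives, a non-trivial $\varphi \colon F \to G$ is, in the colimit, induced by the family of canonical maps $e_{F_n G_{m(n)}}$ for suitable cofinal $m(n)$, and likewise for $\psi \colon G \to H$. The composite $\psi\circ\varphi$ is then computed entry-wise as a colimit of composites $e_{G_{m} H_{k}} \circ e_{F_n G_m}$ in $\cC(\cZ)$, which by Definition~\ref{D:category}(iii) is non-zero if and only if $G_m$ lies ``between'' $F_n$ and $H_k$ in the sense $f_n \le g_m \le h_k$ and $f'_n \le g'_m \le h'_k$. Passing to the limit and again invoking monotone convergence of endpoints turns this into $f \le g \le h$ and $f' \le g' \le h'$ on the limit points; the point is that these are closed conditions, and the already-established Hom-conditions for $\varphi, \psi$ force the relevant orderings to be the non-degenerate ones so that no spurious cancellation occurs.

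The main obstacle I anticipate is the careful bookkeeping of boundary cases in translating the strict/weak inequalities of the finite-stage Hom-spaces into the correct strict/weak inequalities on limit points — in particular handling the situations where a limit endpoint equals an accumulation point and the convention $a_i^- = a_i = a_i^+$ kicks in, and making sure the ``eventually'' quantifiers over $m$ and $n$ interact correctly (so that a condition holding for all large $m$ but failing in the limit, or vice versa, does not occur). Everything else — the interchange of $\Hom$ with the colimit on the first variable, the compatibility of canonical maps, and the final inverse limit — is routine given the results already established, in particular Remarks~\ref{R:subseq_induced_by_morphisms_being_zero_in_colimit} and~\ref{R:canonical_morphisms_exist} and the explicit Hom- and factorisation-formulas of Definition~\ref{D:category}.
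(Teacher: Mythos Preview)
Your proposal is correct and follows essentially the same approach as the paper: both start from the identification $\Hom_{\Mod\cC(\cZ)}(F,G) \cong \lim_n \colim_m \Hom_{\cC(\cZ)}(F_n,G_m)$, reduce to analysing when the finite-stage Hom-spaces are eventually nonzero via the crossing inequalities, and then pass to the limit using monotone convergence of endpoints; the composition statement is likewise handled entry-wise using the factorisation criterion of Definition~\ref{D:category}(iii). The paper organises the argument slightly differently---phrasing the existence of a nonzero $\varphi$ in terms of a compatible family $\varphi_{i,j}$ satisfying explicit cone conditions rather than invoking the canonical maps of Remark~\ref{R:canonical_morphisms_exist}---but this is a presentational difference only, and your anticipated obstacle (the boundary bookkeeping where limit endpoints hit accumulation points) is exactly the place where the paper's proof also does its work.
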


\begin{proof}
We apply the Yoneda Lemma to obtain an isomorphism
	\begin{align*}
		\Hom_{\Mod\cC(\cZ)}(F,G) &\cong \lim G(\mathbf{F}) \\ &= \lim \colim \Hom_{\cC(\cZ)}(\mathbf{F}, \mathbf{G}).
	\end{align*}
Therefore, $\Hom_{\Mod\cC(\cZ)}(F,G)$ is either trivial or isomorphic to $\bK$. Fix $\mathbf{F} = (F_i,\widehat{f}_i)_{i > 0}$ and $\mathbf{G} = (G_j,\widehat{g}_j)_{j > 0}$ with colimit maps $\lambda_i \colon \yo(F_i) \to F$ and $\mu_j \colon \yo(G_j) \to G$. 

Consider a morphism $\varphi \colon F \to G$ in $\Mod \cC(\cZ)$, and consequently, consider, for all $i>0$, the morphism $\varphi \circ \lambda_i \colon \yo(F_i) \to G$. The module colimit $G$ is a filtered colimit and, for all $i>0$, $\yo(F_i)$ is compact in $\Mod\cC(\cZ)$. Therefore, for all $i>0$, there exists an $M_i > 0$ such that for all $j \geq M_i$ there exists a morphism $\varphi_{i,j} \colon F_i \to G_j$ and the following hold:
\begin{enumerate}
    \item For all $i>0$ and $j \geq M_i$, there exists a commutative diagram
    \begin{center}
    \begin{equation} \label{E:piecewise}
        \begin{tikzcd}[column sep={8em,between origins}]
            F \arrow[r, "\varphi"] & G
            \\
            \yo(F_{i}) \arrow[u, "\lambda_i"] \arrow[r, "\varphi_{i,j} \circ -",swap] & \yo(G_{j}). \arrow[u, "\mu_{j}",swap]
        \end{tikzcd}
    \end{equation}
    \end{center}
    \item For all $i>0$ and $j'\geq j \geq M_i$, we have $\varphi_{i,j'} = g_{j,j'} \circ \varphi_{i,j}$.
    \item The module colimit $G$ is a cone of $\yo(\mathbf{F})$ with cone maps $\varphi \circ \lambda_i = \mu_{M_i} \circ (\varphi_{i,M_i} \circ -) \colon \yo(F_i) \rightarrow G$.
\end{enumerate}
In particular, if $\varphi$ is non-trivial, then, for all $i \gg 0$, $\varphi \circ \lambda_i$ is non-zero, and so, for all $j \geq M_i$, $\varphi_{i,j}$ is also non-trivial.

Conversely, assume for all $i \gg 0$ there exists an $M_i > 0$ such that for all $j \geq M_i$ there exists a non-zero morphism $\varphi_{i,j} \colon F_i \to G_j$ that satisfies conditions (ii) and (iii). 
Then by (iii) and the universal property of $F$ there exists a morphism $\varphi \colon F \to G$ that satisfies condition (i). Fix an $i \gg 0$. For all $j' \geq j \geq M_i$, the morphism $\varphi_{i,j'}$ is non-trivial, and so, $\mu_j \circ \varphi_{i,j} \circ \mathrm{id}_{F_i}$ is also non-trivial. Therefore, by evaluating the diagram in (\ref{E:piecewise}) at $F_i \in \cC(\cZ)$ and following the identity $\mathrm{id}_{F_i}$ through the evaluated diagram, we see that $\varphi_{F_i}$ is non-trivial, and so, $\varphi$ is also non-trivial.

 To summarise, there exists a non-trivial map $\varphi \colon F \to G$ if and only if for all $i \gg 0$ there exists an $M_i > 0$ such that for all $j \geq M_i$ there exists a non-trivial map $\varphi_{i,j} \colon F_i \to G_j$ that satisfies conditions (ii) and (iii). 
 This is equivalent to the following: We can write $F_i = \{f_i,f'_i\}$ and $G_j = \{g_j,g'_j\}$ such that for all $i' \geq i \gg 0$ there exist $M_{i'} \geq M_i > 0$ such that for all $j' \geq j \geq M_{i'}$ we have
\begin{equation}\label{E:small_crossings}
    f_i \leq f_{i'} \leq g_j \leq g_{j'} < {f'_{i}}^- \leq {f'_{i'}}^- \text{ and } f'_i \leq f'_{i'} \leq g'_{j} \leq g'_{j'} < f_i^- \leq f_{i'}^-.
\end{equation}
 Let $f = \lim f_i$, $f' = \lim f'_i$ and $g = \lim g_j$, $g' = \lim g'_j$ in $\overline{\cZ}$. Then $F = \{f,f'\}$ and $G = \{g,g'\}$ and the inequality (\ref{E:small_crossings}) is satisfied if and only if
 \begin{equation}\label{E:big_crossing}
     f \leq g < {f'}^- \text{ and } f' \leq g' < f^-.
\end{equation}
This proves the first part of the claim.

Let now $\varphi \colon F \to G$ and $\psi \colon G \to H$ be non-trivial maps. As before this is equivalent to the following:
For all $i,j \gg 0$ there exist $M_i, N_j > 0$ such that for all $\ell \geq M_i$ and $k \geq N_j$ there exist non-trivial morphisms $\varphi_{i,\ell} \colon F_i \rightarrow G_\ell$ and $\psi_{j,k} \colon G_j \to H_k$ that satisfy conditions (ii) and (iii) (with appropriate renaming of indices and morphisms). Without loss of generality we may assume that $j \geq M_i \gg 0$, and so, for all $i \gg 0$, $j \geq M_i \gg 0$ and $k \geq N_j$, we obtain a commutative diagram
\begin{center}
\begin{equation}\label{E:piecewise_composition}
        \begin{tikzcd}
            F \arrow[r, "\varphi"] & G \arrow[r, "\psi"] & H
            \\
            \yo(F_i) \arrow[r, "\varphi_{i,j} \circ -",swap] \arrow[u, "\lambda_i"] & \yo(G_j) \ar[u,"\mu_j"]\arrow[r, "\psi_{j,k} \circ -",swap] & \yo(H_k) \arrow[u, "\nu_k"].
        \end{tikzcd}
    \end{equation}
    \end{center}
    Therefore, by the previous argument, a morphism $\psi \circ \varphi$ is non-trivial if and only if for all $i \gg 0$, $j \geq M_i \gg 0$ and $k \geq N_j$, the morphism $\psi_{j,k} \circ \varphi_{i,j}$ is non-trivial.

Assume now that $F = \{f,f'\}$, $G = \{g,g'\}$ and $H = \{h,h'\}$, and that the non-trivial maps $\varphi$ and $\psi$ are enabled by the inequalities
\begin{equation}\label{E:phi-crossing}
    f \leq g < {f'}^- \text{ and } f' \leq g' < f^-,
\end{equation}
respectively
\begin{equation}\label{E:psi-crossing}
    g \leq h < {g'}^- \text{ and } g' \leq h' < g^-.
\end{equation}
For all integers $m >0$ we can write $F_m = \{f_m,f'_m\}$, $G_m = \{g_m,g'_m\}$ and $H_m = \{h_m,h'_m\}$ with $f = \lim f_m$, $f' = \lim f'_m$, etc. Then for all $i \gg 0$ and $j \geq M_i \gg 0$ the non-trivial map $\varphi_{i,j} \colon F_i \to G_j$ must be enabled by the inequality
\[
    f_i^- < g_j < {f'_i}^- < g'_j
\]
and for all $j \geq M_i \gg 0$ and $k \geq N_j$ the non-trivial map $\psi_{j,k} \colon G_j \to H_k$ must be enabled by the inequality
\[
    g_j^- < h_k < {g'_j}^- < h'_k.
\]
Now $\psi \circ \varphi \neq 0$ is equivalent to $\psi_{j,k} \circ \varphi_{i,j} \neq 0$ for all $i \gg 0$, $j \geq M_i \gg 0$ and $k \geq N_j$. Choosing $M_i$ and $N_j$ large enough, this is equivalent to
\[
    f_i \leq g_j \leq h_k \text{ and } f'_i \leq g'_j \leq h'_k
\]
for all $i \gg 0$, $ j\geq M_i \gg 0$ and $k \geq N_j$. Since all our sequences $\{f_n\}_{n >0}$, $\{f'_n\}_{n>0}$, etc.\ are either constant or of the form $\{z^{(n)}\}_{n > 0}$ for some point $z \in \cZ$ we obtain that, as desired, this is equivalent to  
\[
    f \leq g \leq h \text{ and } f' \leq g' \leq h'.
\]
\end{proof}

In Subsection~\ref{SS:EX:morphismsbetweendoublefans}, there is an example concerning morphisms between module colimits of double fans.

\section{Coaisle metric completions of discrete cluster categories} \label{S:coaisle metric}
 In this section we explicitly compute the completions of $\cC(\cZ)$ with respect to all coaisle metrics and show that, unlike for aisle metrics, in general these completions are not subcategories of $\cC(\cZ)$. We start by defining a combinatorial completion of $\cC(\cZ)$ with respect to a fixed coaisle that we will prove is equivalent to the coaisle metric completion. 

Throughout Section~\ref{S:coaisle metric}, we fix a t-structure $(\aisle,\coaisle)$ with corresponding $\overline{\cZ}$-decorated non-crossing partition $(\cP,\rx)$, where $\rx = (x_1, \ldots, x_N)$. Recall that the coaisle metric $\cM_\coaisle$ is given by
\[
   (\cM_\coaisle)_0 = \cC(\cZ) \text{ and } (\cM_\coaisle)_t = \Sigma^{-t} \coaisle \text{ for all $t >0$.}
\]

We will show that the (indecomposable) objects in the completion of $\cC(\cZ)$ with respect to any coaisle metric correspond to module colimits of (indecomposable) double fans. We know from Section~\ref{S:double fans} that these module colimits can be interpreted combinatorially as arcs of $\overline{\cZ}$. With this motivation, for a subset $\overline{S} \subseteq \overline{\cZ}$ we define the {\em completed convex hull} to be the full subcategory of $\Mod \cC(\cZ)$ given by 
\[
    \overline{\cx}(\overline{S}) = \add \{F \in \Mod \cC(\cZ) \mid F \cong \{z_0,z_1\}  \text{ is an arc 
 of }\overline{\cZ} \text{ with } z_0,z_1 \in \overline{\cS} \}.
\]

Let $B$ be a block in the non-crossing partition $\cP$. We define the {\em $\cZ$-support of $B$} as
\begin{equation*}
    \cZ_B = \bigcup_{\substack{ i \in B, \\ x_i \neq a_i}} (a_i,a_{i+1}) \cap \cZ,
\end{equation*}
and the {\em $\overline{\cZ}$-completion of $B$} as
\begin{equation*}
    \overline{\cZ}_B = \cZ_B \cup \{a_i \mid i \in B, a_i\notin \{x_1, \ldots, x_N\}\}.
\end{equation*}

\begin{definition}
The {\em combinatorial completion of $\cC(\cZ)$ with respect to the $\overline{\cZ}$-decorated non-crossing partition $(\cP,\rx)$} is the full replete subcategory $\mathfrak{R}_{(\cP,\rx)}$ of $\Mod \cC(\cZ)$ with objects
\[
     \add \left( \bigcup_{B \in \cP} \overline{\cx} \left( \overline{\cZ}_B \right) \right).
\]
\end{definition}

An example of the combinatorial completion of a coaisle can be found in Subsection~\ref{SS:EX:completion}.

\begin{remark}
    The $\overline{\cZ}$-completion of a block includes only the accumulation points of $\cZ$ that do not appear in the decoration $\rx$. In Lemma~\ref{L:double_fan_cauchy}, we prove that double fans converging to an accumulation point $a_i$ such that $a_i = x_{i+1}$ are not Cauchy sequences and in Lemma~\ref{L:double_fan_cptly_supp} we prove that double fans converging to an accumulation point $a_i$ such that $a_i = x_i$ are not compactly supported.
\end{remark}

\begin{remark} \label{R:completion_like_union_of_aisles}
    The combinatorial completion $\cR_{(\cP,\rx)}$ can be thought of as a combinatorial completion of $\bigcup_{p \in \bZ} \Sigma^p\aisle$. Indeed, we have
    \begin{equation*}
        \bigcup_{p \in \bZ} \Sigma^p\aisle = \add \left( \bigcup_{B \in \cP} \cx \left( \cZ_B \right) \right).
    \end{equation*}
    In particular, the indecomposable objects in $\mathfrak{R}_{(\cP,\rx)}$ that do not lie in $\bigcup_{p \in \bZ} \Sigma^p \aisle$ are precisely the arcs of $\overline{\cZ}$ of the form $\{a_i,z\}$, where $i \in B \in \cP$ and $z \in \bigcup_{j \in B} [a_j,a_{j+1})$.
\end{remark}

To distinguish it clearly from the a priori distinct combinatorial completion of $\cC(\cZ)$, we refer to $\fS_{\cM_\coaisle}$ as the {\em metric completion of $\cC(\cZ)$} (with respect to the coaisle metric $\cM_\coaisle$).

\begin{theorem}\label{T:completions agree}
    Let $(\aisle,\coaisle)$ be a t-structure on $\cC(\cZ)$ with corresponding $\overline{\cZ}$-decorated non-crossing partition $(\cP,\rx)$. The metric completion of $\cC(\cZ)$ with respect to $\cM_\coaisle$ is equivalent to its combinatorial completion with respect to $(\cP,\rx)$:
    \[
        \fS_{\cM_\coaisle} = \mathfrak{R}_{(\cP,\rx)}.
    \]
\end{theorem}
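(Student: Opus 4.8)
The plan is to prove the equality of the metric completion $\fS_{\cM_\coaisle}$ and the combinatorial completion $\fR_{(\cP,\rx)}$ as full subcategories of $\Mod\cC(\cZ)$ by a two-way inclusion, in each direction reducing an arbitrary object to a double fan via the approximation machinery of Section~\ref{S:metric completions}, then identifying module colimits of double fans with arcs of $\overline{\cZ}$ lying in the relevant blocks.

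\emph{Inclusion $\fR_{(\cP,\rx)} \subseteq \fS_{\cM_\coaisle}$.} Fix an arc $\{f,f'\}$ of $\overline{\cZ}$ with both endpoints in $\overline{\cZ}_B$ for some block $B\in\cP$. By Remark~\ref{R:every arc is a mocolim} it is the module colimit of a double fan $\mathbf{F}$, so it suffices to show that (after passing to a subsequence if necessary) $\mathbf{F}$ is a compactly supported Cauchy sequence with respect to $\cM_\coaisle$. Equivalently, by the Remark following Definition~\ref{D:coaisle_sequence}, one must check that $\mathbf{F}$ stabilises at each $\Sigma^{-t}\coaisle$ and that $\mocolim\mathbf{F}$ kills some $\Sigma^{-t}\coaisle$. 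Both are local, testable on indecomposables: using the combinatorial description of $\coaisle$ from Corollary~\ref{C:GZ_classification_coaisle} together with the crossing/Hom-criteria in Definition~\ref{D:category} and Lemma~\ref{L:arc_that_picks_out_points}, one shows that the cones of the transition maps $\zeta_{n,n'}$ (which by the triangle recalled just before Lemma~\ref{L:arc_that_picks_out_points} split as a sum of two short arcs near $f$ and $f'$) eventually lie in $\Sigma^{-t}\coaisle$ precisely because the endpoints land in $\cZ_B$ or in an accumulation point $a_i$ with $a_i\notin\{x_1,\dots,x_N\}$; and that $\mocolim\mathbf{F}$ vanishes on $\Sigma^{-t}\coaisle$ for large $t$ for the same reason. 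Here the precise conditions defining $\overline{\cZ}_B$ — excluding accumulation points appearing in the decoration — are exactly what is needed, and the remarks promised in the text (the analogues of Lemma~\ref{L:double_fan_cauchy} and Lemma~\ref{L:double_fan_cptly_supp}) supply the failure in the excluded cases.

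\emph{Inclusion $\fS_{\cM_\coaisle} \subseteq \fR_{(\cP,\rx)}$.} Fix $E\in\fS_{\cM_\coaisle}$, so $E\cong\mocolim\mathbf{E}$ for a Cauchy sequence $\mathbf{E}$ compactly supported at some $t\geq 0$. Apply Proposition~\ref{P:combine_aisle_and_coaisle}, using a largest aisle $\widehat{\aisle}$ contained in $\coaisle$ (which exists, and for which $\coaisle=\Sigma^{t+1}B_{t+1}\cap\Sigma^{2(t+1)}\widehat{\coaisle}$ is again a coaisle by Lemma~\ref{L:right-non-degenerate} applied to $\cM=\cM_\coaisle$): this replaces $\mathbf{E}$ by a sequence $\widehat{\mathbf{Y}}$ in $\Sigma^t\widehat{\coaisle}$ with the same module colimit whose $\aisle$-approximation stabilises at $0$ and whose $\coaisle$-approximation is a minimal coaisle sequence. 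Decomposing into indecomposable components and using that $\cC(\cZ)$ is Krull--Schmidt and $2$-Calabi--Yau, one argues via the crossing conditions that each indecomposable component of this reduced sequence is, up to isomorphism of module colimits, an indecomposable double fan: its entries are arcs whose transition maps become ``small'' in the coaisle metric exactly when the endpoints march monotonically toward limits in $\overline{\cZ}$, and the minimal-coaisle-sequence and $\Sigma^{-t}\coaisle$-triviality conditions force those limits to lie in the appropriate $\overline{\cZ}_B$ (again the decoration constraints rule out the forbidden accumulation points, using the analogues of Lemmas~\ref{L:double_fan_cauchy} and \ref{L:double_fan_cptly_supp} in reverse). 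Hence $E$ is a finite direct sum of module colimits of double fans with endpoints in the blocks, i.e.\ $E\in\fR_{(\cP,\rx)}$.

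\emph{Main obstacle.} The routine direction is $\fR_{(\cP,\rx)}\subseteq\fS_{\cM_\coaisle}$; the real work is the converse, specifically showing that \emph{every} compactly supported Cauchy sequence can, after the approximation reductions, be rewritten component-wise as a double fan and not merely as some ``staircase'' sequence of arcs whose endpoints need not converge nicely. Controlling the two endpoints of each entry simultaneously — ensuring that each endpoint sequence is eventually constant or of the monotone form $z^{(n)}$ demanded in Definition~\ref{D:double_fan}, and that the two limits are distinct — is where the combinatorics of crossings, the minimality of the coaisle sequence, and the $\Sigma^{-1}\coaisle$-triviality must be combined carefully; I expect this bookkeeping, together with verifying that passing to subsequences and components does not lose the Cauchy or compact-support properties (which is exactly what Remark~\ref{R:subseq_are_cchy_and_cptly_supp} and Lemma~\ref{L:cptly_supp_means_no_summands} are for), to be the crux of the argument.
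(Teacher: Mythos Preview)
Your overall architecture matches the paper's: prove both inclusions, handle $\fR_{(\cP,\rx)} \subseteq \fS_{\cM_\coaisle}$ by checking directly that the obvious double fans are Cauchy and compactly supported (this is exactly Lemmas~\ref{L:double_fan_cauchy} and~\ref{L:double_fan_cptly_supp} and Proposition~\ref{P:comb_comp_equals_double_fan_mocolims}), and for the converse first pass through the approximation machinery of Section~\ref{S:metric completions} to reduce to a minimal coaisle sequence with respect to a right non-degenerate t-structure (Proposition~\ref{P:reduce_to_RNDG_coaisle_sequences}). You have also correctly located the crux. A small slip: where you write ``$\coaisle=\Sigma^{t+1}B_{t+1}\cap\Sigma^{2(t+1)}\widehat{\coaisle}$'' you mean to define a \emph{new} right non-degenerate coaisle $\rcoaisle$, not to assert this equals $\coaisle$.

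Where your plan becomes genuinely hazy is the sentence ``Decomposing into indecomposable components \ldots\ each indecomposable component of this reduced sequence is, up to isomorphism of module colimits, an indecomposable double fan: its entries are arcs whose transition maps become `small' \ldots\ exactly when the endpoints march monotonically.'' This suggests you intend to show that the reduced sequence $\widehat{\mathbf{Y}}$ itself (or its components) has monotone endpoint behaviour. The paper does \emph{not} attempt this, and it is not clear it can be made to work: even after Proposition~\ref{P:final_classification_coaisle_sequence} pins down the shape of each summand of the $\rcoaisle$-approximation as $\{\widetilde{x}_i^-,\widetilde{x}_j^{(s)}\}$, the transition maps between successive entries are not diagonal, the number of summands is not constant, and there is no canonical ``indecomposable component'' to follow through the sequence.

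The paper's resolution is different in spirit. After establishing (Propositions~\ref{P:summands_in_coaisle_are_neighbouring} and~\ref{P:coaisle_seq_small_cones}) the precise form of summands and cones in a minimal coaisle sequence, Lemma~\ref{L:classification_of_tilde_seq} packages this as two endpoint/cone conditions \ref{P:overlineY1} and \ref{P:overlineY2} on $\widehat{\mathbf{Y}}$. Then, rather than dissecting $\widehat{\mathbf{Y}}$, Lemma~\ref{lem:bound_tilde_seq_by_double_fan} \emph{builds a brand-new double fan} $\mathbf{F}$ depending only on the decomposition of the first entry $\widehat{Y}_1$ into types \textbf{T1}--\textbf{T3}, and inductively constructs maps $\Phi_n \colon F_n \to \widehat{Y}_n$ whose cones live in $\add\bigl(\bigcup_\ell \cx((\widetilde{x}_\ell^{(n-1)},a_{\ell+1}))\bigr)$; Lemma~\ref{L:mocolim_of_small_bumps} then forces $\mocolim(\cone\Phi_\bullet)=0$, so $\mocolim\mathbf{F}\cong\mocolim\widehat{\mathbf{Y}}$. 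The point is that one never controls the endpoints of $\widehat{\mathbf{Y}}$ itself beyond the coarse conditions \ref{P:overlineY1}, \ref{P:overlineY2}; the double fan is an external comparison object, not a component of the original sequence. Your ``Main obstacle'' paragraph anticipates the difficulty correctly, but the bookkeeping you describe (forcing the endpoints of $\widehat{\mathbf{Y}}$ to be monotone) is not the move that actually closes the gap.
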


The rest of Section~\ref{S:coaisle metric} is dedicated to proving Theorem~\ref{T:completions agree}. 

\subsection{The combinatorial completion as a subcategory of the metric completion} \label{subsec:comb_in_met}

In this section we prove that every arc that lies in the combinatorial completion of a coaisle can be realised as the module colimit of a compactly supported Cauchy sequence with respect to the corresponding coaisle metric. To do so we utilise double fans. 

\begin{lemma} \label{L:double_fan_cauchy}
 Let $\mathbf{F} = (F_n,\zeta_n)_{n >0}$ be an indecomposable double fan with $\mocolim \mathbf{F} = \{f,f'\}$. Fix indices $i,j \in [N]$ such that $f \in (a_{i},a_{i+1}]$ and $f' \in (a_{j},a_{j+1}]$. Then $\mathbf{F}$ is a Cauchy sequence with respect to the coaisle metric $\cM_\coaisle$ if and only if the following hold:
	\begin{enumerate}
		\item $x_{i} \neq a_{i+1}$ or $f \in \cZ$;
		\item $x_{j} \neq a_{j+1}$ or $f' \in \cZ$.
	\end{enumerate}
\end{lemma}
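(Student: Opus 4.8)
The claim characterizes when an indecomposable double fan $\mathbf{F}$ with $\mocolim\mathbf{F} = \{f,f'\}$ is Cauchy with respect to $\cM_\coaisle$. My plan is to translate the Cauchy condition into a statement about the cones $\cone(\zeta_{m,m'})$ and use the combinatorial description of $\coaisle$ from Corollary~\ref{C:GZ_classification_coaisle}. Recall that $\mathbf{F}$ is Cauchy iff for every $t \geq 0$ the sequence stabilises at $(\cM_\coaisle)_t = \Sigma^{-t}\coaisle$, i.e. for $m' > m \gg 0$ we have $\cone(\zeta_{m,m'}) \in \Sigma^{-t}\coaisle$. So the real content is: the cones of the connecting maps eventually land in every desuspension of the coaisle precisely under conditions (i) and (ii).

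First I would compute $\cone(\zeta_{m,m'})$ explicitly. Writing $F_n = \{f_n, f'_n\}$, the canonical map $\zeta_{m,m'}\colon F_m \to F_{m'}$ is enabled by $f_m^- < f_{m'} < {f'_m}^- < f'_{m'}$, and by the distinguished triangle recipe recalled before Lemma~\ref{L:arc_that_picks_out_points}, we get
\[
    \cone(\zeta_{m,m'}) \cong \{f_m^-, f_{m'}\} \oplus \{{f'_m}^-, f'_{m'}\}.
\]
Now each of the two sequences $\{f_n\}$ and $\{f'_n\}$ is, by Definition~\ref{D:double_fan}, either constant or of the form $\{z^{(n)}\}$. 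If $\{f_n\}$ is constant (so $f = f_m \in \cZ$), then $\{f_m^-, f_{m'}\} = \{f^-, f\} = 0$ for all $m < m'$, so this summand contributes nothing and there is no constraint — this is exactly the ``or $f \in \cZ$'' clause in (i). If instead $f_n = z^{(n)}$ with $z \in \cZ$ and $f = a_i$ or $f = x_i \in \cZ$, then I need to determine for which $t$ the arcs $\{f_m^-, f_{m'}\} = \{z^{(m-1)}, z^{(m')}\}$ lie in $\Sigma^{-t}\coaisle$ for $m' > m \gg 0$. The key point: these are ``short'' arcs whose both endpoints converge to $f$ from the clockwise side, sitting inside the interval $(a_i, a_{i+1})$ (or its closure), and as $m \to \infty$ they get arbitrarily close to $f$. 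Using Corollary~\ref{C:GZ_classification_coaisle}, the coaisle restricted near $a_{i+1}$ is controlled by the decoration $x_i$ (via the block containing $i$): the arcs with both endpoints sufficiently far clockwise inside $(a_i,a_{i+1})$ lie in $\coaisle$ iff $x_i \neq a_{i+1}$, i.e. iff $x_i$ is strictly clockwise of the accumulation point $a_{i+1}$, leaving ``room'' for the tail of the fan. More precisely, I would show $\Sigma^{-t}\coaisle$ contains all sufficiently short clockwise-tail arcs near $a_{i+1}$ for every $t$ exactly when $x_i \neq a_{i+1}$; when $x_i = a_{i+1}$, the coaisle (and all its desuspensions, since $\Sigma^{-t}$ only rotates finitely) excludes arcs arbitrarily close to $a_{i+1}$, so the cones never stabilise there and $\mathbf{F}$ fails to be Cauchy. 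This is where I expect the main obstacle: carefully reading off from Corollary~\ref{C:GZ_classification_coaisle} and the Kreweras complement which block of $\cP^c$ governs the region $(a_i, a_{i+1})$ near $a_{i+1}$, and checking that ``$x_i \neq a_{i+1}$'' is the precise threshold — one has to be attentive to whether $f = a_{i+1}$ itself or $f = x_{i} \in \cZ$ with $f \in (a_i, a_{i+1})$, and to the half-open versus open interval subtleties in the decoration.

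Finally, I would assemble the two summands: $\cone(\zeta_{m,m'})$ lies in $\Sigma^{-t}\coaisle$ for $m' > m \gg 0$ and all $t$ iff each of $\{f_m^-, f_{m'}\}$ and $\{{f'_m}^-, f'_{m'}\}$ does (since $\Sigma^{-t}\coaisle$ is closed under summands and extensions/direct sums), and by the above this happens iff condition (i) holds for the $f$-side and condition (ii) holds for the $f'$-side. The indices $i, j$ with $f \in (a_i, a_{i+1}]$ and $f' \in (a_j, a_{j+1}]$ are exactly the ones relevant to the two sides, so the stated biconditional follows. For the ``only if'' direction I would argue the contrapositive: if, say, $x_i = a_{i+1}$ and $f \notin \cZ$ (so $f = a_{i+1}$ and the $f$-sequence is a non-constant clockwise tail), then no matter how large $m$ is, the summand $\{f_m^-, f_{m'}\}$ has endpoints accumulating at $a_{i+1} = x_i$, and one checks directly from Corollary~\ref{C:GZ_classification_coaisle} that such an arc is never in $\coaisle$, hence a fortiori never in $\Sigma^{-t}\coaisle$ for the fixed finite $t$ needed — contradicting the Cauchy condition.
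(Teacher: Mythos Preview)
Your approach is correct and essentially identical to the paper's: both compute $\cone(\zeta_{m,m'}) \cong \{f_m^-, f_{m'}\} \oplus \{{f'_m}^-, f'_{m'}\}$, observe that a constant endpoint sequence makes the corresponding summand vanish, and otherwise check via the support description in Corollary~\ref{C:GZ_classification_coaisle} that the short arcs accumulating at $a_{i+1}$ eventually lie in every $\Sigma^{-t}\coaisle$ precisely when $x_i \neq a_{i+1}$. One slip to clean up: in the non-constant case $f_n = z^{(n)}$ the limit is necessarily the accumulation point $f = a_{i+1}$ (not ``$f = a_i$ or $f = x_i \in \cZ$'' as you wrote midway through), which you in fact use correctly later in the contrapositive argument.
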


\begin{proof}
For all integers $m >0$ set $F_m = \{f_m,f'_m\}$ such that the sequences $(f_n)_{n > 0}$ and $(f'_n)_{n>0}$ are either constant or of the form $(z^{(n)})_{n>0}$ for some point $z \in \cZ$, and let $f = \lim f_m$ and $f' = \lim f'_m$ in $\overline{\cZ}$.
   For all integers $m'>m>0$ the cone of $\zeta_{m,m'} \colon F_m \rightarrow F_{m'}$ is isomorphic to
 \[
    \{f_m^-,f_{m'}\} \oplus \{{f'_m}^-, {f'_{m'}}\}.
\]

    Suppose that $\mathbf{F}$ is a Cauchy sequence. Then without loss of generality by taking a truncation if necessary, we may assume for all integers $m'>m>0$ the cone of $\zeta_{m,m'}$ lies in $\coaisle$. In particular, the object $\{f_m^-,f_{m'}\}$ is an arc in $\coaisle$ or it is trivial. If the object is an arc in $\coaisle$, then both $f_m^-$ and $f_{m'}$ lie in the support $S(\coaisle)$, and the intersection
    \begin{equation*}
        (a_i,a_{i+1}) \cap S(\coaisle) = [x_i^-,a_{i+1})
    \end{equation*}
    is non-empty, i.e.\ $x_i \neq a_{i+1}$. Otherwise the object is trivial and we have $f_m = f_{m'}$, i.e.\ the sequence $(f_n)_{n>0}$ is constant which implies $f \in \cZ$. This proves (i) and (ii) follows similarly.

    To prove the converse, for each integer $t\geq 0$ we wish to find an integer $M>0$ such that for all $m'>m\geq M$ both summands of the cone of $\zeta_{m,m'}$ lie in $\Sigma^{-t} \coaisle$. Condition (i) ensures that this is true for the object $\{f_m^-,f_{m'}\}$, and condition (ii) ensures the same for the object $\{{f'_m}^-, {f'_{m'}}\}$.
\end{proof}

\begin{lemma} \label{L:double_fan_cptly_supp}
Let $\mathbf{F} = (F_n,\zeta_n)_{n >0}$ be an indecomposable double fan and let $F = \{f,f'\}$ be its module colimit. Fix indices $k,\ell \in [N]$ such that $f \in [a_k,a_{k+1})$ and $f' \in [a_\ell,a_{\ell+1})$. Then $\mathbf{F}$ is compactly supported with respect to the coaisle metric $\cM_\coaisle$ if and only if the following hold:
 \begin{enumerate}
    \item $k$ and $\ell$ lie in the same block of $\cP$;
    \item $x_k \neq a_k$;
    \item $x_\ell \neq a_\ell$.
 \end{enumerate}
\end{lemma}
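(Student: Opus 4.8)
The plan is to characterise compact support of an indecomposable double fan $\mathbf{F}=(F_n,\zeta_n)_{n>0}$ with module colimit $F=\{f,f'\}$ directly via the criterion from the earlier remark: $\mathbf{F}$ is compactly supported at $t\geq 0$ if and only if $\mocolim\mathbf{F}(A)=0$ for every $A\in(\cM_\coaisle)_t=\Sigma^{-t}\coaisle$. Since $\mocolim\mathbf{F}=\yo\text{-colimit}$ of representables, and $\cC(\cZ)$ is Hom-finite, we have for an indecomposable arc $A$
\[
    \mocolim\mathbf{F}(A)=\colim_n\Hom_{\cC(\cZ)}(A,F_n),
\]
so compact support at $\coaisle$ (shifted) amounts to: for every arc $A\in\Sigma^{-t}\coaisle$ and every map $A\to F_m$, the composite $A\to F_m\to F_{m'}$ is eventually zero. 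I would first translate this into combinatorics using the description of $\Hom$ and of factorisation in Definition~\ref{D:category}, together with Lemma~\ref{L:arc_that_picks_out_points} to detect endpoints. The key combinatorial fact is that a map $A\to F_{m'}$ being zero in the colimit corresponds to the second endpoint of $A$ being "overtaken" by the expanding fan; a map survives precisely when $A$ has an endpoint that stays pinned near one of the accumulation points $a_k$ or $a_\ell$ that the fan approaches, in a region disjoint from the support of the relevant shift of $\coaisle$.

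Next I would prove the three conditions are necessary. If $k$ and $\ell$ lie in different blocks of $\cP$ (violating (i)), then by the combinatorial description of the coaisle (Corollary~\ref{C:GZ_classification_coaisle}, or the cross-free characterisation $\coaisle=\add\{Z\mid Z\text{ does not cross }\Sigma\aisle\}$) one can exhibit an arc $A$ with one endpoint near $a_k$ and the other near $a_\ell$ that lies in $\Sigma^{-t}\coaisle$ for all $t$ and admits a non-zero map into every $F_n$ that survives in the colimit — since the two ends of $A$ track the two ends of the fan, the factorisation condition in Definition~\ref{D:category}(iii) is met for all $n$. For (ii): if $x_k=a_k$, then (using the classification) $\coaisle$, and hence every $\Sigma^{-t}\coaisle$, contains arcs with an endpoint arbitrarily close to $a_k$ on the side the fan approaches from, and such an arc maps non-trivially and compatibly into the tail of $\mathbf{F}$; so $\mocolim\mathbf{F}$ does not vanish on $\Sigma^{-t}\coaisle$ for any $t$, contradicting compact support. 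Condition (iii) is symmetric. Here I would lean on the explicit support formula $S(\coaisle)\cap(a_i,a_{i+1})=[x_i^-,a_{i+1})$ already recorded in the excerpt for right non-degenerate coaisles, and its analogue in general, to see exactly which endpoints near $a_k$ are available.

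For sufficiency, assume (i)–(iii). Since $x_k\neq a_k$ and $x_\ell\neq a_\ell$, there is a "gap" just clockwise of $a_k$ (resp.\ $a_\ell$) that is missing from $S(\coaisle)$, and moreover, because $k$ and $\ell$ lie in a common block $B$ of $\cP$, the relevant piece of $\coaisle$ "between" $a_k$ and $a_\ell$ is controlled by a single block of the Kreweras complement; one then checks that for sufficiently large $t$ every arc $A\in\Sigma^{-t}\coaisle$ that admits a non-zero map to some $F_m$ must have an endpoint forced into one of these forbidden gaps, hence no such $A$ exists, or else the map dies once the fan expands past $A$. Concretely: a non-zero map $A\to F_m$ forces, via Lemma~\ref{L:arc_that_picks_out_points} and the ordering inequalities, one endpoint of $A$ to lie in a bounded window near $f$ or $f'$; as $m'\to\infty$ the window near $f$ (if $f\notin\cZ$, i.e.\ $f=a_k$) shrinks toward $a_k$ and, for $t$ large, $\Sigma^{-t}\coaisle$ has no arcs with an endpoint in that shrinking window on the fan's side; when $f\in\cZ$ the fan is constant at that end and the argument runs on the other end. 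I expect this direction to be the main obstacle: one has to carefully manage the interaction of \emph{both} endpoints of $A$ with \emph{both} ends of the fan, handle the four cases ($f,f'$ in $\cZ$ or not) uniformly, and make precise the claim that the block condition (i) is exactly what prevents an arc from "bridging" the two accumulation points while staying in a deep shift of $\coaisle$. I would isolate a single technical sublemma — that for $t\gg0$, an arc in $\Sigma^{-t}\coaisle$ mapping non-trivially into $F_m$ is eventually killed by the structure maps — and deduce all of (i)–(iii) sufficiency from it, mirroring the structure of the proof of Lemma~\ref{L:double_fan_cauchy}.
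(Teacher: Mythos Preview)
Your plan is workable in principle, but it takes a substantially longer route than the paper and contains one slightly misleading piece of intuition.

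The paper's proof is a two-line reduction via Lemma~\ref{L:morphisms_between_double_fans}. Since any arc $Y\in\cC(\cZ)$ is itself the module colimit of a constant indecomposable double fan, that lemma already computes $\mocolim\mathbf{F}(Y)\cong\Hom_{\Mod\cC(\cZ)}(\yo(Y),F)$ purely as a crossing condition between the arc $Y$ and the arc $F=\{f,f'\}$ of $\overline{\cZ}$. Thus compact support at $t$ becomes the combinatorial statement ``$F$ crosses no arc in $\Sigma^{-t+1}\coaisle$'', and the three conditions drop out immediately from the explicit description of $\Sigma^{-t+1}\coaisle$ in Corollary~\ref{C:GZ_classification_coaisle} together with the definition of the Kreweras complement (with the observation that when $k\neq\ell$ lie in the same block of $\cP$, conditions~(ii) and~(iii) are automatic from Definition~\ref{D:decoratednc}). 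You never need to track how individual maps $A\to F_m$ behave along the sequence, nor to split into cases according to whether $f,f'$ lie in $\cZ$.

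By contrast, your plan re-derives the relevant piece of Lemma~\ref{L:morphisms_between_double_fans} from scratch inside this argument. That is not wrong, but it duplicates work and forces the case analysis you flag as the main obstacle. One small correction to your intuition for necessity of~(i): the witnessing arc in $\bigcap_t\Sigma^{-t}\coaisle$ is not one with endpoints ``near $a_k$ and near $a_\ell$''; rather, it is an arc supported on a block of $\cP^c$ that \emph{separates} $k$ from $\ell$ (this is exactly what the Kreweras complement encodes). Such an arc crosses $F$, which is what matters---it need not share accumulation neighbourhoods with $F$.
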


\begin{proof}
    The double fan $\mathbf{F}$ is compactly supported if and only if there exists an integer $t \geq 0$ such that for all arcs $Y$ in $\Sigma^{-t}\coaisle$ we have 
    \[
        0 = \mocolim\mathbf{F}(Y) \cong \Hom_{\Mod\cC(\cZ)}(\yo(Y), F).
    \]
     By Lemma~\ref{L:morphisms_between_double_fans} this is the case if and only if there exists an integer $t \geq 0$ such that $F$ crosses no arcs in 
     \[
     \Sigma^{-t+1}\coaisle = \add \left( \bigcup_{B \in \cP^c} \cx \left( \bigcup_{i \in B} [x_i^{-t},a_{i+1}) \right) \right).
     \]
     If $k \neq \ell$, then by construction of the Kreweras complement, this is the case if and only if $k$ and $\ell$ lie in the same block of $\cP$. In this case, by definition of $\overline{\cZ}$-decorated partitions we have $x_k \neq a_k$ and $x_\ell \neq a_\ell$. On the other hand, if $k = \ell$, then automatically $k$ and $\ell$ lie in the same block of $\cP$ and there exists an integer $t \geq 0$ such that $F$ crosses no arcs in $\Sigma^{-t}\coaisle$ if and only if $x_k \neq a_k$.
\end{proof}

\begin{proposition} \label{P:comb_comp_equals_double_fan_mocolims}
    An indecomposable object in $\Mod\cC(\cZ)$ lies in the combinatorial completion $\fR_{(\cP,\rx)}$ if and only if it is isomorphic to the module colimit $\{f,f'\}$  of an indecomposable double fan that is Cauchy and compactly supported with respect to the coaisle metric $\cM_\coaisle$.
\end{proposition}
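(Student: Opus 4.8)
The plan is to prove the two directions of the equivalence by combining the combinatorial descriptions already established. Recall that the indecomposable objects of $\fR_{(\cP,\rx)}$ are, by definition, the arcs $\{z_0,z_1\}$ of $\overline{\cZ}$ with $z_0, z_1 \in \overline{\cZ}_B$ for some block $B \in \cP$, where $\overline{\cZ}_B = \cZ_B \cup \{a_i \mid i \in B, \, a_i \notin \{x_1,\ldots,x_N\}\}$. On the other hand, by Remark~\ref{R:every arc is a mocolim} every arc of $\overline{\cZ}$ is the module colimit of some indecomposable double fan, and conversely the module colimit of an indecomposable double fan is, by Definition~\ref{D:double_fan}, an arc of $\overline{\cZ}$. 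So the content of the proposition is precisely that membership of the arc $\{f,f'\}$ in a single $\overline{\cZ}_B$ matches up with the conjunction of the Cauchy condition (Lemma~\ref{L:double_fan_cauchy}) and the compact support condition (Lemma~\ref{L:double_fan_cptly_supp}) for a double fan realising it.

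First I would fix an indecomposable double fan $\mathbf{F}$ with $\mocolim \mathbf{F} = \{f,f'\}$, and choose the indices $k, \ell \in [N]$ with $f \in [a_k, a_{k+1})$ and $f' \in [a_\ell, a_{\ell+1})$ as in Lemma~\ref{L:double_fan_cptly_supp}; I would also record the indices $i,j$ of Lemma~\ref{L:double_fan_cauchy} determined by $f \in (a_i, a_{i+1}]$ and $f' \in (a_j, a_{j+1}]$, noting that $i = k$ unless $f = a_k$ (in which case $i = k-1$), and similarly for $j$ versus $\ell$. Combining the two lemmas, $\mathbf{F}$ is both Cauchy and compactly supported if and only if all five conditions hold: $k$ and $\ell$ lie in a common block $B$ of $\cP$; $x_k \neq a_k$; $x_\ell \neq a_\ell$; ($x_i \neq a_{i+1}$ or $f \in \cZ$); and ($x_j \neq a_{j+1}$ or $f' \in \cZ$). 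I would then argue that this combined condition is equivalent to $f, f' \in \overline{\cZ}_B$ for that common block $B$.

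The heart of the argument is a careful case split on whether each endpoint is a point of $\cZ$ or an accumulation point. Consider $f$. If $f \in \cZ$, then $i = k$, the Cauchy condition at $f$ is automatic, and $f \in (a_k, a_{k+1}) \cap \cZ$; the compact-support condition $x_k \neq a_k$ says exactly $k$ contributes to $\cZ_B$ (via $(a_k, a_{k+1}) \cap \cZ \subseteq \cZ_B$), so $f \in \cZ_B \subseteq \overline{\cZ}_B$, and conversely $f \in \cZ_B$ forces $x_k \neq a_k$. If $f = a_k \notin \cZ$, then $i = k-1$ and the relevant conditions become $x_k \neq a_k$ (compact support) together with $x_{k-1} \neq a_k$ (Cauchy, i.e.\ $x_i \neq a_{i+1}$ with $i = k-1$); the conjunction $x_{k-1} \neq a_k$ and $x_k \neq a_k$ is precisely the statement $a_k \notin \{x_1, \ldots, x_N\}$ (since $a_k$ can only equal $x_{k-1}$, by the containment $x_m \in [a_m, a_{m+1})$ of a decoration, or $x_k$, by $x_m \in (a_m, a_{m+1}]$), which is exactly the condition for $a_k \in \overline{\cZ}_B$ given $k \in B$. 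The symmetric analysis applies to $f'$, $\ell$, $j$. Finally, the condition that $k$ and $\ell$ lie in the same block $B$ is exactly what is needed for $\{f,f'\}$ to land in a single $\overline{\cx}(\overline{\cZ}_B)$ rather than straddling two blocks; and one should check the edge case $f = f' = a_k$ cannot occur since the double fan requires $f \neq f'$, and that $\{f, f'\}$ is a genuine arc of $\overline{\cZ}$, which is built into Definition~\ref{D:double_fan}. The main obstacle is bookkeeping: keeping the pairs of indices $(i,k)$ and $(j,\ell)$ straight through the accumulation-point cases, and verifying that the two apparently different conditions ``$a_k \notin \{x_1,\ldots,x_N\}$'' and ``$x_{k-1} \neq a_k$ and $x_k \neq a_k$'' really do coincide — which rests entirely on the interval constraints in Definition~\ref{D:decoratednc} that confine each $x_m$ to the arc between $a_m$ and $a_{m+1}$.
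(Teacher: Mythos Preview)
Your proposal is correct and follows essentially the same approach as the paper: both directions are obtained by combining Lemma~\ref{L:double_fan_cauchy}, Lemma~\ref{L:double_fan_cptly_supp}, and Remark~\ref{R:every arc is a mocolim}, with a case split on whether each endpoint lies in $\cZ$ or is an accumulation point. Your treatment is in fact slightly more explicit than the paper's in tracking the relationship between the index pairs $(i,k)$ and $(j,\ell)$ and in spelling out why ``$a_k \notin \{x_1,\ldots,x_N\}$'' is equivalent to ``$x_{k-1} \neq a_k$ and $x_k \neq a_k$''.
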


\begin{proof}
    First suppose that an indecomposable object $F$ in $\Mod \cC(\cZ)$ is isomorphic to the module colimit $\{f,f'\}$ of an indecomposable double fan $\mathbf{F}$ that is Cauchy and compactly supported with respect to $\cM_\coaisle$. Fix indices $k,\ell \in [N]$ such that $f$ and $f'$ lie in $[a_k,a_{k+1})$ and $[a_\ell,a_{\ell+1})$ respectively. As $\mathbf{F}$ is compactly supported, by Lemma~\ref{L:double_fan_cptly_supp}, the indices $k$ and $\ell$ lie in the same block $B \in \cP$ with $x_k \neq a_k$ and $x_\ell \neq a_\ell$. If $f$ is not a proper accumulation point then $f$ lies in $(a_k,a_{k+1}) \subseteq \overline{\cZ}_B$. If on the other hand $f$ is a proper accumulation point, then because $\mathbf{F}$ is Cauchy, by Lemma~\ref{L:double_fan_cauchy} we also have $x_{k-1} \neq a_k$, and so $f=a_k \in \overline{\cZ}_B$. Thus, in either case we have $f \in \overline{\cZ}_B$ and symmetrically $f' \in \overline{\cZ}_B$. Therefore $F \cong \{f,f'\}$ lies in $\mathfrak{R}_{(\cP,\rx)}$.

     Conversely, every indecomposable object in $\cR_{(\cP,\rx)}$ is isomorphic to an indecomposable object in  $\add \left( \bigcup_{B \in \cP} \overline{\cx} \left( \overline{\cZ}_B \right) \right)$. Therefore, it is isomorphic to an arc $\{f,f'\}$ of $\overline{\cZ}_B$ for some block $B \in \cP$. Hence there exist indices $k, \ell \in B$ such that $f$ and $f'$ lie in $[a_k,a_{k+1})$ and $[a_\ell,a_{\ell+1})$ respectively, $x_k \neq a_k$ and $x_\ell \neq a_\ell$. 
     By Remark~\ref{R:every arc is a mocolim} there exists an indecomposable double fan $\mathbf{F} = (F_n,\zeta_n)_{n>0}$ with module colimit $\{f,f'\}$, and by Lemma~\ref{L:double_fan_cptly_supp} it is compactly supported. 
     If $f \notin \cZ$, then $f = a_k \in \overline{\cZ}_B$ and by definition of the combinatorial completion, we must have $a_k \neq x_{k-1}$. Symmetrically, if $f' \notin \cZ$ we must have $a_\ell \neq x_{\ell-1}$. It follows by Lemma~\ref{L:double_fan_cauchy} that $\mathbf{F}$ is also Cauchy.    
\end{proof}

One direction of Theorem~\ref{T:completions agree} is an immediate consequence of Proposition~\ref{P:comb_comp_equals_double_fan_mocolims}.

\begin{corollary} \label{C:comb_lies_in_metric}
	The combinatorial completion $\mathfrak{R}_{(\cP,\rx)}$ of $\cC(\cZ)$ with respect to the $\overline{\cZ}$-decorated non-crossing partition $(\cP,\rx)$ is a subcategory of the metric completion $\fS_{\cM_\coaisle}$ of $\cC(\cZ)$ with respect to the coaisle metric $\cM_\coaisle$.
\end{corollary}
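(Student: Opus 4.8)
The plan is to combine Proposition~\ref{P:comb_comp_equals_double_fan_mocolims} with the observation that the metric completion $\fS_{\cM_\coaisle}$ is closed under taking finite direct sums (and summands) inside $\Mod\cC(\cZ)$, which is automatic since $\fS_{\cM_\coaisle}$ is a triangulated, hence additive and idempotent-complete, subcategory of $\Mod\cC(\cZ)$. First I would recall that $\mathfrak{R}_{(\cP,\rx)} = \add\left(\bigcup_{B\in\cP}\overline{\cx}(\overline{\cZ}_B)\right)$, so every object of $\mathfrak{R}_{(\cP,\rx)}$ is a summand of a finite direct sum of indecomposable objects, each of which is an arc $\{f,f'\}$ of $\overline{\cZ}_B$ for some block $B$.

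Second, for each such indecomposable arc $\{f,f'\}$, Proposition~\ref{P:comb_comp_equals_double_fan_mocolims} gives an indecomposable double fan $\mathbf{F}$ that is Cauchy and compactly supported with respect to $\cM_\coaisle$ and whose module colimit is isomorphic to $\{f,f'\}$; in particular $\{f,f'\} \cong \mocolim\mathbf{F} \in \fS_{\cM_\coaisle}$ by definition of the metric completion. Third, taking a finite direct sum $F_1\oplus\cdots\oplus F_m$ of such indecomposable arcs, I would note that the corresponding direct sum of double fans is again a double fan, and is Cauchy and compactly supported with respect to $\cM_\coaisle$ (these properties pass to finite direct sums, since the cone of a direct sum of morphisms is the direct sum of the cones, and module colimits commute with finite direct sums); its module colimit is $F_1\oplus\cdots\oplus F_m$, which therefore lies in $\fS_{\cM_\coaisle}$. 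Finally, any object of $\mathfrak{R}_{(\cP,\rx)}$ is a summand of such a direct sum, and $\fS_{\cM_\coaisle}$ is closed under summands (as it is triangulated, hence idempotent-complete, as a full subcategory of $\Mod\cC(\cZ)$ closed under the relevant colimits), so it lies in $\fS_{\cM_\coaisle}$ as well.

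Alternatively, and perhaps more cleanly, I would argue directly: a summand of a module colimit of a Cauchy, compactly supported sequence is again such a module colimit—one simply passes to the corresponding component sequence in the sense of the \emph{component} construction from the Preliminaries, invoking Lemma~\ref{L:cptly_supp_means_no_summands} or a direct argument to see that the component of a Cauchy sequence is Cauchy and the component of a compactly supported sequence is compactly supported. This shows $\mathfrak{R}_{(\cP,\rx)} \subseteq \fS_{\cM_\coaisle}$ as full subcategories of $\Mod\cC(\cZ)$.

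The only mild subtlety—and it is genuinely mild—is bookkeeping: verifying that a finite direct sum of double fans whose individual module colimits are Cauchy and compactly supported is itself Cauchy and compactly supported. This follows because Cauchyness is a statement about cones of composites $\zeta_{m,m'}$ lying eventually in $\Sigma^{-t}\coaisle$ for each $t$—and a finite direct sum of such fans has, for $m,m'$ beyond the maximum of the finitely many thresholds, all cone summands in $\Sigma^{-t}\coaisle$—while compact support is the vanishing of $\mocolim(-)$ on $B_s$ for some $s$, and $\mocolim$ of a direct sum is the direct sum of the $\mocolim$s, so one again takes the maximum of finitely many thresholds $s$. No real obstacle arises; the content of the corollary is entirely carried by Proposition~\ref{P:comb_comp_equals_double_fan_mocolims}.
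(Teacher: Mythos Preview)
Your proposal is correct and follows essentially the same approach as the paper, which simply states that the corollary is an immediate consequence of Proposition~\ref{P:comb_comp_equals_double_fan_mocolims}. You have spelled out the bookkeeping (finite direct sums of compactly supported Cauchy double fans remain compactly supported Cauchy double fans) that the paper leaves implicit.

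One small caution: your aside that $\fS_{\cM_\coaisle}$ is ``triangulated, hence idempotent-complete'' is not valid in general---triangulated categories need not be idempotent-complete. Fortunately you do not need this: by Lemma~\ref{L:morphisms_between_double_fans} every arc $\{f,f'\}$ has endomorphism ring $\bK$ and is therefore indecomposable, so every object of $\mathfrak{R}_{(\cP,\rx)}$ is already isomorphic to a finite direct sum of arcs, and your direct-sum argument suffices without any appeal to summands.
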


Now we focus on the opposite direction of Theorem~\ref{T:completions agree}. Note that, given Proposition~\ref{P:comb_comp_equals_double_fan_mocolims}, it suffices to prove that every indecomposable object in the metric completion of $\cC(\cZ)$ is isomorphic to the module colimit of a compactly supported Cauchy indecomposable double fan.
\subsection{The metric completion as a subcategory of the combinatorial completion} \label{subsec:met_in_comb}

To prove that the metric completion with respect to $\cM_\coaisle$ sits inside the combinatorial completion we compute the module colimits of compactly supported Cauchy sequences with respect to the coaisle metric $\cM_\coaisle$. Given a compactly supported stabilising sequence $\mathbf{E}$ with respect to a suitable good metric (in particular a coaisle metric) we can compute its module colimit via an approximation by a suitable t-structure.

\subsubsection{Utilising right non-degenerate t-structures}

Passing to a suitable right non-degenerate t-structure allows us to simplify a large portion of our computations.

\begin{proposition} \label{P:reduce_to_RNDG_coaisle_sequences}
    Let $\widehat{\aisle}$ be a largest aisle in $\coaisle$ with t-structure $(\widehat{\aisle},\widehat{\coaisle})$. Then $\rcoaisle = \coaisle \cap \Sigma^{2(t+1)}\widehat{\coaisle}$ is a coaisle of a right non-degenerate t-structure $(\raisle,\rcoaisle)$ of $\cC(\cZ)$. In particular, for all objects $E \in \fS_{\cM_{\coaisle}}(\cT)$ that are compactly supported at $t\geq0$ there exists a sequence $\mathbf{\widehat{Y}}$ in $\Sigma^t\widehat{\coaisle}$ such that 
        \[
            E \cong \mocolim \widehat{\mathbf{Y}},
        \]
    and the following hold:
        \begin{enumerate}
            \item the $(\Sigma^{-(t-1)}\raisle)$-approximation of $\mathbf{\widehat{Y}}$ stabilises at $0$;
            \item the $(\Sigma^{-(t-1)}\rcoaisle)$-approximation of $\mathbf{\widehat{Y}}$ is a minimal coaisle sequence with respect to $\Sigma^{-(t-1)}\rcoaisle$.
        \end{enumerate}
\end{proposition}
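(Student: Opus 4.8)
The plan is to deduce this directly from Proposition~\ref{P:combine_aisle_and_coaisle}, applied with the good metric $\cM = \cM_\coaisle$, together with Lemma~\ref{L:right-non-degenerate} to check the hypotheses and to establish the right non-degeneracy claim. Recall that $\cC(\cZ)$ is $2$-Calabi-Yau, so the standing hypothesis of Proposition~\ref{P:combine_aisle_and_coaisle} that the ambient category be $2$-Calabi-Yau is satisfied. Thus the whole statement is essentially a matter of matching up the data.

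Concretely, I would proceed in three steps. First, verify the hypothesis $\cM_{\widehat{\aisle}} \subseteq \cM_\coaisle$: for $t = 0$ both terms are $\cC(\cZ)$, and for $t > 0$ one needs $\Sigma^t\widehat{\aisle} \subseteq \Sigma^{-t}\coaisle$, equivalently $\Sigma^{2t}\widehat{\aisle} \subseteq \coaisle$, which is exactly the first assertion of Lemma~\ref{L:right-non-degenerate} (it gives $\Sigma^s\widehat{\aisle} \subseteq \coaisle$ for every integer $s$, since $\widehat{\aisle}$ is a largest aisle contained in $\coaisle$). Second, identify the coaisle candidate appearing in Proposition~\ref{P:combine_aisle_and_coaisle}: since $\cM = \cM_\coaisle$ has $(\cM_\coaisle)_{t+1} = \Sigma^{-(t+1)}\coaisle$, we get $\Sigma^{t+1}(\cM_\coaisle)_{t+1} = \coaisle$, so that candidate is $\coaisle \cap \Sigma^{2(t+1)}\widehat{\coaisle} = \rcoaisle$; applying Lemma~\ref{L:right-non-degenerate} with $r = 2(t+1)$ shows that $\rcoaisle$ is a right non-degenerate coaisle of $\cC(\cZ)$, and we let $(\raisle,\rcoaisle)$ be the corresponding t-structure. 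This proves the first sentence of the proposition and simultaneously verifies the hypothesis of the conditional part of Proposition~\ref{P:combine_aisle_and_coaisle}, namely that $\rcoaisle$ is a coaisle. Third, invoke Proposition~\ref{P:combine_aisle_and_coaisle} for $E \in \fS_{\cM_\coaisle}$ compactly supported at $t$ to obtain a sequence $\mathbf{\widehat{Y}}$ in $\Sigma^t\widehat{\coaisle}$ with $E \cong \mocolim\mathbf{\widehat{Y}}$ whose $(\Sigma^{-(t-1)}\raisle)$-approximation stabilises at $0$ and whose $(\Sigma^{-(t-1)}\rcoaisle)$-approximation is a minimal coaisle sequence with respect to $\Sigma^{-(t-1)}\rcoaisle$; these are precisely conclusions (i) and (ii).

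I do not expect a genuine obstacle here: the substantive work has already been carried out in Proposition~\ref{P:combine_aisle_and_coaisle} and Lemma~\ref{L:right-non-degenerate}, and what remains is the bookkeeping with suspensions — confirming $\Sigma^{t+1}(\cM_\coaisle)_{t+1} = \coaisle$ and $\Sigma^{2t}\widehat{\aisle} \subseteq \coaisle$ — together with citing those two results in the right order. The only mildly delicate point is keeping the shift indices straight, in particular noting that the coaisle $\rcoaisle = \coaisle \cap \Sigma^{2(t+1)}\widehat{\coaisle}$ produced by Proposition~\ref{P:combine_aisle_and_coaisle} coincides with the one to which Lemma~\ref{L:right-non-degenerate} attaches its right non-degeneracy statement.
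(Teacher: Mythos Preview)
Your proposal is correct and matches the paper's own proof, which simply cites Lemma~\ref{L:right-non-degenerate} for the right non-degeneracy of $\rcoaisle$ and then applies Proposition~\ref{P:combine_aisle_and_coaisle} with $\cM = \cM_\coaisle$. Your explicit verification of the hypothesis $\cM_{\widehat{\aisle}} \subseteq \cM_\coaisle$ via $\Sigma^s\widehat{\aisle} \subseteq \coaisle$ is a useful bit of bookkeeping that the paper leaves implicit.
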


\begin{proof}
    By Lemma~\ref{L:right-non-degenerate} the intersection $\rcoaisle = \coaisle \cap \Sigma^{2(t+1)}\widehat{\coaisle}$ is the coaisle of a right non-degenerate t-structure of $\cC(\cZ)$. So, the statement follows by Proposition~\ref{P:combine_aisle_and_coaisle} with $\cM = \cM_{\coaisle}$.
\end{proof}

In light of Proposition~\ref{P:reduce_to_RNDG_coaisle_sequences}, to compute the module colimits of compactly supported Cauchy sequences with respect to the coaisle metric, we study coaisle sequences with respect to right non-degenerate coaisles.

\subsubsection{Coaisle sequences}

For this subsection, we assume that $(\aisle, \coaisle)$ is right non-degenerate. In particular, every point in the support $S(\coaisle)$ is of the form $x_i^{(s)}$ for some $Z$-index $i \in Z(\rx)$ and some integer $s \geq -1$ .

\begin{definition}
    Let $B = \{i_1,i_2,\dots,i_n\}$ be a block of a partition $\cQ$ of $[N]$. Suppose that we have
    \begin{equation*}
        i_1 < i_2 < \dots < i_n
    \end{equation*}
    with respect to the natural cyclic order on $[N]$. Fix an integer $m$. Then the {\em $\cQ$-predecessor} of $i_m$ is $i_{m-1}$ and the {\em $\cQ$-successor} of $i_m$ is $i_{m+1}$ where the indices are taken modulo $n$.
\end{definition}

\begin{remark}
    Note that an index $i$ is its own $\cQ$-predecessor if and only if it is its own $\cQ$-successor if and only if $i$ is a singleton in the partition $\cQ$.
\end{remark}

The goal of this subsection is to prove the following result, which shows that minimal coaisle sequences look similar to double fans. 

\begin{proposition} \label{P:final_classification_coaisle_sequence}
    Assume $(\aisle, \coaisle)$ is right non-degenerate. Let $\mathbf{Y} = (Y_n,g_n)_{n>0}$ be a minimal coaisle sequence with respect to $\coaisle$. Then $\mathbf{Y}$ has a subsequence $\mathbf{Y}_\mathbf{I} = (Y'_n,g'_n)_{n>0}$ for which the following hold:
     \begin{enumerate}
        \item For all integers $n>0$, each indecomposable direct summand of $Y'_n$ is of the form $\arc{i}{j}{s}$ for some $Z$-indices $i,j \in Z(\rx)$ where $j$ is the $\cP^c$-predecessor of $i$ and for some integer $s > n$;  
        \item For all integers $n'>n>0$, the
        cone of $g'_{n,n'}$ lies in
        \begin{equation*}
            \add \left( \bigcup_{j \in Z(\rx)} \cx \left((x_j^{(n-1)},a_{j+1})\right) \right). 
        \end{equation*} In particular $\mathbf{Y_I}$ is Cauchy with respect to $\cM_\coaisle$.
     \end{enumerate}
      
 \end{proposition}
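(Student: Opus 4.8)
The plan is to compute the module colimit of a minimal coaisle sequence by peeling it off in three stages: standard reductions; using minimality plus Corollary~\ref{C:GZ_classification_coaisle} to pin down the shape of the indecomposable summands of the $Y_n$; and then reading off the cones from the explicit triangles in $\cC(\cZ)$. First I would pass to a tail of $\mathbf{Y}$; by Remark~\ref{R:subseq_are_cchy_and_cptly_supp} this is again a minimal coaisle sequence, and since $\mathbf{Y}$ stabilises at $\Sigma^{-2}\coaisle$ we may assume $\cone(g_{n,n'})\in\Sigma^{-2}\coaisle$ for \emph{all} $n'>n>0$. Using right non-degeneracy (so $S(\coaisle)=\{x_\ell^{(s)}\mid \ell\in Z(\rx),\,s\geq -1\}$) and Corollary~\ref{C:GZ_classification_coaisle}, I would record that for $t\geq 1$ the category $\Sigma^{-t}\coaisle$ consists of the finite direct sums of arcs with both endpoints in $\bigcup_{\ell\in B}[x_\ell^{(t-1)},a_{\ell+1})$ for a single block $B\in\cP^c$; in particular $x^{(-2)}_\ell$ never lies in the support of $\Sigma^{-t}\coaisle$ for any $t\geq 1$.

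Next, since $\mathbf{Y}$ is $(\Sigma^{-1}\coaisle)$-trivial, each indecomposable summand $W$ of each $Y_n$ lies in $\coaisle\setminus\Sigma^{-1}\coaisle$; comparing the supports of $\coaisle$ and $\Sigma^{-1}\coaisle$ forces at least one endpoint of $W$ to be the left-hand endpoint $x_i^-$ of some defining interval $[x_i^-,a_{i+1})$, so $W=\{x_i^-,x_j^{(s)}\}$ with $i,j\in Z(\rx)$ in a common block of $\cP^c$ and $s\geq -1$. I would then track a summand along the sequence. The key observation is that $g_{n,n'}|_W\neq 0$ for every $n'>n$: otherwise $\Sigma W=W^-$ would be a direct summand of $\cone(g_{n,n'})\in\Sigma^{-2}\coaisle$, impossible since $W^-$ has endpoint $x_i^{(-2)}$. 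Plugging the explicit split triangle $X\to Y\to\{x_0^-,y_0\}\oplus\{x_1^-,y_1\}\to\Sigma X$ into a nonzero component $W\to W'$ of $g_{n,n'}$ (into an indecomposable summand $W'$ of $Y_{n'}$), and using that its cone is a summand of $\cone(g_{n,n'})\in\Sigma^{-2}\coaisle$, the cone term carrying $x_i^{(-2)}$ must vanish — so $W'$ again has endpoint $x_i^-$ — while the other cone term, of shape $\{(x_j^{(s)})^-,\bullet\}$, must land in some $[x_k^{(1)},a_{k+1})$. Locating $x_i^-$ inside the interval decomposition of its $\cP^c$-block then pins $j$ down as the $\cP^c$-predecessor of $i$ (it is the unique defining interval of the block from which an arc ending at the start $x_i^-$ of $[x_i^-,a_{i+1})$ can leave $\coaisle$ while remaining outside $\Sigma^{-1}\coaisle$), and forces $s$ to grow along the strand.

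Strands along which $s$ stays bounded have to be excluded: after a Gaussian-elimination refinement in the spirit of Lemma~\ref{L:cptly_supp_means_no_summands} such a strand would split off a representable summand $\yo(W_0)$ with $0\neq W_0=\{x_i^-,x_j^{(s_0)}\}\in\coaisle$ from $\mocolim\mathbf{Y}$; but for a suitable point $z\in S(\coaisle)$ the arc $\{x_i,z\}$ lies in $\coaisle$ and crosses $W_0$, hence $\Hom_{\cC(\cZ)}(P,W_0)\neq 0$ for some $P\in\Sigma^{-1}\coaisle$, so $\yo(W_0)$ is not compactly supported at $\Sigma^{-1}\coaisle$, contradicting that $\mathbf{Y}$ is. Thus $s\to\infty$ along every strand, and since there are only finitely many index pairs $(i,j)$ a diagonal refinement produces a subsequence $\mathbf{Y_I}=(Y'_n,g'_n)_{n>0}$ with $s>n$ for every summand of $Y'_n$, which is exactly (i). The main obstacle is precisely this step: keeping honest control of the matrix of each $g_n$ (morphisms in $\cC(\cZ)$ a priori mix the indecomposable summands, so the ``strand'' bookkeeping needs Krull--Schmidt together with successive refinements and Gaussian elimination), and — the most delicate point — extracting from the crossing combinatorics of $\cC(\cZ)$ and the structure of the Kreweras complement that the moving endpoint sits precisely in the interval of the $\cP^c$-predecessor of $i$.

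Finally, for (ii) each summand of the strand is $\{x_i^-,x_j^{(s_n)}\}$ with $s_n$ strictly increasing and $s_n>n$, so the split triangle above gives
\[
\cone\big(\{x_i^-,x_j^{(s_n)}\}\to\{x_i^-,x_j^{(s_{n'})}\}\big)=\{x_i^{(-2)},x_i^-\}\oplus\{x_j^{(s_n-1)},x_j^{(s_{n'})}\}=\{x_j^{(s_n-1)},x_j^{(s_{n'})}\},
\]
and $s_n-1\geq n>n-1$ places this arc in $\cx\big((x_j^{(n-1)},a_{j+1})\big)$. Summing over the finitely many summands gives $\cone(g'_{n,n'})\in\add\big(\bigcup_{j\in Z(\rx)}\cx((x_j^{(n-1)},a_{j+1}))\big)$, and since this category is contained in $\Sigma^{-n}\coaisle=(\cM_\coaisle)_n$, the sequence $\mathbf{Y_I}$ is Cauchy with respect to $\cM_\coaisle$. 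The reductions in the first paragraph and this cone computation are routine; the content is entirely in the strand analysis.
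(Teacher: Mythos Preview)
Your overall plan is reasonable, but the core of the strand analysis rests on a false claim. You assert that for a nonzero component $\psi\colon W\to W'$ of $g_{n,n'}$ (with $W$ a summand of $Y_n$ and $W'$ a summand of $Y_{n'}$), the cone of $\psi$ is a direct summand of $\cone(g_{n,n'})$, and then use $\cone(g_{n,n'})\in\Sigma^{-2}\coaisle$ to constrain the endpoints of $W'$. In a triangulated category the cone of a single matrix entry is \emph{not} a summand of the cone of the whole matrix; your later remark about Gaussian elimination does not fix this, because even after reducing isomorphism entries you cannot make the matrix block-diagonal. Consequently, neither your conclusion that $W'$ must share the endpoint $x_i^-$, nor your placement of the second cone term inside $[x_k^{(1)},a_{k+1})$, is justified. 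A second gap is your identification of $j$ as the $\cP^c$-predecessor of $i$: the condition ``$\{x_i^-,x_j^{(s)}\}\in\coaisle\setminus\Sigma^{-1}\coaisle$'' holds for every $j$ in the block containing $i$, not just the predecessor, so your stated justification does not single $j$ out. Your Part~(ii) computation inherits the same problem, since it treats $g'_{n,n'}$ as diagonal.

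The paper proves Part~(i) by a genuinely different mechanism: it exploits compact support at $\Sigma^{-1}\coaisle$ rather than the stabilisation condition. Lemma~\ref{L:factoring_crossings} shows that a nonzero composite $V\hookrightarrow Y_m\to Y_{m_Z}\twoheadrightarrow W$ forbids certain summands of a test object $Z\in\Sigma^{-1}\coaisle$; choosing $Z=\bigoplus_{q\in B}\{x_p^{(s+\delta+1)},x_q^+\}$ with $p$ the $\cP^c$-predecessor of $i$ then forces $\ell=p$ and $r\geq s+\delta$ in Lemma~\ref{L:summands_look_like_one}. Proposition~\ref{P:summands_in_coaisle_are_neighbouring} then lifts backwards along the sequence using the probe arc $\fromarc{x_i^-}\in\Sigma^{-1}\aisle$ together with $\Hom(\Sigma^{-1}\aisle,\Sigma^{-2}\coaisle)=0$, which produces a summand of the previous entry with endpoint $x_i^-$ without any claim about cones of components. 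For Part~(ii), the paper avoids diagonality entirely: once Part~(i) is known, both $Y_{m'}$ and $\Sigma Y_m$ lie in an explicit extension-closed convex hull, so $\cone(g_{m,m'})$ does too, and intersecting with $\Sigma^{-2}\coaisle$ removes the unwanted endpoints $x_{i_j}^{--},x_{i_j}^-$.
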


To prove Proposition~\ref{P:final_classification_coaisle_sequence}, we break it down into a series of helpful results. 

 \begin{lemma} \label{L:factoring_crossings}
    Let $\cM = \{B_t\}_{t\geq0}$ be a good metric on $\cC(\cZ)$. Let $\mathbf{E} = (E_n,f_n)_{n>0}$ be a sequence in $\cC(\cZ)$ that is compactly supported at $t \geq 0$ with respect to $\cM$ and fix an object $Z \in B_t$. Then, for each integer $m>0$, there exists an integer $m_Z > m$ such that for each indecomposable summand $\{v_0,v_1\}$ of $E_m$ and $\{w_0,w_1\}$ of $E_{m_Z}$ the following holds: If the composition
    \begin{center}
      \begin{tikzcd}
          \{v_0,v_1\} \arrow[r, "\iota"] & E_{m} \arrow[r, "f_{m,m_Z}"] & E_{m_Z} \arrow[r, "\pi"] & \{w_0,w_1\}
      \end{tikzcd}
  \end{center}
  (where $\iota$ and $\pi$ denote the canonical inclusion and projection of indecomposable summands respectively), is non-zero and enabled by the ordering
 	\begin{equation*}
 		v_0^- < w_0 < v_1^- < w_1,
 	\end{equation*}
 	then a non-trivial indecomposable summand $U$ of $Z$ cannot be of the form $U = \{u_0,u_1\}$ where
 	\[
 		w_1^+ < u_0 \leq v_0 \text{ and } w_0^+ < u_1 \leq v_1.
 	\]
 \end{lemma}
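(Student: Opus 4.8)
The plan is to argue by contradiction: I will produce, from a single integer $m_Z$ depending only on $m$ and $Z$, a setup in which a ``forbidden'' summand $U$ of $Z$ yields a morphism $Z \to E_m$ that does \emph{not} become zero after composing with $f_{m,m_Z}$, which is impossible once $m_Z$ has been chosen so that every morphism out of $Z$ into $E_m$ dies.

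\textbf{Producing $m_Z$.} First I would fix $m>0$. Since $\mathbf{E}$ is compactly supported at $t$ and $Z \in B_t$, the module colimit of $\mathbf{E}$ vanishes on $Z$, so $\colim \Hom_{\cC(\cZ)}(Z,E_n) = 0$. As $\cC(\cZ)$ is Hom-finite, $\Hom_{\cC(\cZ)}(Z,E_m)$ is a finite-dimensional $\bK$-vector space; applying the first part of Remark~\ref{R:subseq_induced_by_morphisms_being_zero_in_colimit} to each element of a $\bK$-basis and taking the largest of the resulting indices, I obtain an integer $m_Z > m$ such that $f_{m,m_Z}\circ\varphi = 0$ for \emph{every} morphism $\varphi\colon Z \to E_m$ (by $\bK$-linearity of composition). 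I claim this $m_Z$ works.

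\textbf{The crossing computation (main obstacle).} Suppose not. Then there are indecomposable summands $\{v_0,v_1\}$ of $E_m$ and $\{w_0,w_1\}$ of $E_{m_Z}$ for which $g := \pi\circ f_{m,m_Z}\circ\iota \colon \{v_0,v_1\} \to \{w_0,w_1\}$ is non-zero and enabled by $v_0^- < w_0 < v_1^- < w_1$, together with a non-trivial indecomposable summand $U = \{u_0,u_1\}$ of $Z$ with $w_1^+ < u_0 \le v_0$ and $w_0^+ < u_1 \le v_1$. The heart of the argument, which I expect to be the main (though essentially routine) obstacle, is the bookkeeping with the cyclic order: from $v_0^- < w_0$ and $v_1^- < w_1$ one deduces $v_0 \le w_0$ and $v_1 \le w_1$, hence $u_0 \le v_0 \le w_0$ and $u_1 \le v_1 \le w_1$; from $w_1^+ < u_0$ and $w_0^+ < u_1$ one deduces $w_1 < u_0^-$ and $w_0 < u_1^-$. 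Putting these together gives the cyclic order $u_0^- < w_0 < u_1^- < w_1$, so $U^-$ and $\{w_0,w_1\}$ cross and $\Hom_{\cC(\cZ)}(U,\{w_0,w_1\}) \cong \bK$, with canonical morphism $e_{U,\{w_0,w_1\}}$ enabled by this ordering (Definition~\ref{D:category}(ii)). Since moreover $u_0 \le v_0 \le w_0$ and $u_1 \le v_1 \le w_1$, Definition~\ref{D:category}(iii) shows that $e_{U,\{w_0,w_1\}}$ factors through the indecomposable object $\{v_0,v_1\}$.

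\textbf{Assembling the contradiction.} By Remark~\ref{R:canonical_morphisms_exist}, this factorisation gives $e_{\{v_0,v_1\},\{w_0,w_1\}} \circ e_{U,\{v_0,v_1\}} = e_{U,\{w_0,w_1\}} \ne 0$; in particular $\Hom_{\cC(\cZ)}(U,\{v_0,v_1\})$ and $\Hom_{\cC(\cZ)}(\{v_0,v_1\},\{w_0,w_1\})$ are both non-zero, hence one-dimensional, so $g = \lambda\, e_{\{v_0,v_1\},\{w_0,w_1\}}$ for some non-zero $\lambda \in \bK$. Then $g \circ e_{U,\{v_0,v_1\}} = \lambda\, e_{U,\{w_0,w_1\}} \ne 0$, so $f_{m,m_Z}\circ\iota\circ e_{U,\{v_0,v_1\}} \ne 0$. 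Writing $\pi_U \colon Z \to U$ and $\iota_U \colon U \to Z$ for the canonical projection and inclusion of the summand $U$ (so $\pi_U\circ\iota_U = \id_U$), set $\varphi := \iota\circ e_{U,\{v_0,v_1\}}\circ\pi_U \colon Z \to E_m$. Then $f_{m,m_Z}\circ\varphi\circ\iota_U = f_{m,m_Z}\circ\iota\circ e_{U,\{v_0,v_1\}} \ne 0$, hence $f_{m,m_Z}\circ\varphi \ne 0$, contradicting the defining property of $m_Z$. Therefore no such summand $U$ exists, which is the assertion.
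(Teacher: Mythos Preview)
Your argument is correct and follows essentially the same route as the paper: choose $m_Z$ using Hom-finiteness and Remark~\ref{R:subseq_induced_by_morphisms_being_zero_in_colimit}, then for a putative forbidden summand $U$ produce a non-zero map $U \to \{v_0,v_1\}$ whose composition with $\psi$ is non-zero, and precompose with the projection $Z \to U$ to contradict the choice of $m_Z$. Your write-up is somewhat more explicit about the one-dimensionality of the Hom-spaces and the use of canonical morphisms, but the substance is the same.
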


 \begin{proof}
 The category $\cC(\cZ)$ is Hom-finite and $\mocolim \mathbf{E}(Z) = 0$. Therefore, by Remark~\ref{R:subseq_induced_by_morphisms_being_zero_in_colimit} for every integer $m>0$ there exists an integer $m_Z > m$ such that, the composition
 	\begin{equation}\label{E:Uzero}
 	\begin{tikzcd}
          Z \arrow[r, "\varphi"] & E_{m} \arrow[r, "f_{m,m_Z}"] & E_{m_Z}
      \end{tikzcd}
 	\end{equation}
 	is zero.

    Fix indecomposable summands $\{v_0,v_1\}$ of $E_m$ and $\{w_0,w_1\}$ of $E_{m_Z}$ such that the composition
  \begin{center}
      \begin{tikzcd}
          \psi \colon \{v_0,v_1\} \arrow[r, "\iota"] & E_{m} \arrow[r, "f_{m,m_Z}"] & E_{m_Z} \arrow[r, "\pi"] & \{w_0,w_1\}
      \end{tikzcd}
  \end{center}
 	is non-zero and enabled by the ordering
 	\begin{equation}\label{E:psi}
 		v_0^- < w_0 < v_1^- < w_1.
 	\end{equation}
    Now fix an indecomposable summand $U$ of $Z$ and assume for a contradiction that 
     $U = \{u_0,u_1\}$ with
 	\begin{equation*}
 		w_1^+ < u_0 \leq v_0 \text{ and } w_0^+ < u_1 \leq v_1.
 	\end{equation*}
 	Combining these inequalities with \eqref{E:psi}, we obtain
 	\begin{equation*}
 		w_1^+ < u_0 \leq v_0 < w_0^+ < u_1 \leq v_1.
 	\end{equation*}
 	Therefore, there exists a non-zero morphism $\varphi' \colon U \rightarrow V$ enabled by the ordering
 	\begin{equation*}
 		u_0^- < v_0 < u_1^- < v_1.
 	\end{equation*}
 	Moreover, the composition $\psi \circ \varphi' \colon U \rightarrow W$ is non-zero because there are inequalities
 	\begin{equation*}
 		u_0 \leq v_0 \leq w_0 \text{ and } u_1 \leq v_1 \leq w_1.
 	\end{equation*}
 	Consequently, there is a non-zero morphism
    \begin{center}
 		\begin{tikzcd}
 			Z \arrow[r] & U \arrow[r,"\varphi'"] & V \arrow[r, "\iota"] & E_m \arrow[r, "f_{m,m_Z}"] & E_{m_Z}
 		\end{tikzcd}
 	\end{center}
 	which is a contradiction to the composition \eqref{E:Uzero} being zero for all morphisms $\varphi \colon Z \rightarrow E_m$.
 \end{proof}

\begin{lemma}\label{L:summands_look_like_one}
 	Assume $(\aisle,\coaisle)$ is right non-degenerate. Let $\mathbf{Y} = (Y_n,g_n)_{n>0}$ be a $(\Sigma^{-1}\coaisle$)-trivial sequence in $\coaisle$ that is compactly supported at $\Sigma^{-1}\coaisle$.
Then the following hold:
\begin{enumerate}
    \item For all integers $m > 0$, each non-trivial indecomposable direct summand of $Y_m$ is of the form $\arc{i}{j}{s}$ for some $\cZ$-indices $i,j \in Z(\rx)$ that lie in the same block $B \in \cP^c$ and an integer $s \geq -1$;
          
       \item For all integers $m \geq 0$ and $\delta \geq 3$, there exists an $m' > m$ such that the following holds: For every indecomposable summand $\arc{i}{j}{s}$ of $Y_m$ and every indecomposable summand $\arc{k}{\ell}{r}$ of $Y_{m'}$, consider the composition
        \begin{equation}\label{E:factoring}
        \begin{tikzcd}
            \psi \colon \arc{i}{j}{s} \arrow[r, "\iota"] & Y_{m} \arrow[r, "g_{m,m'}"] & Y_{m'} \arrow[r, "\pi"] & \arc{k}{\ell}{r},
        \end{tikzcd}
 	\end{equation}
 	where $\iota$ and $\pi$ denote the canonical inclusion and projection of indecomposable summands respectively. If $\psi$ is non-trivial and enabled by the ordering
 	\begin{equation*}
 	x_i^{--} < x_k^- < x_j^{(s-1)} < x_\ell^{(r)}
 \end{equation*}
       (up to a possible relabelling of $i$ and $j$ if $s = -1$), then $\ell$ is the $\cP^c$-predecessor of $i$ and $r \geq s+\delta$.
\end{enumerate}
 \end{lemma}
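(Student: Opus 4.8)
Paragraph 1 (Part (i)). The argument here is short. The plan is to recall, from Corollary~\ref{C:GZ_classification_coaisle} and the description $S(\coaisle)=\{x_k^{(s)}\mid k\in Z(\rx),\ s\geq -1\}$ of the support of a right non-degenerate coaisle, that every indecomposable summand of $Y_m$ is an arc $\{x_i^{(s)},x_j^{(t)}\}$ with $i,j$ lying in one block $B\in\cP^c$ and $s,t\geq -1$; and then to observe that if both $s\geq 0$ and $t\geq 0$ one has $\{x_i^{(s)},x_j^{(t)}\}=\Sigma^{-1}\{x_i^{(s-1)},x_j^{(t-1)}\}$, where $\{x_i^{(s-1)},x_j^{(t-1)}\}$ is again an arc whose two endpoints lie in $S(\coaisle)$ with both endpoint-slots in $B$, hence lies in $\coaisle$. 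Thus the summand would lie in $\Sigma^{-1}\coaisle$, contradicting $(\Sigma^{-1}\coaisle)$-triviality, so $\min\{s,t\}=-1$ and, after relabelling, the summand is of the form $\arc{i}{j}{s}$.

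Paragraph 2 (Part (ii), setup). For (ii) the plan is to fix $m$ and $\delta\geq 3$, list the finitely many indecomposable summands $\arc{i_d}{j_d}{s_d}$ of $Y_m$ (using that $\cC(\cZ)$ is Krull--Schmidt) and put $s_{\max}=\max_d s_d$, and then extract $m'$ from Lemma~\ref{L:factoring_crossings}. Concretely, I would apply that lemma to $\mathbf{Y}$ with the metric $\cM_\coaisle$ and $t=1$ --- which is legitimate since $\mathbf{Y}$ is compactly supported at $(\cM_\coaisle)_1=\Sigma^{-1}\coaisle$ and $\cC(\cZ)$ is Hom-finite --- letting $Z$ run over the \emph{finite} family $\mathcal{U}$ of those arcs of $\cZ$ which lie in $\Sigma^{-1}\coaisle$ and have one of the two shapes
\[
    \{x_p,x_q^+\},\qquad \{x_\ell^{(a)},x_q^+\}\qquad (p,q,\ell\in[N],\ 0\leq a\leq s_{\max}+\delta+1).
\]
I would then set $m':=\max_{U\in\mathcal{U}}m_U$; since $\mathcal{U}$ depends only on $m$ and $\delta$, one such $m'$ works for all summand pairs simultaneously.

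Paragraph 3 (Part (ii), deduction). Given this $m'$, let $\arc{i}{j}{s}$ be a summand of $Y_m$, $\arc{k}{\ell}{r}$ a summand of $Y_{m'}$, and suppose the composition $\psi$ is non-trivial and enabled by $x_i^{--}<x_k^-<x_j^{(s-1)}<x_\ell^{(r)}$. First I would read off cyclic positions: since $x_p^-\in(a_p,x_p)$ for each endpoint (because $a_p$ is a two-sided accumulation point), the displayed ordering places $i,k,j,\ell$ in this cyclic order on $[N]$, with $k$ possibly equal to $i$ or $j$; and since, by (i), $i,j$ lie in one block of $\cP^c$ and $k,\ell$ in one block of $\cP^c$, the non-crossing property of $\cP^c$ forces all four indices into a single block $B$. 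To conclude that $\ell$ is the $\cP^c$-predecessor of $i$, I would argue by contradiction: an index $p\in B$ strictly between $\ell$ and $i$ on the side avoiding $j$ and $k$ would make $U=\{x_p,x_k^+\}$ a non-zero arc of $\Sigma^{-1}\coaisle$ (hence in $\mathcal{U}$) with $x_\ell^{(r+1)}<x_p\leq x_i^-$ and $x_k<x_k^+\leq x_j^{(s)}$, i.e.\ $U$ nested inside the crossing witnessing $\psi$ in the sense of Lemma~\ref{L:factoring_crossings}; that lemma would then deny that $U$ is a summand of $Z=U$, which is absurd. The growth bound is proved in exactly the same way with $x_p$ replaced by $x_\ell^{(r+2)}$: if $r<s+\delta$, then $U=\{x_\ell^{(r+2)},x_k^+\}\in\mathcal{U}$ is again nested inside that crossing, contradicting Lemma~\ref{L:factoring_crossings}; so $r\geq s+\delta$.

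Paragraph 4 (Main obstacle). I expect the genuine work to be the cyclic bookkeeping in the last step: one must check carefully that every $U$ constructed above is a genuine non-zero arc contained in $\Sigma^{-1}\coaisle$, that its endpoints fall into exactly the half-open arcs appearing in the statement of Lemma~\ref{L:factoring_crossings}, and that the degenerate configurations --- the coincidences among $i,j,k,\ell$, the symmetric case $s=-1$ (absorbed by the permitted relabelling of $i$ and $j$), and decorations sitting close to an accumulation point --- are all handled. A secondary, more bookkeeping-level point is ensuring that $\mathcal{U}$ is finite and independent of the particular summand pair, which is what allows a single $m'$ to serve uniformly.
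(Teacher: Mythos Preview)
Your proof of (i) is the same as the paper's. For (ii), your approach is correct but organised differently, and the difference is worth noting.

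The paper does not separate the two conclusions. For each summand $\arc{i}{j}{s}$ of $Y_m$ it lets $p$ be the $\cP^c$-predecessor of $i$ \emph{in advance} and takes a single test object
\[
    Z=\bigoplus_{q\in B}\{x_p^{(s+\delta+1)},x_q^+\}\in\Sigma^{-1}\coaisle.
\]
Lemma~\ref{L:factoring_crossings} then forbids $x_\ell^{(r+1)}<x_p^{(s+\delta+1)}\leq x_i^-$, forcing $x_p^{(s+\delta+1)}\leq x_\ell^{(r+1)}\leq x_i^-$; from this one inequality, both $\ell=p$ and $r\geq s+\delta$ drop out at once. The hypothesis $\delta\geq 3$ is used exactly here: it guarantees $s+\delta+1\geq 3$, so $\{x_p^{(s+\delta+1)},x_q^+\}$ is a genuine arc even when $p=q$. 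The paper is slightly informal about obtaining a single $m'$ valid for all summands of $Y_m$, which your $\max$ over a finite family handles more explicitly.

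Your two-step route (first rule out a hypothetical $p$ strictly between $\ell$ and $i$ via $\{x_p,x_k^+\}$, then bound $r$ via $\{x_\ell^{(r+2)},x_k^+\}$) also works, but the second test is more fragile: when $\ell=k$ and $r\in\{-1,0\}$ the object $\{x_\ell^{(r+2)},x_k^+\}$ is not an arc, so you would need to argue separately that this configuration cannot occur, or replace $r+2$ by something like $s+\delta+1$ as the paper does. You correctly flag this as the main obstacle; the paper's choice of test object sidesteps it entirely and explains why $\delta\geq 3$ appears in the statement.
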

 
 \begin{proof}
	By Corollary~\ref{C:GZ_classification_coaisle} we have
 	\begin{equation*}
       \coaisle = \add \left( \bigcup_{B \in \cP^c} \cx \left( \bigcup_{q \in B} [x_q^{-},a_{q+1}) \right) \right), \;\;\; \Sigma^{-1} \coaisle = \add \left( \bigcup_{B \in \cP^c} \cx \left( \bigcup_{q \in B} [x_q,a_{q+1}) \right) \right).
 	\end{equation*}
  Thus an indecomposable object in $\coaisle$ that does not lie in $\Sigma^{-1}\coaisle$, in particular every summand of an entry of $\mathbf{Y}$, is of the form $\{x_i^-,x_j^{(s)}\}$ for some $i,j \in Z(\rx)$ in the same block $B$ of $\cP^c$ and some $s \geq -1$. 
   
Fix an integer $m > 0$ and let $\{x_i^-,x_j^{(s)}\}$ be a summand of the entry $Y_{m}$. Let $p$ be the $\cP^c$-predecessor of $i$ and fix an integer $\delta \geq 3$. As $s+\delta+1 \geq 3$, for all indices $q \in B$, the set $\{x_p^{(s+\delta+1)},x_q^+\}$ is a non-trivial arc of $\cZ$. Consider the object
\[ Z = \bigoplus_{q \in B}\{x_p^{(s+\delta+1)},x_q^+\} \in \Sigma^{-1}\coaisle.
    \] 
and set $m' = m_{Z}$ as defined in Lemma~\ref{L:factoring_crossings}. Let $\{x_k^-,x_\ell^{(r)}\}$  be a summand of $Y_{m'}$ such that the composition
  \begin{equation*}
        \begin{tikzcd}
            \psi \colon \arc{i}{j}{s} \arrow[r, "\iota"] & Y_{m} \arrow[r, "g_{m,m'}"] & Y_{m'} \arrow[r, "\pi"] & \arc{k}{\ell}{r}
        \end{tikzcd}
 	\end{equation*}
 (where $\iota$ and $\pi$ denote the canonical inclusion and projection of indecomposable summands respectively), is non-zero and enabled by the ordering
 \begin{equation}\label{E:rightorder}
 	x_i^{--} < x_k^- < x_j^{(s-1)} < x_\ell^{(r)}.
 \end{equation}
 Since $i$ and $j$, respectively $k$ and $\ell$, lie in the same block of $\cP^c$ which is non-crossing, all four of $i,j,k$ and $\ell$ lie in the same block $B$ of $\cP^c$. Moreover, by (\ref{E:rightorder}) we have $x_k \neq x_j^{(s)}$ and thus
  \begin{equation}\label{E:Z2-inequality}
    	x_k < x_k^+ \leq x_j^{(s)}.
  \end{equation}
The arc $\{x_p^{(s+\delta+1)},x_k^+\}$ is a non-trivial summand of $Z$. Therefore, by Lemma~\ref{L:factoring_crossings} and the inequalities (\ref{E:rightorder}) and (\ref{E:Z2-inequality}) we cannot have
    \[
    	x_\ell^{(r+1)} < x_p^{(s+\delta+1)} \leq x_i^-
    \]
    and so must have 
    \[
			 x_p^{(s+\delta+1)} \leq x_\ell^{(r+1)} \leq x_i^-.
    \]
    As $r+1 \geq 0$, this forces $p = \ell$ and $r \geq s+\delta$.
   \end{proof}

\begin{proposition} \label{P:summands_in_coaisle_are_neighbouring}
    Assume $(\aisle,\coaisle)$ is right non-degenerate. Let $\mathbf{Y} = (Y_n,g_{n})_{n > 0}$ be a minimal coaisle sequence with respect to $\coaisle$. Then there exists a subsequence $\mathbf{Y}_\mathbf{I} = (Y'_n,g'_n)_{n>0}$ of $\mathbf{Y}$ such that for all integers $m >0$ each non-trivial indecomposable direct summand of $Y'_m$ is of the form $\arc{i}{j}{s}$ for some $Z$-indices $i,j \in Z(\rx)$ where $j$ is the $\cP^c$-predecessor of $i$ and for some integer $s > m$.
\end{proposition}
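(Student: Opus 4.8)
\emph{Plan.} By Lemma~\ref{L:summands_look_like_one}(i) every non-trivial indecomposable summand of every $Y_n$ already has the form $\arc{i}{j}{s}$ with $i,j\in Z(\rx)$ lying in a common block of $\cP^c$ and $s\geq -1$. Since $\mathbf{Y}$ is a coaisle sequence it stabilises at $\Sigma^{-2}\coaisle$, so after deleting finitely many initial terms I may assume $\cone(g_{m,m'})\in\Sigma^{-2}\coaisle$ for all $m'>m$; also $\mathbf{Y}$ is compactly supported at $\Sigma^{-1}\coaisle$. All of these properties pass to subsequences by Remark~\ref{R:subseq_are_cchy_and_cptly_supp}, so it suffices to exhibit the desired subsequence using them.

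\emph{No summand dies, and every summand is hit.} If, for a non-trivial indecomposable summand $S$ of $Y_m$ and some $m'>m$, the composite $S\hookrightarrow Y_m\xrightarrow{g_{m,m'}}Y_{m'}$ vanished, then the split mono $S\to Y_m$ would factor through $\Sigma^{-1}\cone(g_{m,m'})\in\Sigma^{-3}\coaisle\subseteq\Sigma^{-1}\coaisle$, exhibiting $S$ as a summand of an object of $\Sigma^{-1}\coaisle$ and contradicting $(\Sigma^{-1}\coaisle)$-triviality. Dually, using the split epi $Y_{m'}\twoheadrightarrow T$ and $\cone(g_{m,m'})\in\Sigma^{-2}\coaisle\subseteq\Sigma^{-1}\coaisle$, every non-trivial indecomposable summand $T$ of $Y_{m'}$ is hit non-trivially by some indecomposable summand of $Y_m$. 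Thus, along any subsequence, the transition maps couple each indecomposable summand of one term to an indecomposable summand of the next, in both directions.

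\emph{Controlling the couplings and reading off the structure.} Fix $\delta\geq 3$ and apply Lemma~\ref{L:summands_look_like_one}(ii) recursively to produce indices $n_1<n_2<\cdots$ such that, for consecutive terms of the subsequence $\mathbf{Y}_\mathbf{I}=(Y'_n,g'_n)_{n>0}$ indexed by $\{n_k\}$, every non-trivial composite $\arc{i}{j}{s}\hookrightarrow Y'_n\to Y'_{n+1}\twoheadrightarrow\arc{k}{\ell}{r}$ which is enabled by the ordering $x_i^{--}<x_k^-<x_j^{(s-1)}<x_\ell^{(r)}$ forces $\ell$ to be the $\cP^c$-predecessor of $i$ and $r\geq s+\delta$. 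To turn this into control over \emph{all} non-trivial couplings one rules out the only other possible ordering $x_i^{--}<x_\ell^{(r)}<x_j^{(s-1)}<x_k^-$: this is vacuous when $s=-1$, since then $\arc{i}{j}{s}$ is symmetric and the relabelling clause of Lemma~\ref{L:summands_look_like_one}(ii) applies, and for $s>-1$ it contradicts the membership of source and target in $\coaisle$ together with $\cone(g'_n)\in\Sigma^{-2}\coaisle$, by a crossing computation in the spirit of Lemma~\ref{L:factoring_crossings}. Now every summand $\arc{k}{\ell}{r}$ of $Y'_{n}$ ($n\geq 2$) is hit from $Y'_{n-1}$, so the controlled conclusion forces $\ell$ to be the $\cP^c$-predecessor of the "$-$"-index of each hitting summand; since the successor in a block of $\cP^c$ is unique these indices all coincide, with a common value $i_T$, and $\ell$ is the $\cP^c$-predecessor of $i_T$. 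Using in addition that $\arc{k}{\ell}{r}\in\coaisle\setminus\Sigma^{-1}\coaisle$ and that the ordering places $x_k^-$ strictly between the "$-$" and "$(s)$" endpoints of any hitting summand, a short combinatorial check pins $k=i_T$; hence $\ell$ is the $\cP^c$-predecessor of $k$, i.e.\ each summand of $Y'_n$ has the form $\arc{i}{j}{s}$ with $j$ the $\cP^c$-predecessor of $i$. Finally, the bound $r\geq s+\delta$ with $\delta\geq 3$ makes the shift parameters of $Y'_n$ at least $3n-c$ for an absolute constant $c$, so discarding finitely many initial terms yields $s>n$ for the $n$-th term, which is exactly the assertion.

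\emph{Main obstacle.} The technical heart is the combinatorial bookkeeping in the last step: ruling out the "wrong" crossing ordering for $s>-1$ (so that Lemma~\ref{L:summands_look_like_one}(ii) genuinely constrains every coupling, not just those of a prescribed form), and then extracting from the coaisle and $(\Sigma^{-1}\coaisle)$-triviality constraints that the "$-$"-index of each summand must equal the index $i_T$ already forced by the couplings into it. Everything else — the recursive subsequence extraction from Lemma~\ref{L:summands_look_like_one}(ii), the inheritance of properties along subsequences, and the final diagonal thinning — is routine.
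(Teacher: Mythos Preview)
Your overall strategy—show that every indecomposable summand of $Y'_n$ is hit from $Y'_{n-1}$, then invoke Lemma~\ref{L:summands_look_like_one}(ii) to constrain the target—is reasonable, and your ``every summand is hit'' argument is fine. However, two steps are genuinely unjustified and do not seem to go through as stated.

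\textbf{Gap 1: the wrong ordering.} You assert that case (B), the ordering $x_i^{--}<x_\ell^{(r)}<x_j^{(s-1)}<x_k^-$, ``contradicts membership in $\coaisle$ together with $\cone(g'_n)\in\Sigma^{-2}\coaisle$''. But $\cone(g'_n)$ is the cone of the full map $Y'_n\to Y'_{n+1}$, not of an individual component, so the cone of a single component $\arc{i}{j}{s}\to\arc{k}{\ell}{r}$ need not lie in $\Sigma^{-2}\coaisle$. A crossing argument ``in the spirit of'' Lemma~\ref{L:factoring_crossings} would need a specific test object, which you do not supply.

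\textbf{Gap 2: the identity $k=i_T$.} Even granting case (A), Lemma~\ref{L:summands_look_like_one}(ii) only tells you that the target's ``$(r)$''-index $\ell$ equals the $\cP^c$-predecessor of the \emph{source's} ``$-$''-index $i_T$. It says nothing about the target's ``$-$''-index $k$. The ordering constraint $x_{i_T}^{--}<x_k^-<x_j^{(s-1)}$ does not pin $k$: for instance $k=j$ is consistent with it (giving a target $\{x_j^-,x_j^{(r)}\}$ with both endpoints in one segment), and if the block has more than one element then $j$ is not the $\cP^c$-predecessor of $j$. Nothing in your setup excludes such a summand.

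The paper avoids both problems with a single device. Given a summand $\arc{i}{j}{s}$ of $Y_m$, it does not take an arbitrary hitting source; instead it uses the probe $\fromarc{x_i^-}=\{x_i^-,x_i^+\}\in\Sigma^{-1}\aisle$. Since $\Hom(\Sigma^{-1}\aisle,\Sigma^{-2}\coaisle)=0$, the map $\fromarc{x_i^-}\to\arc{i}{j}{s}\hookrightarrow Y_m$ factors through $Y_{m''}$, producing a summand $\arc{i}{q}{t}$ of $Y_{m''}$ which, by Lemma~\ref{L:arc_that_picks_out_points}, shares the endpoint $x_i^-$. Now source and target have the \emph{same} ``$-$''-index, the ordering is forced, and Lemma~\ref{L:summands_look_like_one}(ii) directly yields that $j$ is the $\cP^c$-predecessor of $i$. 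This is the missing idea.
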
   

   \begin{proof}
    There exists a subsequence $\mathbf{Y_J}$ of $\mathbf{Y}$ such that for all $m,m' \in \mathbf{J}$ with $m<m'$ the cone of $g_{m,m'}$ lies in $\Sigma^{-2}\coaisle$ and if there exists a morphism as in (\ref{E:factoring}), then the conclusion of Lemma~\ref{L:summands_look_like_one}(ii) holds for $\delta=3$. Let $\mathbf{I}$ denote the subsequence of $\mathbf{J}$ given by removing its first entry and fix $m \in \mathbf{I}$. Choose a non-trivial indecomposable summand of $Y_m$ with canonical inclusion $\kappa$. By Lemma~\ref{L:summands_look_like_one} the summand is of the form $\arc{i}{j}{s}$. So, there exists a non-trivial morphism
\begin{equation*}
        \begin{tikzcd}
            \kappa \circ \varphi \colon \fromarc{x_i^-} \arrow[r, "\varphi"] & \arc{i}{j}{s} \arrow[r, "\kappa"] & Y_{m}.
        \end{tikzcd}
      \end{equation*}
    Let $m''$ denote the predecessor of $m$ in $\mathbf{J}$. By Theorem~\ref{T:GZ_classification_aisles}, the arc $\fromarc{x_i^-}$ lies in $\Sigma^{-1}\aisle$, and $\Hom(\Sigma^{-1}\aisle,\Sigma^{-2}\coaisle) =0$. So, the morphism $\kappa \circ \varphi$ factors through $g_{m'',m}$ and we obtain a commutative diagram
 	\begin{center}
 		\begin{tikzcd}[column sep = 4em]
 			& & \fromarc{x_i^-} \arrow[d,"\kappa \circ \varphi"] \arrow[ld, dashed] &
 			\\
 			\Sigma^{-1} \cone(g_{m'',m}) \arrow[r] & Y_{m''} \arrow[r, "g_{m'',m}"] & Y_{m} \arrow[r] & \cone(g_{m'',m}).
 		\end{tikzcd}
 	\end{center}
    Therefore there exists an indecomposable summand $\widetilde{Y}_{m''}$ of ${Y}_{m''}$ and a map $\varphi' \colon \fromarc{x_i^-} \to \widetilde{Y}_{m''}$ such that
    the composition
        \begin{center}
            \begin{tikzcd}[column sep = 4em]
            \psi \circ \varphi' \colon \fromarc{x_i^-} \arrow[r,"\varphi'"] & \widetilde{Y}_{m''} \arrow[r,"\iota"] & Y_{m''} \arrow[r, "g_{m'',m}"] & Y_{m} \arrow[r,"\pi"] & \arc{i}{j}{s}
            \end{tikzcd}
        \end{center}
 	is non-zero, where $\iota$ and $\pi$ denote the canonical inclusion and projection respectively. In particular, the morphism $\varphi' \colon \fromarc{x_i^-} \rightarrow \widetilde{Y}_{m''}$ is non-zero, and so, $x_i^-$ is an endpoint of $\widetilde{Y}_{m''}$. Moreover, since $\widetilde{Y}_{m''}$ lies in $\coaisle$, there exists a $\cZ$-index $q \in Z(\rx)$ and an integer $t\geq -1$ such that $\widetilde{Y}_{m''} = \arc{i}{q}{t}$. Therefore, there exists a non-zero morphism $\psi \colon \widetilde{Y}_{m''} = \arc{i}{q}{t} \rightarrow \arc{i}{j}{s}$, and so, the arcs $\Sigma \arc{i}{q}{t}$ and $\arc{i}{j}{s}$ cross. There is precisely one ordering of the endpoints that allows such a crossing, namely
  \begin{equation*}
      x_i^{--} < x_i^- < x_q^{(t-1)} < x_j^{(s)}.
  \end{equation*}
    Consequently, $\psi$ is enabled by this ordering and, by Lemma~\ref{L:summands_look_like_one}, we have that $s \geq t+3$ and $j$ is the $\cP^c$-predecessor of $i$. Hence, by induction over the sequence $\mathbf{I}$ we obtain $s > m$.
\end{proof}

\begin{proposition}\label{P:coaisle_seq_small_cones}
    Assume $(\aisle,\coaisle)$ is right non-degenerate. Let $\mathbf{Y} = (Y_n,g_n)_{n>0}$ be a minimal coaisle sequence with respect to $\coaisle$. Then there exists a subsequence $\mathbf{Y}_\mathbf{I}$ of $\mathbf{Y}$ such that, for all integers $m,m' \in \mathbf{I}$ with $m<m'$, the
        cone of $g_{m,m'}$ lies in
        \begin{equation*}
            \add \left( \bigcup_{j \in Z(\rx)} \cx \left((x_j^{(m-1)},a_{j+1})\right) \right).
        \end{equation*}
    In particular, $\mathbf{Y_I}$ is a Cauchy sequence with respect to $\cM_\coaisle$.
\end{proposition}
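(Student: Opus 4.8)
The plan is to build on Proposition~\ref{P:summands_in_coaisle_are_neighbouring} and the fact that a minimal coaisle sequence stabilises at $\Sigma^{-2}\coaisle$. First I would replace $\mathbf{Y}$ by the subsequence supplied by Proposition~\ref{P:summands_in_coaisle_are_neighbouring}, which is again a minimal coaisle sequence, so that for all $m>0$ every indecomposable summand of $Y_m$ has the form $\arc{i}{j}{s}$ with $j$ the $\cP^c$-predecessor of $i$ and $s>m$. Discarding finitely many initial terms, I may moreover assume $\cone(g_{m,m'})\in\Sigma^{-2}\coaisle$ for all $m<m'$, and --- iterating Lemma~\ref{L:summands_look_like_one}(ii) with $\delta=3$ along a diagonal argument --- that whenever the composition of the inclusion of a summand $\arc{i}{j}{s}$ of $Y_m$, of $g_{m,m'}$, and of the projection onto a summand $\arc{k}{\ell}{r}$ of $Y_{m'}$ is non-zero and enabled by the ordering $x_i^{--}<x_k^-<x_j^{(s-1)}<x_\ell^{(r)}$, then $\ell=j$ and $r\geq s+3$.

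The key step is to pin down the matrix of $g_{m,m'}$. Arcs supported in convex hulls of intervals indexed by distinct blocks of $\cP^c$ have non-interleaving endpoints, and applying $\Sigma$ only moves an endpoint inside its ambient interval $(a_q,a_{q+1})$; hence there are no non-zero morphisms between summands of $Y_m$ and of $Y_{m'}$ supported in different blocks of $\cP^c$, so all indices occurring lie in one block. A non-zero entry $\arc{i}{j}{s}\to\arc{k}{\ell}{r}$ is enabled by one of two orderings of its endpoints: the first forces $\ell=j$ by the previous paragraph, whence $\ell$ is the $\cP^c$-predecessor of both $i$ and $k$ and so $k=i$, as $\cP^c$-predecessors are unique; the second ordering is ruled out by a direct case analysis using that $j$ and $\ell$ are the $\cP^c$-predecessors of $i$ and $k$ (it would place $k$ strictly between $j$ and $i$ within the block, or else force $r<s$, contradicting non-vanishing of the entry). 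So $g_{m,m'}$ is block diagonal over the pairs $(i,j)$, each block a morphism $\gamma^{(i,j)}\colon\bigoplus_a\arc{i}{j}{s_a}\to\bigoplus_b\arc{i}{j}{r_b}$ all of whose non-zero entries are scalar multiples of the canonical morphism $\arc{i}{j}{s}\to\arc{i}{j}{r}$ with $r\geq s+3$, whose cone, by the triangle formula, is $\{x_j^{(s-1)},x_j^{(r)}\}$ (the remaining summand $\{x_i^{--},x_i^-\}$ being trivial).

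Since canonical morphisms compose canonically (Remark~\ref{R:canonical_morphisms_exist}), Gaussian elimination through automorphisms of the source and target presents $\gamma^{(i,j)}$ as a direct sum of canonical morphisms $\arc{i}{j}{s'_c}\to\arc{i}{j}{r'_c}$ together with zero morphisms out of a complementary summand $P'$ of the source and into a complementary summand $Q'$ of the target, so that
\[
    \cone(\gamma^{(i,j)})\;\cong\;\bigoplus_c\{x_j^{(s'_c-1)},x_j^{(r'_c)}\}\;\oplus\;\Sigma P'\;\oplus\;Q'.
\]
Now $\cone(g_{m,m'})=\bigoplus_{(i,j)}\cone(\gamma^{(i,j)})$ lies in $\Sigma^{-2}\coaisle$, which is closed under direct summands; but the non-zero indecomposable summands of $\Sigma P'$ and of $Q'$ --- of the forms $\{x_i^{--},x_j^{(s-1)}\}$ and $\{x_i^-,x_j^{(r)}\}$ with $s,r>m$ --- do not lie in $\Sigma^{-2}\coaisle$, since $x_i^{(-2)},x_i^{(-1)}\notin[x_i^+,a_{i+1})$. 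Hence $P'=Q'=0$ and $\cone(g_{m,m'})\cong\bigoplus_c\{x_j^{(s'_c-1)},x_j^{(r'_c)}\}$; as each $s'_c>m$ and $r'_c>s'_c$, every such summand lies in $\cx((x_j^{(m-1)},a_{j+1}))$, which is the displayed containment. For the final assertion, for $t\geq1$ and $m\geq t-1$ these arcs lie in $\cx([x_j^{(t-1)},a_{j+1}))\subseteq\Sigma^{-t}\coaisle=(\cM_\coaisle)_t$, so $\mathbf{Y_I}$ is Cauchy with respect to $\cM_\coaisle$.

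The main obstacle I foresee is the bookkeeping behind the first two paragraphs: arranging on a single subsequence that $g_{m,m'}$ is simultaneously block diagonal over all pairs $(i,j)$ and has cone inside $\Sigma^{-2}\coaisle$ for every $m<m'$, which requires iterating Lemma~\ref{L:summands_look_like_one}(ii) and carefully eliminating the second crossing ordering; once the matrix is under control the cone computation is routine.
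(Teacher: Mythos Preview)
Your argument is correct, but it is considerably more elaborate than the paper's. You go to some length to show that $g_{m,m'}$ is block diagonal over the pairs $(i,j)$, then put each block into Smith normal form, and finally read off the cone summand by summand. This works: the first ordering is handled by Lemma~\ref{L:summands_look_like_one}(ii), and the second ordering $x_i^{--}<x_\ell^{(r)}<x_j^{(s-1)}<x_k^-$ is indeed impossible because it forces $x_k^-$ into the interval $(x_j^{(s-1)},x_i^{--})$, which only meets the segments $(a_{j+1},a_{j+2}),\ldots,(a_{i-1},a_i)$ and a tiny piece of $(a_i,a_{i+1})$ not containing any $x_q^-$ with $q$ in the block; since $j$ is the $\cP^c$-predecessor of $i$ there is no such $k$ in $B$. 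Once block diagonality is established, Gaussian elimination and the exclusion of $\Sigma P'$ and $Q'$ via $\Sigma^{-2}\coaisle$ are routine.

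The paper bypasses all of this. It never analyses the matrix of $g_{m,m'}$. Instead, after passing to the subsequence from Proposition~\ref{P:summands_in_coaisle_are_neighbouring}, it simply observes that the set of endpoints occurring in $Y_{m'}$ and in $\Sigma Y_m$ is contained in
\[
\bigcup_{j\in Z(\rx)}\bigl(\{x_{i_j}^{--},x_{i_j}^-\}\cup(x_j^{(m-1)},a_{j+1})\bigr),
\]
where $i_j$ is the $\cP^c$-successor of $j$. By extension closure of convex hulls (Remark~\ref{R:convex_hull_ext_closed}), the cone of $g_{m,m'}$, being an extension of $Y_{m'}$ by $\Sigma Y_m$, has all its endpoints in the same set. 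Intersecting with $\Sigma^{-2}\coaisle$ then kills the endpoints $x_{i_j}^{--}$ and $x_{i_j}^-$, exactly as you kill $\Sigma P'$ and $Q'$, and one is done. Your approach has the advantage of making the structure of $g_{m,m'}$ completely explicit, but the paper's endpoint-tracking argument is shorter and avoids the diagonal iteration of Lemma~\ref{L:summands_look_like_one}(ii) and the Gaussian elimination.
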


\begin{proof}
    As the sequence $\mathbf{Y}$ is a minimal coaisle sequence there exists a subsequence $\mathbf{Y}_\mathbf{I}$ which satisfies the conclusion of Proposition~\ref{P:summands_in_coaisle_are_neighbouring}. 
    Fix integers $m<m' \in \mathbf{I}$ and for each index $j \in [N]$ let $i_j \in [N]$ denote its $\cP^c$-successor. The objects $Y_{m'}$ and $\Sigma Y_m$ both lie in 
    \begin{equation*}
        \cC = \add \bigcup_{j \in Z(\rx)} \cx \left( \{x_{i_j}^{--},x_{i_j}^-\} \cup (x_j^{(m-1)},a_{j+1}) \right),
    \end{equation*}
   which is extension closed by Remark~\ref{R:convex_hull_ext_closed}. By the triangle
 	\begin{center}
 		\begin{tikzcd}
 			Y_{m} \arrow[r, "g_{m,m'}"] & Y_{m'} \arrow[r] & \cone(g_{m,m'}) \arrow[r] & \Sigma Y_{m},
 		\end{tikzcd}
 	\end{center}
    it follows that $\cone{g_{m,m'}} \in \cC$. Furthermore, $\mathbf{Y}$ stabilises at $\Sigma^{-2}\coaisle$. So, without loss of generality, by taking an appropriate subsequence if necessary, we may assume that $\cone{g_{m,m'}} \in \Sigma^{-2}\coaisle$. However, neither $x_{i_j}^{--}$ nor $x_{i_j}^-$ can be endpoints of an arc in $\Sigma^{-2}\coaisle$ by Corollary~\ref{C:GZ_classification_coaisle}. This shows the claim.
\end{proof}

The proof of Proposition~\ref{P:final_classification_coaisle_sequence} follows from Proposition~\ref{P:summands_in_coaisle_are_neighbouring} which proves Part~(i) and Proposition~\ref{P:coaisle_seq_small_cones} which proves Part~(ii).

\subsubsection{Bounding sequences by double fans} We now compute the module colimit of the sequence $\widehat{\mathbf{Y}}$ constructed in Proposition~\ref{P:reduce_to_RNDG_coaisle_sequences} in terms of double fans. Crucially, we show that such a sequence $\widehat{\mathbf{Y}}$ has a subsequence satisfying the following double fan-like properties.

\begin{definition}
    Given a t-structure $(\raisle,\rcoaisle)$ corresponding to a $\overline{\cZ}$-decorated non-crossing partition $(\cP_R,\rx_R = (\widetilde{x}_1, \ldots, \widetilde{x}_N))$ we may impose the following conditions with respect to $(\raisle,\rcoaisle)$ on a sequence $\mathbf{\widehat{Y}} = (\widehat{Y}_n,\widehat{g}_n)_{n>0}$ in $\cT$:
    \begin{enumerate} [label=$\mathbf{\widehat{Y}\arabic*}$]
        \item \label{P:overlineY1} for all $n>0$, the endpoints of $\widehat{Y}_n$ lie in the set
        \begin{equation*}
            S(\raisle) \cup \bigcup_{j \in Z(\rx_R)} (\widetilde{x}_j^{(n)}, a_{j+1});
        \end{equation*}
        \item \label{P:overlineY2} for all $n,n' \in I$ with $n' > n$ the
        cone of $g_{n,n'}$ lies in
        \begin{equation*}
            \add \left( \bigcup_{j \in Z(\rx_R)} \cx \left((\widetilde{x}_j^{(n-1)},a_{j+1})\right) \right).
        \end{equation*}
        
    \end{enumerate}
\end{definition}

\begin{lemma} \label{L:classification_of_tilde_seq}
    Let $\cM = \{B_t\}_{t\geq0}$ be a good metric and let $(\raisle,\rcoaisle)$ be a right non-degenerate t-structure on $\cC(\cZ)$. Let $\mathbf{\widehat{Y}} = (\widehat{Y}_n,\widehat{g}_n)_{n>0}$ be a sequence in $\cC(\cZ)$ with  $\raisle$-approximation $\mathbf{X}$ and $\rcoaisle$-approximation $\mathbf{Y}$ such that $\mathbf{X}$ stabilises at $0$ and $\mathbf{Y}$ is a minimal coaisle sequence with respect to $\rcoaisle$. Then there exists a $(\Sigma^{-1}\rcoaisle)$-trivial subsequence ${\mathbf{\widehat{Y}}}_\mathbf{I} = (\widehat{Y}'_{n},\widehat{g}'_{n})_{n>0}$ of $\mathbf{\widehat{Y}}$ which satisfies Properties~\ref{P:overlineY1} and \ref{P:overlineY2} with respect to $(\raisle,\rcoaisle)$.
\end{lemma}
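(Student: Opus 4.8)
The plan is to pass to a subsequence of $\mathbf{\widehat{Y}}$ on which the $\rcoaisle$-approximation is pinned down by Proposition~\ref{P:final_classification_coaisle_sequence} and the $\raisle$-approximation is eventually constant, and then to read off Properties~\ref{P:overlineY1} and~\ref{P:overlineY2} for $\mathbf{\widehat{Y}}$ itself from the approximation triangles $X_n\to\widehat{Y}_n\to Y_n\to\Sigma X_n$.

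\textbf{Reductions.} First apply Proposition~\ref{P:final_classification_coaisle_sequence} to the minimal coaisle sequence $\mathbf{Y}$ (with respect to $\rcoaisle$) to obtain a strictly increasing $\mathbf{I}$ in $\bN$ such that the indecomposable summands of the entries of $\mathbf{Y_I}$ are of the form $\{\widetilde{x}_i^-,\widetilde{x}_j^{(s)}\}$ with $i,j\in Z(\rx_R)$, $j$ the $\cP_R^c$-predecessor of $i$ and $s>n$, and the cones of the connecting maps lie in $\add\bigl(\bigcup_{j\in Z(\rx_R)}\cx((\widetilde{x}_j^{(n-1)},a_{j+1}))\bigr)$. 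Since truncation is a functor, hence commutes with composition of morphisms, the $\raisle$- and $\rcoaisle$-approximations of $\mathbf{\widehat{Y}_I}$ are $\mathbf{X_I}$ and $\mathbf{Y_I}$; by Remark~\ref{R:subseq_are_cchy_and_cptly_supp}, $\mathbf{X_I}$ still stabilises at $0$ and $\mathbf{Y_I}$ is still a minimal coaisle sequence. Passing to a tail we may moreover assume (by the remark after Lemma~\ref{L:cauchy_prop_of_X_and_Y}) that all $X_n$ equal a fixed $X\in\raisle$ with transition maps $h_{n,n'}$ isomorphisms; then the $3\times3$ lemma applied to the diagram of approximation triangles (cf.~(\ref{E:3x3again})) yields $\cone(\widehat{g}_{n,n'})\cong\cone(g_{n,n'})$ for all $n'>n$, since $\cone(h_{n,n'})=0$. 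Write $X\xrightarrow{\alpha_n}\widehat{Y}_n\xrightarrow{\beta_n}Y_n\xrightarrow{\gamma_n}\Sigma X$ for the approximation triangle of $\widehat{Y}_n$. We claim this subsequence, which we continue to denote $\mathbf{\widehat{Y}_I}=(\widehat{Y}'_n,\widehat{g}'_n)_{n>0}$, has the required properties.

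\textbf{$(\Sigma^{-1}\rcoaisle)$-triviality and Property~\ref{P:overlineY2}.} If some $\widehat{Y}'_n$ had an indecomposable summand $S\in\Sigma^{-1}\rcoaisle\subseteq\rcoaisle$, then $\Hom(\raisle,\rcoaisle)=0$ would force the composite of $\alpha_n$ with the split projection $\widehat{Y}'_n\to S$ to vanish, so that projection factors through $\beta_n$ and $S$ is a direct summand of $Y'_n$ --- contradicting $(\Sigma^{-1}\rcoaisle)$-triviality of the minimal coaisle sequence $\mathbf{Y}$. Hence $\mathbf{\widehat{Y}_I}$ is $(\Sigma^{-1}\rcoaisle)$-trivial. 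Property~\ref{P:overlineY2} is then immediate: $\cone(\widehat{g}'_{n,n'})\cong\cone(g'_{n,n'})$ lies in $\add\bigl(\bigcup_{j\in Z(\rx_R)}\cx((\widetilde{x}_j^{(n-1)},a_{j+1}))\bigr)$ by Proposition~\ref{P:final_classification_coaisle_sequence}(ii).

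\textbf{Property~\ref{P:overlineY1}.} Every endpoint $z$ of $\widehat{Y}'_n$ is an endpoint of $X$ or of $Y'_n$: by Lemma~\ref{L:arc_that_picks_out_points} there is a non-zero $\varphi\colon\fromarc{z}\to\widehat{Y}'_n$, and either $\beta_n\circ\varphi\neq0$, so $z$ is an endpoint of $Y'_n$, or $\varphi$ factors through $\alpha_n$, so $z$ is an endpoint of $X$. Endpoints of $X\in\raisle$ lie in $S(\raisle)$. By Proposition~\ref{P:final_classification_coaisle_sequence}(i) an endpoint of $Y'_n$ is either some $\widetilde{x}_j^{(s)}$ with $j\in Z(\rx_R)$, $s>n$ --- which lies in $(\widetilde{x}_j^{(n)},a_{j+1})$ --- or an inner endpoint $\widetilde{x}_i^-$ with $i\in Z(\rx_R)$. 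For the inner endpoints one uses the explicit cone formula for morphisms of arcs in $\cC(\cZ)$: forming the extension $\widehat{Y}_n$ of $Y_n\in\rcoaisle$ by $X\in\raisle$ along $\gamma_n$ reroutes, summand by summand, the arcs of $Y_n$ and $X$ at their mutual crossings, and a crossing of a summand $\{\widetilde{x}_i^-,\widetilde{x}_j^{(s)}\}$ with an arc of $X$ ending in $\widetilde{x}_i=(\widetilde{x}_i^-)^+$ collapses the new $\widetilde{x}_i^-$-ended arc $\{\widetilde{x}_i^-,\widetilde{x}_i\}$ to $0$. For the relevant indices $i$ one has $\fromarc{\widetilde{x}_i^-}\in\Sigma^{-1}\raisle$ (exactly as in the proof of Proposition~\ref{P:summands_in_coaisle_are_neighbouring}), hence $\toarc{\widetilde{x}_i}=\Sigma\fromarc{\widetilde{x}_i^-}\in\raisle$, so $\widetilde{x}_i\in S(\raisle)$; then, using right non-degeneracy of $(\raisle,\rcoaisle)$, either $\widetilde{x}_i^-\in S(\raisle)$ already, or $X=\tau_\raisle\widehat{Y}_n$ --- being a right minimal $\raisle$-approximation --- contains an arc ending in $\widetilde{x}_i$ through which $\gamma_n$ must reroute the summand $\{\widetilde{x}_i^-,\widetilde{x}_j^{(s)}\}$, so that $\widehat{Y}_n$ no longer has $\widetilde{x}_i^-$ as an endpoint but instead has $\widetilde{x}_i\in S(\raisle)$. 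This establishes Property~\ref{P:overlineY1}.

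\textbf{Main obstacle.} The genuinely delicate point is this last step: showing that, whenever an inner endpoint $\widetilde{x}_i^-$ of $Y_n$ is not already in $S(\raisle)$, the connecting map $\gamma_n$ of the approximation triangle is forced into the ``reroute-and-collapse'' configuration at that point. This is the one place where one must combine the minimality of the approximation triangles, the combinatorial rigidity of $\cC(\cZ)$, the stabilisation $\tau_\raisle\widehat{g}_{n,n'}\cong\id$ along the sequence, and right non-degeneracy of $(\raisle,\rcoaisle)$; everything else is bookkeeping with subsequences, the $3\times3$ lemma, and the already-established structure theory of (minimal) coaisle sequences.
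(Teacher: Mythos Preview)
Your approach is essentially the same as the paper's: pass to a subsequence controlled by Proposition~\ref{P:final_classification_coaisle_sequence}, truncate so that the $\raisle$-approximation is constant, deduce $(\Sigma^{-1}\rcoaisle)$-triviality from that of $\mathbf{Y}$, obtain Property~\ref{P:overlineY2} from $\cone(\widehat{g}'_{n,n'})\cong\cone(g'_{n,n'})$, and read off Property~\ref{P:overlineY1} from the approximation triangle. The paper cites \cite[Lemma~3.4]{Gratz--Zvonareva--2023--tstructures_in_clus_cat} for the fact that the endpoints of $\widehat{Y}_n$ lie among those of $X_n$ and $Y_n$, whereas you derive this via Lemma~\ref{L:arc_that_picks_out_points} and the long exact sequence; both are fine.

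The one place where you diverge is the treatment of the inner endpoints $\widetilde{x}_i^-$, and here you have significantly overcomplicated matters. You set up a ``reroute-and-collapse'' argument and flag it as the main obstacle, but no such argument is needed. Since $a_i$ is a two-sided accumulation point and the t-structure is right non-degenerate (so $\widetilde{x}_i\in\cZ$ with $\widetilde{x}_i\neq a_i$), the interval $(a_i,\widetilde{x}_i]\cap\cZ$ is \emph{infinite}; in particular it contains $\widetilde{x}_i^-$ together with some $z\notin\{\widetilde{x}_i^{--},\widetilde{x}_i^-,\widetilde{x}_i\}$, so $\{\widetilde{x}_i^-,z\}$ is an arc in $\raisle$ and hence $\widetilde{x}_i^-\in S(\raisle)$. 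This is exactly what the paper observes in a single line. Your disjunction ``either $\widetilde{x}_i^-\in S(\raisle)$ already, or \dots'' is therefore always resolved by the first branch; the second branch, whose justification via right-minimality of the approximation is in any case not clearly sound as written (there is no reason a priori that $X=\tau_{\raisle}\widehat{Y}_n$ must contain an arc ending at $\widetilde{x}_i$), never arises. What you identify as the ``genuinely delicate point'' is in fact a non-issue.
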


\begin{proof}
    As $\mathbf{Y} = (Y_n,g_n)_{n>0}$ is a minimal coaisle sequence it is $(\Sigma^{-1}\rcoaisle)$-trivial. Thus, as $Y_n$ is the $\rcoaisle$-envelope of $\widehat{Y}_n$, it follows that $\widehat{\mathbf{Y}}$ is also $(\Sigma^{-1}\rcoaisle)$-trivial. Moreover, as the sequence $\mathbf{X} = (X_n,h_n)_{n>0}$ stabilises at $0$, by passing to a suitable truncation of $\widehat{\mathbf{Y}}$ if necessary, we may assume without loss of generality that every morphism $h_n$ is an isomorphism.

     $\mathbf{\widehat{Y}1}$: Since $(\raisle,\rcoaisle)$ is right non-degenerate and $\mathbf{Y}$ is a minimal coaisle sequence with respect to $\rcoaisle$, there exists a subsequence $\mathbf{Y_I}$ of $\mathbf{Y}$ that satisfies Proposition~\ref{P:final_classification_coaisle_sequence} with respect to the t-structure $(\raisle,\rcoaisle)$. Set $\widehat{\mathbf{Y}}_\mathbf{I}$ to be the corresponding subsequence of $\widehat{\mathbf{Y}}$. For each $n\in I$, we have the $(\raisle,\rcoaisle)$-approximation triangle
    \begin{center} \label{E:tilde_seq_triangle}
		\begin{tikzcd}
			X_n \arrow[r] & \widehat{Y}_{n} \arrow[r] & Y_{n} \arrow[r] & \Sigma X.
		\end{tikzcd}
	\end{center}
    Therefore, by \cite[Lemma~3.4]{Gratz--Zvonareva--2023--tstructures_in_clus_cat}, the endpoints of $\widehat{Y}_n$ lie in the union of the endpoints of $X_n$ and $Y_n$. As $X_n$ lies in $\raisle$, its endpoints lie in $S(\raisle)$. Let $(\cP_R,\rx_R = (\widetilde{x}_1, \ldots, \widetilde{x}_N))$ be the $\overline{\cZ}$-decorated non-crossing partition corresponding to $(\raisle,\rcoaisle)$. By Proposition~\ref{P:final_classification_coaisle_sequence}(i) every summand of $Y_n$ is of the form $\{\widetilde{x}_i^-,\widetilde{x}_j^{(s)}\}$ for some $s > n$. Note that the point $\widetilde{x}_i^-$ lies in $S(\raisle)$. Therefore, the endpoints of $Y_n$, and consequently the endpoints of $\widehat{Y}_n$, lie in
    \begin{equation*}
        S(\raisle) \cup \bigcup_{j \in Z(\rx_R)}(\widetilde{x}_j^{(n)},a_{j+1}).
    \end{equation*}

    $\mathbf{\widehat{Y}2}$: Take the sequence of approximation triangles $\Delta \widehat{\mathbf{Y}}_\mathbf{I} = (\Delta \widehat{Y}_n, \Delta \widehat{g}_n)_{n \in \mathbf{I}}$ of $\widehat{\mathbf{Y}}_\mathbf{I}$. The octrahedral axiom applied to the morphism of triangles $\Delta \widehat{g}_{n,n'}$ for $n' > n \gg 0$ implies that $\cone(\widehat{g}_{n,n'}) \cong \cone(g_{n,n'})$. So, the claim follows by Proposition~\ref{P:final_classification_coaisle_sequence}(ii).
\end{proof}

Now we prove that $\widehat{\mathbf{Y}}$ can be bounded by a double fan. To do so, we need the following lemma.

\begin{lemma} \label{L:mocolim_of_small_bumps}
    Let $\mathbf{E} = (E_n,f_n)_{n>0}$ be a sequence in $\cC(\cZ)$. For each index $\ell \in [N]$ choose a point $z_\ell \in \cZ$ that lies in $(a_\ell,a_{\ell+1})$. If for all integers $n>0$ the object $E_n$ lies in the subcategory
    \begin{equation*}
        \add \left(\bigcup_{\ell \in [N]} \cx \left((z_\ell^{(n-1)}, a_{\ell+1})\right)\right),
    \end{equation*}
    then we have $\mocolim\mathbf{E}=0$.
\end{lemma}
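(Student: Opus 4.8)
The plan is to prove that $\mathbf{E}$ is a null sequence; once that is established, Remark~\ref{R:subseq_induced_by_morphisms_being_zero_in_colimit} immediately gives $\mocolim\mathbf{E}=0$. So fix an integer $m>0$; I must produce an integer $m'>m$ with $f_{m,m'}=0$. Since $\cC(\cZ)$ is Krull--Schmidt and $E_m$ lies in $\add\big(\bigcup_{\ell\in[N]}\cx((z_\ell^{(m-1)},a_{\ell+1}))\big)$, each indecomposable summand of $E_m$ is an arc $X=\{x_0,x_1\}$ whose endpoints both lie in $(z_\ell^{(m-1)},a_{\ell+1})\cap\cZ$ for a single index $\ell$ depending on $X$, and only finitely many such arcs $X$ (hence only finitely many indices $\ell$) occur. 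It therefore suffices to find $m'>m$ with $\Hom_{\cC(\cZ)}(X,Y)=0$ for every indecomposable summand $X$ of $E_m$ and every indecomposable summand $Y$ of $E_{m'}$, as then the matrix $f_{m,m'}$ vanishes.

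The geometric input I will use is that, for each $\ell$, the points $z_\ell^{(0)}<z_\ell^{(1)}<\cdots$ lie in $\cZ\cap(a_\ell,a_{\ell+1})$, and since $a_1,\dots,a_N$ are the only accumulation points of $\cZ$, the supremum of this increasing sequence can only be $a_{\ell+1}$; hence $z_\ell^{(n)}\to a_{\ell+1}$, and for any fixed point $p\in(a_\ell,a_{\ell+1})$ we have $z_\ell^{(n)}>p$ for all $n\gg 0$. For each indecomposable summand $X=\{x_0,x_1\}$ of $E_m$, with endpoints in sector $\ell$, labelled so that $z_\ell^{(m-1)}<x_0<x_1<a_{\ell+1}$ in the cyclic order, one has $a_\ell<x_0^-<x_1^-<a_{\ell+1}$, where $\Sigma X=X^-=\{x_0^-,x_1^-\}$. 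Using the input above together with the finiteness of the set of summands $X$, I choose $m'>m$ large enough that $z_\ell^{(m'-1)}>x_1^-$ for every such $X$.

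Finally I claim this $m'$ works. The two endpoints of $X^-$ cut $S^1$ into two arcs; one of them is $(x_0^-,x_1^-)$, which is contained in $(a_\ell,z_\ell^{(m'-1)})$, since it stays inside sector $\ell$ (because $a_\ell<x_0^-<x_1^-<a_{\ell+1}$ forces no wraparound) and lies below $z_\ell^{(m'-1)}$ (because $x_1^-<z_\ell^{(m'-1)}$); consequently the complementary arc $(x_1^-,x_0^-)$ contains all of $(z_\ell^{(m'-1)},a_{\ell+1})$ as well as every sector $(a_k,a_{k+1})$ with $k\neq\ell$. Now let $Y$ be an indecomposable summand of $E_{m'}$; its endpoints lie in a single interval $(z_k^{(m'-1)},a_{k+1})$, and whether $k=\ell$ or $k\neq\ell$ this interval is contained in the arc $(x_1^-,x_0^-)$. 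Hence both endpoints of $Y$ lie in one of the two arcs cut out by $X^-$, so $X^-$ and $Y$ do not cross, and therefore $\Hom_{\cC(\cZ)}(X,Y)=0$ by the description of morphisms in $\cC(\cZ)$. Thus $f_{m,m'}=0$, the sequence $\mathbf{E}$ is null, and the lemma follows. The only real care required is the cyclic-order bookkeeping in this last step — verifying that $(x_0^-,x_1^-)$ does not wrap around the circle and that each interval $(z_k^{(m'-1)},a_{k+1})$ sits entirely in the complementary arc — which is precisely what the containments $z_\ell^{(m-1)}<x_0<x_1<a_{\ell+1}$, the choice $z_\ell^{(m'-1)}>x_1^-$, and the pairwise disjointness of distinct sectors $(a_k,a_{k+1})$ provide.
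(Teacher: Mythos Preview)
Your proof is correct. The paper argues somewhat differently: it first decomposes $\mathbf{E}$ as a direct sum over the $N$ sectors (using that there are no morphisms between arcs supported in distinct intervals $(z_k^{(m-1)},a_{k+1})$ and $(z_\ell^{(n-1)},a_{\ell+1})$ for $k\neq\ell$), reduces to a single $\ell$, and then, assuming for contradiction that $\mocolim\mathbf{E}(W)\neq 0$ for some indecomposable $W$, obtains for each $n'$ an indecomposable summand of $E_{n'}$ receiving a non-zero map from $W$; the crossing condition forces an endpoint of $\Sigma W$ to lie in $(z_\ell^{(n'-1)},a_{\ell+1})$ for infinitely many $n'$, which is impossible. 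Your argument is more direct and in fact proves the stronger statement that $\mathbf{E}$ is a null sequence: you bypass the sector decomposition by handling the cross-sector case in the same breath, and you never touch the module colimit at all, only the elementary Hom-vanishing in $\cC(\cZ)$. The paper's route generalises more readily to situations where one only controls $\mocolim\mathbf{E}$ evaluated on certain test objects, while yours is cleaner here since the hypothesis already constrains the entries $E_n$ themselves.
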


\begin{proof}
    Fix integers $m,n > 0$ and two distinct indices $k, \ell \in [N]$. Then there are no morphisms between arcs in $\cx((z_k^{(m-1)}, a_{k+1}))$ and $\cx((z_\ell^{(n-1)}, a_{\ell+1}))$. Therefore, we can decompose $\mathbf{E}$ as a direct sum of $N$ sequences, one for each index $\ell \in [N]$, and without loss of generality by considering one of these summands individually, we may assume the sequence $\mathbf{E}$ lies only in $\cx((z_\ell^{(n-1)}, a_{\ell+1}))$. 
    
    For a contradiction, assume that $W$ is an object in $\cC(\cZ)$ such that $\mocolim\mathbf{E}(W)$ is non-zero and without loss of generality assume $W = \{w_0,w_1\}$ is indecomposable. Then there exists an integer $n>0$ and a morphism $\varphi \colon W \rightarrow E_n$ such that for all integers $n'>n$ the composition
    \begin{center}
        \begin{tikzcd}
            W \arrow[r, "\varphi"] & E_n \arrow[r, "f_{n,n'}"] & E_{n'}
        \end{tikzcd}
    \end{center}
    is non-zero. Therefore, for all integers $n'>n$ there exists an indecomposable summand $V_{n'}$ of $E_{n'}$ with a non-trivial map from $W$. For each $n'>n$ choose points $v_0(n'),v_1(n') \in \cZ$ such that $V_{n'} = \{v_0(n'),v_1(n')\}$. Then both $v_0(n')$ and $v_1(n')$ lie in $(z_\ell^{(n'-1)},a_{\ell+1})$. Moreover, for all $n'>n$, the non-zero morphism $f_{n,n'} \circ \varphi$ is enabled by one of the following orderings:
        \[
            w_0^- < v_0(n') < w_1^- < v_1(n') \; \text{ or } \; w_0^- < v_1(n') < w_1^- < v_0(n').
        \]
    Either way, at least one of $w_0^-$ or $w_1^-$ lies in  $(z_\ell^{(n'-1)},a_{\ell+1})$ for infinitely many $n'>n$; a contradiction.
\end{proof}

\begin{lemma}  \label{lem:bound_tilde_seq_by_double_fan}
    Let $(\raisle,\rcoaisle)$ be a right non-degenerate t-structure on $\cC(\cZ)$. Let $\mathbf{\widehat{Y}} = (\widehat{Y}_n,\widehat{g}_n)_{n>0}$ be a $(\Sigma^{-1}\rcoaisle)$-trivial sequence in $\cC(\cZ)$ that satisfies Properties~\ref{P:overlineY1} and \ref{P:overlineY2} with respect to the t-structure $(\raisle,\rcoaisle)$.    
    Then there exists a double fan $\mathbf{F}$ that is a Cauchy sequence with respect to the coaisle metric $\cM_\rcoaisle$ and an isomorphism
    \begin{equation*}
        \mocolim\mathbf{\widehat{Y}} \cong \mocolim\mathbf{F}.
    \end{equation*}
\end{lemma}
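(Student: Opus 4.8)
The plan is to manufacture the double fan $\mathbf{F}$ out of $\widehat{\mathbf{Y}}$ by passing to a subsequence, decomposing each entry into indecomposable summands, and replacing every endpoint of every summand by a ``nearby'' point that is either constant or of the form $z^{(n)}$ --- the defining shape of a double fan summand --- while controlling the resulting error by Lemma~\ref{L:mocolim_of_small_bumps}, which guarantees that a sequence all of whose entries lie in $\add\bigl(\bigcup_{\ell}\cx((z_\ell^{(n-1)},a_{\ell+1}))\bigr)$ has vanishing module colimit.

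First I would set up the combinatorics of the entries. By Property~\ref{P:overlineY1} every endpoint of an indecomposable summand of $\widehat{Y}_n$ lies either in the fixed set $S(\raisle)$ or in one of the intervals $(\widetilde{x}_j^{(n)},a_{j+1})$, and the latter intervals are disjoint from $S(\raisle)$ and shrink onto the accumulation point $a_{j+1}$ as $n\to\infty$. Call a summand \emph{stable} if both endpoints lie in $S(\raisle)$ and \emph{moving} otherwise, so $\widehat{Y}_n\cong\widehat{Y}_n^{\mathrm{st}}\oplus\widehat{Y}_n^{\mathrm{mv}}$. By the combinatorial description of crossings together with \eqref{E:Ext1}, no arc with both endpoints in $S(\raisle)$ crosses an arc with both endpoints in any interval $(\widetilde{x}_j^{(m)},a_{j+1})$; since by Property~\ref{P:overlineY2} every summand of $\cone(\widehat{g}_{n,n'})$ is of the latter kind, we get $\Ext^1(\cone(\widehat{g}_{n,n'}),\widehat{Y}_n^{\mathrm{st}})=0$, so as in the split-triangle argument in the proof of Lemma~\ref{L:cptly_supp_means_no_summands} the triangle on $\widehat{g}_{n,n'}$ splits off $\widehat{Y}_n^{\mathrm{st}}$ and $\widehat{g}_{n,n'}$ restricts to a split inclusion $\widehat{Y}_n^{\mathrm{st}}\hookrightarrow\widehat{Y}_{n'}^{\mathrm{st}}$. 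Thus stable summands, once they appear, persist; and since by \cite[Lemma~3.4]{Gratz--Zvonareva--2023--tstructures_in_clus_cat} the only endpoints of $\widehat{Y}_{n'}$ not already among those of $\widehat{Y}_n$ are endpoints of $\cone(\widehat{g}_{n,n'})$ and hence near some $a_{j+1}$, a careful Krull--Schmidt argument shows the stable part stabilises: after passing to a subsequence we may take $\widehat{Y}_n^{\mathrm{st}}=P$ a fixed object with $\widehat{g}_{n,n'}|_P=\mathrm{id}_P$, while each moving endpoint of $\widehat{Y}_n$ becomes monotone in $n$ and hence converges in $\overline{\cZ}$, necessarily to some $a_{j+1}$.

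Next I would build $\mathbf{F}=\mathbf{F}^{\mathrm{st}}\oplus\mathbf{F}^{\mathrm{mv}}$: for $\mathbf{F}^{\mathrm{st}}$ take the constant sequence on $P$, a double fan whose indecomposable summands are constant; for $\mathbf{F}^{\mathrm{mv}}$, to each moving summand of $\widehat{Y}_n$, with endpoints converging to $p$ (either a constant point of $S(\raisle)$ or some $a_{i+1}$) and to $q$ (similarly), assign the indecomposable double fan whose $n$-th entry has the constant endpoint where the true endpoint is eventually constant and the endpoint $z_i^{(n)}$ --- for a fixed base point $z_i\in(a_i,a_{i+1})\cap\cZ$ chosen so that $z_i^{(n)}$ eventually overtakes the true moving endpoint --- where the true endpoint converges to $a_{i+1}$. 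By Remark~\ref{R:every arc is a mocolim} this double fan has module colimit the arc of $\overline{\cZ}$ with the same limiting endpoints as the summand, and I take its transition maps to be the canonical ones (Remark~\ref{R:canonical_morphisms_exist}). Now $F_n$ and $\widehat{Y}_n$ have the same limiting endpoints and differ only in that the moving endpoints of $F_n$ have been pushed slightly closer to the $a_{j+1}$'s; interleaving the two sequences, the canonical morphisms $\widehat{Y}_n\to F_{\sigma(n)}$ and $F_n\to\widehat{Y}_{\tau(n)}$ are defined for suitable strictly increasing $\sigma,\tau$, and one checks (using Property~\ref{P:overlineY2} to rule out vanishing of the relevant components) that the composites $\widehat{Y}_n\to F_{\sigma(n)}\to\widehat{Y}_{\tau\sigma(n)}$ and $F_n\to\widehat{Y}_{\tau(n)}\to F_{\sigma\tau(n)}$ differ from the transition maps only by morphisms factoring through objects of $\add\bigl(\bigcup_j\cx((\widetilde{x}_j^{(n-1)},a_{j+1}))\bigr)$. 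Such morphisms induce zero in the module colimit by the argument of Lemma~\ref{L:mocolim_of_small_bumps}, so the interleaved diagram has the same colimit read off along either cofinal subdiagram, giving $\mocolim\widehat{\mathbf{Y}}\cong\mocolim\mathbf{F}$.

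Finally I would check that $\mathbf{F}$ is Cauchy with respect to $\cM_\rcoaisle$, which by Lemma~\ref{L:double_fan_cauchy} reduces to an endpoint-by-endpoint check on each indecomposable summand of $\mathbf{F}$: a constant endpoint lies in $S(\raisle)\subseteq\cZ$, so the relevant hypothesis of Lemma~\ref{L:double_fan_cauchy} is vacuous, while a moving endpoint converges to some $a_{j+1}$ from the left, and by Property~\ref{P:overlineY1} moving endpoints in the $j$-th sector occur only when $(\widetilde{x}_j^{(n)},a_{j+1})\neq\varnothing$ for all $n$, which forces $\widetilde{x}_j\in\cZ$, i.e.\ $\widetilde{x}_j\neq a_{j+1}$; so the hypothesis of Lemma~\ref{L:double_fan_cauchy} holds. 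The main obstacle is the bookkeeping in the second and third steps: coherently aligning the indecomposable decompositions of the $\widehat{Y}_n$ along the sequence, proving that the stable part genuinely stabilises to a finite object rather than growing indefinitely, and verifying that the canonical comparison maps differ from the transition maps only by ``small bump'' morphisms --- all of which needs repeated passage to subsequences and careful use of Krull--Schmidt uniqueness together with Properties~\ref{P:overlineY1} and \ref{P:overlineY2}.
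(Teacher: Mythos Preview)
Your overall strategy---build a double fan from the summand decomposition of $\widehat{\mathbf{Y}}$ and compare via ``small bumps'' killed by Lemma~\ref{L:mocolim_of_small_bumps}---is the right idea, but the specific route you take has a genuine gap and is more laborious than what the paper does.

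The gap is in your coherent tracking of summands. You decompose \emph{each} $\widehat{Y}_n$ into stable and moving parts and then speak of ``each moving endpoint of $\widehat{Y}_n$ becoming monotone in $n$'' and of assigning a double fan ``to each moving summand of $\widehat{Y}_n$, with endpoints converging to $p$ \ldots\ and to $q$''. For this to make sense you need the indecomposable decompositions of the $\widehat{Y}_n$ to line up along the sequence, so that a moving summand at level $n$ has a well-defined successor at level $n+1$. Nothing in Properties~\ref{P:overlineY1} and~\ref{P:overlineY2} gives you this: even with a small-bump cone, the map $\widehat{g}_{n,n'}$ need not be diagonal with respect to any decomposition, and the number of moving summands can change along the sequence. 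Your $\Ext^1$ vanishing shows the stable part splits off compatibly, but it does not organise the moving part. Without this alignment, your construction of $\mathbf{F}^{\mathrm{mv}}$ and your interleaving maps are not well-defined.

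The paper sidesteps all of this by looking only at $\widehat{Y}_1$. It decomposes $\widehat{Y}_1$ into types \textbf{T1}--\textbf{T3} (both endpoints stable, one stable and one moving, both moving) and to each summand $Z$ assigns an indecomposable double fan $\mathbf{F}(Z)$ whose $n$-th entry replaces each moving endpoint of $Z$ by $\widetilde{x}_\ell^{(n+1)}$ and keeps stable endpoints fixed; $\mathbf{F}$ is the direct sum. No decomposition of $\widehat{Y}_n$ for $n>1$ is ever used. The comparison is then a single morphism of sequences $\Phi_n\colon F_n\to\widehat{Y}_n$, constructed inductively: $\Phi_1$ is built summand-by-summand with $\cone(\Phi_1)$ a sum of small bumps, and $\Phi_{n+1}$ is obtained from $\Phi_n$ by the lifting
\[
\begin{tikzcd}
\Sigma^{-1}\cone(\zeta_n)\arrow[r] & F_n\arrow[r,"\zeta_n"]\arrow[d,"\Phi_n"] & F_{n+1}\arrow[d,dashed,"\Phi_{n+1}"]\arrow[r] & \cone(\zeta_n)\\
 & \widehat{Y}_n\arrow[r,"\widehat{g}_n"] & \widehat{Y}_{n+1} &
\end{tikzcd}
\]
which exists because the summands $\fromarc{\widetilde{x}_k^{(n+1)}}$ of $\Sigma^{-1}\cone(\zeta_n)$ have no morphisms to $\widehat{Y}_{n+1}$ (Property~\ref{P:overlineY1} forbids $\widetilde{x}_k^{(n+1)}$ as an endpoint of $\widehat{Y}_{n+1}$). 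A $3\times 3$ diagram then shows $\cone(\Phi_{n+1})$ again lies in $\add\bigl(\bigcup_\ell\cx((\widetilde{x}_\ell^{(n)},a_{\ell+1}))\bigr)$, so Lemma~\ref{L:mocolim_of_small_bumps} kills the cone sequence and homologicality of $\yo$ gives $\mocolim\mathbf{F}\cong\mocolim\widehat{\mathbf{Y}}$. This replaces your interleaving argument by a single sequence of triangles, and avoids entirely the bookkeeping you flag at the end.
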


\begin{proof}
    Let $(\cP_R,\rx_R = (\widetilde{x}_1, \ldots, \widetilde{x}_N))$ be the $\overline{\cZ}$-decorated non-crossing partition associated to $(\raisle,\rcoaisle)$. Since the endpoints of $\widehat{Y}_1$ lie in the set
        \begin{equation*}
            S(\raisle) \cup \bigcup_{\ell \in Z(\rx_R)} (\widetilde{x}_\ell^{+}, a_{\ell+1});
        \end{equation*}
    its indecomposable summands can be grouped into three types:
    \begin{enumerate} [label=\textbf{T\arabic*}]
        \item Arcs that have both endpoints in $S(\raisle)$.
        \item Arcs of the form $\{v, \widetilde{x}_\ell^{(r)}\}$ for a point $v \in S(\raisle)$, an index $\ell \in Z(\rx_R)$ and an integer $r > 1$.
        \item Arcs of the form $\{\widetilde{x}_i^{(t)}, \widetilde{x}_j^{(s)}\}$ for indices $i,j \in Z(\rx_R)$ and integers $s,t > 1$. 
    \end{enumerate}
    Given an object $X$ of $\cC(\cZ)$ we denote by $\mathrm{ind}(X)$ its set of indecomposable summands. We define the double fan $\mathbf{F} = (F_n,\zeta_n)_{n>0}$ as a direct sum of indecomposable double fans $\bigoplus_{Z \in \mathrm{ind}(\widehat{Y}_1)} \mathbf{F(Z)}$ where $\mathbf{F(Z)} = (F(Z)_n,\zeta(Z)_n)_{n>0}$ depends on the type of $Z$: 
    \begin{enumerate} [label=\textbf{T\arabic*}]
    \item Define $\mathbf{F(Z)}$ to be the constant double fan $(Z,1_Z)_{n>0}$. 
    \item If $Z= \{v,\widetilde{x}_\ell^{(r)}\}$, then define $F(Z)_n = \{v, \widetilde{x}_\ell^{(n+1)}\}$ with canonical morphism $\zeta(Z)_n$. 
    \item If $Z = \{\widetilde{x}_i^{(t)}, \widetilde{x}_j^{(s)}\}$, then define $F(Z)_n = \{\widetilde{x}_i^{(n+1)}, \widetilde{x}_j^{(n+1)}\}$ with canonical morphism $\zeta(Z)_n$.
    Note that $i \neq j$ because no summand of $\widehat{Y}_1$ lies in $\Sigma^{-1}\rcoaisle$ and both $s,t > 1$. Therefore, $F(Z)_n$ is an arc and $F(Z)$ is well-defined.
    \end{enumerate}
   For all integers $n'>n>0$, the cone of $\zeta_{n,n'}$ is isomorphic to $\bigoplus_{Z \in \mathrm{ind}(\widehat{Y}_1)} \cone \zeta(Z)_{n,n'}$. Consequently, each of its indecomposable summands is of the form $\{\widetilde{x}_k^{(n'+1)}, \widetilde{x}_k^{(n)}\}$ for some index $k \in Z(\rx_R)$. In particular, the double fan $\mathbf{F}$ is a Cauchy sequence with respect to the coaisle metric $\cM_\rcoaisle$.

    We claim that, for all integers $n>0$, there exists a morphism $\Phi_n \colon F_n \rightarrow \widehat{Y}_n$ that induces a morphism of triangles
    \begin{center}
 		\begin{tikzcd}
 			F_n \arrow[r, "\Phi_n"] \arrow[d, "\zeta_{n}"] & \widehat{Y}_n \arrow[d, "\widehat{g}_{n}"] \arrow[r] & \cone{\Phi_n} \arrow[d, "\xi_n"] \arrow[r] & \Sigma F_n \arrow[d]
 			\\
 			F_{n+1} \arrow[r, "\Phi_{n+1}"] & \widehat{Y}_{n+1} \arrow[r] & \cone{\Phi_{n+1}} \arrow[r] & \Sigma F_{n+1}
 		\end{tikzcd}
 	\end{center}
    and moreover, that 
    \begin{equation*}
        \cone{\Phi_n} \in \add \left( \bigcup_{\ell \in Z(\rx_R)} \cx \left((\widetilde{x}_\ell^{(n-1)}, a_{\ell+1})\right)\right).
    \end{equation*}
    This is sufficient to prove the main statement: Indeed, if $\mathbf{C} = (\cone{\Phi_n},\xi_n)_{n>0}$, then by Lemma~\ref{L:mocolim_of_small_bumps} we have $\mocolim \mathbf{C} = 0$, and, as the Yoneda embedding is homological, we have $\mocolim \mathbf{F} \cong \mocolim \mathbf{\widehat{Y}}$.
    
    We inductively construct $\Phi_n$. First consider the case $n=1$. 
    We define $\Phi_1 = \bigoplus_{Z \in \mathrm{ind}(\widehat{Y}_1)}\varphi_Z$, where $\varphi_Z \colon F_1(Z) \to Z$ depends on the type of $Z$:
    \begin{enumerate} [label = \textbf{T\arabic*}]
    \item Set $\varphi_Z = \id_Z$, and thus, $\cone \varphi_Z = 0$.
    \item If $Z= \{v,\widetilde{x}_\ell^{(r)}\}$, then $F(Z)_1 = \{v, \widetilde{x}_\ell^{(2)}\}$. Set $\varphi_Z$ to be the canonical morphism enabled by the ordering
    \begin{equation*}
        v^- < v < \widetilde{x}_\ell^+ < \widetilde{x}_\ell^{(r)}.
    \end{equation*} 
    In this case $\cone \varphi_Z \cong \{\widetilde{x}_\ell^+, \widetilde{x}_\ell^{(r)}\}$.
    \item If $Z = \{\widetilde{x}_i^{(t)}, \widetilde{x}_j^{(s)}\}$ then $F(Z)_1 = \{\widetilde{x}_i^{(2)}, \widetilde{x}_j^{(2)}\}$ and $i \neq j$. Set $\varphi_Z$ to be the canonical morphism enabled by the ordering
    \begin{equation*}
        \widetilde{x}_i^+ < \widetilde{x}_i^{(t)} < \widetilde{x}_j^+ < \widetilde{x}_j^{(s)}.
    \end{equation*}
    In this case $\cone(\varphi_Z) \cong \{\widetilde{x}_i^+, \widetilde{x}_i^{(t)}\} \oplus \{\widetilde{x}_j^+, \widetilde{x}_j^{(s)}\}$. 
    \end{enumerate}
    Consequently $\cone(\Phi_1) \cong \bigoplus_{Z \in \widehat{Y}_1} \cone(\varphi_Z)$ is isomorphic to a direct sum of arcs of the form $\{\widetilde{x}_k^+, \widetilde{x}_k^{(r)}\}$ such that $r>1$ and $k \in Z(\rx_R)$, and so it lies in
    \begin{equation*}
        \add \left( \bigcup_{\ell \in Z(\rx_R)} \cx \left((\widetilde{x}_\ell, a_{\ell+1})\right) \right).
    \end{equation*}

    Now suppose that for some integer $n>0$ there exists a morphism $\Phi_n \colon F_n \rightarrow \widehat{Y}_n$ with the required properties. Each indecomposable summand of $\Sigma^{-1}\cone\zeta_n$ is of the form $\fromarc{\widetilde{x}_k^{(n+1)}} = \{\widetilde{x}_k^{(n+1)}, \widetilde{x}_k^{(n+3)}\}$ for some index $k \in Z(\rx_R)$. Recall the endpoints of $\widehat{Y}_{n+1}$ lie in 
    \begin{equation*}
        S(\raisle) \cup \bigcup_{\ell \in Z(\rx_R)} (\widetilde{x}_{\ell}^{(n+1)}, a_{\ell+1}),
    \end{equation*}
    and because $n>0$ we have $\widetilde{x}_k^{(n+1)} \notin S(\raisle)$. Therefore, there are no summands of $\widehat{Y}_{n+1}$ that have the point $\widetilde{x}_k^{(n+1)}$ as an endpoint and thus no non-trivial morphisms $\Sigma^{-1}\cone\zeta_n \rightarrow \widehat{Y}_{n+1}$. So, we obtain a commutative diagram
    \begin{center}
 	\begin{tikzcd}
            \Sigma^{-1}\cone(\zeta_n) \arrow[r] & F_n \arrow[r, "\zeta_n"] \arrow[d,"\Phi_n"] & F_{n+1} \arrow[r] \arrow[d,"\Phi_{n+1}",dashed] & \cone(\zeta_n).
 		\\
 		& \widehat{Y}_n \ar[r, "\widehat{g}_n"] & \widehat{Y}_{n+1} &
 	\end{tikzcd}
    \end{center}
    Then by the ($3 \times 3$)-axiom (cf.\ \cite[Lemma~2.6]{May-triangulated}), there exists an object $C$ and a diagram
 	\begin{center}
  \begin{equation}\label{E:3x3_fan_approx}
 		\begin{tikzcd}
 			F_n \arrow[r, "\Phi_n"] \arrow[d, "\zeta_n"] & \widehat{Y}_n \arrow[d, "\widehat{g}_n"] \arrow[r] & \cone{\Phi_n} \arrow[d] \arrow[r] & \Sigma F_n \arrow[d]
 			\\
 			F_{n+1} \arrow[r, "\Phi_{n+1}"] \arrow[d] & \widehat{Y}_{n+1} \arrow[r] \arrow[d] & \cone{\Phi_{n+1}} \arrow[r] \arrow[d] & \Sigma F_{n+1} \arrow[d]
 			\\
 			\cone{\zeta_n} \arrow[r] \arrow[d] & \cone{\widehat{g}_n} \arrow[r] \arrow[d] & C \arrow[r] \arrow[d] & \Sigma \cone{\zeta_n} \arrow[d]
 			\\
 			\Sigma F_n \arrow[r, "\Sigma \Phi_n"] & \Sigma \widehat{Y}_n \arrow[r] & \Sigma \cone{\Phi_n} \arrow[r] & \Sigma^2 F_n
 		\end{tikzcd}
   \end{equation}
 	\end{center}
 	that is commutative everywhere except in the bottom right square, which is anticommutative, and whose rows and columns are distinguished triangles in $\cC(\cZ)$. Note that the subcategory
  \[
    \add \left(\bigcup_{\ell \in Z(\rx_R)} \cx ((\widetilde{x}_\ell^{(n-2)}, a_{\ell+1}))\right)
    \]
    is extension closed by \cite[Lemma~3.4]{Gratz--Zvonareva--2023--tstructures_in_clus_cat} and Equation (\ref{E:Ext1}). By the inductive assumption it contains $\cone(\Phi_n)$. 
    Since it also contains $\Sigma \cone(\zeta_n)$ and $\cone \widehat{g}_n$, by row three in (\ref{E:3x3_fan_approx}) it also contains $C$. Therefore by column three in (\ref{E:3x3_fan_approx}) it contains $\cone(\Phi_{n+1})$. Consider now the second row in (\ref{E:3x3_fan_approx}). The support of $\add \widehat{Y}_{n+1}$ is a subset of
 \[
    S(\raisle) \cup \bigcup_{\ell \in Z(\rx_R)} (\widetilde{x}_\ell^{(n+1)}, a_{\ell+1})
\]
  and the support of $\add \Sigma F_{n+1}$ is a subset of
  \[
    S(\Sigma\raisle) \cup \bigcup_{\ell \in Z(\rx_R)} (\widetilde{x}_\ell^{(n)}, a_{\ell+1}).
\]
  Therefore, the support of $\add \cone(\Phi_{n+1})$ is contained in the intersection
  \begin{equation}\label{E:intersection}
    \left( S(\raisle) \cup \bigcup_{\ell \in Z(\rx_R)} (\widetilde{x}_\ell^{(n)}, a_{\ell+1}) \right) \cap  \bigcup_{\ell \in Z(\rx_R)} (\widetilde{x}_\ell^{(n-2)}, a_{\ell+1}) = \bigcup_{\ell \in Z(\rx_R)} (\widetilde{x}_\ell^{(n)}, a_{\ell+1}),
  \end{equation}
   where equality holds because for all $n>1$ and $\ell \in Z(\rx_R)$ we have
    $S(\raisle) \cap  (\widetilde{x}_\ell^{(n-2)}, a_{\ell+1}) = \varnothing$. Therefore the morphism $\Phi_{n+1}$ has the desired properties.
\end{proof}

\begin{proposition} \label{P:metric_lies_in_comb}
    The metric completion $\fS_{\cM_\coaisle}$ of $\cC(\cZ)$ with respect to the coaisle metric $\cM_\coaisle$ is a subcategory of the combinatorial completion $\fR_{(\cP,\rx)}$ of $\cC(\cZ)$ with respect to the $\overline{\cZ}$-decorated non-crossing partition $(\cP,\rx)$.
\end{proposition}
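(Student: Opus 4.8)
The plan is to take an arbitrary indecomposable object $E$ in the metric completion $\fS_{\cM_\coaisle}$ and show it lies in $\fR_{(\cP,\rx)}$ by exhibiting it as the module colimit of a compactly supported Cauchy indecomposable double fan, whereupon Proposition~\ref{P:comb_comp_equals_double_fan_mocolims} finishes the job. Since everything is additive and $\cC(\cZ)$ is Krull--Schmidt, it suffices to treat indecomposable $E$, and such an $E$ is compactly supported at some $t \geq 0$ with $E \cong \mocolim\mathbf{E}$ for a Cauchy sequence $\mathbf{E}$ with respect to $\cM_\coaisle$.

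First I would invoke Proposition~\ref{P:reduce_to_RNDG_coaisle_sequences}: choosing a largest aisle $\widehat{\aisle}$ contained in $\coaisle$ with t-structure $(\widehat{\aisle},\widehat{\coaisle})$, and setting $\rcoaisle = \coaisle \cap \Sigma^{2(t+1)}\widehat{\coaisle}$ (a right non-degenerate coaisle with t-structure $(\raisle,\rcoaisle)$ by Lemma~\ref{L:right-non-degenerate}), we obtain a sequence $\widehat{\mathbf{Y}}$ in $\Sigma^t\widehat{\coaisle}$ with $E \cong \mocolim\widehat{\mathbf{Y}}$ such that the $(\Sigma^{-(t-1)}\raisle)$-approximation of $\widehat{\mathbf{Y}}$ stabilises at $0$ and its $(\Sigma^{-(t-1)}\rcoaisle)$-approximation is a minimal coaisle sequence with respect to $\Sigma^{-(t-1)}\rcoaisle$. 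Applying Lemma~\ref{L:classification_of_tilde_seq} (with the right non-degenerate t-structure $(\Sigma^{-(t-1)}\raisle,\Sigma^{-(t-1)}\rcoaisle)$), we may pass to a $(\Sigma^{-(t)}\rcoaisle)$-trivial subsequence of $\widehat{\mathbf{Y}}$ satisfying Properties~\ref{P:overlineY1} and \ref{P:overlineY2} with respect to that t-structure. Then Lemma~\ref{lem:bound_tilde_seq_by_double_fan} produces a double fan $\mathbf{F}$, Cauchy with respect to $\cM_{\Sigma^{-(t-1)}\rcoaisle}$, with $\mocolim\widehat{\mathbf{Y}} \cong \mocolim\mathbf{F}$, hence $E \cong \mocolim\mathbf{F}$.

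It remains to check that $\mathbf{F}$ (or rather each of its indecomposable summands) is Cauchy and compactly supported with respect to the \emph{original} coaisle metric $\cM_\coaisle$, not merely with respect to $\cM_{\Sigma^{-(t-1)}\rcoaisle}$. For compact support, note $E \cong \mocolim\mathbf{F}$ is compactly supported at $t$ with respect to $\cM_\coaisle$ by hypothesis, so $\mathbf{F}$ is too; decomposing $\mathbf{F}$ into its indecomposable double fan summands $\mathbf{F}(d)$, each $\mocolim\mathbf{F}(d)$ is a direct summand of $\mocolim\mathbf{F}$, hence also compactly supported with respect to $\cM_\coaisle$. For the Cauchy condition, the cone of a composition $\zeta_{n,n'}$ in $\mathbf{F}$ is, by the explicit description in Lemma~\ref{lem:bound_tilde_seq_by_double_fan}, a direct sum of arcs $\{\widetilde{x}_k^{(n'+1)},\widetilde{x}_k^{(n)}\}$ with $k \in Z(\rx_R)$; since $\Sigma^{2(t+1)}\widehat{\coaisle} \supseteq \rcoaisle \supseteq \Sigma^{-1}\rcoaisle \supseteq \cdots$ and, by construction of $\rcoaisle$ as $\coaisle \cap \Sigma^{2(t+1)}\widehat{\coaisle} \subseteq \coaisle$, these arcs eventually land in every $\Sigma^{-p}\coaisle$, so $\mathbf{F}$ is Cauchy with respect to $\cM_\coaisle$; again this passes to each summand by Remark~\ref{R:subseq_are_cchy_and_cptly_supp}-type stability of the cone under passing to components. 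Thus each $\mocolim\mathbf{F}(d) \in \fR_{(\cP,\rx)}$ by Proposition~\ref{P:comb_comp_equals_double_fan_mocolims}, and hence $E \cong \bigoplus_d \mocolim\mathbf{F}(d) \in \fR_{(\cP,\rx)}$.

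The main obstacle I anticipate is bookkeeping the shift: the reductions are carried out relative to the \emph{shifted} right non-degenerate t-structure $(\Sigma^{-(t-1)}\raisle, \Sigma^{-(t-1)}\rcoaisle)$, so one must carefully track that ``Cauchy with respect to $\cM_{\Sigma^{-(t-1)}\rcoaisle}$'' together with the containment $\rcoaisle \subseteq \coaisle$ (and the good-metric axiom $\Sigma^{\pm1}B_{t+1}\subseteq B_t$, which makes all the relevant categories cofinal in one another) really does yield ``Cauchy with respect to $\cM_\coaisle$''. The compact-support transfer is comparatively painless because compact support is inherited by direct summands of the module colimit, but the Cauchy transfer genuinely uses the nested-coaisle relationship established in Lemma~\ref{L:right-non-degenerate} together with the fact that the double fan's cones are built from arcs with endpoints of the form $\widetilde{x}_k^{(n)}$ that run off to accumulation points, landing in arbitrarily high desuspensions of $\coaisle$.
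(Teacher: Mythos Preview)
Your proof is correct and follows essentially the same route as the paper: reduce via Proposition~\ref{P:reduce_to_RNDG_coaisle_sequences} and Lemma~\ref{L:classification_of_tilde_seq} to a sequence satisfying \ref{P:overlineY1} and \ref{P:overlineY2} for the shifted right non-degenerate t-structure, apply Lemma~\ref{lem:bound_tilde_seq_by_double_fan} to get a double fan $\mathbf{F}$ with the same module colimit, then transfer compact support via the isomorphism of module colimits and the Cauchy property via the containment $\rcoaisle \subseteq \coaisle$, and finish with Proposition~\ref{P:comb_comp_equals_double_fan_mocolims} on the indecomposable summands. Two minor remarks: the paper handles the Cauchy transfer in one line by observing that Cauchy for $\cM_{\Sigma^{-(t-1)}\rcoaisle}$ is the same as Cauchy for $\cM_{\rcoaisle}$ (equivalent metrics), and then $\rcoaisle \subseteq \coaisle$ gives $\Sigma^{-p}\rcoaisle \subseteq \Sigma^{-p}\coaisle$ for all $p$, which is slightly cleaner than unpacking the explicit cone description; and your opening reduction to \emph{indecomposable} $E$ is unnecessary (and would require knowing $\fS_{\cM_\coaisle}$ is Krull--Schmidt a priori), but harmless, since your argument in fact works verbatim for arbitrary $E$ and never uses indecomposability.
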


\begin{proof}
    Let $E$ be an object in the metric completion $\fS_{\cM_\coaisle}$. Then $E$ is compactly supported at some $t \geq 0$. Let $\widehat{\aisle}$ be a largest aisle in $\coaisle$ with t-structure $(\widehat{\aisle},\widehat{\coaisle})$ and let $\rcoaisle = \coaisle \cap \Sigma^{2(t+1)}\widehat{\coaisle}$. By Proposition~\ref{P:reduce_to_RNDG_coaisle_sequences} and Lemma~\ref{L:classification_of_tilde_seq}, there exists a $(\Sigma^{-t}\rcoaisle)$-trivial sequence $\mathbf{\widehat{Y}}$ in $\Sigma^t\widehat{\coaisle}$ satisfying \ref{P:overlineY1} and \ref{P:overlineY2} with respect to the t-structure $(\Sigma^{-(t-1)}\raisle,\Sigma^{-(t-1)}\rcoaisle)$ such that
    \begin{equation*}
        E \cong \mocolim\mathbf{\widehat{Y}}.
    \end{equation*}
    Therefore, by Lemma~\ref{lem:bound_tilde_seq_by_double_fan} with t-structure $(\Sigma^{-(t-1)}\raisle,\Sigma^{-(t-1)}\rcoaisle)$, there exists a double fan $\mathbf{F}$ which is a Cauchy sequence with respect to $\cM_{\Sigma^{-(t-1)}\rcoaisle}$, and consequently also with respect to $\cM_{\rcoaisle}$, and isomorphisms
    \begin{equation*}
        E \cong \mocolim\mathbf{E} \cong \mocolim\mathbf{\widehat{Y}} \cong \mocolim\mathbf{F}.
    \end{equation*}
    As $E$ is compactly supported at $t$ with respect to $\cM_\coaisle$, by the existence of these isomorphisms, the double fan $\mathbf{F}$ is also compactly supported at $t$ with respect to $\cM_\coaisle$. Moreover, as $\rcoaisle \subseteq \coaisle$, the double fan $\mathbf{F}$ is also a Cauchy sequence with respect to $\cM_\coaisle$. The double fan $\mathbf{F}$ decomposes as a direct sum $\bigoplus_{i=1}^n \mathbf{F}_i$ of indecomposable double fans which are also compactly supported and Cauchy, and by Proposition~\ref{P:comb_comp_equals_double_fan_mocolims} their module colimits lie in the combinatorial completion $\fR_{(\cP,\rx)}$. Hence so does $E \cong \bigoplus_{i=1}^n \mocolim \mathbf{F}_i$.
\end{proof}

Now Theorem~\ref{T:completions agree} follows by Corollary~\ref{C:comb_lies_in_metric} and Proposition~\ref{P:metric_lies_in_comb}.

\subsection{A combinatorial description of the coaisle completions}

The interpretation of the metric completion $\mathfrak{S}_{\cM_\coaisle}$ of $\cC(\cZ)$ with respect to a coaisle metric in terms of its combinatorial completion allows for a holistic combinatorial description of $\mathfrak{S}_{\cM_\coaisle}$.

\begin{theorem}\label{T:combinatorial description of completion}
    Let $(\aisle,\coaisle)$ be a t-structure on $\cC(\cZ)$ corresponding to a $\overline{\cZ}$-decorated non-crossing partition $(\cP,\rx)$. The metric completion $\mathfrak{S}_{\cM_\coaisle}$ of $\cC(\cZ)$ with respect to the coaisle metric $\cM_\coaisle$ is equivalent to
    \[
        \mathfrak{S}_{\cM_\coaisle}(\cC(\cZ)) \simeq \bigoplus_{B \in \cP} \mathfrak{S}_{\cM_\coaisle}(B),
    \]
    where each $\mathfrak{S}_{\cM_\coaisle}(B)$ can be described combinatorially as follows:
    \begin{itemize}
        \item The indecomposable objects of $\mathfrak{S}_{\cM_\coaisle}(B)$ are in one-to-one correspondence with arcs of $\overline{\cx}(\overline{\cZ}_B)$.
        \item Let $F$ and $G$ be arcs of $\overline{\cx}(\overline{\cZ}_B)$. Then we have 
        \begin{equation*}
		\Hom_{\mathfrak{S}_{\cM_\coaisle}(B)}(F, G) \cong \begin{cases}
			\bK, &\text{if } F = \{f, f'\} \text{ and }  G = \{g,g'\} \text{ with } \\
    & f \leq g < {f'}^- \text{ and } f' \leq  g' < f^-
			\\
			0, &\text{ otherwise.}\\
		\end{cases}
	\end{equation*}
 \item{Let $F = \{f,f'\}$, $G = \{g,g'\}$ and $H = \{h,h'\}$ be arcs of $\overline{\cx}(\overline{\cZ}_B)$ with non-trivial maps $\varphi \colon F \to G $ and $\psi \colon G \to H$ enabled by the inequalities
\[
    f \leq g < {f'}^-, \;\; f' \leq g' < f^- \text{ and } g \leq h < {g'}^-, \;\; g' \leq h' < g^-.
\]
Then $\psi \circ \varphi \neq 0$ if and only if 
\[
    f \leq g \leq h \text{ and } f' \leq g' \leq h'.
\]}
    \end{itemize}
Moreover, $\mathfrak{S}_{\cM_\coaisle}$ is a triangulated category with suspension functor $\Sigma$ acting on arcs by a one-step clockwise rotation, i.e.\ given an arc $\{z_0,z_1\}$ we have $\Sigma \{z_0,z_1\} = \{z_0^-,z_1^-\}$, where for an accumulation point $a \in L(\cZ)$ we write $a^- = a = a^+$. Its distinguished triangles are diagrams which are isomorphic to colimits in $\Mod \cC(\cZ)$ of compactly supported Cauchy sequences of triangles in $\cC(\cZ)$. In particular, whenever $F = \{f,f'\}$ and $G = \{g,g'\}$ are indecomposable with a non-trivial map $\varphi \colon F \to G$ enabled by the inequalities
\[
    f \leq g < {f'}^- \text{ and } f' \leq g' < f^-
\]
we obtain a triangle
\[
    F \xrightarrow{\varphi} G \to \{f^-,g\} \oplus \{{f'}^-,g'\} \to \Sigma F,
\]
where we set $\{b,b'\} = 0$ if $b' \in \{b^-,b,b^+\}$.
\end{theorem}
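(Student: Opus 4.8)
The plan is to derive the statement from results already established. First I would invoke Theorem~\ref{T:completions agree} to replace $\fS_{\cM_\coaisle}$ by the combinatorial completion $\fR_{(\cP,\rx)} = \add\bigl(\bigcup_{B\in\cP}\overline{\cx}(\overline{\cZ}_B)\bigr)$. The sets $\overline{\cZ}_B$ for $B\in\cP$ are pairwise disjoint (the open intervals $(a_i,a_{i+1})$ with $i\in B$ are disjoint across blocks, and an accumulation point $a_i$ determines the index $i$), so each indecomposable object of $\fR_{(\cP,\rx)}$ lies in $\overline{\cx}(\overline{\cZ}_B)$ for a unique block $B$; setting $\mathfrak{S}_{\cM_\coaisle}(B) := \overline{\cx}(\overline{\cZ}_B)$, Proposition~\ref{P:comb_comp_equals_double_fan_mocolims} together with Remark~\ref{R:every arc is a mocolim} shows that its indecomposable objects are in bijection with the arcs of $\overline{\cZ}_B$. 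Morphisms in $\Mod\cC(\cZ)$, and hence in $\fS_{\cM_\coaisle}$, between such arcs, as well as their compositions, are computed by Lemma~\ref{L:morphisms_between_double_fans} after reducing to indecomposable double-fan summands; this yields the asserted $\Hom$-spaces and composition rule and, in particular, that distinct arcs give non-isomorphic objects. Finally, for arcs $F=\{f,f'\}$ in a block $B$ and $G=\{g,g'\}$ in a block $B'\neq B$, the weak inequalities in the $\Hom$-formula become strict since the endpoints lie in disjoint intervals, which forces $f<g<f'<g'$ cyclically, i.e.\ $F$ and $G$ cross; this is impossible for arcs whose endpoints lie in the $\overline{\cZ}$-completions of distinct blocks of the non-crossing partition $\cP$. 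Hence $\Hom(F,G)=0=\Hom(G,F)$ and $\fS_{\cM_\coaisle}\simeq\bigoplus_{B\in\cP}\mathfrak{S}_{\cM_\coaisle}(B)$.

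Next I would treat the triangulated structure. That $\fS_{\cM_\coaisle}$ is triangulated is \cite[Theorem~15]{neeman2020metricssurvey}, and by the construction in \cite{neeman2020metricssurvey} its distinguished triangles are exactly the diagrams isomorphic to module colimits of compactly supported Cauchy sequences of triangles in $\cC(\cZ)$. Its suspension is induced by $\Sigma$ on $\cC(\cZ)$, which extends to the exact, colimit-preserving autoequivalence $F\mapsto F\circ\Sigma^{-1}$ of $\Mod\cC(\cZ)$ commuting with $\yo$, so that $\Sigma(\mocolim\mathbf{E})\cong\mocolim(\Sigma\mathbf{E})$. Applying this to an indecomposable double fan $\mathbf{F}=(\{f_n,f'_n\},\zeta_n)_{n>0}$ with $\mocolim\mathbf{F}=\{f,f'\}$ gives $\Sigma\mathbf{F}=(\{f_n^-,{f'_n}^-\},\Sigma\zeta_n)_{n>0}$, again an indecomposable double fan, whose module colimit has left endpoint $\lim f_n^-$; this equals $f^-$ if $f\in\cZ$ and $f$ if $f\in L(\cZ)$, and symmetrically on the right. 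Thus $\Sigma$ acts by $\{z_0,z_1\}\mapsto\{z_0^-,z_1^-\}$ with the convention $a^-=a$ for $a\in L(\cZ)$, as claimed.

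For the explicit triangle, fix arcs $F=\{f,f'\}$ and $G=\{g,g'\}$ with a non-trivial map $\varphi\colon F\to G$ enabled by $f\le g<{f'}^-$ and $f'\le g'<f^-$. I would realise $F=\mocolim\mathbf{F}$ and $G=\mocolim\mathbf{G}$ by compactly supported Cauchy indecomposable double fans (Remark~\ref{R:every arc is a mocolim}, Lemmas~\ref{L:double_fan_cauchy} and~\ref{L:double_fan_cptly_supp}), chosen compatibly, after reindexing as in the proof of Lemma~\ref{L:morphisms_between_double_fans}, so that for all $n\gg0$ there is a non-zero canonical morphism $\varphi_n\colon F_n\to G_n$ enabled by $f_n^-<g_n<{f'_n}^-<g'_n$. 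The standard triangles $F_n\xrightarrow{\varphi_n}G_n\to\{f_n^-,g_n\}\oplus\{{f'_n}^-,g'_n\}\to\Sigma F_n$ in $\cC(\cZ)$ assemble into a sequence of triangles; by the $3\times3$ Lemma (cf.\ \cite[Lemma~2.6]{May-triangulated}) the cone sequence $(\{f_n^-,g_n\}\oplus\{{f'_n}^-,g'_n\})_{n>0}$ sits in triangles built from the cones of the connecting maps of $\mathbf{F}$ and of $\mathbf{G}$, so it is again Cauchy and compactly supported, and so is the whole sequence of triangles. Its module colimit is therefore a distinguished triangle of $\mathfrak{S}_{\cM_\coaisle}$ with first two terms $F$ and $G$, connecting map $\mocolim(\varphi_n)$ (a non-zero, hence up to a scalar the prescribed, multiple of $\varphi$), and third term $\mocolim(\{f_n^-,g_n\})_{n>0}\oplus\mocolim(\{{f'_n}^-,g'_n\})_{n>0}=\{f^-,g\}\oplus\{{f'}^-,g'\}$, under the convention that $\{b,b'\}=0$ whenever $b'\in\{b^-,b,b^+\}$, which covers the $V_1=0$ case of Figure~\ref{F:cone}. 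This gives the triangle $F\xrightarrow{\varphi}G\to\{f^-,g\}\oplus\{{f'}^-,g'\}\to\Sigma F$.

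I expect the main obstacle to be this last step: one must choose the approximating double fans $\mathbf{F}$ and $\mathbf{G}$ compatibly enough that the prescribed morphism and the identification of the cone survive in the module colimit, and then verify carefully that the resulting sequence of triangles is genuinely Cauchy and compactly supported with the asserted colimit terms, paying attention to the degenerate limits where a cone summand vanishes. All the necessary bookkeeping is, however, already contained in Lemma~\ref{L:morphisms_between_double_fans}, Lemmas~\ref{L:double_fan_cauchy}--\ref{L:double_fan_cptly_supp}, the Cauchy and compact-support behaviour of module colimits, and the $3\times3$ Lemma, so this is a matter of care rather than a genuinely new difficulty.
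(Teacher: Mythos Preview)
Your proposal is correct and takes essentially the same approach as the paper, which records the theorem as an immediate consequence of Theorem~\ref{T:completions agree}, Lemma~\ref{L:morphisms_between_double_fans}, and \cite[Theorem~15]{neeman2020metricssurvey}. You have simply spelled out the details the paper leaves implicit---the block decomposition and orthogonality, the action of $\Sigma$ on arcs, and the explicit triangle obtained as a module colimit of a sequence of triangles between double fans---all consistently with the paper's framework.
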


\begin{proof}
    This is an immediate consequence of Theorem~\ref{T:completions agree}, Lemma~\ref{L:morphisms_between_double_fans} and \cite[Theorem~15]{neeman2020metricssurvey}.
\end{proof}

\begin{remark}
    In upcoming work, we prove there is a similar combinatorial description for the completion of $\cC(\cZ)$ with respect to any good metric $\cM = \{B_t\}_{t \geq 0}$ such that for all $t \geq 0$ we have $B_t = \add B_t$.
\end{remark}

Let $\overline{\cC(\cZ)}$ denote the combinatorial completion of $\cC(\cZ)$ as defined in \cite{Paquette--Yildirim--2021:combinatorial_completion}.

\begin{corollary}\label{C:PY-equivalence}
    Let $(\aisle,\coaisle)$ be a non-degenerate t-structure with corresponding $\cZ$-decorated non-crossing partition $(\cP_0, \rx)$, where $\cP_0 = \{[N]\}$ is the coarsest partition of $[N]$, and $\rx \in \cZ^N$. Then there is an equivalence of categories
    \[
        \mathfrak{S}_{\cM_\coaisle} \simeq \overline{\cC(\cZ)}.
    \]
\end{corollary}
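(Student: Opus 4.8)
The plan is to exhibit an explicit bijection between indecomposable objects on both sides and check it is compatible with the Hom-spaces and with the action of the suspension, invoking Theorem~\ref{T:combinatorial description of completion} for the description of $\mathfrak{S}_{\cM_\coaisle}$ and the corresponding combinatorial description from \cite{Paquette--Yildirim--2021:combinatorial_completion} for $\overline{\cC(\cZ)}$. First I would specialize Theorem~\ref{T:combinatorial description of completion} to the case $\cP = \cP_0 = \{[N]\}$ and $\rx \in \cZ^N$: since there is only one block $B = [N]$, the sum $\bigoplus_{B \in \cP}\mathfrak{S}_{\cM_\coaisle}(B)$ collapses to a single summand, and since no decoration $x_i$ equals $a_i$ (non-degeneracy gives right non-degeneracy, and the coarsest partition with $\rx \in \cZ^N$ also rules out $x_i = a_i$ for the aisle side), every accumulation point $a_i$ lies in $\overline{\cZ}_B = \cZ \cup L(\cZ) = \overline{\cZ}$. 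Hence the indecomposable objects of $\mathfrak{S}_{\cM_\coaisle}$ are exactly all arcs of $\overline{\cZ}$, i.e.\ two-element subsets $\{z_0,z_1\}$ of $\overline{\cZ}$ with $z_0 \notin \{z_1^-,z_1,z_1^+\}$ (using the convention $a^- = a = a^+$ on accumulation points). This is precisely the indexing set for the indecomposables of $\overline{\cC(\cZ)}$ in \cite{Paquette--Yildirim--2021:combinatorial_completion}, so the object-level bijection is the identity on arcs.

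Next I would compare morphism spaces. Both categories are $\bK$-linear Krull--Schmidt, so it suffices to compare $\Hom$ between indecomposables. Theorem~\ref{T:combinatorial description of completion} gives, for arcs $F = \{f,f'\}$ and $G = \{g,g'\}$ of $\overline{\cZ}$, that $\Hom_{\mathfrak{S}_{\cM_\coaisle}}(F,G) \cong \bK$ precisely when $f \leq g < {f'}^-$ and $f' \leq g' < f^-$, and $0$ otherwise; one then checks this matches the crossing/angle condition defining $\Hom$ in $\overline{\cC(\cZ)}$ verbatim (after matching conventions for how $\overline{\cC(\cZ)}$ orders endpoints and shifts one endpoint by a predecessor). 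The composition rule in Theorem~\ref{T:combinatorial description of completion}---namely $\psi\circ\varphi \neq 0$ iff $f \leq g \leq h$ and $f' \leq g' \leq h'$---must likewise be matched against the factorization rule in \cite{Paquette--Yildirim--2021:combinatorial_completion}; since both are ``one-dimensional Hom's composing nontrivially exactly when the middle arc is weakly nested between source and target'', they agree, and this pins down the $\bK$-linear functor up to rescaling the chosen basis vectors $e_{XY}$, which can be arranged consistently by Remark~\ref{R:canonical_morphisms_exist}. Finally, the suspension $\Sigma$ acts on both sides by the one-step clockwise rotation $\{z_0,z_1\} \mapsto \{z_0^-,z_1^-\}$ (with $a^- = a$ on accumulation points), so the equivalence commutes with the respective autoequivalences.

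The main obstacle is purely bookkeeping rather than conceptual: reconciling the two sets of conventions. \cite{Paquette--Yildirim--2021:combinatorial_completion} builds $\overline{\cC(\cZ)}$ as a Verdier quotient and describes its Hom-spaces and composition in their own combinatorial language (angles, admissible arcs, identifications of certain arcs with zero), and one has to verify line by line that their crossing condition, their ``factors through'' condition, and their zero-arc convention $\{b,b'\}=0$ for $b'\in\{b^-,b,b^+\}$ coincide with ours after translating the one-endpoint shift and the clockwise-vs-anticlockwise orientation. I would organize this as a short dictionary lemma and then note that, granted the dictionary, the identity-on-arcs assignment is fully faithful and essentially surjective, hence an equivalence; I would also explicitly flag, as the paper's introduction does, that this is only an equivalence of categories commuting with the shift and preserving triangles with two indecomposable terms, and that whether it is a triangulated equivalence is left open.
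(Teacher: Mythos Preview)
Your proposal is correct and follows the same approach as the paper, which treats the corollary as an immediate consequence of Theorem~\ref{T:combinatorial description of completion} without giving an explicit proof. You have simply spelled out the specialization: with $\cP_0=\{[N]\}$ and $\rx\in\cZ^N$ one has $\overline{\cZ}_B=\overline{\cZ}$, so the combinatorial description of $\mathfrak{S}_{\cM_\coaisle}$ matches that of $\overline{\cC(\cZ)}$ from \cite{Paquette--Yildirim--2021:combinatorial_completion} on objects, morphisms, composition, and suspension; your careful flagging that this yields only an equivalence of categories (not a priori a triangulated equivalence) matches the paper's own Remark following the corollary.
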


\begin{remark}
    By Theorem~\ref{T:combinatorial description of completion} we see that the equivalence of categories $ \mathfrak{S}_{\cM_\coaisle} \simeq \overline{\cC(\cZ)}$ in Corollary~\ref{C:PY-equivalence} commutes with suspension, and preserves triangles of the form $A \to B \to C \to \Sigma A$ where $A$ and $B$ are indecomposable. It is entirely plausible that the equivalence of categories is in fact a triangulated equivalence. This remains to be investigated.
\end{remark}

More generally, in Theorem~\ref{T:combinatorial description of completion}, each of the direct summands $\mathfrak{S}_{\cM_\coaisle}(B)$ could, as a category, be constructed in an analogous manner to the combinatorial completion in \cite{Paquette--Yildirim--2021:combinatorial_completion}. The only very minor adaptation would be to not necessarily replace all the accumulation points by lines of integers, but instead replace an appropriate subset of the accumulation points (cf.\ start of \cite[Section~3]{Paquette--Yildirim--2021:combinatorial_completion}), and then take the relevant Verdier quotient with respect to the newly generated intervals.

\section{An example} \label{S:example}

    In this section we explicitly compute the completion of a discrete cluster category with $N=10$ accumulation points with respect to a specific coaisle metric.

    \subsection{T-structures} \label{SS:EX:tstructures}
    Consider the non-crossing partition
    \begin{equation*}
        \cP = \{ \{1\}, \{2,3,9\}, \{4\}, \{5,6\}, \{7\}, \{8\}, \{10\}\}
    \end{equation*}
    of $[10]$. Let $\rx$ be the $\overline{\cZ}$-decoration of $\cP$ given by $\rx = (x_1,a_3,x_3,a_4,a_6,x_6,a_7,a_8,x_9,a_{10})$ where for the indices $i=1,3,6,9$ we have $x_i \in \cZ$ (cf. Definition~\ref{D:decoratednc}). The pair $(\cP,\rx)$ defines an aisle $\aisle$ of $\cC(\cZ)$ (cf.\ Theorem~\ref{T:GZ_classification_aisles}). This aisle is illustrated by the left-hand diagram in Figure~\ref{F:tstructure}, where the indecomposable objects of $\aisle$ correspond to the arcs that lie in the shaded region. The corresponding coaisle $\coaisle$ of $\cC(\cZ)$ is illustrated by the right-hand diagram in Figure~\ref{F:tstructure} (cf.\ Corollary~\ref{C:GZ_classification_coaisle}). Note the Kreweras complement of $\cP$ is
    \begin{equation*}
        \cP^c = \{\{1,9,10\}, \{2\}, \{3,4,6,7,8\}, \{5\}\}.
    \end{equation*}

    \begin{figure}[H]
        \centering
        \includegraphics[scale=0.8]{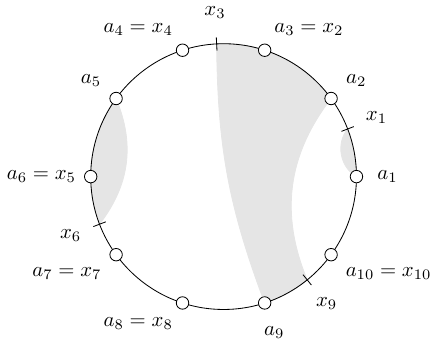} \;\;\; 
        \includegraphics[scale=0.8]{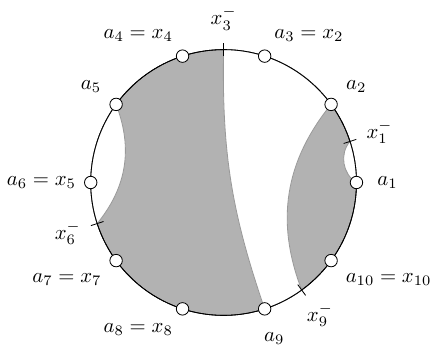}
        \caption{The aisle $\aisle$ (left) and the coaisle $\coaisle$ (right) of the t-structure $(\aisle,\coaisle)$ of $\cC(\cZ)$.}
        \label{F:tstructure}
    \end{figure}

    \subsection{Right non-degenerate t-structures} \label{SS:EX:RNDGcoaisles} The t-structure $(\aisle,\coaisle)$ is not right non-degenerate (cf.\ Definition~\ref{D:RNDG}). Indeed, the non-empty intersection $\bigcap_{n \in \bZ} \Sigma^n\coaisle$ is illustrated by the striped region of the left-hand diagram in Figure~\ref{F:RNDGcoaisles}. Notice the support of this intersection is the union of those segments $(a_i,a_{i+1})$ that lie in $S(\coaisle)$; that is those segments with $x_i = a_i$. We may define a new $\overline{\cZ}$-decoration of the partition $\cP$ to obtain a right non-degenerate t-structure. In particular, fix a decoration $\widetilde{\rx} = (\widetilde{x}_1,a_3,\widetilde{x}_3,\widetilde{x}_4,a_6,\widetilde{x}_6,\widetilde{x}_7,\widetilde{x}_8,\widetilde{x}_9,\widetilde{x}_{10})$ where for indices $i \in [N] \setminus \{2,5\}$ we have $\widetilde{x}_i \in \cZ$. Then the pair $(\cP,\widetilde{\rx})$ defines a right non-degenerate t-structure $(\widetilde{\aisle}, \widetilde{\coaisle})$ whose coaisle $\widetilde{\coaisle}$ is illustrated by the right-hand diagram in Figure~\ref{F:RNDGcoaisles}. Notice that for all indices $i \in [N]$ we have $\widetilde{x}_i \neq a_i$.

    \begin{figure}[H]
        \centering
        \includegraphics[scale=0.8]{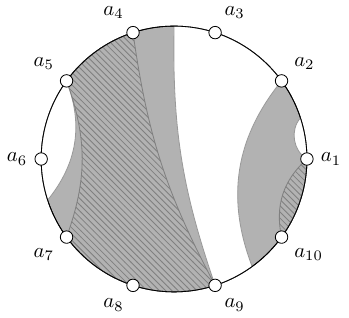}  
        \;\;\;
        \includegraphics[scale=0.8]{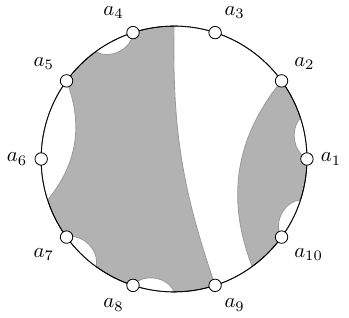} 
        \caption{A coaisle $\coaisle$ of $\cC(\cZ)$ that is not right-non degenerate (left) and a coaisle $\widetilde{\coaisle}$ of $\cC(\cZ)$ that is right non-degenerate (right). The striped region represents the intersection $\bigcap_{n \in \bZ} \Sigma^n\coaisle$.}
        \label{F:RNDGcoaisles}
    \end{figure}

    \subsection{Largest aisles contained in coaisles.} \label{SS:EX:largestaisles}
    The right non-degenerate coaisle $\widetilde{\coaisle}$ illustrated by the right-hand diagram in Figure~\ref{F:RNDGcoaisles} is very similar to the coaisle $\coaisle$ illustrated by the right-hand diagram in Figure~\ref{F:tstructure}. In particular, up to equivalence, $\widetilde{\coaisle}$ is obtained from $\coaisle$ by removing the aisle
    \begin{equation*}
        \add \left( \bigcup_{i \in \{4,7,8,10\}} \cx \left((a_i,\widetilde{x}_i\right]) \right).
    \end{equation*}
    However, $\coaisle$ contains a larger aisle; namely the aisle $\widehat{\aisle}$ illustrated by the striped region in the left-hand diagram in Figure~\ref{F:largest_aisles}. This aisle is a representative of the unique equivalence class of largest aisles contained in $\coaisle$. The intersection $\rcoaisle = \coaisle \cap \widehat{\coaisle}$ is a right non-degenerate coaisle as depicted by the right-hand diagram in Figure~\ref{F:largest_aisles}.

    \begin{figure}[H]
        \centering
        \includegraphics[scale=0.8]{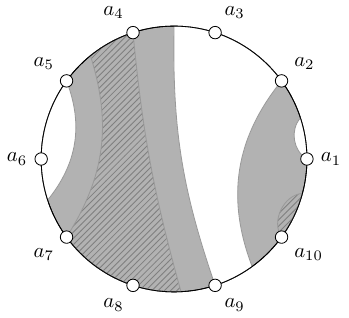}
        \;\;\;
        \includegraphics[scale=0.8]{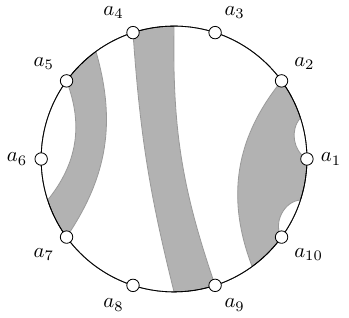}
        \caption{A largest aisle $\widehat{\aisle}$ contained in the coaisle $\coaisle$ (left) and the induced right non-degenerate coaisle $\rcoaisle = \coaisle \cap \widehat{\coaisle}$ (right).}
        \label{F:largest_aisles}
    \end{figure}

    \subsection{Double fans.} \label{SS:EX:doublefans}
    To compute the metric completion of $\cC(\cZ)$ with respect to the coaisle metric $\cM_\coaisle$ it suffices to restrict our attention to compactly supported Cauchy double fan sequences (cf.\ Theorem~\ref{T:completions agree} and Proposition~\ref{P:comb_comp_equals_double_fan_mocolims}). The left-hand diagram in Figure~\ref{F:doublefanseq} depicts four examples of double fan sequences (cf.\ Definition~\ref{D:double_fan}). The entries of the sequences are illustrated as solid and then dotted lines. The module colimits of the double fan sequences are depicted by the bold lines. We denote the module colimits of $\mathbf{F_1}$, $\mathbf{F_2}$, $\mathbf{F_3}$ and $\mathbf{F_4}$ by $\{a_2,a_9\}$, $\{z_2,a_3\}$, $\{z_3,a_6\}$ and $\{z_4,z'_4\}$ respectively (cf. Definition~\ref{D:double_fan}). The sequence $\mathbf{E}$ depicted in the right-hand diagram in Figure~\ref{F:doublefanseq} is not a double fan sequence because the accumulation point $a_2$ is the limit of both sequences of endpoints. In particular, the module colimit of the sequence $\mathbf{E}$ is trivial (cf.\ Lemma~\ref{L:mocolim_of_small_bumps}).

    \begin{figure}[H]
        \centering
        \includegraphics[scale=0.8]{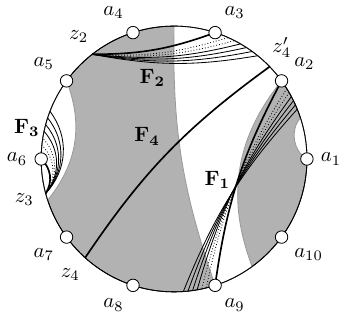}
        \;\;\;
        \includegraphics[scale=0.8]{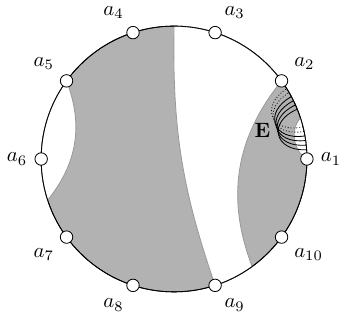}
        \caption{Four examples of double fan sequences (left) and a non-example of a double fan sequence (right).}
        \label{F:doublefanseq}
    \end{figure}

    \subsection{Morphisms between module colimits of double fans.} \label{SS:EX:morphismsbetweendoublefans}
    Morphisms in $\Mod\cC(\cZ)$ between module colimits of double fan sequences can be interpreted from the combinatorial model (cf.\ Lemma~\ref{L:morphisms_between_double_fans}). In Figure~\ref{F:mocolimmorphism} there are three arcs of $\overline{\cZ}$, namely $Z = \{z,z'\}$, $W = \{a_2,a_9\}$ and $V = \{a_2,v\}$ all of which can be realised as the module colimit of an appropriate double fan sequence (cf.\ Remark~\ref{R:every arc is a mocolim}).  There exists a morphism $Z \rightarrow V$ and a morphism $V \rightarrow Z$ enabled by the orderings
    \begin{equation*}
        z \leq v < {z'}^- \text{ and } z' \leq a_2 < z^-,
    \end{equation*}
    respectively
    \begin{equation*}
        a_2 \leq z < v^- \text{ and } v \leq z' < a_2^-=a_2.
    \end{equation*}
    There is also a morphism $V \rightarrow W$ enabled by the ordering
    \begin{equation*}
        v \leq a_9 < a_2^-=a_2 \text{ and } a_2 \leq a_2 < v^-.
    \end{equation*}
    However, there is no morphism $W \rightarrow V$ because neither $v$ nor $a_2$ lies in the interval $[a_9,a_2^-)$. Similarly, there exist morphisms $W \rightarrow Z$ and $Z \rightarrow W$. Moreover, we have
    \begin{equation*}
        z \leq v \leq a_9 \text{ and } z' \leq a_2 \leq a_2,
    \end{equation*}
    and so, the composition $Z \rightarrow V \rightarrow W$ is non-zero.

    \begin{figure}[H]
        \centering
        \includegraphics[scale=0.8]{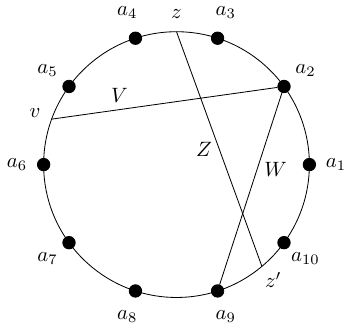}
        \caption{Three arcs $V$, $W$ and $Z$ of $\overline{\cZ}$.}
        \label{F:mocolimmorphism}
    \end{figure}

    \subsection{Cauchy and compactly supported double fans.} \label{SS:EX:cauchyandcptlysuppdoublefans}
    Whether or not a double fan sequence is Cauchy and/or compactly supported can easily be read from the combinatorial model (cf.\ Lemmata~\ref{L:double_fan_cauchy} and \ref{L:double_fan_cptly_supp}). The sequences $\mathbf{F_1}$ and $\mathbf{F_4}$ in the left-hand diagram of Figure~\ref{F:doublefanseq} are both Cauchy because both $x_1 \neq a_2$ and $x_8 \neq a_9$, and $\mathbf{F_4}$ is a constant sequence, whereas the sequences $\mathbf{F_2}$ and $\mathbf{F_3}$ are not Cauchy as $x_2=a_2$ and $x_5=a_5$. Moreover, the sequence $\mathbf{F_1}$ is compactly supported, because $2$ and $9$ lie in the same block of $\cP$, $x_2 \neq a_3$ and $x_9 \neq a_{10}$, and similarly we see that $\mathbf{F_3}$ is also compactly supported. However, the sequences $\mathbf{F_2}$ and $\mathbf{F_4}$ are not compactly supported. Indeed, the arc $\{z_2^{(-2)}, z_4^{(2)}\}$ lies in all shifts of $\coaisle$ and intersects both module colimits $\{z_2,a_3\}$ and $\{z_4,z'_4\}$. Therefore, the only double fan sequence that is both Cauchy and compactly supported with respect to the coaisle metric $\cM_\coaisle$ is $\mathbf{F_1}$. Note that, although not a double fan sequence, the sequence $\mathbf{E}$ in the right-hand diagram in Figure~\ref{F:doublefanseq} is both Cauchy and compactly supported with respect to $\cM_\coaisle$.

    \subsection{Coaisle metric completion.} \label{SS:EX:completion}
    We may represent the metric completion of $\cC(\cZ)$ with respect to the coaisle metric $\cM_\coaisle$ combinatorially (cf.\ Theorem~\ref{T:completions agree}). In particular, the combinatorial completion $\fR_{(\cP,\rx)}$ is depicted by the right-hand diagram in Figure~\ref{F:completion}. The indecomposable objects of the category $\fR_{(\cP,\rx)}$ correspond to the arcs that lie in the cross hatched region where arcs can end at black accumulation points but not at white accumulation points. The left-hand diagram in Figure~\ref{F:completion} is the aisle $\aisle$. Notice the completion is simply the union of the shifts of the aisle where we now also allow some specific accumulation points to be endpoints of arcs (cf.\ Remark~\ref{R:completion_like_union_of_aisles}).

    \begin{figure}[H]
        \centering
        \includegraphics[scale=0.8]{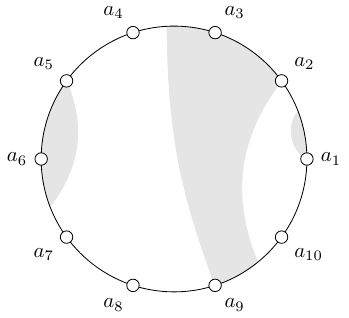}
        \;\;\;
        \includegraphics[scale=0.8]{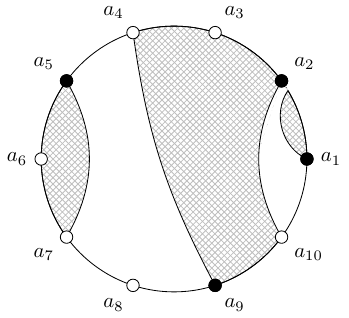}
        \caption{The aisle $\aisle$ of the t-structure $(\aisle,\coaisle)$ (left) and the completion of $\cC(\cZ)$ with respect to the coaisle metric $\cM_\coaisle$ (right).}
        \label{F:completion}
    \end{figure}

\bibliographystyle{alpha}
\bibliography{bib.bib}

\bigskip

\noindent 
\footnotesize \textsc{Charley Cummings, Department of Mathematics, Aarhus University, Ny Munkegade 118, 8000 Aarhus C, Denmark.}
\\
\noindent  \textit{Email address:} 
{\texttt{c.cummings@math.au.dk}}

\bigskip

\noindent 
\footnotesize \textsc{Sira Gratz, Department of Mathematics, Aarhus University, Ny Munkegade 118, 8000 Aarhus C, Denmark.}
\\
\noindent  \textit{Email address:} 
{\texttt{sira@math.au.dk}}

\end{document}